\newcommand{\qmatrix}[1]{ \left( \begin{matrix} #1 \end{matrix} \right) }
\newcommand{\loc}{\mathrm{loc}}
\newcommand{\printcbf}[1]{%
\stepcounter{#1}{\bf \arabic{#1})}
}%
\def\[#1\]{\begin{align*}#1\end{align*}}
\def\be#1\ee{\begin{align}#1\end{align}}
\def\bea#1\eea{\begin{align}#1\end{align}}
\def\ben#1\een{\begin{align*}#1\end{align*}}
\newcommand{\supp}{\mathrm{supp}}
\newcommand{\diff}{\mathrm{d}}
\newcommand{\pr}{\mathrm{pr}}
\newcommand{\n}[1]{\left\Vert #1\right\Vert}
\newcommand{\la}{\left\langle}
\newcommand{\ra}{\right\rangle}
\newcommand{\R}{\mathbb{R}}
\newcommand{\N}{\mathbb{N}}
\newcommand{\mc}[1]{\mathcal{#1}}
\newcommand{\s}{\mc}
\newcommand{\p}[2]{\frac{\partial #1}{\partial #2}}
\newcommand{\pa}[1]{{{\partial}\over{\partial #1}}}
\newcommand{\dif}[1]{\frac{\diff}{\diff #1}}
\def\ol#1{\overline{#1}}
\newcommand{\q}[2]{{{\partial^2 #1}\over{\partial #2}^2}}
\newtheoremstyle{theorem}{0.5cm}{0.5cm}%
   {}
   {}
   {\bfseries}
   {}
   {2ex}
   {\thmname{#1}\thmnumber{ #2}\thmnote{ #3}}
\theoremstyle{theorem}
\newtheorem{theorem}{Theorem}[section]
\newtheorem{proposition}[theorem]{Proposition}
\newtheorem{example}[theorem]{Example}
\newtheorem{corollary}[theorem]{Corollary}
\newtheorem{remark}[theorem]{Remark}
\newtheorem{definition}[theorem]{Definition}
\newtheorem{lemma}[theorem]{Lemma}
\title[On Existence of $L^1$-solutions for BTE and RT Treatment Optimization]
{On Existence of $L^1$-solutions for Coupled Boltzmann Transport Equation and Radiation Therapy Treatment Optimization}
\author{J. Tervo$^1$}
\address{$^1$University of Eastern Finland, Department of Applied Physics, P.O.Box 1627, FI-70211 Kuopio, Finland}
\author{P. Kokkonen$^2$}
\address{$^2$Varian Medical Systems Finland Oy, Paciuksenkatu 21, 00270 Helsinki, Finland}
\email{pvkokkon@gmail.com}
\begin{document}
\renewcommand{\diff}{d}

\begin{abstract}
The paper considers a linear system of  Boltzmann transport equations modelling the evolution of three species of particles, photons, electrons and positrons. The system is coupled because of the collision term (an integral operator). The model is intended especially for dose calculation (forward problem) in radiation therapy.
It, however, does not apply to all relevant interactions in its present form.
We show under physically relevant assumptions that the system has a unique solution in appropriate ($L^1$-based) spaces and that the solution is non-negative when the data (internal source and inflow boundary source) is non-negative.
In order to be self-contained as much as is practically possible,
many (basic) results and proofs have been reproduced in the paper.
Existence, uniqueness and non-negativity of solutions for the related time-dependent coupled system are also proven. Moreover, we deal with inverse radiation treatment planning problem (inverse problem) as an optimal control problem both for external and internal therapy (in general $L^p$-spaces). Especially, in the case $p=2$ variational equations for an optimal control related to an appropriate differentiable convex object function are verified. Its solution can be used as an initial point for an actual (global) optimization.
\end{abstract}

\maketitle

\section{Introduction}\label{intro}

The Boltzmann transport equation (BTE) is an integro-partial differential equation which physically is based on the conservation laws. It has applications in many fields of scientific computation, including in among others optical tomography, cosmic radiation,
nanotechnology (e.g. plasma physics) and radiation therapy, which is considered in this paper.
For general mathematical theory of BTE with relevant boundary conditions we refer to \cite{agoshkov} and \cite{dautraylionsv6}. See also \cite{casezweifel}, \cite{cercignani}, \cite{duderstadt79}, \cite{pomraning} where the subject is considered from more physical point of view.
For more recent issues related to BTE can be found \cite{mokhtarkharroubi}, and some non-linear aspects in \cite{bellamo}.
Finally, for topics related to Monte Carlo methods in the context of BTE,
both from theoretical and practical points of view,
we refer to \cite{lapeyre03} and \cite{seco13}.

From the computational point of view,
the primary goal in radiation therapy is to
generate dose distributions in such a way that the prescribed dose conforms
as well as possible to the target volume,
while healthy tissue, and especially
the so-called critical organs, achieve as low dose as possible.
One considers the desired dose distribution in the patient domain to be known.
In {\it external radiotherapy} the problem is to find the optimal dose by defining the field intensity, that is the incoming particle flux, on (patches of) the patient surface, which can be regulated (controlled)
by the relative position and orientation of the patient and the accelerator head,
as well as by different mechanical devices therein such as jaws, wedges and multileaf collimators (MLCs).
In {\it internal radiotherapy} the radioactive sources are to be
positioned inside the patient tissue such that the desired dose distribution is achieved.
The determination of the external particle flux (or the distribution of internal sources) required to deliver the desired dose distribution is called
the \emph{inverse treatment planning problem} (IRTP),
which from mathematical point of view is an {\it inverse problem}.
The calculation of particle fluxes or dose in the patient tissue when the incoming fluxes or internal sources are known, is called {\it dose calculation},
and it is considered as a {\it forward problem}.

The solution of IRTP always requires some \emph{dose calculation model}.
Classical examples of such models,
and whose popularity is mainly explained by the
limitations in computer technology until quite recently,
are the so-called pencil beam calculation models (cf. \cite{gustafsson}, \cite{mayles07}).
These models are based on the idea that the incident radiation beam is divided
into beamlets (pencil beams) and the total dose is obtained as a superposition
(e.g. by convolution)
of doses contributed by these beamlets (pencil beam kernels),
which themselves are calculated by a Monte Carlo simulation
or other methods such as Fermi-Eyges theory, which is based on a rough approximation of BTE (\cite{blais90}, \cite{mayles07}, \cite{seco13}).
Even though various kinds of corrections for pencil beam models have been proposed (see references mentioned in \cite{larsen97}, for example),
they remain inaccurate especially in regions which are highly non-homogeneous.
On the other hand, one can develop various dose calculation
models based on point (spread) kernels (see \cite{mayles07})
that result from a single interaction by an incident photon (for example)
at a given point in homogeneous material.
Like the pencil beam kernels above,
the point kernels are typically calculated by Monte-Carlo simulations.

In radiation therapy BTE describes how radiation is scattered and absorbed in a 
tissue. The sources of high energy particles, such as photon or electrons may be on 
the surface of the patient (external therapy)
or inside the patient close to the cancer tissue (internal 
therapy). In any case they mobilize three kinds of particles, photons, electrons 
and positrons, whose simultaneous evolution should be taken into account in the 
transport model.
In this setting, the potential creation of (or contamination by)
other heavy particles (such as neutrons)
will not be taken into account since their
contribution is negligible (cf. \cite{seco13}) when the source beam
consists of photons or electrons in the relevant range (say 6-15MeV) of energies.

Dose calculation models governed by the (linear) BTE are valid in inhomogeneous material. They take rigorously into account the scattering and absorption effects
(phenomena emerging from particle/nuclear physics) in physically solid way.
We assume here that the transport of radiation particles is ruled by the following linear coupled system of three BTEs
(for a derivation of the linear BTE, see \cite{allaire12}, \cite{duderstadt79}, \cite{agoshkov})
\be\label{i1}
\omega\cdot\nabla\psi_j(x,\omega,E)+\Sigma_j(x,\omega,E)\psi_j
-(K_j\psi)(x,\omega,E)=f_j(x,\omega,E),\quad j=1,2,3
\ee
together with an inflow boundary condition
\be\label{i2}
{\psi_j}_{|\Gamma_-}=g_j,\quad j=1,2,3
\ee
where
\[
(K_{j}\psi)(x,\omega,E)=\sum_{k=1}^3\int_S\int_I\sigma_{kj}(x,\omega',\omega,E',E)\psi_k(x,\omega',E') dE' d\omega',\quad j=1,2,3.
\]
The first term on the left in (\ref{i1}) is called a convection (or advection) operator,
the second term is a scattering operator and the third one is a collision operator.
On the right, the functions $f_j$ represent the (internal) sources,
and $g_j$ in \eqref{i2} are boundary sources.
The the system is \emph{coupled} through the operators $K_j$
(unless, of course, $\sigma_{jk}=0$ for $j\neq k$).
Here a solution $\psi=(\psi_1,\psi_2,\psi_3)$ of the problem \eqref{i1}-\eqref{i2} is a vector-valued function whose components
describe the particle number densities of photons, electrons and positrons, respectively. 
Its dynamical counterpart is given by (see section \ref{tds1})
\be\label{i1a}
\frac{1}{\n{v_j}}{\p {\psi_j}t}+\omega\cdot\nabla\psi_j+\Sigma_j(x,\omega,E)\psi_j
-K_j\psi=f_j(x,\omega,E,t),\quad j=1,2,3
\ee
where $\n{v_j}$ is the speed of the particle $j$,
together with inflow boundary and initial condition
\begin{align}
{\psi_j}_{|\Gamma_-\times [0,T]}=&g_j, \label{i2a} \\ 
\psi_j(0)=&\psi_{0j}, \quad j=1,2,3. \label{i2aa}
\end{align}
The dynamical solution is defined in seven-dimensional
phase space $(x,\omega,E,t)$; position, angle (direction of velocity), energy of particles and time.
In the steady state (stationary state) there is no time dependence,
and so the phase space $(x,\omega,E)$ is six-dimensional.
This is basically always the case in applications related to radiotherapy,
because the relevant radiation field(s) $\psi$ anyway reach the steady state
nearly instantly (\cite{borgers99}).

We remark that the model (\ref{i1}), (\ref{i2}) (and correspondingly (\ref{i1a}), (\ref{i2a}), (\ref{i2aa})) {\it is valid only for certain interactions} such as for Compton scattering that 
is, "photons to photons" and "photons to electrons" scattering.
In addition, it covers e.g. some parts of M\o ller scattering, namely "secondary electrons to electrons" scattering (and similarly for positrons).
It is not applicable to e.g. "primary electrons to electrons" scattering in M\o ller interaction.
We shall describe in more detail this subject in section \ref{sec:coll} (see also \cite{tervo16-up}). The point is that  for some interactions
the differential cross-sections
may have singularities or even {\it hyper-singularities} which implies that the nature of transport equations dramatically
changes by bringing extra first-order pseudo-differential (or approximately partial differential)
terms into the transport equation.
A vastly applied approximation to cover these problematic interactions is the so-called Continuous Slowing Down-Boltzmann Transport Equation
(CSDA-BTE) which we have analysed in \cite{tervo16-up}.
CSDA-BTE means that the following equation
(cf. \cite{frank10}, \cite{larsen97})
\bea\label{sda}
-{\p {(S_{j,r}\psi_j)}E}+
\omega\cdot\nabla\psi_j+\Sigma_{j,r}(x,\omega,E)\psi_j
-K_{j,r}\psi=f_j(x,\omega,E),\quad j=2,3
\eea
is used instead of (\ref{i1}) for $j=2,3$ (electrons and positrons) where
\[
K_{2,r}\psi=&{}
\int_S\int_I\sigma_{1,2}(x,\omega',\omega,E',E)\psi_1(x,E',\omega') dE' d\omega'\\
&{}+\int_S\int_I\sigma_{2,2,r}(x,\omega',\omega,E',E)\psi_2(x,E',\omega') dE' d\omega' \\
&{}+\int_S\int_I\sigma_{3,2}(x,\omega',\omega,E',E)\psi_3(x,E',\omega') dE' d\omega',
\]
and
\[
K_{3,r}\psi=&{}
\int_S\int_I\sigma_{1,3}(x,\omega',\omega,E',E)\psi_1(x,E',\omega') dE' d\omega'\\
&{}+
\int_S\int_I\sigma_{2,3}(x,\omega',\omega,E',E)\psi_2(x,E',\omega') dE' d\omega' \\
&{}+\int_S\int_I\sigma_{3,3,r}(x,\omega',\omega,E',E)\psi_3(x,E',\omega') dE' d\omega'.
\]
Above, for $j=2,3$, functions $\Sigma_{j,r}(x,E)$ are the \emph{restricted total cross-sections},
$\sigma_{j,j,r}(x,E',E,\omega',\omega)$ are the \emph{restricted differential cross-sections},
and the factors $S_{j,r}=S_{j,r}(x,E)$ are the so-called \emph{restricted stopping powers}.
The model neglects soft inelastic interactions.
Besides the inflow boundary condition (\ref{i2}),
one must demand from this model that the solution satisfy
\[
\psi_2(x,\omega,E_{\rm m})=\psi_3(x,\omega,E_{\rm m})=0,
\]
or at least that
\[
\lim_{E\to\infty}\psi_2(x,\omega,E)=\lim_{E\to\infty}\psi_3(x,\omega,E)=0,
\]
where in the last case we naturally assume that $I=[E_0,\infty[$.
This requirement makes the overall problem mathematically well-defined that is, under relevant physical assumptions  the problem has an unique solution.  
Similarly we can replace the equation \eqref{i1a} for $j=2,3$ in the time-dependent case to 
obtain a time-dependent CSDA-BTE. For stationary single CSDA-BTE equation existence of solutions 
and some optimal control results in $L^2$-spaces are recently shown in \cite{tervo16-up} (see 
also \cite{frank10} where one assumes that the stopping power is independent of the spatial
coordinate $x$ and that the collision operator has a special form).

In this paper we consider the existence of solutions for the above coupled system
\eqref{i1}, \eqref{i2} in spaces $L^p(G\times S\times I)^3$ especially for $p=1$.
Here $G\subset\R^3$ is the spatial domain, $S\subset\R^3$ is the unit sphere and $I=[E_0,E_{\rm m}]$ is the energy interval.
The energy $E$ and the angle $\omega$ are kept everywhere separated that is, the phase space is $G\times S\times I$.
At first we consider the so-called escape time mapping $t=t(x,\omega)$
and recall of its analytical properties, which are useful e.g. in investigations of regularity of the solutions. After that we reproduce the well-known solutions (modified to our situation) of the convection/scattering equation (i.e. without the collision operator),
by using the Lagrange's method i.e. the method of characteristics.
Then the $m$-dissipativity property of the convection operator under homogeneous inflow boundary data is shown for one type of particles (again modified to our case).
It follows from these considerations that the convection operator (under homogeneous inflow boundary data)
is $m$-dissipative in the spaces $L^1(G\times S\times I)^3$ 
related to system (\ref{i1}), which is still uncoupled (section \ref{cos1}). Under 
certain, physically relevant assumptions we show the dissipativity of the (coupled) 
scattering-collision operator. Putting these together and applying the properties 
of $m$-dissipative operators and the lifting results of inflow boundary data, the 
existence and uniqueness result of solutions for coupled system \eqref{i1}, 
\eqref{i2} is proved. In addition, we verify non-negativity of solutions when the data is non-negative. In section \ref{tds1} the existence of solutions of the time-dependent coupled system \eqref{i1a}, \eqref{i2a}, \eqref{i2aa} is studied.

The above model of transport is linear and, therefore, neglects any non-linear interaction (cf. \cite{duderstadt79}, \cite{friedlander02}). 
In addition the inflow boundary condition (\ref{i2}) is not exactly correct because minor part of the  particles return to the patient domain $G$.
The reflection boundary conditions of the form
${\psi_j}_{|\Gamma_-}=R_j({\psi_j}_{|\Gamma_+})+g_j,\ j=1,2,3$, where $R_j$ are appropriate (unbounded) operators might be more accurate (see \cite{tervo16-up}, \cite{dautraylionsv6}, \cite{lions71}).

The use of BTE in dose calculation needs the choice of {\it total and differential cross-sections}.
In radiation therapy the cross-sections of primary interest are those for water (tissue), bone and air (void-like regions). 
For a more thorough discussion on the cross sections relevant to radiation therapy, we refer to \cite{bomanthesis}, \cite{hensel}.

The analytical (explicit) solution of BTE is known only when the
underlying geometrical settings, the structures of cross-sections,
the sources and the incoming fluxes of particles (boundary conditions) are rather simple (see e.g. \cite{duderstadt79}, Ch. 2).
Hence in practical situations one  must apply appropriate numerical schemes for obtaining the solutions.
Various kinds of numerical methods can be utilized for solving the transport equation (see \cite{ackroyd}),
for instance the combination of finite element method (or collocation method) and discrete ordinate method,
or Monte Carlo.

In section \ref{srs} we consider the above mentioned IRTP problem.
In solving the IRTP problem 
one may use physical or biological criteria for optimization (for some general backgrounds see e.g. \cite{shepard99}). We consider here only physical criteria which are common in practical planning.
Biological criteria are not considered,
because their grounds from the modelling perspective have not been
well established.
We notice, however that the optimization schemes given in this paper can also be founded on the biological criteria in an analogous manner,
although the resulting object function is likely to be more multiextremal.

The patient domain $G\subset\R^3$ consists of tumor volume ${\bf T}$, critical organ's region ${\bf C}$ and the normal tissue's region ${\bf N}$. 
Hence  $G={\bf T}\cup {\bf C}\cup {\bf N}$
where the union is mutually disjoint.
The tumor volume (that is, the target) includes the tumor and some safety margin. Critical organs and normal tissue are build up of healthy tissue,
and should receive as low a dose as possible.

Typically the resulting object function  based on the physical criteria is,
in the stationary case, of the form (see section \ref{RTP})
\bea\label{io}
J(f,g)=&c_{\bf T}\n{D_0-{\s D}(f,g)}_{L^p({\bf T})}^p
+
c_{\bf C}\n{(D_C-{\s D}(f,g))_-}_{L^p({\bf C})}^p \nonumber \\
&+
c_{\bf N}\n{(D_N-{\s D}(f,g))_-}_{L^p({\bf N})}^p \nonumber \\
&+
c_{\rm DV}\Big(\Big(v_C-\frac{1}{\mc{L}^3({\bf C})}\int_{\bf C}H(({\s D}(f,g))(x)-d_C) dx\Big)_-\Big)^p
\eea
and where $c_{\bf T}, c_{\bf C}, c_{\bf N}, c_{\rm DV}$ are positive weights,
$\mc{L}^3$ is the 3-dimensional Lebesgue measure, $H$ is the Heaviside function
and $a_-$ denotes the negative part of $a\in\R$.
Here  ${\s D}(f,g)=D(\psi(f,g))$ where $D$ is the dose operator
(see section \ref{RTP1})
and $\psi=\psi(f,g)$ is the solution of (\ref{i1})-(\ref{i2}).
We note that $f=0$ for external therapy and $g=0$ for internal therapy. 
In (\ref{io}) the first three terms are convex and (locally) Lipschitz continuous (for $p=2$ the first term is also differentiable), and the last term is both non-convex and non-differentiable in general.
Moreover, the last term can be replaced a by Lipschitz continuous counterpart by replacing Heaviside function $H$ with its definition by a Lipschitz continuous approximation (which in practice is reasonable).
After this replacement the whole object function (\ref{io}) is (locally) Lipschitz continuous. In addition the admissible sets (as given in section \ref{RTP}) are convex.

In practice, solving deterministically (i.e. without Monte Carlo methods) the discretized BTE is a quite formidable numerical task because in three spatial dimensions we have in total $3+2+1=6$ phase space variables (i.e. 3 spatial, 2 angular and 1 energy dimensions).
In time-dependent case one would also have to take the time variable $t$  into 
account, further increasing the state space dimension by one.
We also notice that the collision term of BTE necessitates,
in general, the consideration of two additional variables $\omega'$, $E'$ which, 
however, are not phase space variables \emph{per se}.
Hence the numerical dimension of the problem is very large
in the sense that the total number of grid points needed in any (deterministic) discretization scheme
grows fast (to $6$th or $7$th power, say)
with the number of grid points used for discretizing each individual dimension (if assumed to be proportional).
There have only been a few attempts to solve BTE using deterministic methods for radiotherapy needs in three spatial dimensions without further approximations
and/or geometrical simplifications (see e.g.  \cite{bomanthesis}).
In \cite{hensel} computationally less complex algorithm is developed 
and some simulations are carried out in slab $3$D-geometry.

Since the object function (\ref{io}) contains non-convex terms, {\it global optimization} (\cite{pinter}) for (locally) Lipschitz continuous functions in convex domains is needed. Moreover, the applied optimization method should be reasonably fast.
The initialization (determination of an {\it initial solution} for global optimization scheme) is necessary since the determination of a carefully chosen initial point for a large dimensional global optimization 
scheme is very essential for achieving (time savings and) satisfactory results (\cite{pinter14}).
We prove in section \ref{RTP} (in the case $p=2$) that for a certain related (convex) object function,
the optimal control exists, and formulas  for it in a variational form are given.
We suggest that this solution is used as an initial solution.
Preliminary simulations show that the computation of the initial solution in this 
way is fast enough (\cite{bomanthesis}).
In section \ref{srs} we bring up some challenges and problems related to IRTP.  

Finally, we remark that an optimization scheme can be formulated in such a way
that in external therapy the device (such as MLC) parameters are {\it directly} as decision parameters both in static and dynamical  delivery techniques.
This is based on the fact that the incoming flux $g$ can be expressed as a function of these parameters, say $g=g({\bf q})$ (see \cite{tervo02} for a certain  implementation related to MLC).
Substitution of this expression $g=g({\bf q})$ to 
${\s D}(0,g)$ gives the object function (\ref{io}) as a function of  ${\bf q}$. 
The resulting object function is, however, highly multiextremal, and
thus seeking its global minimum is rendered more difficult.

The authors would, moreover, like to mention that in \cite{tervo02}, p.121 one must add for functions in $H$ the requirement $f_{|\Gamma}\in L_2(\Gamma,|\omega\cdot\nu|d\sigma dE d\Omega)$ 
which is erroneously omitted there. In addition, in \cite{tervo07}, p. 824 the space $H$ should be the completion of $C^1(\ol{V}\times I\times S)$ with respect to $\la\cdot,\cdot\ra_H$-inner product; not only the intersection $H_1\cap H_2$ as it 
was erroneously defined there.

{\bf Acknowledgements.}
The authors would like to thank C. Boylan and T. Torsti from Varian Medical Systems Finland
for their valuable comments and suggestions.

\section{Preliminaries}\label{ls1}

We assume that $G$ is an open bounded set in $\R^3$, equipped with Lebesgue measure $\mc{L}^3$, with piecewise smooth (orientable) $C^1$ boundary $\partial G$,
that is, $\partial G$ is $2$-dimensional (orientable) piecewise $C^1$-manifold
(i.e. a $C^1$-manifold with corners)
such that $G$ lies locally only on one side of $\partial G$, see e.g. \cite{grisvard}, \cite{lee03}.
For example, $G$ may be a parallelepiped. 

The unit outward pointing normal on $\partial G$ is denoted by $\nu$,
and the surface measure on $\partial G$ is $\sigma$.
Let $S$ be the unit sphere in $\R^3$ equipped with the
surface measure $\mu_S$,
and let $I$ be the (energy) interval $[E_0,E_{\rm m}]$, $E_0\geq 0$ (which we assume to be bounded),
equipped with the Lebesgue-measure $\mc{L}^1$.
The variable in $S$ (in $I$) is denoted by $\omega$ (by $E$).
The surface measures $\sigma$ and $\mu_S$ are induced by the Lebesgue measure,
and we write these in integrals over $\partial G$ and $S$,
respectively,
as $\diff\sigma(y)$ and $\diff\omega:=\diff\mu_S(\omega)$,
where $y\in \partial G$ and $\omega\in S$.
In integrals over the spatial domain $G$ and the energy interval $I$, 
we write $\diff x=\diff\mc{L}^3(x)$, and $\diff E:=\diff\mc{L}^1(E)$, where $x\in G$, $E\in I$.

\begin{remark}\label{pre-re1}
A.
In the definition of the (single) collision operator
\be\label{re1-1}
(K\psi)(x,\omega,E)=\int_{S\times I}\sigma(x,\omega',\omega,E',E)\psi(x,\omega',E') dE' d\omega'
\ee
it would be possible (instead of the above particular measures) to choose
{\it any positive Radon measures} $\rho_I$ and $\rho_S$ on (Borel sets of) the interval $I$ and on the unit sphere $S$
that is,
\be\label{re1-2}
(K\psi)(x,\omega,E):=\int_S\int_I\sigma(x,\omega',\omega,E',E)\psi(x,\omega',E') d\rho_I(E') d\rho_S(\omega').
\ee
This kind of more general choice of measures has obvious benefits.

B. Our considerations are based on the fact that $K$ is a bounded operator 
under assumption \eqref{colleh} (see Theorem \ref{NoLabel})
\[
K:L^1(G\times S\times I,dx d\omega dE)\to L^1(G\times S\times I,dx d\omega dE).
\]
The results of this paper can be straightforwardly generalized to the case where the measure $d\omega dE$, i.e. $\mu_S\otimes \mc{L}^1$, on $S\times I$ is replaced with a more general positive Radon measure $\ol{\rho}$ on $S\times I$
that is, we may seek solutions in the space $L^1(G\times S\times I,dx d\ol{\rho}(\omega,E))$. In this setting $K$ would be a linear operator
\be\label{re1-3}
K:L^1(G\times S\times I,dx d\ol{\rho}(\omega,E))\to L^1(G\times S\times I,dx d\ol{\rho}(\omega,E)).
\ee
Especially, the measure $\rho$ may be of the form
$\ol{\rho}=\ol{\rho}_S\otimes\ol{\rho}_I$ where $\ol{\rho}_S$ ($\ol{\rho}_I$) is a positive Radon measure on $S$ (on $I$).
Only the boundedness and dissipativity (coercitivity) criteria for $K$
(see \eqref{scateh}, \eqref{colleh}, \eqref{colleha}, \eqref{co2a}, \eqref{co2aa})
must be modified in a suitable way.

We emphasise that in the definition \eqref{re1-2} of $K$
there may be a different Radon measure $\rho=\rho_S\otimes\rho_I$ on $S\times I$, 
but when $K$ is considered as a linear operator between 
$L^1$-spaces one must have same Radon measure $\ol{\rho}$ on $S\times I$
in both domain and range as in (\ref{re1-3}). The same 
observation concerns the case where the solutions are sought in more general spaces $L^p(G\times S\times I,dx d\ol{\rho}(\omega,E))$, $1\leq p<\infty$.

C. When defining $L^p$-spaces as in case B. above,
using general Radon measure $\ol{\rho}$ on $S\times I$,
it is also important to assume (see \eqref{eq:def_Gamma} below)
that the subset $\Gamma_0$ of $\Gamma$ has zero measure
with respect to the measure $\sigma\otimes \ol{\rho}$ on $\Gamma$.
It can be shown that it follows from this assumption that the set $N_0$ (see \eqref{eq:N_0})
has zero measure with respect to $\mc{L}^3\otimes\ol{\rho}$.
However, the argument used in the proof of Theorem \ref{th:bardos}
is no longer applicable in this more general setting,
and must be replaced by one using, for example, Fubini's theorem.

D. In fact, the Radon-measures $\rho_I$ (on $I$) and $\rho_S$ (on $S$) might even depend on parameters $(x,\omega',\omega,E)$ and $(x,\omega,E',E)$, 
respectively.
For example, the collision operator related to elastic scattering has the form
\[
(K\psi)(x,\omega,E)=\int_{S}\sigma(x,\omega',\omega,E)\psi(x,E,\omega') d\omega'
\]
which can be given as
\[
(K\psi)(x,\omega,E)=\int_{S}\int_I\sigma(x,\omega',\omega,E)\psi(x,E',\omega') d\rho_I(E'|E)d\omega',
\]
where for every $E$, $\rho_I(\cdot|E)$ is a Radon-measure on $I$ defined by
\[
\rho_I(A|E):=\delta_0(A-E)
=\begin{cases}
1, & \textrm{if}\ E\in A \\
0, & \textrm{if}\ E\notin A
\end{cases}
\]
for all Borel sets $A\subset U$. 
\end{remark}

Let $(\partial G)_r$ be the $C^1$-part of $\partial G$.
We use the following abbreviations
\begin{align}\label{eq:def_Gamma}
\Gamma&=\{(y,\omega,E)\in \partial G\times S\times I\} \nonumber\\
\tilde \Gamma&=\{(y,\omega,E)\in (\partial G)_r\times S\times I\} \nonumber\\
\Gamma_+&=\{(y,\omega,E)\in (\partial G)_r\times S\times I\ |\ \omega\cdot\nu(y)>0\} \nonumber\\
\Gamma_-&=\{(y,\omega,E)\in (\partial G)_r\times S\times I\ |\ \omega\cdot\nu(y)<0\} \nonumber \\
\tilde\Gamma_0&=\{(y,\omega,E)\in (\partial G)_r\times S\times I\ |\ \omega\cdot\nu(y)=0\}
\nonumber \\
\Gamma_0&=\tilde\Gamma_0\cup(\Gamma\backslash\tilde{\Gamma}).
\end{align}
Then $\Gamma=\Gamma_+\cup\Gamma_-\cup\Gamma_0\cup (\Gamma\backslash\tilde{\Gamma})$ and the union is mutually disjoint.
Notice that $\Gamma_{\pm}$ are open sets in $\partial G\times S\times I$ and
$\Gamma\backslash\tilde{\Gamma}=(\partial G\backslash (\partial G)_r)\times S\times I$ is zero-measurable in $\Gamma$.
Moreover, $\Gamma_{0}$ is a closed set in $\Gamma$,
and it is in fact zero measurable as demonstrated in the following lemma.

\begin{lemma}
The set $\Gamma_0$ is zero-measurable in $\Gamma$ with respect to $\sigma\otimes\mu_S\otimes \mc{L}^1$.
\end{lemma}

\begin{proof}
It is enough to show that $\tilde{\Gamma}_0$ is zero-measurable in $\tilde{\Gamma}$.
For each $y\in(\partial G)_r$ the set
\[
{S}_0(y,E)=\{\omega\in S\ |\ \omega\cdot\nu(y)=0\}
\]
is zero-measurable in $S$ (w.r.t the measure $d\omega$) and $\tilde{\Gamma}_0$ can be written as
\[
\tilde{\Gamma}_0=\{(y,\omega,E)\in\partial G\times S\times I\ |\ (y,E)\in(\partial G)_r\times I,\ \omega\in {S}_0(y,E)\}
\]
Hence by Fubini's theorem (with $dE$ is the Lebesgue measure on $I$, again with a slight abuse of notation)
\[
(\sigma\otimes \mu_S\otimes \mc{L}^1)(\tilde{\Gamma}_0)
=\int_{(\partial G)_r\times I} \Big(\int_{{S}_0(y)} d\omega\Big) d\sigma(y)dE
=0.
\]
This finishes the proof.
\end{proof}

\begin{remark}
If $\partial G$ happened to be piecewise $C^2$-manifold,
the proof of the zero-measurability of $\tilde{\Gamma}_0$ in $\tilde{\Gamma}$
above could also be proved in the following way
that is more differential geometric in flavor:

Let $f:\tilde{\Gamma}\to\R;\ f(y,\omega,E)=\omega\cdot\nu(y)$, which is $C^1$-smooth and $\tilde{\Gamma}_0=f^{-1}(0)$. 
The differential $Df$ of $f$ on $\tilde{\Gamma}$ can be seen to be
\[
Df(y,\omega,E)(a,b,c)=(D\nu(y)a)\cdot \omega+\nu(y)\cdot b, \quad (a,b,c)\in T_y (\partial G)_r\times T_\omega S\times \R.
\]
Clearly, if $f(y,\omega,E)=0$, then $Df(y,\omega,E)\neq 0$.
Indeed, otherwise, as $\nu(y)\cdot b=0$ for all $b\in T_\omega S$,
there would be an $\alpha\in\R$ such that $\nu(y)=\alpha\omega$,
hence $0=f(y,\omega,E)=\alpha\n{\omega}^2_{\R^3}=\alpha$, which is impossible as
this would imply that $\nu(y)=0$.
Then $\tilde{\Gamma}_0$
is a $C^1$-submanifold of $\tilde{\Gamma} = (\partial G)_r\times S\times I $ of codimension 1, and so has measure zero.

\end{remark}

Furthermore, let
\[
W^1(G\times S\times I)=\{\psi\in L^1(G\times S\times I)\ |\ \omega\cdot\nabla\psi\in L^1(G\times S\times I)\}
\]
(here $\nabla$ is taken with respect to $x$-variable only and $\nabla\psi$ is understood in the distributional sense).
The space $W^1(G\times S\times I)$ is equipped with the norm
\[
\n{\psi}_{W^1(G\times S\times I)}=\n{\psi}_{L^1(G\times S\times I)}+
\n{\omega\cdot\nabla\psi}_{L^1(G\times S\times I)}.
\]
Then $W^1(G\times S\times I)$ is a Banach space.
It is known that the space
\[
\mc{D}(\ol G\times S\times I)=\{\phi_{| G\times S\times I}|\ \phi\in C_0^\infty(\R^3\times S\times \R)\}
\]
is dense in $W^1(G\times S\times I)$ (cf. \cite{dautraylionsv6}, p. 221 and the references mentioned therein).

For $\Gamma_-$ we use can define the space of $L^1$-functions with respect to the measure
$|\omega\cdot\nu|\ d\sigma d\omega dE=-\omega\cdot\nu\ d\sigma d\omega dE$ which is denoted by $T^1(\Gamma_-)$
that is, $T^1(\Gamma_-)=L^1(\Gamma_-,|\omega\cdot\nu|\ d\sigma d\omega dE)$.
The norm in   $T^1(\Gamma_-)$   is
\[
\n{h}_{T^1(\Gamma_-)}=\int_{\Gamma_-}|h(y,\omega,E)|\ |\omega\cdot\nu|\ d\sigma d\omega dE.
\]
The space $T^1(\Gamma_+)$ of $L^1$-functions (and its norm) on $\Gamma_+$ with respect to the measure
$|\omega\cdot\nu|\ d\sigma d\omega dE=\omega\cdot\nu\ d\sigma d\omega dE$ is defined similarly.
Moreover, one has the following
trace theorem (see \cite{dautraylionsv6}, pp. 230-231).

\begin{theorem}\label{th:trace}
For any compact set $K\subset \Gamma_{\pm}$ there exists a constant $C_K>0$ such that
\[
\int_K|\psi(y,\omega,E)|\ |\omega\cdot\nu|\ d\sigma d\omega dE\leq C_K\n{\psi}_{W^1
(G\times S\times I)}\ {\rm for\ all}\ \psi\in \mc{D}(\ol G\times S\times I).
\]
\end{theorem}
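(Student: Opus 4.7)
The plan is to use a partition of unity to reduce the global trace estimate to a finite sum of local estimates in coordinate charts of $(\partial G)_r$ where the boundary is straightened to a hyperplane, and then in each chart apply the fundamental theorem of calculus along the characteristic $s \mapsto y + s\omega$ combined with the change of variables $x = y + s\omega$, which converts the weight $|\omega\cdot\nu|$ exactly into the Jacobian of this map.

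By compactness of $K\subset\Gamma_\pm$, its spatial projection lies in a compact subset of $(\partial G)_r$ and can be covered by finitely many open sets $U_i\subset(\partial G)_r$ on each of which there is a $C^1$-diffeomorphism $\Phi_i$ straightening $G\cap U_i$ to a piece of $\{x_3>0\}$ with $\partial G\cap U_i$ corresponding to $\{x_3=0\}$ and outward normal $-e_3$. Pick a subordinate partition of unity $\{\chi_i\}\subset C_c^\infty((\partial G)_r)$; then $\psi = \sum_i\chi_i\psi$ near the spatial projection of $K$, and by the triangle inequality it suffices to bound the local integral for each $i$. Assume $K\subset\Gamma_-$ (the case $\Gamma_+$ is symmetric with the characteristic run backward). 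In the straightened chart, $\omega_3>0$ on the $\omega$-projection of $K$, and $\chi_i\psi$ is smooth with compact support in the chart. For each $(y,\omega,E)$ with $y=(y_1,y_2,0)$, the function $s\mapsto(\chi_i\psi)(y+s\omega,\omega,E)$ is smooth and vanishes for all sufficiently large $s$, so the fundamental theorem of calculus gives
\begin{align*}
(\chi_i\psi)(y,\omega,E) = -\int_0^\infty (\omega\cdot\nabla)(\chi_i\psi)(y+s\omega,\omega,E)\,ds.
\end{align*}
Taking absolute values, multiplying by $|\omega_3|=|\omega\cdot\nu|$, and integrating over $(y_1,y_2)\in\R^2$, the translation $(y_1,y_2)\mapsto(y_1+s\omega_1,y_2+s\omega_2)$ has unit Jacobian and the substitution $x_3=s\omega_3$ yields $|\omega_3|\,ds=dx_3$. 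Combining these and using that $\chi_i\psi$ vanishes outside the chart gives
\begin{align*}
\int_{\R^2}|(\chi_i\psi)(y_1,y_2,0,\omega,E)|\,|\omega_3|\,dy_1dy_2 \leq \int_G|\omega\cdot\nabla(\chi_i\psi)|(x,\omega,E)\,dx.
\end{align*}
Integrating over $(\omega,E)$, accounting for the bounded Jacobian relating $dy_1dy_2$ to $d\sigma$ via $\Phi_i$, and expanding $\omega\cdot\nabla(\chi_i\psi)=(\omega\cdot\nabla\chi_i)\psi+\chi_i(\omega\cdot\nabla\psi)$ (whose $L^1$-norm is controlled by $\n{\psi}_{W^1(G\times S\times I)}$ thanks to $\chi_i\in C_c^\infty$) yields a local bound with a constant $C_i$; summing over $i$ produces $C_K$.

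The main obstacle I expect is the geometric setup: one must choose the charts small enough that the characteristic change of variables $x=y+s\omega$ is injective with uniformly controlled Jacobian factor as $(y,\omega)$ ranges over the relevant part of $K$ and $s$ varies in the range where $\chi_i\psi$ is nonzero. Both issues are handled by the compactness of $K$ (which in particular forces $|\omega\cdot\nu|\geq\delta>0$ on $K$, so the substitution $x_3=s\omega_3$ is a genuine diffeomorphism) and the local $C^1$-structure of $(\partial G)_r$; the resulting constant $C_K$ depends on $K$, the boundary geometry, and the fixed partition of unity, but not on $\psi$.
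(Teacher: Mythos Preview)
Your strategy---localize, integrate along characteristics, and convert the boundary integral to a volume integral via the change of variables $x=y+s\omega$---is a valid and classical route to this estimate, but it is \emph{not} what the paper does. The paper avoids localization entirely: for $K\subset\Gamma_+$ it picks a single smooth cutoff $\theta_K\in\mc{D}(\R^3\times S\times\R)$ with $\theta_K\equiv 1$ on $K$ and $(\supp\theta_K)\cap\Gamma_-=\emptyset$, then applies Stokes' theorem to $\omega\cdot\nabla(\theta_K|\psi|)$ over $G$. Because $\theta_K(\omega\cdot\nu)\geq 0$ everywhere on $\partial G$, the full boundary integral dominates the integral over $K$, and the volume term is bounded by $C_K\n{\psi}_{W^1}$ via the Leibniz rule and $\omega\cdot\nabla|\psi|=\mathrm{sgn}(\psi)\,\omega\cdot\nabla\psi$. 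This is shorter and sidesteps charts, partitions of unity, and Jacobian bookkeeping; your approach is more hands-on and makes the role of the weight $|\omega\cdot\nu|$ as a Jacobian explicit.

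There is, however, a genuine glitch in your execution. Once you straighten the boundary by a diffeomorphism $\Phi_i$, the operator $\omega\cdot\nabla_x$ becomes a variable-coefficient first-order operator in the new coordinates, so the characteristics are no longer straight lines there, and the computation ``$(y_1,y_2)\mapsto(y_1+s\omega_1,y_2+s\omega_2)$ has unit Jacobian, $x_3=s\omega_3$'' is not valid. The clean fix is to \emph{not} straighten: parametrize $(\partial G)_r$ locally by $h:V\to\partial G$ and use the map $(v,s)\mapsto h(v)+s\omega$ in the \emph{original} coordinates, whose Jacobian is $|\omega\cdot\nu(h(v))|\,\n{(\partial_1 h\times\partial_2 h)(v)}$---exactly the computation the paper carries out later in the proof of Lemma~\ref{lift1}. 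With that adjustment (and taking your cutoffs $\chi_i$ to live in $C_c^\infty(\R^3)$ rather than on $(\partial G)_r$, so that $\chi_i\psi$ makes sense in the interior), your argument goes through.
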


\begin{proof}
We assume here, for definiteness, that $K\subset\Gamma_{+}$, the case $K\subset\Gamma_-$ being proven in analogous way.
For $(\omega,E)\in S\times I$, let $K_{(\omega,E)}=\{x\in\R^3\ |\ (x,\omega,E)\in K\}$ which is a compact subset of $\partial G$.
Choose a function $\theta_K\in\mc{D}(\R^3\times S\times\R)$
such that $0\leq \theta_K\leq 1$ everywhere, $\theta_K|_K=1$
and $(\supp\ \theta_K) \cap \Gamma_-=\emptyset$.
Then as $\theta_K(x,\omega,E)(\omega\cdot\nu(x))\geq 0$ for all $x\in (\partial G)_r$ and
$\theta_K(x,\omega,E)=1$ for all $x\in K_{(\omega,E)}$,
and because $|\psi(\cdot,\omega,E)|$ belongs to the standard Sobolev space
of index $(1,1)$ on $G$ i.e. to $W^{1,1}(G)=\{f\in L^1(G)\ |\ \nabla f\in L^1(G)\}$, we have
\[
& \int_{K_{(\omega,E)}} |\psi(\cdot,\omega,E)|(\omega\cdot\nu) d\sigma
=\int_{K_{(\omega,E)}} |\psi(\cdot,\omega,E)|\theta_K(\cdot,\omega,E)(\omega\cdot\nu) d\sigma \\
\leq & \int_{\partial G} |\psi(\cdot,\omega,E)|\theta_K(\cdot,\omega,E) (\omega\cdot\nu) d\sigma
=\Big|\int_G \omega\cdot\nabla\big(\theta_K(\cdot,\omega,E)|\psi(\cdot,\omega,E)|\big)\diff x\Big| \\
\leq& \int_G |\omega\cdot\nabla\theta_K(\cdot,\omega,E)||\psi(\cdot,\omega,E)| dx
+\int_G |\theta_K(\cdot,\omega,E)| \big|\omega\cdot \nabla|\psi(\cdot,\omega,E)|\big| dx,
\]
where in the third phase we used the Stokes' Theorem, along with the fact that the integral over $\partial G$ on the left is non-negative.
Letting $C_K>0$ be such that
$|\theta_K|\leq C_K$ and $\n{\nabla\theta_K}_{\R^3}\leq C_K$
on $\R^3\times S\times\R$,
taking into account that
$\nabla|\psi(\cdot,\omega,E)|=\mathrm{sgn}(\psi(\cdot,\omega,E))\nabla\psi(\cdot,\omega,E)$
(cf. \cite{grigoryan09}, Section 5.1)
we have, by integrating the above inequalities over $S\times I$,
\[
0\leq \int_K |\psi(x,\omega,E)|(\omega\cdot\nu)d\sigma(x)d\omega dE
\leq& C_K\big(\n{\psi}_{L^1(G\times S\times I)}+\n{\omega\cdot\nabla\psi}_{L^1(G\times S\times I)}).
\]
The right hand side being equal to $C_K\n{\psi}_{W^1(G\times S\times I)}$,
this finishes the proof.
\end{proof}

\begin{remark}
Since $|\omega\cdot\nu|$ is bounded from below on a compact $K\subset\Gamma_-$,
the previous inequality implies
\[
\int_K|\psi(y,\omega,E)|d\sigma d\omega dE\leq \tilde{C}_K\n{\psi}_{W^1(G\times S\times I)},
\]
for some constant $\tilde{C}_K>0$ depending on $K$.
\end{remark}

As a direct consequence of Theorem \ref{th:trace},
any element $\psi\in W^1(G\times S\times I)$ has a well defined trace
$\psi_{|\Gamma_-}$ in $L^1_{\rm loc}(\Gamma_-,|\omega\cdot\nu|\ d\sigma d\omega dE)$ defined by
\[
\psi_{|K}:=\lim_{j\to\infty}\ {\phi_j}_{|K}\ {\rm for\ any\ compact\ subset\ K\subset\Gamma_+},
\]
where $\{\phi_j\}\subset \mc{D}(\ol G\times S\times I)$ is any sequence such that $\lim_{j\to\infty}\n{\phi_j-\psi}_{W^1(G\times S\times I)} =0$. In addition the trace mapping $\gamma_-:W^1(G\times S\times I)\to \ L^1_{\rm loc}(\Gamma_-,
|\omega\cdot\nu|\ d\sigma d\omega dE);\
\gamma_-(\psi)=\psi_{|\Gamma_-}$ is continuous.

In a similar way one has a continuous trace mapping $\gamma_+:W^1(G\times S\times I)\to \ L^1_{\rm loc}(\Gamma_+,|\omega\cdot\nu|\ d\sigma d\omega dE)$. Hence we can define 
(a.e. unique) the trace  $\gamma(\psi)$ on $\Gamma$ for $\psi\in W^1(G\times S\times I)$ by setting (recall that $\Gamma_0$ has a measure zero)
\[
\gamma(\psi)(y,\omega,E)=\begin{cases}
\gamma_+(\psi)(y,\omega,E),& (y,\omega,E)\in\Gamma_+\\
\gamma_-(\psi)(y,\omega,E),& (y,\omega,E)\in\Gamma_-\\
0,& (y,\omega,E)\in\Gamma_0
\end{cases}. 
\]
Finally we denote by $T^1(\Gamma)$ the space of $L^1$-functions on $\Gamma$ with respect to the measure
$|\omega\cdot\nu|\ d\sigma d\omega dE $ that is,
\[
T^1(\Gamma)=L^1(\Gamma,|\omega\cdot\nu|\ d\sigma d\omega dE).
\]
The norm in $T^1(\Gamma)$ is
\[
\n{h}_{T^1(\Gamma)}= \int_{\Gamma}|h(y,\omega,E)|\ |\omega\cdot\nu|\ d\sigma d\omega dE.
\]

Evidently, the spaces $T^1(\Gamma_-)$ and $T^1(\Gamma_+)$ are isometrically embedded into $T^1(\Gamma)$ through the operation of extension by zero.
From the fact that $\Gamma_0$ has a measure zero in $\Gamma$
it follows that one can isometrically identify $T^1(\Gamma)$
with $T^1(\Gamma_-)\times T^1(\Gamma_+)$ (as can be seen easily).
One can, moreover, identify the spaces $T^1(\Gamma_-)$ and $T^1(\Gamma_+)$ with each other isometrically,
a result whose justification will be postponed until Corollary \ref{cor:gamma-pm}
(altenatively, see\cite{cipolatti06}, Corollary 2.2 or \cite{cessenat85}).
As a consequence of these remarks,
the space $T^1(\Gamma)$ can be identified isomorphically (i.e. with equivalent norm)
with $T^1(\Gamma_-)$, or with $T^1(\Gamma_+)$.

The trace $\gamma_{\pm}(\psi)$ for $\psi\in W^1(G\times S\times I)$ is not necessarily in the space $T^1(\Gamma)$.  Hence it is reasonable to define the space
\[
\tilde W^1(G\times S\times I)=\{\psi\in W^1(G\times S\times I)\ |\  \gamma(\psi)\in T^1(\Gamma) \}.
\]
The space $\tilde W^1(G\times S\times I)$ is equipped with the norm
\[
\n{\psi}_{\tilde W^1(G\times S\times I)}=\n{\psi}_{W^1(G\times S\times I)}+ \n{\gamma(\psi)}_{T^1(\Gamma)}.
\]
As the spaces $T^1(\Gamma)$, $T^1(\Gamma_-)$ and $T^1(\Gamma_+)$
are mutually isomorphic, so are also the spaces
$\tilde W^1(G\times S\times I)$, $\tilde W_-^1(G\times S\times I)$ and $\tilde W_+^1(G\times S\times I)$, where
\[
\tilde W_{\pm}^1(G\times S\times I):=\{\psi\in W^1(G\times S\times I)\ |\ \gamma_{\pm}(\psi)\in T^1(\Gamma_{\pm})\},
\]
equipped with the norms defined similarly as $\n{\cdot}_{\tilde{W}^1(G\times S\times I)}$
above.

\begin{proposition}
The spaces $\tilde{W}^1(G\times S\times I)$ and $\tilde{W}_{\pm}^1(G\times S\times I)$ are Banach space.
\end{proposition}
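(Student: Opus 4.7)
The plan is to verify completeness, as the vector-space and norm structure is immediate. I focus on $\tilde W^1(G\times S\times I)$; the argument for $\tilde W_{\pm}^1(G\times S\times I)$ is completely analogous, since the trace spaces $T^1(\Gamma_{\pm})$ are themselves Banach and the mappings $\gamma_{\pm}$ have been shown to be continuous into $L^1_{\loc}(\Gamma_{\pm},|\omega\cdot\nu|\,d\sigma d\omega dE)$.

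Let $\{\psi_j\}$ be a Cauchy sequence in $\tilde W^1(G\times S\times I)$. From the definition of $\n{\cdot}_{\tilde W^1(G\times S\times I)}$, the sequence $\{\psi_j\}$ is Cauchy in the Banach space $W^1(G\times S\times I)$, while $\{\gamma(\psi_j)\}$ is Cauchy in the Banach space $T^1(\Gamma)=L^1(\Gamma,|\omega\cdot\nu|\,d\sigma d\omega dE)$. Hence there exist $\psi\in W^1(G\times S\times I)$ and $h\in T^1(\Gamma)$ such that $\psi_j\to\psi$ in $W^1(G\times S\times I)$ and $\gamma(\psi_j)\to h$ in $T^1(\Gamma)$.

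The crux is to identify $\gamma(\psi)=h$ a.e.\ on $\Gamma$. Fix any compact $K\subset\Gamma_+$. By the continuity of $\gamma_+:W^1(G\times S\times I)\to L^1_{\loc}(\Gamma_+,|\omega\cdot\nu|\,d\sigma d\omega dE)$ established as a consequence of Theorem \ref{th:trace}, one has $\gamma(\psi_j)|_K\to\gamma_+(\psi)|_K$ in $L^1(K,|\omega\cdot\nu|\,d\sigma d\omega dE)$. Simultaneously, convergence in $T^1(\Gamma)$ restricts to convergence in $L^1(K,|\omega\cdot\nu|\,d\sigma d\omega dE)$, so $\gamma(\psi_j)|_K\to h|_K$ in the same space. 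Uniqueness of $L^1$-limits yields $\gamma_+(\psi)=h$ a.e. on $K$; exhausting $\Gamma_+$ by an increasing sequence of such compacta gives $\gamma_+(\psi)=h$ a.e.\ on $\Gamma_+$. The analogous argument on the $\Gamma_-$-side gives $\gamma_-(\psi)=h$ a.e.\ on $\Gamma_-$, and combined with the zero-measurability of $\Gamma_0$ in $\Gamma$ one concludes $\gamma(\psi)=h$ a.e.\ on $\Gamma$. In particular $\gamma(\psi)\in T^1(\Gamma)$, so $\psi\in\tilde W^1(G\times S\times I)$, and
\ben
\n{\psi_j-\psi}_{\tilde W^1(G\times S\times I)}=\n{\psi_j-\psi}_{W^1(G\times S\times I)}+\n{\gamma(\psi_j)-\gamma(\psi)}_{T^1(\Gamma)}\to 0.
\een

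The main subtlety is precisely the identification step: since the trace is a priori continuous only into the local space $L^1_{\loc}$, one must use the stronger global $T^1(\Gamma)$-convergence of $\{\gamma(\psi_j)\}$ to promote its $L^1_{\loc}$-limit $\gamma(\psi)$ to a bona fide element of $T^1(\Gamma)$, thereby ensuring that the candidate limit $\psi$ actually belongs to $\tilde W^1(G\times S\times I)$. Everything else reduces to the definition of the norm and the known completeness of $W^1(G\times S\times I)$ and $T^1(\Gamma)$.
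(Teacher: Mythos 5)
Your proof is correct and follows essentially the same route as the paper's: pass to the limits in the complete spaces $W^1(G\times S\times I)$ and $T^1(\Gamma)$, then identify the two limits by using the continuity of $\gamma_\pm$ into $L^1_{\loc}(\Gamma_\pm,|\omega\cdot\nu|\,d\sigma d\omega dE)$ together with the zero-measurability of $\Gamma_0$. You merely make the identification step (restriction to compacta and uniqueness of $L^1$-limits) more explicit than the paper does.
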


\begin{proof}
We give the proof only for $\tilde{W}^1(G\times S\times I)$,
as the spaces $\tilde{W}_{\pm}^1(G\times S\times I)$ are handled similarly.
If $\{\psi_n\}$ is a Cauchy sequence in $\tilde W^1(G\times S\times I)$,
then $\psi_n\to \psi$ in $W^1(G\times S\times I)$,
$\gamma(\psi_n)\to g$ in $T^1(\Gamma)$,
thus $\gamma_\pm(\psi_n)\to \gamma_\pm(\psi)$
and $\gamma(\psi_n)|_{\Gamma_\pm}\to g|_{\Gamma_\pm}$ in $L^1_\loc(\Gamma_\pm,|\omega\cdot\nu|\diff\sigma\diff\omega\diff E)$,
and so $g=\gamma(\psi)$, which implies that $\psi\in \tilde W^1(G\times S\times I)$.
\end{proof}

For $v\in \mc{D}(\ol G\times S\times I)$ and $u\in\tilde W^1(G\times S\times I)$ one has the following Green's formula
\begin{align}\label{green}
\int_{G\times S\times I}(\omega\cdot \nabla u)v\ dxd\omega dE
+\int_{G\times S\times I}(\omega\cdot \nabla v)u\ dxd\omega dE
=\int_{\partial G\times S\times I}(\omega\cdot \nu) uv d\sigma d\omega dE
\end{align}
which is obtained from the Stokes' Theorem for $u,v\in \mc{D}(\ol G\times S\times I)$, and then by the limiting process for general $v\in \mc{D}(\ol G\times S\times I)$ and $u\in\tilde W^1(G\times S\times I)$.
Similarly, one deduces from Stokes' theorem
the following special case of it that holds for $u\in\tilde{W}^1(G\times S\times I)$,
\begin{align}\label{eq:stokes}
\int_{G\times S\times I} (\omega\cdot \nabla u) dx d\omega dE=\int_{\partial G\times S\times I} u (\omega\cdot \nu)d\sigma d\omega dE.
\end{align}
As a straightforward application of this formula, we have another trace theorem which we shall need later on (cf. \cite{cessenat84} Th\'eor\`eme de trace 2, \cite{cipolatti06} Lemma 2.1, or \cite{dautraylionsv6} Theorem 1, p. 252).

\begin{theorem}\label{th:trace:2}
For $\psi\in \tilde{W}^1_{\pm}(G\times S\times I)$,
we have
\[
\n{\gamma_{\mp}(\psi)}_{T^1(\Gamma_{\mp})}\leq \n{\psi}_{\tilde{W}^1_{\pm}(G\times S\times I)}.
\]
\end{theorem}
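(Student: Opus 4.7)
The plan is to apply the Stokes-type identity \eqref{eq:stokes} to the modulus $|\psi|$, split the resulting boundary integral into $\Gamma_+$ and $\Gamma_-$ pieces (using that $\Gamma_0$ has measure zero), and bound the $\Gamma_{\mp}$-integral in terms of the $\Gamma_{\pm}$-integral plus the $L^1$-norm of $\omega\cdot\nabla\psi$. Since $|\psi|$ is not known a priori to lie in $\tilde W^1(G\times S\times I)$ (this is essentially what we want to prove), I would first truncate $\psi$, derive a uniform bound for the truncation, and then pass to the limit. I treat only the ``$+$'' case; the ``$-$'' case is symmetric.

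For $M>0$, let $T_M(t) = \mathrm{sgn}(t)\min(|t|,M)$ be the $1$-Lipschitz truncation and set $\psi_M = T_M(\psi)$. By the standard chain rule for Sobolev-type spaces (reasoning as with the $\mathrm{sgn}$ function in the proof of Theorem \ref{th:trace}), one has $\psi_M\in W^1(G\times S\times I)$ with $\omega\cdot\nabla\psi_M = \chi_{\{|\psi|<M\}}\omega\cdot\nabla\psi$, and likewise $|\psi_M|\in W^1(G\times S\times I)$ with $|\omega\cdot\nabla|\psi_M||\leq|\omega\cdot\nabla\psi|$. Moreover $\gamma_{\pm}(\psi_M) = T_M(\gamma_{\pm}(\psi))$ is essentially bounded by $M$, and since $\partial G$, $S$ and $I$ are bounded, the total measure $\int_\Gamma|\omega\cdot\nu|\ d\sigma d\omega dE$ is finite. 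Hence $\gamma(|\psi_M|)\in T^1(\Gamma)$ and $|\psi_M|\in\tilde W^1(G\times S\times I)$, so that \eqref{eq:stokes} does apply to $|\psi_M|$.

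Applying \eqref{eq:stokes} to $|\psi_M|$ and splitting the boundary integral yields
\[
\int_{\Gamma_-}|\gamma_-(\psi_M)||\omega\cdot\nu|\ d\sigma d\omega dE = \int_{\Gamma_+}|\gamma_+(\psi_M)||\omega\cdot\nu|\ d\sigma d\omega dE - \int_{G\times S\times I}\omega\cdot\nabla|\psi_M|\ dxd\omega dE.
\]
Since $|T_M(t)|\leq|t|$ and $|\omega\cdot\nabla|\psi_M||\leq|\omega\cdot\nabla\psi|$, the right-hand side is bounded above by $\n{\gamma_+(\psi)}_{T^1(\Gamma_+)} + \n{\omega\cdot\nabla\psi}_{L^1(G\times S\times I)}$, hence by $\n{\psi}_{\tilde W^1_+(G\times S\times I)}$, uniformly in $M$. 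As $M\to\infty$, $|\gamma_-(\psi_M)| = \min(|\gamma_-(\psi)|,M)\uparrow|\gamma_-(\psi)|$ a.e.\ on $\Gamma_-$, so the monotone convergence theorem delivers the stated inequality.

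The main technical obstacle is justifying the two identities $\omega\cdot\nabla T_M(\psi) = T_M'(\psi)\omega\cdot\nabla\psi$ a.e.\ on $G\times S\times I$ and $\gamma_{\pm}(T_M(\psi)) = T_M(\gamma_{\pm}(\psi))$ a.e.\ on $\Gamma_{\pm}$, together with the claim $|\psi_M|\in\tilde W^1$. All three reduce to the same recipe: both identities hold classically for $\phi\in\mc{D}(\ol G\times S\times I)$, and one passes to the limit using density of $\mc{D}(\ol G\times S\times I)$ in $W^1(G\times S\times I)$, the $1$-Lipschitz property of $T_M$ and $|\cdot|$, and the continuity of the local trace furnished by Theorem \ref{th:trace}.
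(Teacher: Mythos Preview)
Your approach is essentially the same as the paper's: apply the Stokes identity \eqref{eq:stokes} to $|\psi|$, split the boundary integral over $\Gamma_+\cup\Gamma_-$, and bound using $\omega\cdot\nabla|\psi|=\mathrm{sgn}(\psi)\,\omega\cdot\nabla\psi$. The paper does this directly, simply writing down the identity for $|\psi|$ without further comment; you add the truncation $\psi_M=T_M(\psi)$ precisely to justify that $|\psi_M|\in\tilde W^1(G\times S\times I)$ so that \eqref{eq:stokes} legitimately applies, and then pass to the limit by monotone convergence. This extra care is warranted (the hypothesis $\psi\in\tilde W^1_+$ gives only $\gamma_+(\psi)\in T^1(\Gamma_+)$, not $\gamma_-(\psi)\in T^1(\Gamma_-)$, which is exactly what is to be proved), so your version is in fact more rigorous while the underlying idea is identical.
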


\begin{proof}
We consider here the case $\psi\in \tilde{W}_{+}(G\times S\times I)$,
as the other case is handled analogously.
From \eqref{eq:stokes} and the fact that $\Gamma_0$ is zero-measurable, we have
\[
& \n{\gamma_+(\psi)}_{T^1(\Gamma_+)}-\n{\gamma_-(\psi)}_{T^1(\Gamma_-)}
=\int_{\Gamma_+} |\psi| |\omega\cdot\nu| d\sigma d\omega dE-\int_{\Gamma_-} |\psi| |\omega\cdot\nu| d\sigma d\omega dE \\
=&\int_{\Gamma_+} |\psi| (\omega\cdot\nu) d\sigma d\omega dE+\int_{\Gamma_-} |\psi| (\omega\cdot\nu) d\sigma d\omega dE
=\int_{\partial G\times S\times I} |\psi| (\omega\cdot\nu)d\sigma d\omega dE \\
=&\int_{G\times S\times I} \omega\cdot\nabla|\psi| d\sigma d\omega dE,
\]
from which, by taking into account that $\omega\cdot \nabla |\psi|=\mathrm{sgn}(\psi)(\omega\cdot \nabla\psi)$ a.e. on $G\times S\times I$ (cf. the proof of Theorem \ref{th:trace}),
we have
\[
\n{\gamma_-(\psi)}_{T^1(\Gamma_-)}\leq &\n{\gamma_+(\psi)}_{T^1(\Gamma_+)}+\int_{G\times S\times I} \big|\omega\cdot\nabla|\psi|\big| d\sigma d\omega dE \\
\leq & \n{\gamma_+(\psi)}_{T^1(\Gamma_+)}+\n{\omega\cdot \nabla\psi}_{L^1(G\times S\times I)}.
\]
This completes the proof since the right hand side is clearly less than $\n{\psi}_{\tilde{W}^1_{+}(G\times S\times I)}$.
\end{proof}

In particular, we have as sets the equalities
\[
\tilde{W}^1(G\times S\times I)=\tilde{W}^1_{+}(G\times S\times I)=\tilde{W}_{-}^1(G\times S\times I).
\]
Moreover, these spaces are equivalent as normed spaces, with their respective norms introduced above, since for all $\psi\in \tilde{W}^1(G\times S\times I)$,
\[
\frac{1}{2}\n{\psi}_{\tilde{W}^1_{\mp}(G\times S\times I)}\leq \n{\psi}_{\tilde{W}^1_{\pm}(G\times S\times I)}\leq \n{\psi}_{\tilde{W}^1(G\times S\times I)}
\leq 2\n{\psi}_{\tilde{W}^1_{\mp}(G\times S\times I)}.
\]

\begin{remark}
A. Similarly one can define the spaces $W^p(G\times S\times I),\ T^p(\Gamma_-)$ and so on
for any $1\leq p<\infty$.

B. In the above we could replace the space 
$ W^1(G\times S\times I)$ with a more general (weighted) space
\[
W^1_\rho (G\times S\times I)=\{\psi\in L^1(G\times S\times I)\ |\ \rho(\omega,E)\omega\cdot\nabla\psi\in L^1(G\times S\times I)\}
\]
where $\rho$ is a positive measurable function (cf. Section 6 where $\rho=\sqrt{E}$).
Similar note is valid for any $1\leq p<\infty$. 
We omit these generalizations here.
\end{remark}

\section{On Collision Operators}\label{sec:coll}

\subsection{Hyper-singular and (pseudo-)differential nature of certain collision operators}\label{coll-sec1}

The differential cross-sections
may have singularities, or even hyper-singularities, which would lead to extra partial differential
and pseudo-differential terms in the transport equation (\cite{hsiao}, Sec. 7.1, pp. 353-394).
Instead of explaining systematically the underlying theory, the following slightly informal description suffices for the purposes of this work.

First of all, in the case where $\sigma(x,\omega',\omega,E',E)$ has hyper-singularities (like  M\o ller differential cross 
section given in the below example) the integral $\int_S\int_I$ occurring in the collision operator must be understood in the 
sense of {\it Cauchy principal value} ${\rm p.v.}\int_S\int_I$ or more generally in the sense of {\it Hadamard finite part integral} ${\rm p.f.}\int_S\int_I$ (\cite{hsiao}, Sec. 3.2., \cite{martin-rizzo}, \cite{chan}, \cite{schwartz}, pp. 104-105.
We remark that one encounters this kind of hyper-singularities frequently in physical models.
In addition, we must assume that $E_0>0$ in the energy interval $I=[E_0,E_{\rm m}]$,
because otherwise $K\psi$, for $\psi\in C_0^\infty(G\times S\times I^\circ)$,
might turn out to be (strictly) a distribution, which would increase the complexity of what is presented here.
In \cite{lorence}, p. 7, it is reported that the differential cross sections are not necessarily valid for very small energies which supports this assumption.

Consider the following partial hyper-singular integral operator,
\be\label{coll-1}
(K\psi)(x,\omega,E)={\rm p.f.}\int_{I}\int_{S}\sigma(x,\omega',\omega,E',E)\psi(x,\omega',E')d\omega' dE'.
\ee
The simplest case is where  $\sigma=\sigma_0(x,\omega',\omega,E',E)$ 
is a measurable non-negative function $G\times S\times S\times (I\times I\setminus D)\to\R$,
where $D=\{(E,E)\ |\ E\in I\}$ is the diagonal of $I\times I$,
obeying for $E\neq E'$ the estimates
\bea
&
{\rm esssup}_{(x,\omega)}\int_{S}\sigma_0(x,\omega',\omega,E',E)d\omega'\leq  {C\over{|E-E'|^\kappa}},
\label{coll-3} \\
&
{\rm esssup}_{(x,\omega')}\int_{S}\sigma_0(x,\omega',\omega,E',E)d\omega\leq  {C\over{|E-E'|^\kappa}},
\label{coll-3a}
\eea
where $\kappa<1$, meaning that $\sigma_0(x,\omega',\omega,E',E)$ has a so-called \emph{weak singularity}.
We see that
\bea
&
{\rm esssup}_{(x,\omega,E)\in G\times S\times I}\int_{I}\int_{S}\sigma_0(x,\omega'\omega,E',E)d\omega'dE'
\leq \sup_{E} C\int_{I}{1\over{|E-E'|^\kappa}}dE' \\
&
=
\sup_{E} C{1\over{1-\kappa}}[(E_{\rm m}-E)^{1-\kappa}+(E-E_0)^{1-\kappa}]\leq {{2CE_{\rm m}^{1-\kappa}}\over{1-\kappa}},
\label{coll-4a}
\eea
and similarly for $\int_{I}\int_{S}\sigma_0(x,\omega'\omega,E',E)d\omega dE$. Hence we see 
that $\sigma_0(x,\omega'\omega,E',E)$ satisfies the {\it Schur conditions} given in section \ref{coss2}, and the corresponding collision operator 
\be 
(K_0\psi)(x,\omega,E)
=
\int_{I}\int_{S}\sigma_0(x,\omega',\omega,E',E)\psi(x,\omega',E')d\omega' dE',
\ee
is the usual partial  (singular)  integral operator.
It is bounded $L^2(G\times S\times I)\to L^2(G\times S\times I)$.

Nevertheless, the  collision operator $K$ is not generally of the above form $K_0$. $(E',E)$-dependence in differential cross section 
$\sigma(x,\omega'\omega,E',E)$ may contain hyper-singularities of higher order,
${1\over{(E'-E)}^m}$, for $m=1,2$. For example; see Example \ref{moller} below. 

Moreover, the $(\omega',\omega)$-dependence in differential cross-sections typically contain Dirac's $\delta$-distributions (on $\R$). 
More precisely, in  $\sigma(x,\omega'\omega,E',E)$ there may occur terms
like $\delta(\omega\cdot\omega'-\mu(E',E))$ which require special treatment.
We remark, however that $\delta$-distribution can be approximated by smooth functions $\eta_{\epsilon}\in C_0^\infty(\R)$ in the sense that 
\be 
|\delta(\phi)-\la\eta_\epsilon,\phi\ra_{L^2(\R)}|
\leq
\n{\delta-\eta_\epsilon}_{H^{-1}(\R)}\n{\phi}_{H^1(\R)},\quad \phi\in H^1(\R),
\ee
where $\n{\delta-\eta_\epsilon}_{H^{-1}(\R)}\to 0$ when $\epsilon\to 0^+$.
Typically $\eta_\epsilon$ is chosen to be the convolution $\eta_\epsilon:=\delta\star\theta_\epsilon$,
where $\theta\in C_0^\infty(\R)$ such that $\int_{\R}\theta(x) dx=1$.
Hence we are able to replace $\delta(\omega\cdot\omega'-\mu(E',E))$,
with $\eta_\epsilon(\omega\cdot\omega'-\mu(E',E))$ which is a well-behaved (smooth) function.

We shall see that the cross section $\sigma$ may be the form (e.g. in {M\o ller} electron-electron cross-section)
\begin{multline}
\sigma(x,\omega',\omega,E',E)
=
\chi(E',E)\Big(
{1\over{(E'-E)^2}}\sigma_2(x,\omega',\omega,E',E)\\
+{1\over{E'-E}}\sigma_1(x,\omega',\omega,E',E)+\sigma_0(x,\omega',\omega,E',E)\Big)
\label{coll-2}
\end{multline}
where $\chi(E',E):=\chi_{\R_+}(E-E_0)\chi_{\R_+}(E_m-E)$.
Here each of $\sigma_j(x,\omega',\omega,E',E)$, $j=0,1,2$ may contain the above explained $\delta$-distributions,
and hence they are not necessarily measurable functions.
Denote for $j=0,1,2$,
\[
(\ol {\s K}_j\psi)(x,\omega,E',E):={}&\int_{S}\sigma_j(x,\omega',\omega,E',E)\psi(x,\omega',E') d\omega', \\[2mm]
(\widehat {\s K}_j\psi)(x,\omega,E',E):={}&\chi(E',E)(\ol {\s K}_j\psi)(x,\omega,E',E)
\]
where $\int_{S}$ is understood, if needed, as a distribution.
We find, according to the examples below, that at worst $K$ can be of the
form (this is corresponding the {M\o ller} scattering for electrons)
\begin{multline}
({K}\psi)(x,\omega,E)
=
{\s H}_2\big((\ol {\s K}_2\psi)(x,\omega,\cdot,E)\big)(E) 
\\
+
{\s H}_1\big((\ol {\s K}_1)\psi)(x,\omega,\cdot,E)\big)(E)
+\int_{I}(\widehat {\s K}_0\psi)(x,\omega,E',E) dE',
\label{co-bb}
\end{multline}
where ${\s H}_m$, $m=1,2$, are the {\it Hadamard finite part operators} with respect to $E'$-variable defined by
\[
({\s H}_m u)(E):={\rm p.f.}\int_{E}^{E_m}{1\over{(E'-E)^m}}u(E')dE'.
\]
The expression \eqref{co-bb} is the hyper-singular integral form of $K$.

In \cite{tervo16-up} we verified that \eqref{co-bb} can be equivalently given in the "pseudo-differential form" by 
\bea\label{co-cc}
({K}\psi)(x,\omega,E)
={}&
{\partial\over{\partial E}}\Big(
{\s H}_1\big((\ol{\s K}_2\psi)(x,\omega,\cdot,E)\big)(E)\Big)
-
{\s H}_1\big(({\p {(\ol{\s K}_2\psi)}E}(x,\omega,\cdot,E)\big)(E)
\nonumber\\
{}&
+{\partial\over{\partial E'}}\Big(
(\ol{\s K}_2\psi)(x,\omega,E',E)\Big)_{|E'=E}\nonumber\\
{}&
+
{\s H}_1\big((\widehat {\s K}_1\psi)(x,\omega,\cdot,E)\big)(E)
+\int_{I}({\s K}_0\psi)(x,\omega,E',E) dE'
\eea
where only ${\s H}_1$ appears. This formulation reveals the nature of charged particles' collisions.
Recall that
${\s H}_1$ is well-defined (at least) for all $u\in C^\alpha(I),\ \alpha>0$ and (cf. \cite{chan})
\be\label{c-0-a}
({\s H}_1u)(E)=\int_{E}^{E_m}{{u(E')-u(E)}\over{E'-E}}dE'+
u(E)\ln({E_m-E}) .
\ee
Moreover, it can perhaps be shown that 
${\s H}_1$ is a zero-order pseudo-differential operator (cf. \cite{hsiao}, Chapter 7).

As a conclusion we find that some 
interactions produce the first-order partial derivatives
with respect to energy $E$ combined with the "zero-order" Hadamard part operator.
A closer analysis of the operators $\widehat{\s K}_j$ reveals that in addition, partial derivatives with respect to $\omega$ may appear.
We  demonstrated that in \cite{tervo16-up} for $n=2$. 
The problematic interactions are the electron-electron (considered below)
and positron-positron collisions, and bremsstrahlung (for which see \cite{koch59}, \cite{lorence}).

The operator (\ref{co-cc}) (or equivalently (\ref{co-bb}))  contains two features that require further study: i) The analysis of operators $\widehat {\s K}_j,\ j=0,1,2$, and ii) the analysis of the Hadamard finite part operator ${\s H}_1$ which is a hyper-singular integral operator.
The analysis of the existence of the solutions for the transport problem,
in the case where these operators are included in the transport operator remains to our understanding open.

\begin{remark}
We remark that the operators of the form
\be\label{r-1}
(Pu)(x,E):={}&
{\rm p.f.}\int_{E_0}^{E_m}{{\sigma_0(x,E,E')}\over{E'-E}}u(x,E,E')dE',\quad u\in C_0^\infty(G\times I^\circ\times I^\circ),
\ee
can be treated as in \cite{hsiao}, Chapter 7. Note that in (\ref{r-1}) the integration is over the whole interval $[E_0,E_m]$.
Under relevant criteria the operators (\ref{r-1}) can likely be shown to be pseudo-differential operators.
In particular we recall that the {\it partial Hilbert transform} 
\[
(Hu)(x,E):={}&
{\rm p.f.}\int_{E_0}^{E_m}{{u(x,E,E')}\over{E-E'}}dE'
\]
is a pseudo-differential operator with symbol $-{\rm i}\ {\rm sign}(\xi)$.
Simplifying the operator ${\s H}_1$ is of the form
\[
({\s H}_1u)(x,E):={}&
{\rm p.f.}\int_{E}^{E_m}{{\sigma_0(x,E,E')}\over{E'-E}}u(x,E,E')dE',\quad u\in C_0^\infty(G\times I^\circ\times I^\circ),
\]
The problematic feature in the expression of $({\s H}_1u)(x,E)$ is that the integration is over $[E,E_m]$.
Similar observations concern the operator ${\s H}_2$. 
Consistently to \cite{hsiao}  formal computations suggest that the prospective symbols of ${\s H}_j$ are respectively
\be 
p_1(x,E,\xi)=
{\rm p.f.}\int_E^{\infty}{{\sigma_0(x,E,E')}\over{E'-E}}
e^{{\rm i}(E'-E)\xi}dE'
=
{\rm p.f.}\int_0^{\infty}{{\sigma_0(x,E,E+z)}\over{z}}
e^{{\rm i} z\xi}dz
\ee
and
\be 
p_2(x,E,\xi)=
{\rm p.f.}\int_0^{\infty}{{\sigma_0(x,E,E+z)}\over{z^2}}
e^{{\rm i} z\xi}dz
\ee
The exact analysis of these operators (whether they are zero-order pseudo-differential operators, for example) remains to our knowledge open.
\end{remark}

The next example of the M\o ller-collision operator,
one of the relevant operators in e.g. radiation therapy,
illustrate the above observations.

\begin{example}\label{moller}
{\it Electron-electron scattering - M\o ller}.
We denote the corresponding differential cross section by $\sigma_{22}(x,\omega',\omega,E',E)$. It has a decomposition  (\cite{duclous}, \cite{lorence}, \cite{bomanthesis}, \cite{hensel})
\be\label{i-e-e1} 
\sigma_{22}(x,\omega',\omega,E',E)
=\sigma^p_{22}(x,\omega',\omega,E',E)+\sigma^s_{22}(x,\omega',\omega,E',E).
\ee
where $\sigma^p_{22}(x,\omega',\omega,E',E)$ is corresponding to the (new) primary electrons
and  $\sigma^s_{22}(x,\omega',\omega,E',E)$ is corresponding to the secondary electrons.
In this scattering process the spins have been averaged out,
and the two electrons completely lose their identity.
Therefore, categorizing the electrons as "primary" and "secondary" is simply done
by assigning the electron exiting the scattering event with the higher energy to be the primary one.
The scattering cross section for primary electron $\sigma^p_{22}(x,\omega',\omega,E',E)$ has an expression
\begin{multline}\label{eq:sigma_p_22}
\sigma^p_{22}(x,\omega',\omega,E',E)
=
\sigma_{0}(x){{(E'+1)^2}\over{E'(E'+2)}}\Big({1\over{E^2}}
+{1\over{(E'-E)^2}}+{1\over{(E'+1)^2}} \\
-{{2E'+1}\over{(E'+1)^2E(E'-E)}}\Big)\chi_{22,p}(E',E)\delta(\omega'\cdot\omega-\mu_{22,p}(E',E)),
\end{multline}
where $\sigma_{0}(x)$ depends on the background material, and
\[
& \mu_{22,p}(E',E):=\sqrt{{E(E'+2)}\over{E'(E+2)}} \\
& \chi_{22,p}(E',E):=\chi_{\R_+}(E-E_0)\chi_{\R_+}(E-{E'\over 2})\chi_{\R_+}(E'-E),
\]
while the cross section for the secondary electron $\sigma^s_{22}(x,\omega',\omega,E',E)$ is
\begin{multline*}
\sigma^s_{22}(x,\omega',\omega,E',E)
=
\sigma_0(x){{(E'+1)^2}\over{E'(E'+2)}}\Big({1\over{E^2}}
+{1\over{(E'-E)^2}}+{1\over{(E'+1)^2}}\\
-{{2E'+1}\over{(E'+1)^2E(E'-E)}}\Big)\chi_{22,s}(E',E)\delta(\omega'\cdot\omega-\mu_{22,s}(E',E))
\end{multline*}
where 
\[
& \mu_{22,s}(E',E):=\mu_{22,p}(E',E-E), \\
& \chi_{22,s}(E',E):=\chi_{\R_+}({{E'}\over 2}-E)\chi_{\R_+}(E-E_0).
\]
Since $\sigma^s_{22}(x,\omega',\omega,E',E)=0$ for $E'\leq 2E$ the 
singularities at $E'=E$ do not cause any problems for the secondary electrons.
 
Write
\[
\chi_{22}(E',E)
:={}&
\chi_{22,p}(E',E)+\chi_{22,s}(E',E), \\[2mm]
\mu_{22}(E',E)
:={}&
\begin{cases} \mu_{22,p}(E',E),& E'\leq 2E \\[2mm]
\mu_{22,s}(E',E),& E'\geq 2E \end{cases},  \\[2mm]
\hat\sigma_{22,0}(x,E',E)
:={}&
\sigma_{0}(x){{(E'+1)^2}\over{E'(E'+2)}}\Big({1\over{E^2}}+{1\over{(E'+1)^2}}\Big), \\[2mm]
\hat\sigma_{22,1}(x,E',E)
:={}&
-\sigma_{0}(x){{2E'+1}\over{E'(E'+2)E}}, \\[2mm]
\hat\sigma_{22,2}(x,E',E)
:={}&
\sigma_{0}(x){{(E'+1)^2}\over{E'(E'+2)}}.
\]
Then we find that 
\begin{multline}\label{i-e-e2} 
\sigma_{22}(x,\omega',\omega,E',E)
=
\chi_{22}(E',E)\Big({1\over{(E'-E)^2}}\hat\sigma_{22,2}(x,E',E)\delta(\omega'\cdot\omega-\mu_{22}(E',E))\\
+
{1\over{E'-E}}\hat\sigma_{22,1}(x,E',E)\delta(\omega'\cdot\omega-\mu_{22}(E',E))\\
+
\hat\sigma_{22,0}(x,E',E)\delta(\omega'\cdot\omega-\mu_{22}(E',E))\Big).
\end{multline}

The operators $\ol {\s K}_{22,j}$ are for any $j=0,1,2$,
\bea\label{eq:ol_s_K_22_j}
(\ol {\s K}_{22,j}\psi)(x,\omega,E',E)
=
{}&
\hat\sigma_{22,j}(x,E',E)
\int_{S}
\delta(\omega'\cdot\omega - \mu_{22}(E,E'))\psi(x,\omega',E')d\omega'
\nonumber\\
={}&\hat{\sigma}_{22,j}(x,E',E)\int_{0}^{2\pi}\psi(x,\gamma(s),E')ds,
\eea
where $\gamma=\gamma_{22}(E',E,\omega):[0,2\pi]\to S$
is a parametrization of the curve
\[
\Gamma(E',E,\omega)=\{\omega'\in S\ |\ \omega'\cdot\omega-\mu_{22}(E',E)=0\},
\]
with (constant) speed
\[
\n{\gamma'(s)}=\sqrt{1-\mu_{22}(E',E)^2},\quad s\in [0,2\pi].
\]
For example, we can choose
\be
\gamma(s)=R(\omega)\big(\sqrt{1-\mu_{22}^2}\cos(s),\sqrt{1-\mu_{22}^2}\sin(s),\mu_{22}\big),\quad s\in [0,2\pi],
\ee
where $\mu_{22}=\mu_{22}(E',E)$, and $R(\omega)$ is any rotation matrix which maps the vector $(0,0,1)$ into $\omega$.

When the spatial dimension $n=2$ the operators $\widehat{\s K}_{22,j}$ are simply,
\begin{multline*}
(\widehat {\s K}_{22,j}\psi)(x,\omega,E',E) \\
= 
\hat{\sigma}_{22,j}(x,E',E)\chi_{22}(E',E)\big(\psi(x,\mu_{22}(E',E)\omega+\sqrt{1-\mu_{22}(E',E)^2}\omega^\perp,E')
\\
+
\psi(x,\mu_{22}(E',E)\omega-\sqrt{1-\mu_{22}(E',E)^2}\omega^\perp,E')\big),
\end{multline*}
where $\omega^\perp:=(-\omega_2,\omega_1)$ (the tangent vector of the unit circle $S=S^1$ at $\omega$).

Writing for $j=0,1,2$,
\[
(\widehat {\s K}_{22,j}\psi)(x,\omega,E',E)=\chi_{22}(E',E)
(\ol {\s K}_{22,j}\psi)(x,\omega,E',E),
\]
the collision operator $K_{22}$ decomposes into
\[
K_{22}=K_{22,2}+K_{22,1}+K_{22,0}
\]
where
\be\label{k220}
(K_{22,0}\psi)(x,\omega,E)
=
\int_{I}(\widehat{\s K}_{22,0}\psi)(x,\omega,E',E)
dE'
\ee
and
\bea\label{k221}
(K_{22,1}\psi)(x,\omega,E)
={}&
{\rm p.f.}\int_{I}{{(\widehat{\s K}_{22,1}\psi)(x,\omega,E',E)}\over{E'-E}}
dE' \nonumber \\
={}&
{\rm p.f.}\int_{E}^{E_m}{{(\ol{\s K}_{22,1}\psi)(x,\omega,E',E)}\over{E'-E}}
dE'\nonumber\\
={}&
{\s H}_1((\ol{\s K}_{22,1}\psi)(x,\omega,\cdot,E))(E).
\eea

The operator $K_{22,2}$ gets a hyper-singular form
\bea\label{k222}
(K_{22,2}\psi)(x,\omega,E)
={}&
{\rm p.f.}\int_{I}{{(\widehat{\s K}_{22,2}\psi)(x,\omega,E',E)}\over{(E'-E)^2}}dE' \nonumber\\
={}&
{\rm p.f.}\int_{E}^{E_m}{{(\ol{\s K}_{22,2}\psi)(x,\omega,E',E)}\over{(E'-E)^2}}dE' \nonumber\\
={}&
{\s H}_2((\ol{\s K}_{22,2}\psi)(x,\omega,\cdot,E))(E).
\eea
Hence by \eqref{k220}, \eqref{k221}, \eqref{k222},
\bea\label{k22}
(K_{22}\psi)(x,\omega,E)
={}&
{\s H}_2((\ol{\s K}_{22,2}\psi)(x,\omega,\cdot,E))(E)
+
{\s H}_1((\ol{\s K}_{22,1}\psi)(x,\omega,\cdot,E))(E)
\nonumber\\
{}&
+\int_{I}(\widehat{\s K}_{22,0}\psi)(x,\omega,E',E)
dE'
\eea
which is the hyper-singular integral form of $K_{22}$.
In \cite{tervo16-up} we showed that
we see that
\bea\label{k22-a}
&
{\s H}_2((\ol{\s K}_{22,2}\psi)(x,\omega,\cdot,E))(E)\nonumber\\
={}&
{\partial\over{\partial E}}\Big(
{\rm p.f.}\int_{E}^{E_m}{{(\ol{\s K}_{22,2}\psi)(x,\omega,E',E)}\over{E'-E}}dE'\Big)
-
{\rm p.f.}\int_{E}^{E_m}{1\over{E'-E}}{\p {(\ol{\s K}_{22,2}\psi)}E}(x,\omega,E',E)dE'
\nonumber\\
{}&
+{\partial\over{\partial E'}}\Big(
(\ol{\s K}_{22,2}\psi)(x,\omega,E',E)\Big)_{|E'=E}
\nonumber\\
={}&
{\partial\over{\partial E}}\Big(
{\s H}_1\big((\ol{\s K}_{22,2}\psi)(x,\omega,\cdot,E)\big)(E)\Big)
-
{\s H}_1\Big({\p{(\ol{\s K}_{22,2}\psi)}E}(x,\omega,\cdot,E)\Big)(E)
\nonumber\\
{}&
+{\partial\over{\partial E'}}\Big(
(\ol{\s K}_{22,2}\psi)(x,\omega,E',E)\Big)_{|E'=E},
\eea

Finally, we remark that
the approximative  $ \widehat{\s K}_{22,j}$ for $j=0,1,2$ are
\bea
( \widehat{\s K}_{22,j}\psi)(x,\omega,E)\approx {}&
\int_{S}\hat\sigma_j(x,E',E)\chi_{22}(E',E)
\eta_\epsilon(\omega'\cdot\omega-\mu_{p}(E,E'))\psi(x,\omega',E')d\omega'\nonumber\\
=:&
\int_{S}\tilde\sigma_j(x,\omega',\omega,E',E)\psi(x,\omega',E')d\omega'
=:(\widetilde {\s K}_{22,j}\psi)(x,\omega,E',E).
\eea
Note that 
$\widetilde K_{22,0}$ is the usual partial Schur integral operator.
The approximations  $\widetilde {\s K}_{22,j}$ are useful from theoretical and practical point of view.
\end{example}

\begin{remark}
In \cite{tervo16-up} we computed  further some of the  terms appearing in the above example,
limiting ourselves to the case for $n=2$. As we mentioned above, in this case
\begin{multline}\label{n2k-1}
(\ol {\s K}_{22,2}\psi)(x,\omega,E',E) \\
=
\hat{\sigma}_{22,2}(x,E',E)\big(\psi(x,\mu_{22}(E',E)\omega+\sqrt{1-\mu_{22}(E',E)^2}\omega^\perp,E')
\\
+
\psi(x,\mu_{22}(E',E)\omega-\sqrt{1-\mu_{22}(E',E)^2}\omega^\perp,E')\big).
\end{multline}
It was found that
the {M\o ller} collision term produces first order partial differential terms with respect to $E$, combined with the Hadamard finite part operator,
and, in addition, it produces terms containing $\nabla_\omega$, i.e. angular derivatives.

The exact form of {M\o ller} collision operator
allows accessing relevant approximation schemes for which the error analysis 
can be carried out. In \cite{tervo16-up} we derived the CSDA-BTE-type approximation,
which however, does not take into account the change of
angle for the (new) primary electron during transport,
since the angular derivative ($\nabla_\omega$) is missing from it. 
On the other hand, CSDA-Focker-Plank approximation 
contains also second order partial derivatives (with respect to angle)
which do not occur in the results of \cite{tervo16-up}.
\end{remark}

As a conclusion, we find that the complete Boltzmann operator
in its exact, general form is given by
\bea\label{T}
T\psi=&{}
-{\partial\over{\partial E}}\Big(
{\s H}_1((\ol{\s K}_2\psi)(x,\omega,\cdot,E))(E)\Big)
+
{\s H}_1(({\p {(\ol{\s K}_2\psi)}E}(x,\omega,\cdot,E))(E)\nonumber\\
&{}
-{\partial\over{\partial E'}}\Big(
(\ol{\s K}_2\psi)(x,\omega,E',E)\Big)_{|E'=E}+
\omega\cdot\nabla_x\psi+F\cdot\nabla_\omega\psi
\nonumber\\
&{}
-
{\s H}_1((\ol {\s K}_1\psi)(x,\omega,\cdot,E))(E)+\Sigma\psi-K_0\psi
\eea
where
\be\label{K}
K_0\psi=
\int_{I}({\s K}_0\psi)(x,\omega,E',E) dE'
\int_{S}\sigma_{00}(x,\omega',\omega,E)\psi(x,\omega',E) d\omega'.
\ee
In addition, the real, physical model is a coupled system $T=(T_1,T_2,T_3)$ of the operators like (\ref{T}) and some terms may 
be missing in $T_j$ (cf. the system considered in this paper and in \cite{tervo16-up}).
The existence and uniqueness properties for the exact transport equation $T\psi=f$
with the given inflow boundary and initial conditions remain to be analysed.
Potential methods are  Lions-Lax-Milgram Theorem, theory of maximally dissipative operators (as in this paper) and theory of evolution operators.
In \cite{tervo16-up} we have applied these methods to the CSDA-BTE-problem.

It is also important to understand regularity of solutions of BTE in the mixed-norm (anisotropic) Sobolev-Slobodevskij spaces.
This is needed e.g. in approximation analysis and, in particular, in numerical analysis (e.g. FEM).
In existence, uniqueness and regularity analysis the above mentioned pseudo-differential form-like expressions of collision operators  might be useful.

\begin{remark}\label{tuned FEM}
At least in existence and uniqueness analysis of solutions,
it is more fruitful to use the partial differential (pseudo-differential) form  of the exact transport equation.  
Nevertheless, the numerical methods may apply directly the hyper-singular partial integral equation (\ref{co-bb}).
For instance, the Galerkin (discontinuous) finite element methods (FEM) are able to consider hyper-singular partial integral terms.  
These techniques are well-known e.g. in field of boundary element methods (BEM) where the hyper-singular
integral kernels are emerging from single and double layer potentials.
We remark that carefully chosen (special) numerical integration schemes and the choice of bases functions are needed in computing element matrices for hyper-singular integral operators.
\end{remark}

\subsection{On the Choice of More General Radon Measures in the Definition of Collision Operators}\label{coll-sec2}

We give the following computations regarding a more general
measures $\rho_I$ and/or $\rho_S$ in the definition of $K$, instead of typical  measures $\mc{L}^1$ and/or  $\mu_S$.

As mentioned in Remark \ref{pre-re1},
for elastic scattering $K$ can be written in the form
\bea\label{r-1-a}
(K\psi)(x,\omega,E)={}&\int_{S}\int_I\sigma(x,\omega',\omega,E)\psi(x,\omega',E') d\rho_I(E'|E)d\omega'\nonumber\\
={}&\int_{S}\sigma(x,\omega',\omega,E)\psi(x,\omega',E)d\omega',
\eea
where for every $E$, and Borel set $A\subset I$,
$\rho_I(A|E)=1$ if $E\in A$ and $\rho_I(A|E)=0$ otherwise.
This is the first motivation of more general measures being used in the definition of the collision operator $K$.

The second one can be depicted as follows. 
On $I$ we use the Lebesgue measure $dE$.
The above example \ref{moller} (see also Example 2.27 and Remark 2.26 in \cite{tervo16-up})
shows that for some interactions, the collision operator $K$ may be of the form
\bea\label{r-1-aa}
(K\psi)(x,\omega,E)
={}&
\int_{I}
\chi(E',E)\hat\sigma(x,E',E)
\int_{0}^{2\pi}\psi(x,\gamma(E',E,\omega)(s),E')ds\nonumber\\
={}&
\int_{I}
\chi(E',E)\hat\sigma(x,E',E)
\frac{1}{\sqrt{1-\mu(E',E)^2}}\int_{\Gamma(E',E,\omega)} \psi(x,\gamma,E')d\ell(\gamma).
\eea
Here $\chi(E',E)$ is a product of characteristic functions, 
the integral $\int_{\Gamma(E',E,\omega)} (\cdots)d\ell$ is the path integral along the curve
\[
\Gamma(E',E,\omega)=\{\omega'\in S\ |\ \omega'\cdot\omega-\mu(E',E)=0\}.
\]
We can further write $K$ as
\bea\label{r-2}
(K\psi)(x,\omega,E)
=
\int_I \int_{S}\chi(E',E)\hat\sigma(x,E',E)\psi(x,\omega',E')d\rho_S\big(\omega'|(\omega,\mu(E',E))\big)dE',
\eea
where for all $-1<\mu<1$ and $\omega\in S$,
the quantity $\rho_S(\cdot|(\omega,\mu))$ is the Radon measure defined
on Borel sets $A\subset S$ by
\[
\rho_S(A|(\omega,\mu)):=\frac{\mc{H}^1(A\cap \Gamma_{(\omega,\mu)})}{\sqrt{1-\mu^2}},
\]
the measure $\mc{H}^1$ being the 1-dimensional Hausdorff measure on $S$ (see \cite[pp. 7--10]{falconer86}), and
\[
\Gamma_{(\omega,\mu)}:=\{\omega\in S\ |\ \omega'\cdot\omega-\mu=0\}.
\]
Notice that $\Gamma(E',E,\omega)=\Gamma_{(\omega,\mu(E',E))}$.

\subsection{On the Boundedness of $K$ and Dissipativity of $\Sigma-K$ for a Single Collision Operator Containing More General Measures}\label{gen-bound-k}

In the cases where $K$ contains more general  measures the boundedness of $K$ and the dissipativity of $\Sigma-K$ in $L^p(G\times S\times I)$-spaces can be shown in the similar fashion as in the below Theorems \ref{NoLabel} and \ref{dfsco} for Lebesgue based measures.
For example, when $K$ is of the form (\ref{r-1-a}) we get for the boundedness of $K$ (here $L^p(G\times S\times I)=L^p(G\times S\times I,dx d\omega dE)$ and in formulations we restrict ourselves to the cases $p=1,2$ only):

Suppose that ${\sigma}:G\times S^2\times I\to\R$ is a measurable non-negative function. Then we have (cf. \cite{halmos}, p. 20, \cite{dautraylionsv6}, pp. 227-228 and the below Theorem \ref{NoLabel})

\begin{enumerate}
\item
Suppose that 
\be\label{s-1}
\int_{S}
{\sigma}(x,\omega,\omega',E)d\omega'
\leq M<\infty\quad {\rm a.e.}\ (x,\omega,E)\in G\times S\times I.
\ee
Then $K:L^1(G\times S\times I)\to L^1(G\times S\times I)$ is bounded and $\n{K}\leq M$.

\item
Suppose that 
\bea\label{s-2}
&
\int_{S}
{\sigma}(x,\omega',\omega,E)d\omega'
\leq M_1<\infty\quad {\rm a.e.}\ (x,\omega,E)\in G\times S\times I,
\nonumber\\
&
\int_{S}
{\sigma}(x,\omega,\omega',E)d\omega'
\leq M_2<\infty\quad {\rm a.e.}\ (x,\omega,E)\in G\times S\times I.
\eea
Then $K:L^2(G\times S\times I)\to L^2(G\times S\times I)$ is bounded and $\n{K}\leq \sqrt{M_1M_2}$.

\end{enumerate}

The conditions (\ref{s-1}), (\ref{s-2}) are called Schur conditions for the boundedness and we call the corresponding collision operator {\it Schur partial integral operators}.

For the dissipativity of $\Sigma-K$, in this case, we have the following.
Suppose that ${\sigma}:G\times S^2\times I\to\R$ is a measurable non-negative function and that $\Sigma\in L^\infty(G\times S\times I)$. Then we have (cf. \cite{dautraylionsv6}, p. 241 and the below Theorem  \ref{dfsco})

\begin{enumerate}
\item
Suppose that, for a.e. $(x,\omega,E)\in G\times S\times I$,
\be\label{d-1}
&
\Sigma(x,\omega,E)-
\int_{S}
\sigma(x,\omega,\omega',E) d\omega'\geq c\geq 0.
\ee
Then for all $\psi\in L^1(G\times S\times I)$ and $\lambda>0$,
\be\label{r-6}
\n{\big(\lambda I-(-\Sigma+K+cI)\big)\psi}_{L^1(G\times S\times I)}\geq \lambda\n{\psi}_{L^1(G\times S\times I)}.
\ee

\item
Suppose that, for a.e. $(x,\omega,E)\in G\times S\times I$,
\bea\label{r-9}
\Sigma(x,\omega,E)-
\int_{S}
\sigma(x,\omega',\omega,E) d\omega' &\geq c\geq 0,
\nonumber\\
\Sigma(x,\omega,E)-
\int_{S}
\sigma(x,\omega,\omega',E) d\omega' &\geq c\geq 0.
\eea
Then for all $\psi\in L^2(G\times S\times I)$,
\be\label{r-7}
\la (\Sigma-K-cI)\psi,\psi\ra_{L^2(G\times S\times I)}\geq 0.
\ee
\end{enumerate}

Additionally, we will deal below with
the boundedness of $K$ and dissipativity of $\Sigma-K$ in the case of single collision operator is of the form (\ref{r-1-aa}).
For simplicity we restrict ourselves to the {\it case $n=2$}, but similar analysis can be performed for $n=3$, or even for general $n$.

\subsubsection{Boundedness}

Suppose that the spatial dimension $n=2$. In this case $\hat {\s K}$ is of the form
\begin{multline}\label{re-b-k-1}
(\hat {\s K}\psi)(x,\omega,E',E) \\
=
\hat{\sigma}(x,E',E)\chi(E',E)\big(\psi(x,\mu(E',E)\omega+\sqrt{1-\mu(E',E)^2}\omega^\perp,E') \\
+
\psi(x,\mu(E',E)\omega-\sqrt{1-\mu(E',E)^2}\omega^\perp,E')\big),
\end{multline}
where $\omega^\perp:=(-\omega_2,\omega_1)$ (the tangent vector of the unit circle $S=S_1$ at $\omega$).
Hence the collision operator (\ref{r-2}) $K$ is
\bea\label{re-b-k-2}
&
(K\psi)(x,\omega,E)
=\int_{I}
(\hat {\s K}\psi)(x,\omega,E',E)dE'\nonumber \\
={}&
\int_{I}\hat{\sigma}(x,E',E)\chi(E',E)\psi(x,\mu(E',E)\omega+\sqrt{1-\mu(E',E)^2}\omega^\perp,E')dE'
\nonumber\\
&
+
\int_{I}\hat{\sigma}(x,E',E)\chi(E',E)
\psi(x,\mu(E',E)\omega-\sqrt{1-\mu(E',E)^2}\omega^\perp,E')dE'\nonumber\\
=:{}&
(K_+\psi)(x,\omega,E)+(K_-\psi)(x,\omega,E).
\eea

Consider the operator $K_+$. We find that
\bea\label{re-b-k-3}
&
\n{K_+\psi}_{L^1(G\times S\times I)}\nonumber\\
\leq{}&
\int_G\int_S\int_I\int_{I}
\hat{\sigma}(x,E',E)\chi(E',E)|\psi(x,\mu(E',E)\omega+\sqrt{1-\mu(E',E)^2}\omega^\perp,E')|dE' dE d\omega dx\nonumber\\
={}&
\int_G\int_I\int_{I}
\hat{\sigma}(x,E',E)\chi(E',E)\Big(\int_S|\psi(x,\mu(E',E)\omega+\sqrt{1-\mu(E',E)^2}\omega^\perp,E')|d\omega\Big)dE' dE dx.
\eea
We see that for any fixed $E,\ E'\in I$ the mapping $h:S\to S$ defined by
\[
h(\omega):={}&\mu(E',E)\omega+\sqrt{1-\mu(E',E)^2}\omega^\perp \\
={}&\qmatrix{\mu(E',E) &-\sqrt{1-\mu(E',E)^2}\\ \sqrt{1-\mu(E',E)^2} &\mu(E',E)\\}
\qmatrix{\omega_1\\ \omega_2}=:\omega''
\]
is a diffeomorphism, and
\[
h^{-1}(\omega'')=\mu(E',E)\omega''-\sqrt{1-\mu(E',E)^2}(\omega'')^\perp.
\]
Furthermore, 
we find that  $h:S\to S$ (and $h^{-1}$) is  isometric. Indeed,
for all $v\in T_\omega(S)$,
\[
dh_\omega(v)=((dh_1)_\omega(v),(dh_1)_\omega(v))
=(\la\nabla h_1(\omega),v\ra,\la\nabla h_2(\omega),v\ra
=H_\mu(\omega)v=H_\mu v
\]
where $\mu=\mu(E',E)$ and
\[
H_\mu:=\qmatrix{\mu &-\sqrt{1-\mu^2}\\ \sqrt{1-\mu^2} &\mu\\}.
\]
Let $G_{\omega}(v,v')=G(v,v')=\la v,v'\ra$ be the Riemannian metric on $S$ induced by the Euclidean metric on $\R^2$,
and let $h^*G(\cdot,\cdot)$ be the pull-back of $G$ onto $S$ along $h$.
Then
\bea
&
(h^*G)(v,v')=G(h_*v, h_*v')=G(dh_\omega(v), dh_\omega(v'))
=G(H_\mu v,H_\mu v')\nonumber\\
={}&
\la H_\mu v,H_\mu v'\ra=\la v, H_\mu^T H_\mu v'\ra
=\la v,v'\ra
=G(v,v')
\eea
as desired. 
The Change of Variables Theorem implies (\cite{lee03}) that
\be\label{c-v}
\int_{S} (f\circ h)d\mu_G=\int_{S} (f\circ h)d(h^*\mu_G)=\int_{S} fd\mu_{G}
\ee
where $\mu_G$ is the Riemannian measure on $S$ induced by $G$.

Using the formula (\ref{c-v}) we obtain
\bea
\int_S|\psi(x,\mu(E',E)\omega+\sqrt{1-\mu(E',E)^2}\omega^\perp,E')|d\omega
={}&\int_{S}|(\psi\circ h)(x,\omega,E')| d\omega
\nonumber\\
={}&
\int_{S}|\psi(x,\omega'',E')|d\omega''. \label{re-b-k-4}
\eea

Combining (\ref{re-b-k-3}) and (\ref{re-b-k-4}) we find that
\bea\label{re-b-k-4b}
\n{K_+\psi}_{L^1(G\times S\times I)}
\leq{}&
\int_G\int_I\int_{I}
\hat{\sigma}(x,E',E)\chi(E',E)\Big(\int_{S}|\psi(x,\omega'',E')|d\omega''\Big)dE' dE dx\nonumber\\
\leq{}&
\Big(\sup_{(x,E')\in G\times I}\int_I\hat{\sigma}(x,E',E)\chi(E',E)dE\Big)\n{\psi}_{L^1(G\times S\times I)},
\eea
implying that $K_+:L^1(G\times S\times I)\to L^1(G\times S\times I)$ is bounded.
Similarly one can show that $K_-:L^1(G\times S\times I)\to L^1(G\times S\times I)$ is bounded as well, and 
\be\label{re-b-k-4a}
\n{K_-\psi}_{L^1(G\times S\times I)}
\leq
\Big(\sup_{(x,E')\in G\times I}\int_I\hat{\sigma}(x,E',E)\chi(E',E)dE\Big)\n{\psi}_{L^1(G\times S\times I)}.
\ee
Hence $K:L^1(G\times S\times I)\to L^1(G\times S\times I)$ is bounded and 
\be\label{re-b-k-5}
\n{K}\leq 
2\sup_{(x,E')\in G\times I}\int_I\hat{\sigma}(x,E',E)\chi(E',E)dE.
\ee

When $p=2$, one obtains analogous boundedness criteria by applying H\"older inequality.
In this case we obtain
\be\label{re-b-k-6}
\n{K}\leq 2\sqrt{M_1M_2}
\ee
where
\bea\label{re-b-k-7}
M_1:={}&\sup_{(x,E')\in G\times I}\int_I\hat{\sigma}(x,E',E)\chi(E',E)dE,\nonumber\\
M_2:={}&\sup_{(x,E)\in G\times I}\int_{I}\hat{\sigma}(x,E',E)\chi(E',E)dE'.
\eea

\subsubsection{Dissipativity}

Consider the dissipativity of $cI-(\Sigma-K):L^1(G\times S\times I)\to L^1(G\times S\times I)$. Suppose that for a.e. $(x,\omega,E)$ we have
\be\label{re-b-k-10}
\Sigma(x,\omega,E)
-
2\int_I\hat{\sigma}(x,E,E')\chi(E,E')dE'\geq c\geq 0.
\ee
Using \eqref{re-b-k-4} we find that
\be\label{re-b-k-5:2}
\int_S|(K\psi)(x,\omega,E)| d\omega \leq
2\int_{I}\int_{S}\hat{\sigma}(x,E',E)\chi(E',E)|\psi(x,\omega'',E')|d\omega'' dE',
\ee
and so for any $\lambda>0$
\bea\label{re-b-k-8}
&
\n{(\lambda I-(cI-(\Sigma-K))\psi}_{L^1(G\times S\times I)} \\
={}&
\int_G\int_S\int_I\Big|(\lambda-c+\Sigma(x,\omega,E))\psi(x,\omega,E)-
(K\psi)(x,\omega,E)\Big|dE d\omega dx\nonumber\\
\geq {}&
\int_G\int_S\int_I\Big((\lambda-c+\Sigma(x,\omega,E))|\psi(x,\omega,E)|-
|(K\psi)(x,\omega,E)|\Big)dE d\omega dx
\nonumber\\
\geq{}&
\int_G\int_S\int_I(\lambda-c+\Sigma(x,\omega,E))|\psi(x,\omega,E)|dE d\omega dx\nonumber\\
{}&
-
\Big(2\int_G\int_I\int_{I}\int_{S}\hat{\sigma}(x,E',E)\chi(E',E)dE\Big)|\psi(x,\omega'',E')|d\omega'' dE' dE dx
\nonumber\\
={}&
\int_G\int_S\int_I\Big(\lambda-c+\Sigma(x,\omega,E)
-
2\int_I\hat{\sigma}(x,E,E')\chi(E,E')dE'\Big)|\psi(x,\omega,E)|dE d\omega dx\nonumber\\
\geq{}& 
\lambda \n{\psi}_{L^1(G\times S\times I)},\nonumber
\eea
where we used the inequality \eqref{re-b-k-5:2} and the fact that $\lambda-c+\Sigma(x,\omega,E)\geq 0$ for a.e. $(x,\omega,E)\in G\times S\times I$.

For $p=2$, an analogous dissipativity result is implied by the assumptions
that for a.e. $(x,\omega,E)\in G\times S\times I$,
\bea\label{re-b-k-11}
&
\Sigma(x,\omega,E)
-
2\int_I\hat{\sigma}(x,E,E')\chi(E,E')dE'
\geq c\geq 0,
\nonumber\\
&
\Sigma(x,\omega,E)
-
2\int_{I}\hat{\sigma}(x,E',E)\chi(E',E)dE'
\geq c\geq 0,
\eea
in combination with the H\"older inequality.

For $n=3$ and for the coupled system (cf. section \ref{coss2}) the boundedness of $K$ and the 
dissipativity of $cI-(\Sigma-K)$ in spaces $L^1(G\times S\times I)$ and $L^2(G\times S\times I)$ (and more generally in spaces $L^p(G\times S\times I)$, $1\leq p<\infty$) can be
derived in a similar fashion, from analogous assumptions.
The details will be given in a future work.

In this paper we 
shall for simplicity assume that the single collision operator is of the form
\be\label{tassaK}
(K\psi)(x,\omega,E)=\int_S\int_I\sigma(x,\omega',\omega,E',E)\psi(x,\omega',E')dE' d\omega'.
\ee
The inclusion of other types of collision operators (\ref{r-1-a}), (\ref{r-1-aa}) in the coupled system is a technicality and we omit it.

\section{Solving the Convection Equation by the Method of Characteristics}\label{ls2}

\subsection{On the Escape Time Map}\label{lss2}

In the following we need the concept of "escape time" $t(x,\omega)$ where $x\in G$ and $\omega\in S$. We define for $(x,\omega)\in G\times S$,
\be\label{eq:escape_time}
t(x,\omega)=t_-(x,\omega):=&\inf\{s>0\ |\ x-s\omega\not\in G\}\\ \nonumber
=&\sup\{t>0\ |\ x-s\omega\in G\ {\rm for\ all}\ 0< s <t\}.
\ee
For some simple cases this mapping $t$ can be given explicitly.

We give a simple example in which $t$ can be computed explicitly.

\begin{example}\label{le1}
Let $G$ be the ball $B(0,r)\subset\R^3$. Suppose that $x\in G$.
We find that the point $y=x-s\omega$ belongs to $\partial G$ exactly when $\n{x-s\omega}=r$. This means that
\be\label{l9}
\n{x}^2-2s(x\cdot\omega) + s^2=r^2.
\ee
The solution of (\ref{l9}) is
\[
s=x\cdot\omega\pm \sqrt{(x\cdot\omega)^2+r^2-\n{x}^2}.
\]
Since $t(x,\omega)$ is positive, we have
\[
t(x,\omega)=x\cdot\omega + \sqrt{(x\cdot\omega)^2+r^2-\n{x}^2}.
\]
Note that the discriminant appearing in the expression of $t(x,\omega)$    is always positive for $x\in G$. Hence $t\in C^\infty( G\times S)$. 
We also remark that for $t(x.\omega)$ is defined for $y\in\partial G$ and $t(y,\omega)=0,\ y\in\partial G$. Hence we see that $t\in C(\ol G\times S)$.
\end{example}

As in Example \ref{le1} one sees generally that
\be\label{l10}
t(x,\omega)=x\cdot\omega\pm \sqrt{(x\cdot\omega)^2+\n{y}^2-\n{x}^2}
\ee
where $y=x-t(x,\omega)\omega\in\partial G$. By elementary geometric considerations one finds that the discriminant in the expression (\ref{l10}) is nonnegative for $x\in G$.
In the case when $G$ is convex (and bounded) and when it has $C^1$-boundary the mapping $t:\ol G\times S\to [0,\infty[$ is continuous and it has continuous partial derivatives ${\p t{x_j}}$ in $G\times S$ (see Prop. \ref{tp} below).
In the case where $G$ is convex $t(x,\omega)$ is the unique number $s$ such that
$y=x-t(x,\omega)\omega\in \partial G$.

We record here a simple general lemma.

\begin{lemma}\label{le:escape_time}
\begin{itemize}
\item[(i)] For all $(x,\omega)\in G\times S$, one has $x-t(x,\omega)\omega\in \partial G$.
\item[(ii)]
For every $(x,\omega)\in G\times S$ for which $y:=x-t(x,\omega)\omega$
is a regular point of $\partial G$, it holds $\omega\cdot \nu(y)\leq 0$.
\end{itemize}
\end{lemma}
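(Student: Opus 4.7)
The plan is to prove (i) by showing that $z := x - t(x,\omega)\omega$ lies in both $\overline{G}$ and $\mathbb{R}^3 \setminus G$. Since $G$ is open and bounded, $t := t(x,\omega)$ is a well-defined finite positive number: the ray $s \mapsto x - s\omega$ must eventually leave the bounded set $G$, so the defining infimum in \eqref{eq:escape_time} is finite, while positivity follows from the openness of $G$ and the fact that $x \in G$. From the inf characterization one can pick a sequence $s_n \searrow t$ with $x - s_n\omega \in G^c$; since $G^c$ is closed, passing to the limit gives $z \in G^c$. Conversely, the sup characterization in \eqref{eq:escape_time} says $x - s\omega \in G$ for every $0 < s < t$, so $z = \lim_{s \nearrow t}(x - s\omega) \in \overline{G}$. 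Combining yields $z \in \overline{G} \cap G^c = \partial G$.

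For (ii), I would exploit the standing hypothesis from Section \ref{ls1} that $\partial G$ is locally a $C^1$-manifold with $G$ lying on one side. At a regular point $y = x - t\omega$ there exist a neighborhood $U$ of $y$ and a $C^1$ function $\varphi : U \to \mathbb{R}$ with nonvanishing gradient such that $G \cap U = \{\varphi < 0\}$, $\partial G \cap U = \{\varphi = 0\}$, and $\nu(y)$ is a positive scalar multiple of $\nabla\varphi(y)$. By (i) combined with the sup characterization, for every sufficiently small $\tau > 0$ the point $y + \tau\omega = x - (t - \tau)\omega$ still lies in $G \cap U$, hence $\varphi(y + \tau\omega) < 0 = \varphi(y)$. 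A first-order Taylor expansion then gives $\tau\, \omega\cdot\nabla\varphi(y) + o(\tau) \leq 0$, and dividing by $\tau > 0$ and letting $\tau \to 0^+$ yields $\omega \cdot \nabla\varphi(y) \leq 0$. Normalizing by $\|\nabla\varphi(y)\|$ gives the desired inequality $\omega \cdot \nu(y) \leq 0$.

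The only conceptual point that requires any care is unpacking what "regular point of $\partial G$" means in the piecewise-$C^1$ setting of Section \ref{ls1}: once a local defining function $\varphi$ is in hand, both parts reduce to routine limit arguments, so I do not anticipate a genuine obstacle here.
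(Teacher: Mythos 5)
Your proof is correct and follows essentially the same route as the paper: part (i) is the identical two-sequence argument showing $z\in \overline{G}\cap G^c=\partial G$, and part (ii) is just the contrapositive form of the paper's contradiction argument (the paper assumes $\omega\cdot\nu(y)>0$ and deduces $y+\tau\omega\notin G$ for small $\tau>0$, contradicting the infimum definition, while you run the same local one-sided geometry forward through an explicit defining function $\varphi$ and a first-order Taylor expansion). The extra care you take in unpacking the defining function at a regular point is a reasonable way of making precise the step the paper states only verbally ("as $\nu(y)$ points outward from $G$\dots").
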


\begin{proof}
(i)
By the first line \eqref{eq:escape_time}, one can choose
a decreasing sequence of positive numbers $(s_n)$ such that $s_n\to t(x,\omega)$
and $x-s_n\omega\notin G$. As $G$ is open, one has in the limit that $x-t(x,\omega)\omega\notin G$.
Similarly, choosing an increasing sequence $(t_n)$ of strictly positive numbers such that $t_n\to t(x,\omega)$,
by the second line in \eqref{eq:escape_time} we have $x-t_n\omega\in G$,
and therefore $x-t(x,\omega)\omega\in\ol{G}$.

(ii) We argue by contradiction.
Suppose that $y=x-t(x,\omega)\omega$ is a regular point of $\partial G$
but $\omega\cdot\nu(y)>0$.
Then as $\nu(y)$ points outward from $G$, we necessarily have $y+\tau \omega\notin G$ for all small enough $\tau>0$,
i.e. $x-(t(x,\omega)-\tau) \omega\notin G$, which contradicts
the above ($\inf$-)definition of $t(x,\omega)=t_-(x,\omega)$.
\end{proof}

For the needs of the next proposition,
we recall the concept of lower and upper semi-continuity
of a mapping, as well as a standard result regarding the
existence of supports for a convex (open) subset $C$ of $\R^n$.

Let $X$ be a metric space and let  $f:A\to \R, A\subset X$ be a mapping. Recall that $f$ is lower semi-continuous at $x_0\in A$ if
\[
\liminf_{x\to x_0}f(x)\geq f(x_0).
\]
Similarly $f$ is upper semi-continuous at $x_0\in A$ if
\[
\limsup_{x\to x_0}f(x)\leq f(x_0).
\]

\begin{proposition}\label{pr:support}
Let $C\subset\R^n$ be an open convex set and let $y\in\partial C$.
Then there exists a $\lambda=\lambda_{y}\in\R^n$ such that 
$\lambda\cdot (x-y)<0$ for all $x\in C$.

We call this $\lambda\in\R^n$ a \emph{support} of $C$ at $y\in\partial C$.
\end{proposition}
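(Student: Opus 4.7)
The plan is to establish this as an instance of the supporting hyperplane theorem, proved by projection onto the closure $\ol{C}$. First, note that since $C$ is open, $y\in\partial C$ forces $y\notin C$, and in fact we can pick a sequence $(y_k)\subset\R^n\setminus\ol{C}$ with $y_k\to y$ (for example by approaching $y$ from the complement of $\ol{C}$, using that $\partial C\subset \partial(\ol{C})$ and $\R^n\setminus\ol{C}$ is nonempty whenever $C\neq\R^n$; if $C=\R^n$ the statement is vacuous since $\partial C=\emptyset$).

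Next, for each $k$ I would use the fact that $\ol{C}$ is a closed convex set (closure of a convex set) to define $p_k$ as the unique nearest point of $\ol{C}$ to $y_k$; the standard variational characterization of the projection gives
\ben
(y_k-p_k)\cdot (x-p_k)\leq 0 \quad \text{for all}\ x\in\ol{C}.
\een
Set $\lambda_k=(y_k-p_k)/\n{y_k-p_k}_{\R^n}$, which is well-defined since $y_k\notin\ol{C}$ implies $p_k\neq y_k$. Since $\n{\lambda_k}_{\R^n}=1$, by Bolzano-Weierstrass a subsequence converges to some unit vector $\lambda\in\R^n$. Because $\n{y_k-p_k}_{\R^n}\leq \n{y_k-y}_{\R^n}\to 0$ we also have $p_k\to y$, so passing to the limit in the projection inequality (divided by $\n{y_k-p_k}_{\R^n}$) yields
\ben
\lambda\cdot(x-y)\leq 0 \quad \text{for all}\ x\in\ol{C}.
\een

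Finally, I would upgrade the weak inequality to strict on $C$ itself by using openness: if some $x_0\in C$ satisfied $\lambda\cdot(x_0-y)=0$, then for all sufficiently small $\varepsilon>0$ the point $x_0+\varepsilon\lambda$ would still lie in $C\subset\ol{C}$, yet $\lambda\cdot(x_0+\varepsilon\lambda-y)=\varepsilon\n{\lambda}_{\R^n}^2=\varepsilon>0$, contradicting the inequality above. Hence $\lambda\cdot(x-y)<0$ for every $x\in C$, which is the desired support. The only mildly delicate step is ensuring existence of the approximating sequence $y_k\notin\ol{C}$, but this is straightforward from the definition of boundary together with the assumption $C\subsetneq\R^n$ implicit in having $\partial C\neq\emptyset$.
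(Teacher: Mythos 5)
Your proof is correct and follows essentially the same route as the paper: approximate $y$ by points outside $\ol{C}$, use the nearest-point projection onto the closed convex set to produce separating unit vectors, extract a convergent subsequence on the compact sphere, pass to the limit, and finally use openness of $C$ to upgrade the weak inequality to a strict one. The only cosmetic differences are that you cite the standard variational characterization of the projection where the paper re-derives it from the convexity inequality, and that your base point $p_k$ moves (requiring the easy observation $p_k\to y$) whereas the paper keeps the base point fixed at $y$; both arguments leave the existence of the exterior approximating sequence at the same level of justification.
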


\begin{proof}
We will prove here a bit more than stated above, namely
that for every $y\notin C$ (which is the case if $y\in\partial C$),
there exist $\lambda$ such that $\lambda\cdot (x-y)<0$ for all $x\in C$.

Observe first that it is enough to prove this in the case $y=0$,
since otherwise one can simply replace $C$ with $C-y=\{x-y\ |\ x\in C\}$.

In the first place, we assume that $0\notin \ol{C}$.
As $\ol{C}$ is closed, there exists $y\in \ol{C}$ such that $\n{y}=\min\{\n{x}\ |\ x\in \ol{C}\}$.
If $x\in C$ we have by convexity $sx+(1-s)y\in\ol{C}$ for all $s\in [0,1]$, from which
\[
\n{y}^2\leq &\n{sx+(1-s)y)}^2
=s^2\n{x}^2+(1-s)^2\n{y}^2+2s(1-s)x\cdot y,
\]
i.e.
\[
2s(1-s)x\cdot (-y)\leq s^2(\n{x}^2+\n{y}^2)-2s\n{y}^2.
\]
Setting $\lambda:=-y$ and letting $s>0$ tend to zero,
we get
\[
x\cdot\lambda \leq \lim_{s\to 0^+} \frac{s(\n{x}^2+\n{y}^2)-2\n{y}^2}{2(1-s)}=-\n{\lambda}^2.
\]
Because $0\notin\ol{C}$, we have $\lambda=-y\neq 0$ and thus $x\cdot\lambda<0$.
Since $x\in C$ was arbitrary, this proves the claim in this special case, i.e. when $0\notin\ol{C}$.

It remains to consider the case where $0\in\partial C$.
Let $(x_n)$ be a sequence in the complement of $\ol{C}$ that converges to $0$
when $n\to\infty$.
Choose a sequence $(\lambda_n)$ in $\R^n$, corresponding to $x_n$ as above,
such that $\lambda_n\cdot x\leq 0$ for all $x\in C$.
We may normalize these vectors $\lambda_n$ by replacing them by $\frac{\lambda_n}{\n{\lambda_n}}$
i.e. assume that $\n{\lambda_n}=1$.
But then they lie on the compact unit sphere $S^{n-1}$ of $\R^{n}$,
and thus we may extract a subsequence $(\lambda_{n_i})$ converging to $\lambda\in S^{n-1}$
as $i\to\infty$.
If $x\in C$, then $\lambda_{n_i}\cdot x\leq 0$ for all $i$ implies that
$\lambda\cdot x\leq 0$ for all $x\in C$ as well.

It remains to show that the above inequality is actually a strict inequality.
Indeed, given $x\in C$,
the assumption that $C$ is open, implies that $x+\epsilon \lambda\in C$
for a small enough $\epsilon>0$.
But then by what we just proved, $\lambda\cdot (x+\epsilon\lambda)\leq 0$,
i.e.
\[
\lambda\cdot x\leq -\epsilon\n{\lambda}^2,\quad \forall x\in C,
\]
which, since $\epsilon>0$ and $\n{\lambda}^2>0$
implies that $\lambda\cdot x<0$ as claimed.
This completes the proof.
\end{proof}

\begin{remark}
As is well known, above proposition is actually true in any (possibly infinite dimensional) Hilbert space.
Moreover, it can be verified using the above proof with the exception
that one must recall that (a) the existence of (a unique) $y$ such that $\ol{y}=\{\n{x}\ |\ x\in\ol{C}\}$
remains true in this more general setting, because $\ol{C}$ is convex and closed,
and (b) that  $\n{\lambda_n}=1$ for all $n$
implies that a subsequence $\lambda_{n_i}$ converges \emph{weakly} to some $\lambda$.
Alternatively, it can be seen as a simple corollary of the Hahn-Banach theorem and Riesz representation theorems.
However, here we do not need this result in such a generality.
\end{remark}

In the next two propositions we formulate the basic continuity and differentiability properties
of the escape time $t$, respectively.

\begin{proposition}\label{pr:escape_time}
The escape time $t(x,\omega)$ has the following properties:
\begin{itemize}
\item[(i)] Function $t(x,\omega)$ is lower semi-continuous on $G\times S$.
\item[(ii)] Function $t(x,\omega)$ is continuous on $G\times S$
if and only if $G$ is convex.
In addition, in this case, for all $(x_0,\omega_0)\in G\times S$
if $y_0=x_0-t(x_0,\omega_0)\omega_0$, we have
\be\label{t0}
\lim_{(x,\omega)\to (y_0,\omega_0)} t(x,\omega)=0.
\ee
\end{itemize}
\end{proposition}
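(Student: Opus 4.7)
The plan is to prove (i) by a direct compactness argument, and (ii) by proving the two implications separately: $G$ convex $\Rightarrow$ $t$ continuous via a supporting hyperplane (which will also deliver the limit \eqref{t0}), and the converse by contrapositive. For (i), I fix $(x_0,\omega_0)\in G\times S$ and any $s$ with $0<s<t(x_0,\omega_0)$. The second expression for $t(x_0,\omega_0)$ in \eqref{eq:escape_time} guarantees that the compact set $K:=\{x_0-r\omega_0\mid r\in[0,s]\}$ lies inside $G$; since $G$ is open, $K$ admits an open neighborhood $U$ with $U\subset G$. Joint continuity of $(x,\omega,r)\mapsto x-r\omega$ together with the compactness of $[0,s]$ then shows that for $(x,\omega)\in G\times S$ sufficiently close to $(x_0,\omega_0)$, one has $x-r\omega\in U$ for every $r\in[0,s]$, which forces $t(x,\omega)\ge s$. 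Sending $s\nearrow t(x_0,\omega_0)$ yields lower semi-continuity.

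For the forward direction of (ii), I assume $G$ is convex; by (i) it suffices to verify upper semi-continuity. Let $(x_0,\omega_0)\in G\times S$ and set $y_0=x_0-t(x_0,\omega_0)\omega_0$; by Lemma \ref{le:escape_time}(i), $y_0\in\partial G$. Proposition \ref{pr:support} supplies a support $\lambda\in\R^3$ of $G$ at $y_0$, namely $\lambda\cdot(z-y_0)<0$ for all $z\in G$. Applying this to $z=x_0$ and using $x_0-y_0=t(x_0,\omega_0)\omega_0$ gives $\lambda\cdot\omega_0<0$. For any $\epsilon>0$, set $s=t(x_0,\omega_0)+\epsilon$; then as $(x,\omega)\to(x_0,\omega_0)$,
\[
\lambda\cdot(x-s\omega-y_0)\longrightarrow \lambda\cdot(x_0-s\omega_0-y_0)=-\epsilon\,(\lambda\cdot\omega_0)>0,
\]
so $x-s\omega\notin G$ for all $(x,\omega)$ in some neighborhood of $(x_0,\omega_0)$, giving $t(x,\omega)\le t(x_0,\omega_0)+\epsilon$ there. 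This is the required upper semi-continuity. The limit \eqref{t0} follows by exactly the same computation applied at $y_0\in\partial G$ with its support $\lambda$ (and noting again that $\lambda\cdot\omega_0<0$ since $x_0\in G$): for any $\epsilon>0$, taking $s=\epsilon$ shows $x-\epsilon\omega\notin G$ for $(x,\omega)$ close enough to $(y_0,\omega_0)$, hence $t(x,\omega)\le\epsilon$.

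For the converse in (ii) I would argue by contrapositive. Non-convexity provides $x_1,x_2\in G$ with the open segment between them leaving $G$, and thus a boundary crossing point $y_0\in\partial G$ on this segment. With $\omega_0=(x_2-x_1)/\n{x_2-x_1}$, the ray from $x_2$ in direction $-\omega_0$ meets $\partial G$ at (or before) $y_0$ and eventually re-enters $G$ on its way to $x_1$. The hard part will be converting this geometric information into a genuine discontinuity at a point of $G\times S$: sequences approaching $y_0\in\partial G$ from inside $G$ have their limit point outside the domain $G\times S$, so several limit values do not by themselves contradict continuity on $G\times S$. The clean resolution is to locate an interior $(x_0,\omega_0)\in G\times S$ whose exit ray grazes $\partial G$ tangentially at a regular point $y_0$ (that is, $\omega_0\cdot\nu(y_0)=0$) and such that small perturbations of $\omega_0$ on one side slip past $y_0$ and continue inside $G$ for a substantial extra distance; the resulting asymmetry contradicts the upper semi-continuity that would follow from continuity combined with (i). The existence of such a configuration is guaranteed by the failure of convexity together with the piecewise $C^1$ regularity of $\partial G$.
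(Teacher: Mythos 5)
Your proofs of (i) and of the forward half of (ii), including the limit \eqref{t0}, are correct. For (i) you give a direct tube-lemma argument (a compact subsegment of the exit ray has a neighbourhood inside $G$, so nearby rays stay in $G$ at least that long), whereas the paper argues by contradiction via a convergent subsequence of exit points landing on $\partial G$; both are sound and of comparable length. For upper semi-continuity under convexity you use the same supporting hyperplane from Proposition \ref{pr:support}, only phrased as ``the point $x-(t_0+\epsilon)\omega$ lies strictly on the wrong side of the hyperplane, hence outside $G$, for $(x,\omega)$ near $(x_0,\omega_0)$'' instead of the paper's passage to the limit along a sequence; the computation $\lambda\cdot\omega_0<0$ and the treatment of \eqref{t0} match the paper's.

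The converse implication in (ii) is where your proposal has a genuine gap. You reduce everything to the existence of an interior point $(x_0,\omega_0)\in G\times S$ whose exit ray \emph{grazes} $\partial G$ tangentially at a \emph{regular} point $y_0$ with $\omega_0\cdot\nu(y_0)=0$, and you assert that non-convexity plus piecewise $C^1$ regularity guarantees such a configuration. That assertion is precisely the nontrivial content and is left unproved; worse, it need not hold in the form stated, since the critical contact can occur at a corner of $\partial G$ where $\nu$ is undefined, so an argument routed through the normal vector cannot be completed in general. The paper avoids tangency and boundary regularity altogether: given $x,y\in G$ with $\ell_{x,y}\not\subset G$, it takes a path $\gamma$ from $x$ to $y$ inside $G$, sets $s_0=\inf\{s\ :\ \ell_{x,\gamma(s)}\not\subset G\}$, and checks (using only that $G$ is open, so that a compact segment inside $G$ persists inside $G$ under small perturbation) that $s_0>0$ and $\ell_{x,\gamma(s_0)}\not\subset G$. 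With $\omega_n$ the unit vectors from $\gamma(s_n)$ toward $x$ for $s_n\nearrow s_0$, one gets $t(x,\omega_n)\geq\n{\gamma(s_n)-x}\to\n{\gamma(s_0)-x}>t(x,\omega_0)$, exhibiting a failure of upper semi-continuity at the interior point $(x,\omega_0)$. If you want to keep your structure, you should replace the grazing-ray claim by this infimum-over-a-path construction (or supply a complete proof that a suitable critical direction exists without invoking $\nu$).
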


\begin{proof}
(i) We assume that $t(x,\omega)$ is not lower semi-continuous,
and show that this leads to a contradiction, thus proving the claim.
Indeed, if this is the case, there is a sequence $(x_n,\omega_n)$ in $G\times S$
converging to $(x,\omega)\in G\times S$
and $\ol{t}>0$ such that $t(x,\omega)>\ol t\geq \liminf_{n\to\infty} t(x_n,\omega_n)$.
Since $t(x_n,\omega_n)$ belongs to a bounded set $[0,\ol t]$,
there is a subsequence of $(x_n,\omega_n)$, which we still denote by $(x_n,\omega_n)$, such that $t(x_n,\omega_n)$
converges to a number $t_0\in [0,\ol t]$.

But then the limit $\lim_{n\to\infty} (x_n-t(x_n,\omega_n)\omega_n)$
exists and equals $x-t_0\omega\in \partial G$,
which by the definition of $t(x,\omega)$
implies that $t(x,\omega)\leq t_0$.
This gives us a contradiction since
\[
t(x,\omega)\leq t_0=\lim_{n\to\infty} t(x_n,\omega_n)\leq \ol t< t(x,\omega).
\]

(ii)
Assume first that $G$ is convex.
Let $(x,\omega)\in G\times S$
and choose any $\alpha\in\R$ and $\lambda\in \R^3$
such that $\lambda\cdot z<\alpha$ for all $z\in G$
and $\lambda\cdot (x-t(x,\omega)\omega)=\alpha$,
i.e. $\lambda\cdot x-\alpha=t(x,\omega)\lambda\cdot\omega$.
This is possible by Proposition \ref{pr:support},
choosing $y:=x-t(x,\omega)\omega\in\partial G$ there, and writing $\alpha:=\lambda\cdot y$.

Notice that because $t(x,\omega)>0$
and $t(x,\omega)\lambda\cdot\omega=\lambda\cdot x-\alpha<0$,
we must have $\lambda\cdot\omega<0$.
Then if $(x_n,\omega_n)$ is any sequence in $G\times S$
converging to $(x,\omega)$, one has
\[
\alpha\geq
\lambda\cdot (x_n-t(x_n,\omega_n)\omega_n)
=\lambda\cdot x_n-t(x_n,\omega_n)\lambda\cdot \omega_n
\]
i.e.
\[
t(x_n,\omega_n)\lambda\cdot \omega_n \geq \lambda\cdot x_n-\alpha,
\]
and therefore
\[
\limsup_{n\to\infty} t(x_n,\omega_n)\leq \frac{\lambda\cdot x-\alpha}{\lambda\cdot \omega} = t(x,\omega).
\]
This proves the upper semi-continuity of $t$,
which combined with the result of the case (i)
shows the continuity of $t$ on $G$.

In the opposite direction, let us then demonstrate that the convexity of $G$
follows from the continuity of $t$ on $G\times S$.
Indeed, if $G$ is not convex,
there are $x,y\in G$ such that the line $\ell_{x,y}:=\{x+t(y-x)\ |\ x\in [0,1]\}$
between them is not completely contained in $G$.
Since $G$ is open and connected,
there is a path $\gamma:[0,1]\to G$ such that $\gamma(0)=x$,
$\gamma(1)=y$ and $\gamma(s)\neq x$ for all $s\in ]0,1]$.
Define $s_0\in\R$ to be the the infimum of $s\in [0,1]$
such that $\ell_{x,\gamma(s)}\not\subset G$.
Clearly, $s_0>0$ and $\ell_{x,\gamma(s_0)}\not\subset G$.
We let $s_n\in ]0,1]$ be an increasing sequence
whose limit is $s_0$, and define $\omega_n:=\frac{x-\gamma(s_n)}{\n{\gamma(s_n)-x}}$,
$\omega_0:=\frac{x-\gamma(s_0)}{\n{\gamma(s_0)-x}}$.
But then for all $n$ and $s\in [0,\n{\gamma(s_n)-x}]$
we have $x-s\omega_n\in G$,
and therefore $t(x,\omega_n)\geq \n{\gamma(s_n)-x}$.
On the other hand, since $\ell_{x,\gamma(s_0)}\not\subset G$
and $\gamma(s_0)\in G$, one has $t(x,\omega_0)<\n{\gamma(s_0)-x}$.
Finally, because $(x,\omega_n)\to (x,\omega_0)$, we 
have
\[
\limsup_{n\to\infty} t(x,\omega_n)
\geq \limsup_{n\to\infty} \n{\gamma(s_n)-x}
=\n{\gamma(s_0)-x}>t(x,\omega_0),
\]
and thus, we conclude that $t$ is not upper semi-continuous on $G\times S$.
This completes the proof of the first part of (ii).

For the second part,
let $(x_0,\omega_0)\in G\times S$ and $y_0=x_0-t(x_0,\omega_0)\omega_0$.
As $y_0\in\partial G$ by Lemma \ref{le:escape_time},
there exists by Proposition \ref{pr:support} a number $\alpha\in\R$ and
a vector $\lambda\in\R^3$
such that $\lambda\cdot y<\alpha$ for all $y\in G$
and $\lambda\cdot y_0=\alpha$.

Therefore, as $x_0\in G$, we have $\lambda\cdot x_0<\alpha$
and since $t_0:=t(x_0,\omega_0)>0$,
we deduce that $\lambda\cdot \omega_0 = t_0^{-1}(\lambda\cdot x_0-\alpha)<0$.
Then if $(x_n,\omega_n)$ is a sequence in $G\times S$ that converges
to $(y_0,\omega_0)$ in $\R^3\times S$,
we have like earlier,
$t(x_n,\omega_n)\lambda\cdot \omega_n \geq \lambda\cdot x_n-\alpha$.
Combining this with the inequality $\lambda\cdot \omega_0<0$ allows us to conclude
\[
\limsup_{n\to\infty} t(x_n,\omega_n)\leq \frac{\lambda\cdot y_0-\alpha}{\lambda\cdot \omega_0} = 0,
\]
where in the last step we used again the equality $\lambda\cdot y_0=\alpha$.
The proof of case (ii) is finished.
\end{proof}

\begin{remark}
The results of Proposition \ref{pr:escape_time} remain true with the given proof
if $G$ is an arbitrary open bounded convex subset of $\R^3$, i.e. it does not necessarily have to have piecewise $C^1$-boundary.
\end{remark}

In particular, due to lower semicontinuity (case (i) of the above proposition)
the escape time $t$ is a Lebesgue-measurable map on $G\times S$.

\begin{proposition}\label{tp}
The mapping $t:G\times S\to\R$ is continuously differentiable on a neighbourhood of
every point $(x_0,\omega_0)\in G\times S$ for which $\omega\cdot\nu(y_0)<0$, where $y_0=x_0-t(x_0,\omega_0)\omega_0\in \partial G$ and where $y_0$ is a regular point of $\partial G$. Moreover, in this case \eqref{t0} holds at the point $(y_0,\omega_0)\in\Gamma_-$ and
\begin{align}\label{l11}
\omega_0\cdot (\nabla t)(x_0,\omega_0)=1.
\end{align}
\end{proposition}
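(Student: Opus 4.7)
The strategy is to reduce everything to the implicit function theorem (IFT) via a local defining function of $\partial G$ near the regular point $y_0$. Since $y_0$ is a regular point of $\partial G$, I would choose a neighborhood $U\subset \R^3$ of $y_0$ and a $C^1$-function $\Phi:U\to\R$ with $\nabla\Phi(y_0)\neq 0$ such that $\partial G\cap U=\Phi^{-1}(0)$, $G\cap U=\{\Phi<0\}$, and the outward normal on $\partial G\cap U$ is $\nu=\nabla\Phi/\n{\nabla\Phi}$. Setting $F(x,\omega,\tau):=\Phi(x-\tau\omega)$ on a neighborhood of $(x_0,\omega_0,t_0)$ with $t_0:=t(x_0,\omega_0)$, I have $F(x_0,\omega_0,t_0)=0$ and
\[
\partial_\tau F(x_0,\omega_0,t_0)=-\omega_0\cdot \nabla\Phi(y_0)=-\n{\nabla\Phi(y_0)}\,\omega_0\cdot\nu(y_0)>0
\]
by the hypothesis $\omega_0\cdot\nu(y_0)<0$. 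The IFT then produces a $C^1$-function $s$ on a neighborhood $V$ of $(x_0,\omega_0)$ satisfying $s(x_0,\omega_0)=t_0$ and $\Phi(x-s(x,\omega)\omega)=0$ throughout $V$.

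Next, I would prove that $s=t$ on a (possibly smaller) neighborhood of $(x_0,\omega_0)$. Since $x-s(x,\omega)\omega\in \partial G\subset G^c$, the infimum definition \eqref{eq:escape_time} immediately gives $t(x,\omega)\leq s(x,\omega)$. For the reverse inequality I would combine the lower semi-continuity of $t$ from Proposition \ref{pr:escape_time}(i) with $t\leq s$ and the continuity of $s$ to conclude $t(x,\omega)\to t_0$ as $(x,\omega)\to(x_0,\omega_0)$. Hence, for $(x,\omega)$ close enough to $(x_0,\omega_0)$, the escape point $x-t(x,\omega)\omega$ — which lies in $\partial G$ by Lemma \ref{le:escape_time}(i) — stays in $U$ while $t(x,\omega)$ remains in the interval on which the IFT gives uniqueness. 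The uniqueness clause of IFT then forces $t(x,\omega)=s(x,\omega)$, so $t\in C^1$ near $(x_0,\omega_0)$.

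Third, differentiating the identity $\Phi(x-t(x,\omega)\omega)=0$ in $x$ at $(x_0,\omega_0)$ yields $\nabla \Phi(y_0)=(\omega_0\cdot \nabla\Phi(y_0))\,\nabla_x t(x_0,\omega_0)$; taking the inner product with $\omega_0$ and dividing by the nonzero scalar $\omega_0\cdot \nabla\Phi(y_0)$ gives $\omega_0\cdot\nabla t(x_0,\omega_0)=1$. For the limit \eqref{t0}, I would apply the IFT a second time to $F$ at $(y_0,\omega_0,0)$ — the same transversality inequality holds — producing a continuous function $\tilde s$ near $(y_0,\omega_0)$ with $\tilde s(y_0,\omega_0)=0$ and $\Phi(x-\tilde s(x,\omega)\omega)=0$; for $(x,\omega)\in G\times S$ near $(y_0,\omega_0)$ the infimum definition again yields $t(x,\omega)\leq \tilde s(x,\omega)\to 0$.

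I expect the main obstacle to be the identification $s=t$ in the second step. Because $G$ is not assumed convex, the ray $\tau\mapsto x-\tau\omega$ could a priori exit and re-enter $G$ before hitting the locally defined escape surface near $y_0$; the combined use of Lemma \ref{le:escape_time}(i), the lower semi-continuity of $t$, and the uniqueness in the IFT is precisely what rules this out on a small enough neighborhood of $(x_0,\omega_0)$.
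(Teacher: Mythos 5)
Your proof is correct, but it follows a genuinely different route from the paper's. The paper parametrizes $\partial G$ near $y_0$ by a chart $H$ and applies the \emph{inverse} function theorem to the flow-out map $(u,s,\omega)\mapsto (H(u,0)+s\omega,\omega)$; the identification $t\circ F(u,s,\omega)=s$ is then obtained by a compactness/contradiction argument showing that the whole half-open segment $\{H(u,0)+s\omega:0<s\le\tau\}$ stays inside $G$ for $(u,\tau,\omega)$ near $(u_0,t_0,\omega_0)$ (this itself requires a second application of the inverse function theorem at $(u_0,0,\omega_0)$), and \eqref{l11} is derived from the translation identity $t(x_0+s\omega_0,\omega_0)=s+t(x_0,\omega_0)$ rather than by implicit differentiation. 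You instead work with a local defining function $\Phi$ and the \emph{implicit} function theorem, and — this is the real divergence — you identify the IFT branch $s$ with $t$ by combining $t\le s$ (from the $\inf$-definition \eqref{eq:escape_time}) with the lower semi-continuity of $t$ from Proposition \ref{pr:escape_time}(i) to force $t(x,\omega)\to t_0$, after which IFT uniqueness pins down $t=s$. This neatly sidesteps the paper's segment argument: you never need the first exit ray to avoid re-entering $G$ globally, only that the \emph{first} exit time converges to $t_0$, which semi-continuity gives for free. The only point you should make explicit is in the proof of \eqref{t0}: the bound $t(x,\omega)\le\tilde s(x,\omega)$ requires $\tilde s(x,\omega)>0$ for $x\in G$, which follows because $\tau\mapsto\Phi(x-\tau\omega)$ is strictly increasing near $\tau=0$ (the transversality inequality $\partial_\tau F>0$ you already computed) and starts at $\Phi(x)<0$; with that observation the argument closes. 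In exchange for relying on the earlier semi-continuity result, your proof is shorter; the paper's proof is self-contained at this point and establishes the slightly stronger geometric fact that the entire segment from the boundary to $x$ lies in $G$.
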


\begin{proof}
Let $(x_0,\omega_0)\in G\times S$ be such that $y_0=x_0-t(x_0,\omega_0)\omega_0$
is a regular point of $\partial G$ and that $\omega_0\cdot \nu(y_0)<0$.
Choose $C^1$-diffeomorphism $H:D\to V$ from an open subset $D\subset\R^3$
onto an open subset $V\subset\R^3$ containing $y_0$ such that $V\cap\ol{G}=H(D_+)$,
with $D_+=\{(x_1,x_2,x_3)\in D\ |\ x_3\geq 0\}$
and $V\cap \partial G=H(D_0)$ where $D_0=\{(x_1,x_2,0)\in D\}$,
which we implicitly identify with the obvious subset of $\R^2$.
Such a $H$ exists since $y_0$ was assumed to be a regular point of $\partial G$.
Define
\[
F:D_0\times\R\times S\to\R^3\times S;\quad F(u,s,\omega)=(H(u,0)+s\omega,\omega),
\]
where $u=(u_1,u_2)\in D_0$, $s\in \R$, $\omega\in S$.
Clearly $F$ is $C^1$, and if $y_0=H(u_0,0)$ and $t_0=t(x_0,\omega_0)$, we have $F(u_0,t_0,\omega_0)=(x_0,\omega_0)$.
Moreover, identifying $T_{u_0} D_0$ with $\R^2$ as well,
\[
DF(u_0,t_0,\omega_0)(v,r,\theta)=\big(DH(u_0,0)v+r\omega_0+t_0\theta,\theta),
\quad (v,r,\theta)\in\R^2\times\R\times T_{\omega_0} S,
\]
and therefore $DF(u_0,t_0,\omega_0)(v,r,\theta)=0$ implies
\[
\theta=0\quad \textrm{and}\quad DH(u_0,0)v+r\omega_0=0.
\]
Using that $DH(u_0,0)v\in T_{y_0}(\partial G)$,
we have furthermore
\[
0=\nu(y_0)\cdot (DH(u_0,0)v+r\omega_0)=r\nu(y_0)\cdot \omega_0
\]
and since $\omega_0\cdot\nu(y_0)<0$, that $r=0$ and finally $v=0$.

It has thus been shown (by the Inverse Mapping Theorem) that $F$ is a diffeomorphism
from a small neighbourhood $W$ of $(u_0,t_0,\omega_0)$ onto a neighbourhood $U$ of $(x_0,\omega_0)$.
We claim that
\begin{align}\label{eq:tp:1}
t\big(F(u,s,\omega)\big)=s,\quad \forall (u,s,\omega)\in W.
\end{align}
Once this is established, it is clear that $t$ is a $C^1$-mapping on $U$ because
then
\[
t(x,\omega)=\pr_2\big(F^{-1}(x,\omega)\big),\quad \forall (x,\omega)\in U,
\]
where $\pr_2$ is the projection onto the second factor $D_0\times\R\times S\to D_0$.

It suffices to show that we can shrink $W$ around $(u_0,t_0,\omega_0)$ such that
the (half open) segment $\{H(u,0)+s\omega\ |\ 0<s\leq \tau\}$ lies completely in $G$
for all $(u,\tau,\omega)\in W$.
Indeed, once this has been established, \eqref{eq:tp:1} follows directly from the definition of $t$.
 
To show that such a modification of $W$ is possible,
we argue by contradiction.
Thus, suppose that there was a sequence $(u_n,\tau_n,\omega_n)$ in $W$
converging to $(u_0,t_0,\omega_0)$
and a sequence of numbers $s_n$, $0<s_n\leq \tau_n$,
such that $v_n:=H(u_n,0)+s_n\omega_n\notin G$.
Taking the numbers $s_n$ to be smaller if necessary, we may assume that $v_n\in\partial G$ for every $n$. 
As the sequence $(s_n)$ is bounded, we may pass to a subsequence $(s_{n_i})$
that converges to some $s'\in [0,t_0]$ (as $\tau_{n_i}\to t_0$),
and because $\partial G$ is closed, we have $\lim_{i\to\infty} v_{n_i}=H(u_0,0)+s'\omega_0\in \partial G$.
On the other hand, $H(u_0,0)=y_0=x_0-t_0\omega_0$
and hence the definition of $t_0=t(x_0,\omega_0)$
implies that $t_0\leq t_0-s'$ i.e. $s'=0$.
Thus $v_{n_i}\to y_0=H(u_0,0)$ in $\R^3$ when $i\to \infty$.

The boundary $\partial G$ being an embedded submanifold of $\R^3$, we have $v_{n_i}\to y_0$ also in $\partial G$,
and hence for all $i$ big enough, one has $v_{n_i}=H(\tilde{u}_{n_i},0)$ for some $\tilde{u}_{n_i}\in D_0$ such that $\tilde{u}_{n_i}\to u_0$ when $i\to\infty$.

Computing the differential of $F$ at $(u_0,0,\omega_0)$ as above,
we see that $DF(u_0,0,\omega_0)(v,r,\theta)=(DH(u_0,0)v+r\omega_0,\theta)$,
from which it is seen that $DF(u_0,0,\omega_0)$ is invertible,
and hence that $F$ is a $C^1$-diffeomorphism from an open neighbourhood
$\tilde{W}$ of $(u_0,0,\omega_0)$ onto an open neighbourhood $\tilde{U}$
of $F(u_0,0,\omega_0)=(y_0,\omega_0)$ in $\R^3\times S$.
But $(\tilde{u}_{n_i},0,\omega_{n_i})\to (u_0,0,\omega_0)$,
hence for $i$ large enough, points $(\tilde{u}_{n_i},0,\omega_{n_i})$
and $(u_{n_i},s_{n_i},\omega_{n_i})$ all belong to $\tilde{W}$.
On the other hand,
\[
F(u_{n_i},s_{n_i},\omega_{n_i})=(v_{n_i},\omega_{n_i})
=(H(\tilde{u}_{n_i},0),\omega_{n_i})=F(\tilde{u}_{n_i},0,\omega_{n_i}),
\]
and thus the injectivity of $F$ on $\tilde{W}$
implies that $(u_{n_i},s_{n_i},\omega_{n_i})=(\tilde{u}_{n_i},0,\omega_{n_i})$
for large enough $i$.
In particular $s_{n_i}=0$ for large $i$, which contradicts the fact
that $s_{n}>0$ for all $n$.

As explained before, this contradiction establishes \eqref{eq:tp:1}
and concludes our proof of $C^1$-differentiability property $t$ as announced in the statement of this proposition.

We shall next demonstrate the limiting property \eqref{t0}.
Let $(x_n,\omega_n)$ be a sequence in $G\times S$ converging to $(y_0,\omega_0)$,
where as before $y_0=x_0-t_0(x_0,\omega_0)\omega_0\in\partial G$.
Using the $F$ as defined previously, and setting $y_0=H(u_0,0)$,
we have as above that $DF(u_0,0,\omega_0)(v,r,\theta)=(DH(u_0,0)v+r\omega_0,\theta)$,
from which we again deduce that $DF(u_0,0,\omega_0)$ is invertible,
and therefore there exist an open subset $\tilde{W}\subset D_0\times\R\times S$
containing $(u_0,0,\omega_0)$ which is mapped diffeomorphically
onto an open neighbourhood $\tilde{U}\subset \R^3\times S$ of $(y_0,\omega_0)$.

Defining $u_n,s_n$ by requiring that $F(u_n,s_n,\omega_n)=(x_n,\omega_n)$,
we have $(u_n,s_n,\omega_n)\to F^{-1}(y_0,\omega_0)=(u_0,0,\omega_0)$.
On the other hand, as $x_n-s_n\omega_n = H(u_n,0)\in\partial G$,
it follows that $t(x_n,\omega_n)\leq s_n$ for all $n$
and therefore
\[
\limsup_{n\to\infty} t(x_n,\omega_n)\leq \lim_{n\to\infty} s_n = 0.
\]
Hence the validity of the limit \eqref{t0} is demonstrated (since $\liminf_{n\to\infty} t(x_n,\omega_n)\geq 0$).

Finally, to prove \eqref{l11}, we observe that whenever $|s|$ is small enough, $s\in\R$,
it holds that $t(x_0+s\omega_0,\omega_0)=s+t(x_0,\omega_0)$
and hence
\[
\omega_0\cdot (\nabla t)(x_0,\omega_0)=\dif{s}\big|_{s=0} t(x_0+s\omega_0,\omega_0)
=\dif{s}\big|_{s=0} \big(s+t(x_0,\omega_0)\big)
=1.
\]
\end{proof}

Since $\Gamma_0$ has measure zero on $\partial G\times S\times I$,
we have the following result (cf. Lemme 2.3.3 in \cite{allaire12}):

\begin{theorem}\label{th:bardos}
The set
\begin{align}\label{eq:N_0}
N_0:=\{(x,\omega,E)\in G\times S\times I\ |\ & \textrm{either}\ y\in\partial G\setminus(\partial G)_r,\ \textrm{or}\ y\in(\partial G)_r\ \textrm{and}\ \omega\cdot\nu(y)=0, \nonumber\\
&\textrm{where}\ y=x-t(x,\omega)\omega\in\partial G\}
\end{align}
has a measure zero in $G\times S\times I$.
\end{theorem}

\begin{proof}
First observe that
\[
N_0\subset P\big((\tilde\Gamma_0\times\R)\cup \big((\partial G\setminus(\partial G)_r)\times S\times I\times\R\big)\big),
\]
where
\[
P:\partial G\times S\times I\times\R\to \R^3\times S\times I;\quad P(y,\omega,E,t)=(y+t\omega,\omega,E).
\]
Since $(\tilde\Gamma_0\times\R)\cup ((\partial G\setminus(\partial G)_r)\times S\times I\times\R)$
has measure zero in $\partial G\times S\times I\times\R$
and $\dim (\partial G\times S\times I\times\R)=\dim (\R^3\times S\times I)$,
to prove the theorem, it suffices to
show that $P$ is locally Lipschitz-continuous, since then it maps sets of Lebesgue measure zero to sets of Lebesgue measure zero.
Indeed, we have
\[
& d_{\R^3\times S\times I}\big(P(y_1,\omega_1,E_1,t_1),P(y_2,\omega_2,E_2,t_2)\big) \\
=&\n{y_1-y_2}+\n{t_1\omega_1-t_2\omega_2}+d_S(\omega_1,\omega_2)+|E_1-E_2| \\
\leq&\n{y_1-y_2}+|t_1-t_2|+|t_1|\n{\omega_1-\omega_2}+d_S(\omega_1,\omega_2)+|E_1-E_2| \\
\leq & 
C_1d_{\partial G}(y_1,y_2)+|t_1-t_2|
+(1+|t_1|)C_2d_S(\omega_1,\omega_2)+|E_1-E_2|,
\]
where $d_{\partial G}$ and $d_{S}$ are the (intrinsic) geodesic metrics on $\partial G$ and $S$,
respectively, and $C_1,C_2>0$ are some constants coming from the fact that these geodesic metrics are equivalent with the
restrictions of metrics of the ambient space $\R^3$ (recall that $\partial G$ is compact).
Finally, for any bounded interval $J\subset\R$ there is a constant $C_3>0$
such that the last line in the above inequality is dominated by
$C_3d_{\partial G\times S\times I\times \R}\big((y_1,\omega_1,E_1,t_1),(y_2,\omega_2,E_2,t_2)\big)$, whenever $t_1,t_2\in J$.
This established the claim.
\end{proof}

\subsection{Local Solution Obtained by Lagrange's Method}\label{lss1}

We consider only the convection of one particle that is, the solution $\psi$ is scalar valued. The vector valued case ($\psi=(\psi_1,\psi_2,\psi_3)$) is then obtained (when required) easily and it is considered in section \ref{lss3}.

At first we apply the classical Lagrange's method (the method of characteristics) for the convection equation
\be\label{l1}
\omega\cdot \nabla\psi+\lambda\psi=f(x,\omega,E)
\ee
where $\lambda \in\R$ and $f\in C(\ol G\times S\times I )$. We demand that the solution
$\psi=\psi(x,\omega,E)$ satisfy the inflow boundary condition
\be
\psi(y,\omega,E)=0,\quad {\rm when }\ (y,\omega,E)\in \partial G_r\times S\times I ;\ \omega\cdot\nu(y)<0.
\ee

First, we seek a general solution for the equation \eqref{l1}, which we write as
\[
\sum_{j=1}^3 \omega_j{\p \psi{x_j}}+\lambda\psi=f(x,\omega,E).
\]
Denote $(x,\omega,E)=(x_1,x_2,x_3,\omega_1,\omega_2,\omega_3,E)$. Then  the augmented system of ordinary differential equations (the system of characteristics) is
\[
X_1'(s)&=\Omega_1,\ \ \ \Omega_1'(s)=0\\
X_2'(s)&=\Omega_2,\ \ \ \Omega_2'(s)=0\\
X_3'(s)&=\Omega_3,\ \ \ \Omega_3'(s)=0\\
{\s E}'(s)&=0 \\
\Psi'(s)&= \ f(X,\Omega,{\s E})-\lambda \Psi
\]
where we denoted $X=(X_1,X_2,X_3),\ \Omega=(\Omega_1,\Omega_2,\Omega_3)$.

We have $\Omega_j'(s)=0,\ {\s E}'(s)=0$ which implies
$\Omega_j(s)=C_j$ (a constant) and ${\s E}=C''$ (a constant). Hence we further get $X_j'(s)=C_j$ and so $X_j(s)=C_js+C_j'$ where $C_j'$ are constants. Denote $C=(C_1,C_2,C_3),\ C'=(C_1',C_2',C_3')$. Then
\[
\Psi'(s)=f(sC+C',C,C'')-\lambda \Psi
\]
whose solutions is
\be \label{l2}
\Psi(s)=e^{-\lambda s} \big(C_0+\int_0^sf(\tau C+C',C,C'')e^{\lambda\tau}\ d\tau\big),
\ee
where $C_0$ is again a constant.

Next we consider (locally) the initial value  for the augmented system. It must be of the form
$(X(0),\Omega(0),{\s E}(0),\Psi(0))=\Theta(w)$ where $w\in W\subset\circ\R^6$ and $\Theta: W\to \R^8$ is the (local) parametrization of the $6$-dimensional manifold $\zeta:=\Theta (W)$ through which the curve $(X,\Omega,{\s E},\Psi)$ goes at $s=0$. Let $h=(h_1,h_2,h_3):{\s V}\to \partial G$, ${\s V}\subset\R^2$ (open)
be a local parametrization of the boundary $\partial G_r$. Suppose that $y_0=h(v_0)\in (\partial G)_r$ such that $\omega_0\cdot\nu(y_0)=\omega_0\cdot\nu(h(v_0))<0$. Then there exist
an open neighbourhood $ {\s V'}\subset {\s V}$ and an open neighbourhood  $ {\s U}\subset\R^3$ such that $\omega\cdot\nu(h(v))<0$ for all
$(v,\omega)\in {\s V'}\times {\s U}$ (here we exceptionally assume $\omega$ belongs to an open subset of $\R^3$). Hence the local parametrization $\Theta$ is
\[
\Theta:{\s V'}\times {\s U}\times \Delta\to \R^8;\ \Theta(w)=(h(v),\omega,E,0), \ w=(v,\omega,E)
\]
where $\Delta\subset I$.

The initial condition is
\[
(X(0),\Omega(0),{\s E}(0),\Psi(0))=\Theta(w)=(h(v),\omega,E,0)
\]
which is equivalent to
\be\label{l3}
X(0)=h(v),\ \Omega(0)=\omega,\ {\s E}(0)=E,\ \Psi(0)=0.
\ee
Taking into account the above obtained general solutions, the condition (\ref{l3}) means that $
C_0=0,\ C'=h(v),\ C=\omega,\ C''=E$ and then
\be\label{l4}
X(s):=X_{(v,\omega,E)}(s)=s\omega+h(v),\ \Omega(s):=\Omega_{(v,\omega,E)}(s)=\omega,\
{\s E}(s):={\s E}_{(v,\omega,E)}(s)=E
\ee
and
\be\label{l4a}
\Psi(s):=\Psi_{(v,\omega,E)}(s)=e^{-\lambda s} \int_0^s f(\tau \omega+h(v),\omega,E)e^{\lambda\tau}\ d\tau.
\ee

The Lagrange's method proceeds as follows. We denote
\be\label{l5}
X(s)=x,\ \Omega(s)=\omega,\ {\s E}(s)=E
\ee
from which we must eliminate $s,\ v$, ($\omega$ and $E$). We find that the equations (\ref{l5}) mean that
\be\label{l6}
h(v)=x-s\omega\in\partial G\ {\rm or}\ v=h^{-1}(x-s\omega).
\ee
Since $x-s\omega=h(v)\in\partial G$  the definition of $t(x,\omega)$ implies that $s=t(x,\omega)$ in (\ref{l6}).
Hence the solution $\psi$
\bea\label{l7}
\psi(x,\omega,E)&=\Psi_{(h^{-1}(x-s\omega),\omega,E)}(t(x,\omega))\\
&=e^{-\lambda t(x,\omega)} \int_0^{t(x,\omega)}f(\tau \omega+x-t(x,\omega)\omega,\omega,E)e^{\lambda\tau}\ d\tau\\
&=\int_0^{t(x,\omega)}f(x-\tau \omega,\omega,E)e^{-\lambda\tau}\ d\tau.
\eea

The applied Lagrange's method gives (locally) a unique continuous solution (for which ${\p \psi{x_j}}$ are continuous)
when initial value manifold $\zeta$ is not characteristic at $\Theta(w_0)$ for the convection equation. This means that
at the given point $(x_0,\omega_0,E_0,0)=(h(v_0),\omega_0,E_0,0)=\Theta(w_0)\in \zeta$ one must have
\be \label{l8}
\det\qmatrix{
\partial_1\theta(w_0)\cr \vdots\cr\partial_6\theta(w_0)\cr a(x_0,\omega_0,E_0)\cr
}
\not=0
\ee
where $\theta(w):=(h(v),\omega,E)$ and $a(x,\omega,E):=(-\omega_1,-\omega_2,-\omega_3,0,0,0,0)$. Hence
\be \label{l8a}
\det\qmatrix{
\partial_1h_1(v_0)&\partial_1h_2(v_0)&\partial_1h_3(v_0)&0&0&0&0\cr
\partial_2h_1(v_0)&\partial_2h_2(v_0)&\partial_2h_3(v_0)&0&0&0&0\cr
0&0&0&1&0&0&0\cr
0&0&0&0&1&0&0\cr
0&0&0&0&0&1&0\cr
0&0&0&0&0&0&1\cr
\omega_{10}&\omega_{20}&\omega_{30}&0&0&0&0\cr
}\neq 0
\ee
that is
\[
\omega_{10}\det\qmatrix{
\partial_1h_2&\partial_1h_3\cr
\partial_2h_2&\partial_2h_3\cr
}
-
\omega_{20}\det\qmatrix{
\partial_1h_1&\partial_1h_3\cr
\partial_2h_1&\partial_2h_3\cr
}
+
\omega_{30}\det\qmatrix{
\partial_1h_1&\partial_1h_2\cr
\partial_2h_1&\partial_2h_2\cr
}\neq 0,
\]
where partial derivatives $\partial_i h_j$ are evaluated at $v_0$.
Since the normal $\nu(y_0)$ of the surface $\partial G_r$ at $y_0:=h(v_0)$ is parallel to
\[
&
\left(\det\qmatrix{\partial_1h_2(v_0)&\partial_1h_3(v_0)\cr
\partial_2h_2(v_0)&\partial_2h_3(v_0)\cr}
,
-{\det}\qmatrix{\partial_1h_1(v_0)&\partial_1h_3(v_0)\cr
\partial_2h_1(v_0)&\partial_2h_3(v_0)\cr}
,
{\det}\qmatrix{\partial_1h_1(v_0)&\partial_1h_2(v_0)\cr
\partial_2h_1(v_0)&\partial_2h_2(v_0)\cr
}\right) \\
&=(\partial_1h\times\partial_2h)(v_0)
\]
the condition (\ref{l8a}) is equivalent to $\omega_0\cdot \nu(y_0)\not=0$ which is satisfied on the manifold $\zeta$ where $\omega\cdot\nu(h(v))<0$. Hence the obtained solution $\psi$ exists locally and it is unique.

\begin{remark}\label{lr2}
The above expressed method of characteristics needs only the condition $\omega_0\cdot \nu(y_0)\not=0$ to guarantee the existence of the unique {\it local} solution such that ${\p \psi{x_j}}$ are continuous.
\end{remark}

\begin{remark}
Here we give formally a shorter argument
for the derivation of the explicit form 
of the solution to \eqref{l1}:
\[
\dif{s} \psi(x-s\omega,\omega,E)=-\omega\cdot\nabla \psi(x-s\omega,\omega,E)
=\lambda\psi(x-s\omega,\omega,E)-f(x-s\omega,\omega,E).
\]
Write $\Psi(s)=\psi(x-s\omega,\omega,E)$, $F(s)=f(x-s\omega,\omega,E)$,
and we have
\[
\Psi'(s)-\lambda\Psi(s)=-F(s),
\]
i.e.
\[
\dif{s}(e^{-\lambda s}\Psi(s))=-e^{-\lambda s}F(s),
\]
from which
\[
\Psi(s)=e^{\lambda s}\Big(\Psi(0)-\int_0^{s} e^{-\lambda \tau}F(\tau)\diff\tau\Big),
\]
or
\[
\psi(x-s\omega,\omega,E)=e^{\lambda s}\Big(\psi(x,\omega,E)-\int_0^{s} e^{-\lambda \tau}f(x-\tau\omega,\omega,E)\diff\tau\Big).
\]
Letting $s=t(x,\omega)$, we therefore obtain
\[
\psi(x,\omega,E)=e^{-\lambda t(x,\omega)}\psi(x-t(x,\omega)\omega,\omega,E)+\int_0^{t(x,\omega)} e^{-\lambda \tau}f(x-\tau\omega,\omega,E)\diff\tau,
\]
where the first term on the right hand size vanishes,
because of the assumption that $\psi=0$ on $\Gamma_-$.
\end{remark}

\subsection{Global Solution Given by the Method of Characteristics}\label{lss3}

The section \ref{lss2} suggests that the solution for the convection equation (for $\lambda\in\R$)
\be\label{l1aaa}
\omega\cdot \nabla\psi+\lambda\psi=f(x,\omega,E)
\ee
be
\be \label{l15}
\psi(x,\omega,E)=\int_0^{t(x,\omega)}f(x-s\omega,\omega,E)e^{-\lambda s}\ ds.
\ee
In the following we denote
\[
D:=(G\times S\times I)\setminus N_0,
\]
where $N_0$ is Lebesgue zero measurable set given in Theorem \ref{th:bardos}.

\begin{theorem}\label{lth1}
Suppose that $f\in C(\ol G\times S\times I)$ is such that ${\p f{x_j}}\in C(\ol G\times S\times I)$.
Then (\ref{l15}) is the unique solution of the equation (\ref{l1aaa}) in $D$ satisfying the inflow boundary condition $\psi(y,\omega,E)=0$ for $(y,\omega,E)\in\Gamma_-$, where $y:=x-t(x,\omega)\omega,\ x\in D$ 
(that is, $\psi$ is the classical solution in $D$).
\end{theorem}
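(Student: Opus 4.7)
My plan is to verify the theorem in three separate parts: \textbf{(a)} that the function $\psi$ defined by \eqref{l15} satisfies the PDE \eqref{l1aaa} pointwise on $D$, \textbf{(b)} that it attains the inflow boundary value $0$ in an appropriate limiting sense, and \textbf{(c)} uniqueness.

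For part \textbf{(a)}, the key observation is that for every $(x_0,\omega_0,E_0)\in D$, the point $y_0=x_0-t(x_0,\omega_0)\omega_0$ is a regular point of $\partial G$ with $\omega_0\cdot\nu(y_0)\ne 0$, and by Lemma \ref{le:escape_time}(ii) in fact $\omega_0\cdot\nu(y_0)<0$. Hence Proposition \ref{tp} applies and $t$ is $C^1$ on a neighbourhood of $(x_0,\omega_0)$ with $\omega_0\cdot\nabla t(x_0,\omega_0)=1$. I would then differentiate \eqref{l15} by Leibniz's rule (the integrand and its spatial derivatives being jointly continuous on a compact neighbourhood, so the hypotheses are met), obtaining
\[
\partial_j\psi(x,\omega,E)=f(y,\omega,E)e^{-\lambda t(x,\omega)}\,\partial_j t(x,\omega)+\int_0^{t(x,\omega)}(\partial_j f)(x-s\omega,\omega,E)\,e^{-\lambda s}\,ds.
\]
Contracting with $\omega$, using $\omega\cdot\nabla t=1$ for the boundary term, and observing that $\omega\cdot\nabla_x f(x-s\omega,\omega,E)=-\tfrac{d}{ds}f(x-s\omega,\omega,E)$ for the integral term, an integration by parts produces
\[
\omega\cdot\nabla\psi=f(y,\omega,E)e^{-\lambda t(x,\omega)}-f(y,\omega,E)e^{-\lambda t(x,\omega)}+f(x,\omega,E)-\lambda\psi(x,\omega,E),
\]
which is exactly \eqref{l1aaa}.

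For part \textbf{(b)}, I would invoke the second assertion of Proposition \ref{tp} (equation \eqref{t0}), which says that $t(x,\omega)\to 0$ as $(x,\omega)$ approaches any point $(y_0,\omega_0)\in\Gamma_-$ (with $y_0$ regular). Since $f$ is bounded on the compact set $\overline G\times S\times I$, the integral in \eqref{l15} is bounded by $\|f\|_\infty \cdot t(x,\omega)\cdot e^{|\lambda| t(x,\omega)}$, which tends to $0$. Thus $\lim_{(x,\omega,E)\to(y_0,\omega_0,E_0)}\psi(x,\omega,E)=0$, giving the inflow condition in the natural limiting sense.

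For part \textbf{(c)} (uniqueness), I would follow the standard ODE-along-characteristics argument. If $\psi_1,\psi_2$ are two classical solutions, $\phi:=\psi_1-\psi_2$ satisfies $\omega\cdot\nabla\phi+\lambda\phi=0$ on $D$ and vanishes at the inflow boundary in the limiting sense above. Fix $(x,\omega,E)\in D$ and set $\Phi(s):=\phi(x-s\omega,\omega,E)$ for $s\in[0,t(x,\omega)[$; differentiating gives $\Phi'(s)=-\omega\cdot\nabla\phi(x-s\omega,\omega,E)=\lambda\Phi(s)$, so $\Phi(s)=\Phi(0)e^{\lambda s}$. Letting $s\to t(x,\omega)^-$ and using the boundary condition forces $\Phi(0)=\phi(x,\omega,E)=0$.

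The main technical obstacle is part \textbf{(a)}: one must be careful that differentiation under the integral sign is justified uniformly on a neighbourhood of $(x_0,\omega_0,E_0)\in D$, which relies crucially on the local $C^1$-regularity of $t$ (Proposition \ref{tp}) so that the domain of integration depends smoothly on the parameters; and one must correctly track that the contribution from the moving upper limit exactly cancels the boundary term produced by the integration by parts. Everything else is essentially bookkeeping.
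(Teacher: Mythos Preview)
Your proof is correct and follows essentially the same route as the paper: both differentiate \eqref{l15} under the integral sign (using the $C^1$-regularity of $t$ on $D$ from Proposition~\ref{tp}), contract with $\omega$, invoke $\omega\cdot\nabla t=1$, and integrate by parts via $\omega\cdot\nabla_x f(x-s\omega,\cdot)=-\tfrac{d}{ds}f(x-s\omega,\cdot)$ to recover \eqref{l1aaa}. The only differences are cosmetic: the paper checks the boundary condition by direct evaluation (since $t(y,\omega)=0$ for $(y,\omega,E)\in\Gamma_-$ by Proposition~\ref{tp}, the integral is over $[0,0]$) rather than by your limit argument, and the paper leaves uniqueness implicit from the Lagrange analysis of Section~\ref{lss1}, whereas you supply a self-contained ODE-along-characteristics argument.
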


\begin{proof}
Since $f\in C(\ol G\times S\times I)$ the expression (\ref{l15}) is defined for all $(x,\omega,E)\in
\ol G\times S\times I$.
Define
\[
F(x,\omega,E,t)=\int_0^t f(x-s \omega,\omega,E)e^{-\lambda s}\ ds,\quad \ (x,\omega,E)\in D,\ t\in [0,t(x,\omega)].
\]
Then for $(x,\omega,E)\in D$,
\bea\label{cont}
\psi(x,\omega,E)=F(x,\omega,E,t(x,\omega))
\eea
and so
\be\label{l16}
{\p \psi{x_j}}
={\p F{x_j}}(x,\omega,E,t(x,\omega))+{\p F{t}}(x,\omega,E,t(x,\omega)){\p t{x_j}}(x,\omega).
\ee
Hence (recall that by assumption ${\p f{x_j}}\in C(\ol G\times S\times I)$)
\bea\label{l17}
{\p \psi{x_j}}&=\int_0^{t(x,\omega)} {\p f{x_j}}(x-s\omega,\omega,E)e^{-\lambda s}ds
+f(x-t(x,\omega)\omega,\omega,E)e^{-\lambda t(x,\omega)}{\p t{x_j}}(x,\omega).
\eea
Hence we see that ${\p \psi{x_j}}(x,\omega,E)$ exists on $D$ and
\bea
\omega\cdot\nabla\psi
=&\int_0^{t(x,\omega)} \omega\cdot\nabla_x f(x-t\omega,\omega,E)e^{-\lambda t}dt \\
&+f(x-t(x,\omega)\omega,E)e^{-\lambda t(x,\omega)}\omega\cdot(\nabla_x t)(x,\omega).
\eea

Using Eq. \eqref{l11} i.e. $\omega\cdot(\nabla_x t)(x,\omega)=1$
and the basic fact that
\[
\dif{s} f(x-s\omega,\omega,E)=-\omega\cdot \nabla_x f(x-s\omega,\omega,E),
\]
we can simplify the above formula as follows:
\[
\omega\cdot\nabla\psi
=&
\int_0^{t(x,\omega)} -\Big(\dif{s} f(x-s\omega,\omega,E)\Big)e^{-\lambda s}ds
+f(x-t(x,\omega)\omega,\omega,E)e^{-\lambda t(x,\omega)} \\
=&-f(x-t(x,\omega)\omega,\omega,E)e^{-\lambda t(x,\omega)}+f(x,\omega,E) \nonumber \\
&-\lambda\int_0^{t(x,\omega)} f(x-s\omega,\omega,E) e^{-\lambda s}ds
+f(x-t(x,\omega)\omega,\omega,E)e^{-\lambda t(x,\omega)} \\
=&f(x,\omega,E)-\lambda F(x,\omega,E,t(x,\omega)) \\
=&f(x,\omega,E)-\lambda\psi.
\]
We thus see that the convection equation holds.

For $y\in\Gamma_-$ given in the assertion we have by Proposition \ref{tp}
that $t(y,\omega)=0$, and then $\psi(y,\omega,E)=0$ for the inflow boundary points given in the theorem.
This finishes the proof.

\end{proof}

By applying the similar methods as above we
get the following theorems (cf. \cite{dautraylionsv6}, p. 244-246).

\begin{theorem}\label{lth3}
Suppose that
$f\in C(\ol G\times S\times I)$ such that ${\p f{x_j}}\in C(\ol G\times S\times I)$,
and let $\Sigma\in C(\ol G\times S\times I)$ such that 
${\p {\Sigma}{x_j}}\in C(\ol G\times S\times I),\ j=1,2,3$  .
Then  the unique (classical) solution of the equation
\be\label{l19a}
\omega\cdot \nabla\psi+\Sigma\psi=f\quad {\rm in}\ D 
\ee
satisfying the homogeneous inflow boundary condition
\be\label{l20a}
\psi(y,\omega,E)=0\ {\rm for}\ (y,\omega,E)\in\Gamma_- \ 
\ee
is given by
\be \label{l21a}
\psi(x,\omega,E)=\int_0^{t(x,\omega)}e^{-\int_0^t\Sigma(x-s\omega,\omega,E)ds} f(x-t\omega,\omega,E) dt.
\ee
\end{theorem}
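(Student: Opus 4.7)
The plan is to imitate the strategy used for Theorem \ref{lth1} (the constant-$\lambda$ case), replacing the exponential $e^{-\lambda s}$ by the integrating factor $\exp\bigl(-\int_0^s \Sigma(x-\tau\omega,\omega,E)\diff\tau\bigr)$ that arises naturally when $\Sigma$ depends on $x$. More precisely, along the characteristic $s\mapsto (x-s\omega,\omega,E)$, if $\psi$ solves \eqref{l19a} then $\Psi(s):=\psi(x-s\omega,\omega,E)$ satisfies the ODE $\Psi'(s)=\Sigma(x-s\omega,\omega,E)\Psi(s)-f(x-s\omega,\omega,E)$, using $\dif{s}\psi(x-s\omega,\omega,E)=-\omega\cdot\nabla\psi(x-s\omega,\omega,E)$. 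Multiplying by the integrating factor $e^{-\int_0^s\Sigma(x-\tau\omega,\omega,E)\diff\tau}$, integrating from $0$ to $t(x,\omega)$, and using the boundary condition $\Psi(t(x,\omega))=\psi(y,\omega,E)=0$ at $y=x-t(x,\omega)\omega\in\Gamma_-$, yields the formula \eqref{l21a}.

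The second part is the verification that the candidate $\psi$ given by \eqref{l21a} is indeed a classical solution in $D$. I would set
\[
G(x,\omega,E,t)=e^{-\int_0^t\Sigma(x-s\omega,\omega,E)\diff s}\,f(x-t\omega,\omega,E),
\]
so that $\psi(x,\omega,E)=\int_0^{t(x,\omega)}G(x,\omega,E,t)\diff t$, and differentiate in $x$ using Leibniz's rule (as in \eqref{l16}). The crucial identity, used repeatedly, is that for any $C^1$ function $\Phi$ one has $\omega\cdot\nabla_x\Phi(x-s\omega,\omega,E)=-\dif{s}\Phi(x-s\omega,\omega,E)$. Applying this to both $\Sigma$ and $f$, and integrating by parts along $s$, gives
\[
\omega\cdot\nabla_x\!\int_0^t\!\Sigma(x-s\omega,\omega,E)\diff s=\Sigma(x,\omega,E)-\Sigma(x-t\omega,\omega,E),
\]
which yields an explicit expression for $\omega\cdot\nabla_x G$. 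Combined with the boundary contribution from the upper limit $t(x,\omega)$, multiplied by $\omega\cdot\nabla_x t(x,\omega)=1$ (Proposition \ref{tp}, valid precisely on $D$), two telescoping cancellations leave $\omega\cdot\nabla\psi=f-\Sigma\psi$. The boundary condition $\psi|_{\Gamma_-}=0$ follows from \eqref{t0} at points $(y,\omega,E)\in\Gamma_-$ with $\omega\cdot\nu(y)<0$, since then $t(y,\omega)=0$ and the integral in \eqref{l21a} collapses.

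For uniqueness, if $\psi_1,\psi_2$ are two classical solutions in $D$ with vanishing inflow traces, then $\chi:=\psi_1-\psi_2$ solves the homogeneous equation, and along every characteristic curve through a point of $D$ the function $s\mapsto\chi(x-s\omega,\omega,E)$ satisfies a linear first-order ODE with zero terminal value at $s=t(x,\omega)$; by the uniqueness of solutions to linear ODEs, $\chi\equiv 0$ on $D$.

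The main obstacle is the bookkeeping in the verification step: $x$ enters the candidate \eqref{l21a} through the integrand $f(x-t\omega,\cdot)$, the $x$-dependent exponent of the exponential, and the upper limit $t(x,\omega)$. Handling all three simultaneously requires commuting the $\omega\cdot\nabla_x$ derivative with the integral (justified by the assumed continuity of $\partial f/\partial x_j$ and $\partial\Sigma/\partial x_j$) and arranging the pieces so that the two boundary terms (one from differentiating the upper limit and one produced by the integration by parts in $s$) cancel against each other. Everything else is a direct extension of the $\lambda=\mathrm{const}$ computation carried out in the proof of Theorem \ref{lth1}.
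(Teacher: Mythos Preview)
Your proposal is correct and follows precisely the approach the paper intends: the paper does not spell out a proof of this theorem but merely says ``by applying the similar methods as above'' (i.e., the method of Theorem \ref{lth1}), and your argument is exactly that---replacing the constant $e^{-\lambda s}$ by the variable integrating factor $e^{-\int_0^t\Sigma(x-s\omega,\omega,E)\diff s}$, differentiating under the integral via Leibniz's rule, using $\omega\cdot\nabla_x t=1$ on $D$, and letting the boundary term from the upper limit cancel against the term produced by integrating $-\dif{t}G$. The uniqueness argument via the homogeneous ODE along characteristics is also the natural one in this setting.
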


\begin{theorem}\label{lth3a}
Suppose that $g\in C(\Gamma_-)$ such that ${\p g{\tilde y_i}}\in C(\Gamma_-), i=1,2$,
where ${\partial\over{\partial \tilde y_i}}$ denotes any local basis of the
tangent space of $(\partial G)_r$
(and ${\p g{\tilde y_i}}\in C(\Gamma_-)$ is to be understood in a local sense),
and $\Sigma\in C(\ol G\times S\times I)$ such that
${\p {\Sigma}{x_j}}\in C(\ol G\times S\times I),\ j=1,2,3$.
Then  the unique (classical) solution of the equation
\be\label{l19}
\omega\cdot \nabla\psi+\Sigma\psi=0\quad {\rm in}\ D
\ee
satisfying the inhomogeneous inflow boundary condition
\be\label{l20a:1}
\psi(y,\omega,E)=g(y,\omega,E)\quad {\rm for}\ (y,\omega,E)\in \Gamma_-\ 
\ee
is given by
\be \label{l21}
\psi(x,\omega,E)=e^{-\int_0^{t(x,\omega)}\Sigma(x-s\omega,\omega,E)ds} g(x-t(x,\omega)\omega,\omega,E).
\ee
\end{theorem}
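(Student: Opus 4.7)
The plan is to imitate closely the strategy used in Theorem \ref{lth1} and Theorem \ref{lth3}. Namely, I would \emph{define} $\psi$ by the right-hand side of \eqref{l21} on $D$ and then verify that it satisfies the equation and the boundary condition, and finally deduce uniqueness from the ODE theory applied along characteristics.

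Write $\psi(x,\omega,E)=e^{-A(x,\omega,E)}\,g(y(x,\omega),\omega,E)$, where
\ben
A(x,\omega,E):=\int_0^{t(x,\omega)}\Sigma(x-s\omega,\omega,E)\,ds,\qquad y(x,\omega):=x-t(x,\omega)\omega.
\een
By Proposition \ref{tp}, at every point of $D$ the map $t$ is $C^1$ in $x$ and $y(x,\omega)$ lies in the regular part $(\partial G)_r$; moreover $\omega\cdot\nabla_x t=1$. First I would check that $\psi$ is $C^1$ in $x$ on $D$: for $A$ this follows from the hypothesis on $\Sigma$ and the $C^1$ regularity of $t$, while for $g(y(x,\omega),\omega,E)$ one must observe that, for fixed $\omega$, the map $x\mapsto y(x,\omega)$ takes values in $\partial G$, and so its $x$-derivatives are \emph{tangent} to $\partial G$ at $y$. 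Thus only the tangential derivatives of $g$ on $(\partial G)_r$ that appear in the hypothesis are needed.

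The computation of $\omega\cdot\nabla\psi$ simplifies dramatically thanks to \eqref{l11}. Indeed, $\omega\cdot\nabla_x y(x,\omega)=\omega-(\omega\cdot\nabla_x t)\,\omega=\omega-\omega=0$, so by the chain rule $\omega\cdot\nabla_x\bigl(g(y(x,\omega),\omega,E)\bigr)=0$. For $A$, using $\omega\cdot\nabla_x t=1$ and the identity $\omega\cdot\nabla_x\Sigma(x-s\omega,\omega,E)=-\tfrac{d}{ds}\Sigma(x-s\omega,\omega,E)$, one gets
\ben
\omega\cdot\nabla_x A=\Sigma(x-t(x,\omega)\omega,\omega,E)+\bigl[\Sigma(x,\omega,E)-\Sigma(x-t(x,\omega)\omega,\omega,E)\bigr]=\Sigma(x,\omega,E).
\een
Hence $\omega\cdot\nabla\psi=-(\omega\cdot\nabla_x A)\,\psi=-\Sigma\,\psi$, which is \eqref{l19}. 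The boundary condition is immediate: for $(y,\omega,E)\in\Gamma_-$ Proposition \ref{tp} gives $t(y,\omega)=0$, whence $A(y,\omega,E)=0$ and $\psi(y,\omega,E)=g(y,\omega,E)$.

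For uniqueness, suppose $\psi_1,\psi_2$ are two classical solutions and set $\phi:=\psi_1-\psi_2$. For every $(x,\omega,E)\in D$ define $\Phi(s):=\phi(x-s\omega,\omega,E)$ on $[0,t(x,\omega)]$. Then $\Phi'(s)=-\omega\cdot\nabla\phi(x-s\omega,\omega,E)=\Sigma(x-s\omega,\omega,E)\,\Phi(s)$, a linear scalar ODE, with terminal condition $\Phi(t(x,\omega))=\phi(y(x,\omega),\omega,E)=0$ (from Lemma \ref{le:escape_time} and the definition of $D$ the endpoint lies in $\Gamma_-$). Hence $\Phi\equiv 0$ and in particular $\phi(x,\omega,E)=\Phi(0)=0$. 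The main technical obstacle in the whole argument is really only the differentiability of $x\mapsto g(y(x,\omega),\omega,E)$: one must be careful that on $D$ the footpoint $y(x,\omega)$ stays in $(\partial G)_r$ with $\omega\cdot\nu(y)\neq 0$ (guaranteed by Theorem \ref{th:bardos}) so that Proposition \ref{tp} applies and the tangential regularity of $g$ is genuinely what gets used.
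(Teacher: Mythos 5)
Your proposal is correct and follows essentially the same route as the paper's proof: define $\psi$ by \eqref{l21}, use $\omega\cdot\nabla_x t=1$ from \eqref{l11} together with the invariance $x-t(x,\omega)\omega=(x+s\omega)-t(x+s\omega,\omega)\omega$ to kill the derivative of the boundary factor, and differentiate the exponential to produce $-\Sigma\psi$. The only addition is your explicit uniqueness argument along characteristics, which the paper leaves implicit; it is sound (on $D$ the footpoint lies in $\Gamma_-$ so the terminal value of $\Phi$ vanishes), modulo the same tacit continuity-up-to-the-boundary convention the paper itself uses.
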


\begin{proof}
We denote $B:G\times S\times I\to \partial G$; $B(x,\omega,E)=x-t(x,\omega)\omega$,
which by Proposition \ref{tp} is $C^1$-smooth on $D$.
It then follows from the considerations in section \ref{lss2},
that $(x,\omega,E)\mapsto (B(x,\omega,E),\omega,E)$
is $C^1$-map with respect to $x$ from $D$ into $\Gamma_-$ ,
and hence, by the regularity assumptions imposed on $g$,
that the partial derivatives $\pa{x_i} g(B(x,\omega,E),\omega,E)$
exist and are continuous on $D$.

Taking $\psi$ to be defined by Eq. \eqref{l21}, which can be written as
\be\label{bsol}
\psi(x,\omega,E)=e^{-\int_0^{t(x,\omega)} \Sigma(x-s\omega,\omega,E) ds} g(B(x,\omega,E),\omega,E),
\ee
we have on $D$,
\[
&\nabla_x \psi(x,\omega,E) \\
=&\psi(x,\omega,E)\Big(-\Sigma(x-t(x,\omega,E)\omega,\omega,E)(\nabla_x t)(x,\omega)-\int_0^{t(x,\omega)}(\nabla_x \Sigma)(x-s\omega,\omega,E) ds\Big) \\
&+e^{-\int_0^{t(x,\omega)} \Sigma(x-s\omega,\omega,E) ds} \nabla_x\big(g(B(x,\omega,E),\omega,E)\big).
\]
We shall take an inner product of this formula with $\omega$.
To this end, recall that $\omega\cdot (\nabla_x t)(x,\omega)=1$ by \eqref{l11},
and notice that
$\omega\cdot (\nabla_x\Sigma)(x-s\omega,\omega,E)=-\dif{s} \Sigma(x-s\omega,\omega,E)$.
Moreover, for all $s$ near zero,
\[
B(x+s\omega,\omega,E)=&(x+s\omega)-t(x+s\omega,\omega)\omega \\
=&(x+s\omega)-\big(t(x,\omega)+s)\omega=x-t(x,\omega)\omega \\
=&B(x,\omega,E),
\]
and hence
\[
\omega\cdot \nabla_x\big(g(B(x,\omega,E),\omega,E)\big)
=&\dif{s}\big|_{s=0} g(B(x+s\omega,\omega,E),\omega,E)) \\
=&\dif{s}\big|_{s=0} g(B(x,\omega,E),\omega,E)) \\
=&0.
\]
Thus,
\[
&\omega\cdot \nabla_x \psi(x,\omega,E) \\
=&\psi(x,\omega,E)\Big(-\Sigma(x-t(x,\omega)\omega,\omega,E)+\Sigma(x-t(x,\omega)\omega, \omega,E)-\Sigma(x,\omega,E)\Big) \\
&+e^{-\int_0^{t(x,\omega)} \Sigma(x-s\omega,\omega,E) ds} \omega\cdot \nabla_x\big(g(B(x,\omega,E),\omega,E)\big) \\
=&-\Sigma(x,\omega,E)\psi(x,\omega,E),
\]
which is \eqref{l21}.

On the other hand, if $(y,\omega,E)\in\Gamma_-$,
then $t(y,\omega)=0$ by Proposition \ref{tp},
and hence $B(y,\omega,E)=y$, which gives $\psi(y,\omega,E)=g(y,\omega,E)$
i.e. \eqref{l20a:1}.
\end{proof}

With the assumptions of Theorems \ref{lth3} and \ref{lth3a} the (classical) solution of the problem
\be\label{l22a}
\omega\cdot \nabla\psi+\Sigma\psi=f\quad {\rm in}\ D 
\ee
satisfying the inhomogeneous inflow boundary condition (\ref{l20a:1})
is the sum $\psi+\phi$ of the solutions of the problems
\bea\label{l22}
\omega\cdot \nabla\psi+\Sigma\psi&=f\quad {\rm in}\ D\nonumber\\
\psi_{|\Gamma_-}=0,
\eea
and
\bea\label{l23}
\omega\cdot \nabla\phi+\Sigma\phi&=0\quad {\rm in}\ D\nonumber\\
\phi_{|\Gamma_-}&=g. 
\eea

Hence we obtain under the assumptions of Theorems
\ref{lth3}, \ref{lth3a}  a  (classical) solution $\psi$ in $D$ for the problem \eqref{l19a}, \eqref{l20a:1}
\be \label{l24}
\psi(x,\omega,E)=&\int_0^{t(x,\omega)} e^{\int_0^t-\Sigma(x-s\omega,\omega,E)ds}\cdot f(x-t\omega,\omega,E) dt \\
&+e^{\int_0^{t(x,\omega)}-\Sigma(x-s\omega,\omega,E)ds}\cdot g(x-t(x,\omega)\omega,\omega,E). \nonumber
\ee

For later needs we also formulate a generalization of Theorem \ref{lth3a}:

Suppose that $g_j\in C(\Gamma_-)$
such that ${\p{g}{\tilde y_i}}\in C(\Gamma_-), \ i=1,2$
(in the same sense as in Theorem \ref{lth3a} above)
and  $\Sigma_{lk}\in C(\ol G\times S\times I)$ such that
${\p {\Sigma_{lk}}{x_j}}\in C(\ol G\times S\times I),\ 1\leq l,k\leq 3$.
Let
\[
\Sigma\psi=\ol\Sigma\qmatrix{
\psi_1\cr\psi_2\cr\psi_3\cr
}
\]
where $\ol\Sigma$ is the matrix $(\Sigma_{lk}(x,\omega,E))$.
Then the unique classical solution $\psi=(\psi_1,\psi_2,\psi_3)$ of the coupled system of equations
\be\label{l19b}
\omega\cdot \nabla\psi_j+\Sigma(x,\omega,E)\psi=0 \ {\rm in}\ D,\ j=1,2,3
\ee
satisfying the inhomogeneous inflow boundary condition
\be\label{l19d}
\psi_j(y,\omega,E)=g_j(y,\omega,E)\ {\rm on}\ \Gamma_-\ ,\ j=1,2,3
\ee
is
\be \label{l21c}
\psi(x,\omega,E)=e^{-\int_0^{t(x,\omega)} \ol\Sigma(x-s\omega,\omega,E)ds}\cdot g(x-t(x,\omega)\omega,\omega,E).
\ee
In the case where $\ol\Sigma$ is a diagonal matrix $\ol\Sigma={\rm diag}(\Sigma_1,\Sigma_2,\Sigma_3)$ the (classical) solution of (\ref{l19b}-\ref{l19d}) is
\[
\psi(x,\omega,E)&=\Big(e^{\int_0^{t(x,\omega)}-\Sigma_1(x-s\omega,\omega,E)ds}\cdot g_1(x-t(x,\omega)\omega,\omega,E),\nonumber\\ 
& e^{\int_0^{t(x,\omega)}-\Sigma_2(x-s\omega,\omega.E)ds}\cdot g_2(x-t(x,\omega)\omega,\omega,E), \nonumber\\
& e^{\int_0^{t(x,\omega)}-\Sigma_3(x-s\omega,\omega,E)ds}\cdot g_3(x-t(x,\omega)\omega,\omega,E)\Big).
\]
In this article we need only this solution of uncoupled convection equation.  

Similarly we find (a generalization of Theorem \ref{lth3}) that when $f_j\in C(\ol G\times S\times I), \ j=1,2,3$ such that
${\p {f_j}{x_k}}\in C(\ol G\times S\times I),\ j, k=1,2,3$, the (classical) solution of the coupled system
\[
\omega\cdot \nabla\psi_j+\Sigma(x,\omega,E)\psi=f_j(x,\omega,E)\ {\rm in}\ D,\ j=1,2,3
\]
satisfying the homogeneous inflow boundary condition
\[
{\psi_j}_{|\Gamma_-}=0,\quad j=1,2,3,
\]
is
\be\label{l21cc}
\psi(x,\omega,E)=\int_0^{t(x,\omega)}e^{\int_0^t-\ol\Sigma(x-s\omega,\omega,E)ds}\cdot f(x-t\omega,\omega,E) dt.
\ee

The (classical) solution for the general coupled system
\[
\omega\cdot \nabla\psi_j+\Sigma\psi=f_j\quad {\rm in}\ D,\ j=1,2,3
\]
satisfying the inhomogeneous inflow boundary condition
\[
{\psi_j}_{|\Gamma_-}=g_j,\quad j=1,2,3\  
\]
is obtained as the sum of solutions  (\ref{l21c}) and (\ref{l21cc}) (when the stated assumptions are valid).

\begin{remark}\label{lre2}
The classical solution $\psi$ obtained above is continuous in $D$
(and its partial derivatives ${\p {\psi}{x_k}}, \ k=1,2,3$ are continuous in $D$) which can be immediately seen from the formulas like \eqref{cont} and (\ref{bsol}). Thus $\psi$ is continuous almost everywhere in $G\times S\times I$, which implies in particular that it is Lebesgue measurable in $G\times S\times I$.

Note that in the case where $G$ is convex such that $\partial G$ is $C^1$-boundary, the solution $\psi$ is in $C(\ol G\times S\times I)$ and ${\p {\psi}{x_k}}\in C(G\times S\times I)$.
\end{remark}

\begin{remark}
Notice that the formulas for $\psi$ in Theorems \ref{lth3} and \ref{lth3a}
make sense under the
less restrictive assumptions $f\in C(\ol{G}\times S\times I)$ and $g\in C(\Gamma_-)$,
respectively,
i.e. assuming that $f$ and $g$ are merely continuous (and $\Sigma\in C(\ol G\times S\times I)$),
but not necessarily continuously differentiable with respect to $x$ and $y$, respectively
.
Then these $\psi$s can be considered as generalized (classical) solutions
to the corresponding boundary value problems
in the sense that if, by convention,
we replace $\omega\cdot\nabla\psi$
by $\dif{s}\psi(x+s\omega,\omega,E)|_{s=0}$,
then \eqref{l19a}-\eqref{l20a} and \eqref{l19}-\eqref{l20a:1}
are satisfied
for all $(x,\omega,E)\in D$.
\end{remark}

\section{Dissipativity of the Convection Operator}\label{diss1}

\subsection{On Dissipativity of Linear Operators in Banach Spaces}\label{diss1a}

Let $X$ be a {\it real} Banach space and let $X^*$ be its dual space.
Suppose that $x\in X$. Denote by $J(x)=J_X(x)$ the subset of $X^*$ defined by
\[
J(x)=\{l\in X^*\ |\ \n{l}_{X^*}=\n{x}_{X}\ {\rm and}\
\la l,x\ra:=l(x)=\n{x}_{X}\n{l}_{X^*}\}.
\]
In the product space $X=X_1\times X_2\times X_3$ we use the norm
\[
\n{x}_{X_1\times X_2\times X_3}=\sum_{j=1}^3\n{x_j}_{X_j},
\quad x=(x_1,x_2,x_3)\in X.
\]
One has that $X^*=X_1^*\oplus X_2^*\oplus X_3^*$ in the sense that for any $l\in X^*$,
\[
l(x)=\sum_{j=1}^3\la l_j,x_j\ra,
\]
where $l_j:=l_{|X_j}\in X_j^*$,
and the corresponding norm is given by $\n{l}_{X^*}=\max_{1\leq j\leq 3}\n{l_j}_{X_j^*}$.
The structure of $J_X(x)$ can be obtained by applying iteratively the following lemma.

\begin{lemma}\label{dualfle}
Let $Y_1,Y_2$ be Banach spaces, $Y=Y_1\oplus Y_2$ their product
equipped with the $1$-norm $\n{y}_Y=\n{y_1}_{Y_1}+\n{y_2}_{Y_2}$, $y=(y_1,y_2)\in Y$ like above.
Then for every $y=(y_1,y_2)\in Y$ we have
\begin{align}\label{eq:prod_duality}
J_Y(y)=\begin{cases}
\hspace{0.35cm} J_{Y_1}\big(z_1(y)\big)\times J_{Y_2}\big(z_2(y)\big), & \textrm{if}\ y_1\neq 0\ \textrm{and}\ y_2\neq 0, \\
\ol{B}_{Y_1^*}(\n{y_2}_{Y_2})\times J_{Y_2}(y_2), & \textrm{if}\ y_1=0, \\
\hspace{0.95cm} J_{Y_1}(y_1)\times \ol{B}_{Y_2^*}(\n{y_1}_{Y_1}), & \textrm{if}\ y_2=0,
\end{cases}
\end{align}
where $\ol{B}_{Y_j^*}(r)$ denotes the closed ball of radius $r>0$ in $Y_j^*$,
\[
z_j(y):=\frac{\n{y}_Y}{\n{y_j}_{Y_j}}y_j,\quad \textrm{when}\ y_j\neq 0,\ j=1,2.
\]
\end{lemma}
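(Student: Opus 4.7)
The plan is to apply the definition of the duality map $J_Y$ directly, using the identification $Y^*=Y_1^*\oplus Y_2^*$ with the dual ($\infty$-)norm $\n{l}_{Y^*}=\max(\n{l_1}_{Y_1^*},\n{l_2}_{Y_2^*})$ and the pairing $l(y)=l_1(y_1)+l_2(y_2)$, already noted above the lemma. First I would unwind the definition: $l\in J_Y(y)$ is equivalent to the two requirements $\n{l}_{Y^*}=\n{y}_Y$ and $l(y)=\n{y}_Y\n{l}_{Y^*}=\n{y}_Y^{\,2}$. The argument then splits along the three cases in \eqref{eq:prod_duality}.

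For case (i), with $y_1,y_2\neq 0$, I would start from the chain
\[
l_1(y_1)+l_2(y_2)\leq \n{l_1}_{Y_1^*}\n{y_1}_{Y_1}+\n{l_2}_{Y_2^*}\n{y_2}_{Y_2}\leq \n{l}_{Y^*}\n{y}_Y,
\]
and observe that, combined with $\n{l}_{Y^*}=\n{y}_Y$ and $l(y)=\n{y}_Y^{\,2}$, equality must hold throughout. The rightmost equality forces $\n{l_1}_{Y_1^*}=\n{l_2}_{Y_2^*}=\n{y}_Y$, and the leftmost then forces $l_j(y_j)=\n{l_j}_{Y_j^*}\n{y_j}_{Y_j}$ for $j=1,2$. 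Rescaling $y_j\mapsto z_j(y)=(\n{y}_Y/\n{y_j}_{Y_j})y_j$, which satisfies $\n{z_j(y)}_{Y_j}=\n{y}_Y$, the last two identities translate exactly into $\n{l_j}_{Y_j^*}=\n{z_j(y)}_{Y_j}$ and $l_j(z_j(y))=\n{l_j}_{Y_j^*}\n{z_j(y)}_{Y_j}$, i.e.\ $l_j\in J_{Y_j}(z_j(y))$. The converse direction (given $l_j\in J_{Y_j}(z_j(y))$, verify $l\in J_Y(y)$) is a direct algebraic check using the same rescaling.

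For case (ii), where $y_1=0$, the pairing collapses to $l_2(y_2)=\n{y_2}_{Y_2}^{\,2}$, so from $l_2(y_2)\leq \n{l_2}_{Y_2^*}\n{y_2}_{Y_2}\leq \n{l}_{Y^*}\n{y_2}_{Y_2}=\n{y_2}_{Y_2}^{\,2}$ we deduce equality throughout, yielding $\n{l_2}_{Y_2^*}=\n{y_2}_{Y_2}$ and $l_2\in J_{Y_2}(y_2)$. The max-norm identity $\max(\n{l_1}_{Y_1^*},\n{l_2}_{Y_2^*})=\n{y_2}_{Y_2}$ then imposes no further constraint on $l_1$ beyond $\n{l_1}_{Y_1^*}\leq \n{y_2}_{Y_2}$, giving $l_1\in \ol{B}_{Y_1^*}(\n{y_2}_{Y_2})$. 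Conversely, any such pair clearly lies in $J_Y(y)$. Case (iii) is handled by the symmetric argument.

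The only mild subtlety is the bookkeeping around the rescaling $z_j(y)$ in case (i): one must check that translating the equality conditions $\n{l_j}_{Y_j^*}=\n{y}_Y$ and $l_j(y_j)=\n{y}_Y\n{y_j}_{Y_j}$ into the duality relation for $z_j(y)$ works simultaneously for both $j=1,2$. This is where the specific factor $\n{y}_Y/\n{y_j}_{Y_j}$ in the definition of $z_j(y)$ is forced upon us, and it would be the point I would write out most carefully. Everything else reduces to the basic inequality $l(y)\leq \n{l}_{Y^*}\n{y}_Y$ together with the elementary identification of the dual of a $1$-sum as an $\infty$-sum.
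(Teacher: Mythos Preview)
Your proposal is correct and follows essentially the same approach as the paper's proof: both use the identification $Y^*=Y_1^*\oplus Y_2^*$ with the $\max$-norm, exploit equality in the chain $l_1(y_1)+l_2(y_2)\leq \n{l_1}\n{y_1}+\n{l_2}\n{y_2}\leq \n{l}_{Y^*}\n{y}_Y$ to pin down $\n{l_j}_{Y_j^*}$ and $l_j(y_j)$, and then rescale via $z_j(y)$ in the nondegenerate case. The only cosmetic difference is that the paper treats the degenerate cases $y_j=0$ first and the nondegenerate case last, while you do the reverse; the arguments are otherwise identical, including the remark that the reverse inclusion is a direct verification.
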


\begin{proof}
Let $l\in J_Y(y)$.
Then by the definition of $J_Y$
\begin{align}\label{eq:duality:1}
& \n{l}_{Y^*}=\n{y}_{Y}=\n{y_1}_{Y_1}+\n{y_2}_{Y_2} \\
& (\n{y_1}_{Y_1}+\n{y_2}_{Y_2})\n{l}_{Y^*}= \n{y}_Y\n{l}_{Y^*} = l(y)=l_1(y_1)+l_2(y_2). \nonumber
\end{align}
Recalling that $\n{l}_{Y^*}=\max\{\n{l_1}_{Y_1^*}, \n{l_2}_{Y_2^*}\}$,
the second line above implies that
\[
(\n{y_1}_{Y_1}+\n{y_2}_{Y_2})\n{l}_{Y^*}
=&l_1(y_1)+l_2(y_2)\leq \n{l_1}_{Y_1^*}\n{y_1}_{Y_1}+\n{l_2}_{Y_2^*}\n{y_2}_{Y_2} \\
\leq & \n{l}_{Y^*}(\n{y_1}_{Y_1}+\n{y_2}_{Y_2}),
\]
i.e.
\[
(\n{y_1}_{Y_1}+\n{y_2}_{Y_2})\n{l}_{Y^*}=l_1(y_1)+l_2(y_2)=\n{l_1}_{Y_1^*}\n{y_1}_{Y_1}+\n{l_2}_{Y_2^*}\n{y_2}_{Y_2}.
\]
Taking into account the fact that $\n{l}_{Y^*}\geq \n{l_1}_{Y_1^*},\n{l_2}_{Y_2^*}$,
one can conclude from the above equality that
\begin{align}\label{eq:duality:2}
l_1(y_1)=&\n{y_1}_{Y_1}\n{l_1}_{Y_1^*}=\n{y_1}_{Y_1}\n{l}_{Y^*} \\
l_2(y_2)=&\n{y_2}_{Y_2}\n{l_2}_{Y_2^*}=\n{y_2}_{Y_2}\n{l}_{Y^*}. \nonumber
\end{align}

Assume first that $y_1=0$. If $y_2=0$ as well, we have $\n{l}_{Y^*}=0$
from the first line of \eqref{eq:duality:1}
and thus $\n{l_1}_{Y_1^*}=\n{l_2}_{Y_2^*}=0$,
which means that 
$(l_1,l_2)=(0,0)\in \ol{B}_{Y_1^* }(\n{y_2}_{Y_2})\times J_{Y_2}(y_2)$.
On the other hand, if $y_2\neq 0$, the second line of \eqref{eq:duality:2}
implies that $\n{l}_{Y^*}=\n{l_2}_{Y_2^*}$,
and hence from \eqref{eq:duality:1}, we have $\n{l_2}_{Y_2^*}=\n{y_2}_{Y_2}$.
Because $\n{l_1}_{Y_1^*}\leq \n{l}_{Y^*}=\n{y_2}_{Y_2}$, this shows that 
$(l_1,l_2)\in \ol{B}_{Y_1^* }(\n{y_2}_{Y_2})\times J_{Y_2}(y_2)$.

The case where $y_2=0$ and $y_1$ is arbitrary is handled similarly.

We may thus assume that both $y_1$ and $y_2$ are non-zero.
Using \eqref{eq:duality:2} and the definition of $z_j(y)$ as given above,
one has
\[
l_j(z_j(y))=\frac{\n{y}_Y}{\n{y_j}_{Y_j}}l_j(y_1)=\n{y}_Y\n{l_j}_{Y_j^*}=\n{z_j(y)}_{Y_j}\n{l_j}_{Y_j^*},\quad j=1,2.
\]
On the other hand, \eqref{eq:duality:2} implies that $\n{l}_{Y^*}=\n{l_1}_{Y_1^*}=\n{l_2}_{Y_2^*}$,
and hence by using \eqref{eq:duality:1},
\[
\n{z_j(y)}_{Y_j}=\n{y}_Y=\n{l}_{Y^*}=\n{l_j}_{Y_j^*}.
\]
By the definition of the duality set, then,
we conclude that $l_j\in J_{Y_j}(z_j(y))$, $j=1,2$.

We have thus shown that the set $J_Y(y)$
is a subset of the right hand side of \eqref{eq:prod_duality},
taking the appropriate cases into account.
The reverse inclusion is readily verified by checking, case by case, the validity
of both of the lines in \eqref{eq:duality:1}.
\end{proof}

In this section, we assume that $A:D(A)\subset X\to X$ is \emph{densily defined},
i.e. the domain of definition $D(A)$ of $A$ is dense in $X$.
In addition, instead of $A:D(A)\subset X\to X$ we usually write simply $A:X\to X$.

\begin{definition}
\begin{itemize}
\item[(i)] An (unbounded) linear operator $A:X\to X$ is said to be \emph{dissipative}, if for each $x\in D(A)$ there exists $l\in J(x)$ such that
\be\label{d1}
\la l,Ax\ra \leq 0.
\ee
The operator $A:X\to X$ is said to be \emph{accretive}, if  $-A$ is dissipative.

\item[(ii)] A dissipative operator $A:X\to X$ is \emph{$m$-dissipative}, if there exists $\lambda>0$ such that
\[
R(\lambda I-A)=X,
\]
where $R(\lambda I-A)$ is the range of $\lambda I-A$ and $I$ is the identity operator.
\end{itemize}
\end{definition}

One knows that if an operator $A:X\to X$ is  dissipative and if there exists $\lambda_0 >0$ such that
$R(\lambda_0 I-A)=X$ then $R(\lambda I-A)=X$  for every $\lambda >0$ (\cite{pazy83}, Section 1.4, \cite{engelnagel}, Section II.3.b).
On the other hand, the condition $R(\lambda I-A)=X$ is
equivalent to $\lambda\in\rho(A)$ (the resolvent set of $A$)
in the case when $A$ is dissipative, as follows from the theorem we present next.

\begin{theorem}\label{th:dissipative}
A linear operator $A:X\to X$ is dissipative if and only if for all $\lambda >0$ the estimate
\begin{align}\label{eq:dissipative}
\n{(\lambda I-A)x}\geq \lambda \n{x},\quad \forall x\in D(A),
\end{align}
holds.
\end{theorem}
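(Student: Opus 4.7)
The plan is to prove the equivalence by two separate arguments, the forward direction being essentially a one-line computation, while the backward direction requires producing the duality element $l \in J(x)$ out of thin air via a Hahn--Banach plus weak-$*$ compactness argument.

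For the ``only if'' direction, assume $A$ is dissipative and fix $x \in D(A)$ with $x \neq 0$ (the case $x = 0$ being trivial). Pick $l \in J(x)$ with $\langle l, Ax\rangle \leq 0$. Then, using $\langle l, x\rangle = \|x\|\,\|l\|_{X^*} = \|x\|^2$,
\[
\|x\|\,\|(\lambda I - A)x\| = \|l\|_{X^*}\,\|(\lambda I - A)x\| \geq \langle l, (\lambda I - A)x\rangle = \lambda\|x\|^2 - \langle l, Ax\rangle \geq \lambda \|x\|^2,
\]
from which \eqref{eq:dissipative} follows upon dividing by $\|x\|$.

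For the ``if'' direction, suppose \eqref{eq:dissipative} holds for all $\lambda > 0$ and fix $x \in D(A)$, $x \neq 0$. For each $\lambda > 0$, the Hahn--Banach theorem furnishes $l_\lambda \in X^*$ with $\|l_\lambda\|_{X^*} = 1$ and $\langle l_\lambda, (\lambda I - A)x\rangle = \|(\lambda I - A)x\|$. Combined with \eqref{eq:dissipative} this gives
\[
\lambda \|x\| \leq \|(\lambda I - A)x\| = \lambda \langle l_\lambda, x\rangle - \langle l_\lambda, Ax\rangle \leq \lambda \|x\| - \langle l_\lambda, Ax\rangle,
\]
so $\langle l_\lambda, Ax\rangle \leq 0$ for every $\lambda$; dividing instead the left- and right-hand sides by $\lambda$ yields
\[
\|x\| \leq \langle l_\lambda, x\rangle - \tfrac{1}{\lambda}\langle l_\lambda, Ax\rangle.
\]

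The concluding step is to take $\lambda \to \infty$. The net $(l_\lambda)$ lies in the closed unit ball of $X^*$, which is weak-$*$ compact by the Banach--Alaoglu theorem; hence it has a weak-$*$ cluster point $l \in X^*$ with $\|l\|_{X^*} \leq 1$. Passing to the limit along the appropriate subnet in the two displayed inequalities gives simultaneously $\langle l, Ax\rangle \leq 0$ and $\langle l, x\rangle \geq \|x\|$. But $\langle l, x\rangle \leq \|l\|_{X^*}\|x\| \leq \|x\|$, forcing $\langle l, x\rangle = \|x\|$ and $\|l\|_{X^*} = 1$. Setting $\tilde l := \|x\| \cdot l$, we get $\|\tilde l\|_{X^*} = \|x\|$ and $\langle \tilde l, x\rangle = \|x\|^2 = \|x\|\,\|\tilde l\|_{X^*}$, so $\tilde l \in J(x)$, while $\langle \tilde l, Ax\rangle = \|x\|\langle l, Ax\rangle \leq 0$. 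This verifies the dissipativity condition \eqref{d1}.

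The only genuinely subtle point is the passage to the weak-$*$ limit when $X$ is not separable; invoking Banach--Alaoglu and arguing with a subnet (rather than a subsequence) handles this cleanly without any separability assumption on $X$.
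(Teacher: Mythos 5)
Your proof is correct; both directions are the standard Lumer--Phillips argument (the forward direction via the duality element, the converse via Hahn--Banach normalized functionals $l_\lambda$ and a weak-$*$ cluster point from Banach--Alaoglu), and the subnet handling of the non-separable case is done properly. The paper itself does not write out a proof but defers to \cite{pazy83} (Theorem 1.4.2) and \cite{engelnagel}, and your argument is essentially the one given there, so it simply supplies the omitted details.
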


\begin{proof}For the proof we refer to \cite{engelnagel}, Section II.3.b or \cite{pazy83}, Section 1.4.
\end{proof}

In particular, $m$-dissipative operator $A$ is closed since $\rho(A)\neq\emptyset$.
We also have the following (bounded) perturbation result for $m$-dissipative operators.

\begin{theorem}\label{perth}
Suppose that a  closed operator $A:X\to X$ is $m$-dissipative
and that $B:X\to X$ is a bounded dissipative operator. Then $A+B:X\to  X$ is $m$-dissipative.
\end{theorem}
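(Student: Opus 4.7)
The plan is to establish $m$-dissipativity of $A+B$ by verifying separately its dissipativity and the range condition $R(\lambda I - (A+B)) = X$ for some $\lambda > 0$.

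For the range condition, I would argue by a Neumann-series factorization. Because $A$ is $m$-dissipative, Theorem \ref{th:dissipative} shows that $\lambda I - A$ is bijective onto $X$ for every $\lambda > 0$ with $\n{(\lambda I - A)^{-1}} \leq 1/\lambda$. For $\lambda > \n{B}$ one has $\n{(\lambda I - A)^{-1} B} < 1$, so $I - (\lambda I - A)^{-1} B$ is invertible on $X$, and on $D(A)$ the identity $\lambda I - (A+B) = (\lambda I - A)\bigl(I - (\lambda I - A)^{-1} B\bigr)$ holds. Given $y \in X$, the sequence of partial sums $x_N := \sum_{k=0}^N [(\lambda I - A)^{-1} B]^k (\lambda I - A)^{-1} y$ lies in $D(A)$, converges in $X$, and using the identity $A(\lambda I - A)^{-1} = \lambda (\lambda I - A)^{-1} - I$ one checks that $A x_N$ also converges in $X$. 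Closedness of $A$ then places the limit $x$ in $D(A)$ with $(\lambda I - (A+B))x = y$, giving surjectivity.

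The dissipativity of $A+B$ is the real obstacle. Pointwise dissipativity cannot simply be added: the functionals $l \in J(x)$ witnessing $\la l, A x\ra \leq 0$ and $\la l, Bx\ra \leq 0$ need not coincide, and the direct triangle-inequality estimate $\n{(\lambda I - A - B)x} \geq (\lambda - \n{B}) \n{x}$ (obtained by applying $(\lambda I - A)^{-1}$ to $(\lambda I - A)x = (\lambda I - A - B)x + Bx$) is insufficient; it proves only the weaker fact that $A + B - \n{B} I$ is dissipative. My plan is instead to prove the sharp estimate $\n{(\mu I - (A+B))x} \geq \mu \n{x}$ required by Theorem \ref{th:dissipative} through a Yosida-type regularisation. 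Define $A_n := n^2 (n I - A)^{-1} - n I$; each $A_n$ is bounded and dissipative, and $A_n x \to A x$ for every $x \in D(A)$ as $n \to \infty$. The key intermediate claim is that the sum of two bounded dissipative operators is dissipative: for bounded dissipative $C$, Theorem \ref{th:dissipative} applied with $\lambda = 1/s$ gives $\n{(I - sC)^{-1}} \leq 1$ for all small $s > 0$, so by the exponential formula $e^{tC} = \lim_k (I - (t/k) C)^{-k}$ one has $\n{e^{tC}} \leq 1$; for bounded operators the Lie--Trotter product formula $e^{t(A_n + B)} = \lim_k [e^{(t/k) A_n} e^{(t/k) B}]^k$ is a direct consequence of the power-series representations, whence $\n{e^{t(A_n + B)}} \leq 1$, and differentiating at $t = 0$ yields the dissipativity of $A_n + B$.

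Applying Theorem \ref{th:dissipative} to $A_n + B$ gives $\n{(\mu I - (A_n + B)) x} \geq \mu \n{x}$ for every $\mu > 0$ and $x \in X$. Fixing $x \in D(A)$ and $\mu > 0$ and letting $n \to \infty$ (using $A_n x \to A x$), the inequality passes to the limit: $\n{(\mu I - (A+B)) x} \geq \mu \n{x}$ for every $\mu > 0$ and every $x \in D(A)$. Theorem \ref{th:dissipative} then yields the dissipativity of $A + B$, and combined with the range condition of the first paragraph this gives $m$-dissipativity. Thus the principal technical obstacle reduces to the auxiliary bounded case, for which the detour via the exponential and Lie--Trotter formulas is the cleanest route within the framework already set up in the paper.
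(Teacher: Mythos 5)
Your proposal is correct, and for the surjectivity half it coincides with the paper's argument: both factor $\lambda I-(A+B)=(\lambda I-A)\bigl(I-(\lambda I-A)^{-1}B\bigr)$ for $\lambda>\n{B}$ and invert the second factor by a Neumann series (your extra care with the partial sums $x_N$ and the closedness of $A$ is not wrong, just more than the paper bothers with, since $(I-(\lambda I-A)^{-1}B)^{-1}$ already maps into $D(A)$ after composition with $(\lambda I-A)^{-1}$). Where you genuinely diverge is the dissipativity of $A+B$. The paper's proof is a one-liner resting on a nontrivial imported fact (Pazy, Theorem 4.3(b)): for an $m$-dissipative operator the inequality $\la l,Ax\ra\leq 0$ holds for \emph{every} $l\in J(x)$, not just for some; one then chooses the $l\in J(x)$ that witnesses the dissipativity of $B$ and adds. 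You correctly diagnose that naive addition fails because the witnessing functionals need not coincide, and you route around the imported fact entirely via the Yosida approximants $A_n$, the contraction bound $\n{e^{tC}}\leq 1$ for bounded dissipative $C$, the Lie--Trotter formula for bounded operators, and a passage to the limit in the estimate $\n{(\mu I-(A_n+B))x}\geq\mu\n{x}$. Each of these steps is sound (the convergence $A_nx\to Ax$ uses the density of $D(A)$, which the paper assumes throughout this section). The trade-off is clear: the paper's route is short but leans on a cited duality-set lemma; yours is self-contained within Theorem \ref{th:dissipative} and elementary bounded-operator calculus, at the cost of length — and, amusingly, it reconstructs in the bounded setting exactly the semigroup machinery (Trotter products of contractions) that the paper later deploys in the proof of Theorem \ref{coth4}.
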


\begin{proof}
See \cite{pazy83} (Chap. 1, Theorem 4.3 and Chap. 3, Corollary 3.3)
or \cite{engelnagel} (Chap. II, Theorem 3.15 and Chap. III, Theorem 2.7).

Here is a sketch of the proof.
Given $x\in D(A)$, since $B$ is dissipative and bounded ($D(B)=X$),
there exists $l\in J(x)$, such that $\la l,Bx\ra\leq 0$.
But because $A$ is $m$-dissipative,
one also has $\la l,Ax\ra\leq 0$ (\cite{pazy83}, Theorem 4.3 (b)), and hence
\[
\la l,(A+B)x\ra\leq 0,
\]
which shows that $A+B$ is dissipative.

It remains to show that $\lambda I-(A+B)$ is surjective for some $\lambda>0$.
Taking $\lambda >\n{B}$, the $m$-dissipativity of $A$ implies that $\lambda\in \rho(A)$ and
\[
\n{(\lambda I-A)^{-1}B}\leq \frac{\n{B}}{\lambda}<1,
\]
which shows that $I-(\lambda I-A)^{-1}B$ has a bounded inverse. On the other hand,
\[
\lambda I-(A+B)=(\lambda I-A)(I-(\lambda I-A)^{-1}B),
\]
which shows that $\lambda I-(A+B)$ has a bounded inverse. In particular, 
$\lambda I-(A+B)$ is surjective and the proof is complete.
\end{proof}

In the case where $X=L^1(G\times S\times I)$ we have $L^1(G\times S\times I)^*=
L^\infty(G\times S\times I)$ isomorphically (and isometrically) and for $l\in L^1(G\times S\times I)^*$ and $\psi\in L^1(G\times S\times I)$ one has (recall that we have everywhere real spaces)
\[
l(\psi)=\la w,\psi\ra=\int_{G\times S\times I} w\psi dx d\omega dE
\]
where $w\in L^1(G\times S\times I)^\infty$ is corresponding to
$l\in L^1(G\times S\times I)^*$ through the above mentioned isomorphism.

It is well known that
that for $\psi\in L^1(G\times S\times I)$
\[
J(\psi)=\{w\in L^\infty(G\times S\times I)\ |\ w=\n{\psi}_{L^1(G\times S\times I)}\psi^*\}
\]
(cf. \cite{dautraylionsv5} Chapter XVII, section 3.2, p. 344, or use Theorem 1.40 in \cite{rudin87})
where
\[
\psi^*(x,\omega,E)=\begin{cases}
1,\ &\psi(x,\omega,E)>0\\-1,\ &\psi(x,\omega,E)<0
\end{cases}
\]
and $\psi^*$ is a measurable function for which $|\psi^*(x,\omega,E)|\leq 1$ when $\psi(x,\omega,E)=0$.
As defined above a linear operator $A:L^1(G\times S\times I)\to L^1(G\times S\times I)$ is  dissipative, if for each $\psi\in D(A)$  there exists $w\in J(\psi)$ such that
\be\label{d1a}
\la w,A\psi\ra =\int_{G\times S\times I} wA\psi\ dxd\omega dE\leq 0.
\ee
Assume that $\psi\not =0$. We choose $\psi^*(x,\omega,E)=0$ when $\psi(x,\omega,E)=0$ . Then the condition (\ref{d1a}) (for that $w$) means that (here sign($\psi$) is the signum function)
\be
\int_{G\times S\times I} \n{\psi}_{L^1(G\times S\times I)}\psi^*A\psi dxd\omega dE 
=\n{\psi}_{L^1(G\times S\times I)}\int_{G\times S\times I}{\rm sign}(\psi) A\psi  dx d\omega dE
\leq 0
\ee
that is
\be\label{d2:1}
\int_{G\times S\times I}{\rm sign}(\psi) A\psi  dx d\omega dE\leq 0.
\ee

\subsection{$m$-dissipativity of the Convection Operator}\label{diss1b}

Let
\[
\tilde W^1_{-,0}(G\times S\times I)=\{\psi\in \tilde W^1(G\times S\times I)\ |\ \psi_{|\Gamma_-}=\gamma_-(\psi)=0\}.
\]
Furthermore, let $A:L^1(G\times S\times I)\to L^1(G\times S\times I)$ and
$A_0:L^1(G\times S\times I)\to L^1(G\times S\times I)$ be  linear operators defined by
\[
D( A)= W^1(G\times S\times I),\quad  A\psi=-\omega\cdot\nabla\psi.
\] 
and 
\[
D(A_0)=\tilde W^1_{-,0}(G\times S\times I),\quad  A_0\psi=-\omega\cdot\nabla\psi.
\]
Thus the domain of $A_0$ (so called {\it realization}) consists of those $\psi\in \tilde W^1(G\times S\times I)$ for which $\psi_{|\Gamma_-}=\gamma_-(\psi)=0$.

\begin{proposition}
The linear operator $A_0$ is closed and densely defined.
\end{proposition}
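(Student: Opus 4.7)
The plan is to verify the two claims separately, with density being the easier one.

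For dense definedness, I would note that the space of smooth functions with support compact in $G$ with respect to the $x$-variable (for instance, restrictions to $G\times S\times I$ of elements of $C_0^\infty(\R^3\times S\times\R)$ whose $x$-support is compactly contained in $G$) is dense in $L^1(G\times S\times I)$ by standard approximation. Any such function $\phi$ is smooth with bounded derivatives on $G\times S\times I$, so $\phi\in W^1(G\times S\times I)$, and since $\phi$ vanishes in a neighbourhood of $\partial G\times S\times I$, its trace $\gamma(\phi)$ is identically zero. In particular $\gamma(\phi)\in T^1(\Gamma)$ and $\gamma_-(\phi)=0$, so $\phi\in D(A_0)$.

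For closedness, I would take a sequence $(\psi_n)\subset D(A_0)$ with $\psi_n\to \psi$ in $L^1(G\times S\times I)$ and $A_0\psi_n=-\omega\cdot\nabla\psi_n\to \phi$ in $L^1(G\times S\times I)$ and argue that $\psi\in D(A_0)$ with $A_0\psi=\phi$. Since convergence in $L^1$ implies convergence in $\mc{D}'(G\times S\times I)$, the distributional identity $\omega\cdot\nabla\psi=-\phi$ follows, so $\psi\in W^1(G\times S\times I)$ and $\psi_n\to\psi$ in $W^1(G\times S\times I)$.

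The delicate point — and the main obstacle — is to upgrade this to membership in $\tilde W^1_{-,0}(G\times S\times I)$, because convergence in $W^1$ only yields convergence of traces in $L^1_\loc(\Gamma_\pm,|\omega\cdot\nu|d\sigma d\omega dE)$, not in $T^1(\Gamma_\pm)$. Here I would exploit the fact that $\gamma_-(\psi_n-\psi_m)=0$ together with Theorem \ref{th:trace:2}, which gives
\[
\n{\gamma_+(\psi_n-\psi_m)}_{T^1(\Gamma_+)}\leq \n{\psi_n-\psi_m}_{\tilde W^1_-(G\times S\times I)}=\n{\psi_n-\psi_m}_{W^1(G\times S\times I)}.
\]
Since the right-hand side tends to zero, $\{\gamma_+(\psi_n)\}$ is Cauchy in $T^1(\Gamma_+)$ and converges there to some $g\in T^1(\Gamma_+)$. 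By continuity of $\gamma_+:W^1(G\times S\times I)\to L^1_\loc(\Gamma_+,|\omega\cdot\nu|d\sigma d\omega dE)$, we also have $\gamma_+(\psi_n)\to \gamma_+(\psi)$ in $L^1_\loc$, so $\gamma_+(\psi)=g\in T^1(\Gamma_+)$.

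Finally, the vanishing of the negative trace follows from $\gamma_-(\psi_n)=0$ and the continuity of $\gamma_-$ into $L^1_\loc(\Gamma_-,|\omega\cdot\nu|d\sigma d\omega dE)$, which forces $\gamma_-(\psi)=0$, and in particular $\gamma_-(\psi)\in T^1(\Gamma_-)$. Hence $\gamma(\psi)\in T^1(\Gamma)$ with $\gamma_-(\psi)=0$, so $\psi\in \tilde W^1_{-,0}(G\times S\times I)=D(A_0)$ and $A_0\psi=-\omega\cdot\nabla\psi=\phi$, concluding the proof of closedness.
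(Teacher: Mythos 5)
Your proof is correct and follows essentially the same route as the paper: density via smooth functions supported away from the boundary, and closedness by observing that the sequence is Cauchy in $W^1(G\times S\times I)$ and passing the vanishing inflow trace to the limit through the continuity of $\gamma_-$ into $L^1_{\loc}(\Gamma_-,|\omega\cdot\nu|\,d\sigma d\omega dE)$. Your additional step invoking Theorem \ref{th:trace:2} to check that $\gamma_+(\psi)\in T^1(\Gamma_+)$ is a welcome explicit verification of a point the paper leaves implicit (via the identification $\tilde W^1=\tilde W^1_-$), but it does not change the substance of the argument.
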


\begin{proof}
The domain $D(A_0)$ is dense in $L^1(G\times S\times I)$ since $C_0^1(G\times S\times I)$
is dense in $L^1(G\times S\times I)$.

That $A_0$ is closed can be seen as follows. Let $f,\ \psi\in L^1(G\times S\times I)$ and let $\{\psi_n\}\subset D(A_0)=\tilde W^1_{-,0}(G\times S\times I)$ be such that $\n{\psi_n-\psi}_{L^1(G\times S\times I)}\to 0$ and $\n{A_0\psi_n-f}_{L^1(G\times S\times I)}\to 0$ as $n\to\infty$.  Then  $\{\psi_n\}\subset\tilde W^1(G\times S\times I)$ is a Cauchy sequence in $ W^1(G\times S\times I)$ and hence there exists an element $\psi'\in  W^1(G\times S\times I)$ such that $\psi_n\to\psi'$ in
$W^1(G\times S\times I)$. As the latter space is continuously embedded in $L^1(G\times S\times I)$, we have $\psi_n\to\psi$ also in $L^1(G\times S\times I)$,
therefore $\psi=\psi'$ and so $\psi_n\to\psi$ in  $W^1(G\times S\times I)$. Because the trace mapping $\gamma_-:W^1(G\times S\times I)\to L^1_{\rm loc}(\Gamma_-,|\omega\cdot\nu|d\sigma d\omega dE)$  is continuous we get that $\gamma_-(\psi_n)\to \gamma_-(\psi)$ in  $L^1_{\rm loc}(\Gamma_-,|\omega\cdot\nu|d\sigma d\omega dE)$ and since $\gamma_-(\psi_n)=0$, also $\gamma_-(\psi)=0$. Hence $\psi\in \tilde W^1_{-,0}(G\times S\times I)=D(A_0)$ and $A_0\psi=-\omega\cdot\nabla\psi=\lim_{n\to\infty}-\omega\cdot\nabla\psi_n=f$, which shows that  $A_0$ is closed.
\end{proof}

\begin{lemma}\label{dle2}
Let $f\in C(\ol G\times S\times I)$ such that  ${\p f{x_j}}\in  C(\ol G\times S\times I)$ for $j=1,2,3$.
Then the (classical) solution (cf. Theorem \ref{lth1}) $\psi:G\times S\times I\to\R$ of the equation ($\lambda\in\R$)
\[
\omega\cdot \nabla\psi+\lambda\psi=f(x,\omega,E)
\quad\iff\quad (\lambda I-A)\psi=f
\]
defined by $\psi(x,\omega,E)=\int_0^{t(x,\omega)}e^{-\lambda t}f(x-t\omega,\omega,E)dt$
belongs to $\tilde W_{-,0}^1(G\times S\times I)=D(A_0)$. In addition for any $\lambda >0$
\be\label{l18}
\n{\psi}_{L^1(G\times S\times I)}\leq {1\over{\lambda}} \n{f}_{L^1(G\times S\times I)}=
{1\over{\lambda}} \n{(\lambda I-A_0)\psi}_{L^1(G\times S\times I)}.
\ee
\end{lemma}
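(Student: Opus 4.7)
\textit{Plan.} My approach has three tasks: (a) show $\psi\in W^1(G\times S\times I)$ with $\omega\cdot\nabla\psi=f-\lambda\psi\in L^1$; (b) prove the estimate \eqref{l18}; and (c) verify $\gamma_-(\psi)=0$, placing $\psi$ in $D(A_0)$. The unifying tool is Fubini's theorem along the characteristic lines $\{y+s\omega\}$ with $y\in\omega^\perp$. For part (a), since $f$ is bounded on the compact $\ol G\times S\times I$ and $t(x,\omega)\le \mathrm{diam}(G)$, the formula gives $|\psi|\le C(\lambda,G)\|f\|_{L^\infty}$, so $\psi\in L^\infty\subset L^1$. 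To identify the distributional derivative, I fix $(\omega,E)$ and observe that for a.e.\ $y\in\omega^\perp$ the section $\{s\in\R\mid y+s\omega\in G\}$ decomposes into open intervals $(a_n,b_n)$, on each of which Theorem~\ref{lth1} and the explicit formula yield $\psi(y+s\omega,\omega,E)=\int_{a_n}^{s}e^{-\lambda(s-u)}f(y+u\omega,\omega,E)\diff u$, a $C^1$ function of $s$ that satisfies the characteristic ODE $\partial_s\psi+\lambda\psi=f$ and vanishes as $s\to a_n^+$. For $\varphi\in C_c^\infty(G\times S\times I)$, integrating by parts in $s$ on each $(a_n,b_n)$ leaves no boundary contributions (since $\varphi$ has compact support in $G$), and reassembling via Fubini yields $\int\psi\,\omega\cdot\nabla\varphi=-\int(f-\lambda\psi)\varphi$, so $\omega\cdot\nabla\psi=f-\lambda\psi\in L^1$.

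\textbf{Part (b).} Using the same slicing and $\lambda>0$, on each interval $(a_n,b_n)$,
\[
\int_{a_n}^{b_n}|\psi(y+t\omega,\omega,E)|\diff t\le\int_{a_n}^{b_n}\int_{a_n}^{t}e^{-\lambda(t-u)}|f(y+u\omega,\omega,E)|\diff u\,\diff t\le \lambda^{-1}\int_{a_n}^{b_n}|f(y+u\omega,\omega,E)|\diff u,
\]
after swapping $u$ and $t$ and using $\int_u^{b_n}e^{-\lambda(t-u)}\diff t\le\lambda^{-1}$. Summing over components and integrating over $y\in\omega^\perp$ and $(\omega,E)\in S\times I$ gives $\|\psi\|_{L^1}\le\lambda^{-1}\|f\|_{L^1}$.

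\textbf{Part (c).} Approximate $f$ by a sequence $f_n\in C^1(\ol G\times S\times I)$ converging to $f$ uniformly on $\ol G\times S\times I$ (via extension to a neighborhood of $\ol G$ followed by mollification in $x$). The corresponding $\psi_n$ given by the same formula converges to $\psi$ uniformly, and $\omega\cdot\nabla(\psi-\psi_n)=(f-f_n)-\lambda(\psi-\psi_n)\to 0$ uniformly (hence in $L^1$), so $\psi_n\to\psi$ in $W^1$. By Proposition~\ref{tp}, $t(y,\omega)=0$ whenever $(y,\omega,E)\in\Gamma_-$ is a regular inflow point, so $\psi_n=0$ there pointwise. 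A further cutoff $\chi_\varepsilon$ equal to $1$ where $t(x,\omega)\ge\varepsilon$ and $0$ where $t(x,\omega)\le\varepsilon/2$ produces approximants $\chi_\varepsilon\psi_n\in \mathcal{D}(\ol G\times S\times I)$ that vanish near $\Gamma_-$, and for a suitable choice $\varepsilon=\varepsilon(n)\to 0$ they converge to $\psi$ in $W^1$. Continuity of $\gamma_-:W^1\to L^1_{\mathrm{loc}}(\Gamma_-,|\omega\cdot\nu|\diff\sigma\diff\omega\diff E)$ then forces $\gamma_-(\psi)=0$. In particular $\gamma_-(\psi)\in T^1(\Gamma_-)$, so $\psi\in\tilde W_-^1$; Theorem~\ref{th:trace:2} then yields $\gamma_+(\psi)\in T^1(\Gamma_+)$, and combined with the vanishing inflow trace, $\psi\in\tilde W^1_{-,0}=D(A_0)$.

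\emph{The main technical obstacle} is part (c): lifting the pointwise vanishing of $\psi$ on the regular inflow part of $\Gamma_-$ (immediate from the formula and Proposition~\ref{tp}) to vanishing in the $L^1_{\mathrm{loc}}$-trace sense requires producing smooth approximants that simultaneously converge in the $W^1$-norm and vanish on $\Gamma_-$. The cutoff-based construction above is natural, but its $W^1$-convergence must be handled with care because $t(x,\omega)$ itself is only $C^1$ off the exceptional zero-measure set $N_0$, so the cutoff $\chi_\varepsilon$ must be mollified as well to produce a genuinely smooth approximant.
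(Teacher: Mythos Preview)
Your parts (a) and (b) are correct and amount to the same computation the paper does. The paper writes $\psi(x,\omega,E)=\int_0^\infty e^{-\lambda t}\ol f(x-t\omega,\omega,E)\chi_{[0,t(x,\omega)]}(t)\diff t$ with $\ol f$ the zero-extension, applies Fubini, and performs the change of variables $z=x-t\omega$ in the $x$-integral to obtain \eqref{l18} directly; your slicing $x=y+s\omega$ with $y\in\omega^\perp$ is the same change of coordinates, just parametrised differently. For (a) the paper is more economical: since Theorem~\ref{lth1} already gives the classical identity $\omega\cdot\nabla\psi+\lambda\psi=f$ on the full-measure set $D$, and $\psi$ and $\partial_{x_j}\psi$ are continuous there (Remark~\ref{lre2}), the distributional equation holds automatically---your line-by-line integration by parts re-derives this but is not needed.

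Part (c) is where you diverge. The paper disposes of the boundary condition in one line: Theorem~\ref{lth1} (with Proposition~\ref{tp}) gives $\psi(y,\omega,E)=0$ pointwise for a.e.\ $(y,\omega,E)\in\Gamma_-$, and this is taken to coincide with the trace $\gamma_-(\psi)$. Your cutoff-and-approximation argument is an attempt to justify this identification rigorously, which is commendable, but as written it contains a slip and an unnecessary detour. The slip: $\chi_\varepsilon\psi_n$ is not in $\mathcal{D}(\ol G\times S\times I)$, since neither factor is $C^\infty$ (and mollifying $\chi_\varepsilon$ alone does not fix the non-smoothness of $\psi_n$, which still has $t(x,\omega)$ in its upper limit). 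The detour: the preliminary approximation of $f$ by $f_n$ serves no purpose, since $f$ already has the regularity assumed in the lemma. The clean version of your idea avoids $\mathcal{D}$ entirely: work directly with $\chi_\varepsilon\psi\in W^1$ (use $\omega\cdot\nabla t=1$ from \eqref{l11} to see $\omega\cdot\nabla\chi_\varepsilon\in L^\infty$), observe that $\chi_\varepsilon\psi$ vanishes on the open set $\{t(x,\omega)<\varepsilon/2\}$, so a mollification in $x$ of small enough radius produces genuinely smooth approximants that still vanish near $\Gamma_-$ and converge in $W^1$; then invoke continuity of $\gamma_-$. Your estimate $|\psi|\le C t(x,\omega)$ near $\Gamma_-$ is exactly what makes $\|(1-\chi_\varepsilon)\psi\|_{W^1}\to 0$, so the mechanism you identified is the right one.
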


\begin{proof}
Due to Remark \ref{lre2}  $\psi$ is measurable in $G\times S\times I$.
We show that $\psi\in W^1(G\times S\times I)$. Since   $\omega\cdot\nabla\psi+\lambda\psi=f\in C(\ol G\times S\times I)\subset L^1(G\times S\times I)$ it suffices to verify that $\psi\in L^1(G\times S\times I)$.
Denoting by $\ol f$ the extension by zero of $f$ on $\R^3\times S\times I$,
we have
\bea\label{wi}
\psi(x,\omega,E)=\int_0^\infty e^{-\lambda t}\ol f(x-t\omega,\omega,E)\chi_{[0,t(x,\omega)]}(t) dt
\eea
where $\chi_{[0,t(x,\omega)]}$ 
is the characteristic function of the interval $[0,t(x,\omega)]$ (note that the integrand of (\ref{wi}) is measurable).

Hence applying the change of variables $x-t\omega=z$ (in $x$-variable) we obtain
\bea\label{l19:1}
\n{\psi}_{L^1(G\times S\times I)}&=\int_{G\times S\times I}|\psi(x,\omega,E)| dx d\omega dE \nonumber\\
&\leq
\int_0^\infty e^{-\lambda t}\int_G\int_{S\times I} |\ol f(x-t\omega,\omega,E)|
\chi_{[0,t(x,\omega)]}(t) dx d\omega dE dt\nonumber\\
&=\int_0^\infty e^{-\lambda t}\int_{S\times I}\int_{(G-t\omega)\cap G} |\ol f(z,\omega,E)|
\chi_{[0,t(z+t\omega,\omega)]}(t) dz d\omega dE dt\nonumber\\
&\leq
\int_0^\infty e^{-\lambda t}\int_G\int_{S\times I} |f(z,\omega,E)| dz d\omega dE dt
={1\over{\lambda}} \n{f}_{L^1(G\times S\times I)}.
\eea
Hence $\psi\in L^1(G\times S\times I)$ and the estimate (\ref{l18}) holds.

By Theorem \ref{lth1} the inflow boundary condition $\psi(y,\omega,E)=0$ is true a.e. $(y,\omega,E)\in\Gamma_-$ (in the classical sense).
Hence the proof is complete.
\end{proof}

The following theorem is shown by different methods in \cite{dautraylionsv6}, section XXI.\S 2, Theorem 2 and Remark 3 (pp. 222-224). An alternative proof is also given in section XXI.\S 2, Prop. 5 (pp. 242-243) in \cite{dautraylionsv6}.
 
\begin{theorem}\label{dth1}
The operator $A_0:L^1(G\times S\times I)\to L^1(G\times S\times I)$ is $m$-dissipative.
\end{theorem}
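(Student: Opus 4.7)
The goal is to verify the two defining conditions for $m$-dissipativity: (i) $A_0$ is dissipative in the $L^1$ sense, and (ii) for some $\lambda>0$ the range $R(\lambda I - A_0)$ coincides with all of $L^1(G\times S\times I)$.

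For (i), the plan is to use the characterization in \eqref{d2:1}: for $\psi \in D(A_0) = \tilde W^1_{-,0}(G\times S\times I)$ it suffices to show $\int_{G\times S\times I} \mathrm{sign}(\psi)\,A_0\psi\, dx\, d\omega\, dE \leq 0$. Since $\omega\cdot\nabla|\psi| = \mathrm{sign}(\psi)(\omega\cdot \nabla\psi)$ a.e. (the same chain-rule identity already used in the proofs of Theorems \ref{th:trace} and \ref{th:trace:2}), and since then $|\psi|\in \tilde W^1(G\times S\times I)$ with trace $|\gamma(\psi)|\in T^1(\Gamma)$, I would apply the Stokes identity \eqref{eq:stokes} to $|\psi|$ to obtain $\int_{G\times S\times I}\mathrm{sign}(\psi)(\omega\cdot\nabla\psi)\,dx\,d\omega\,dE = \int_{\Gamma} |\psi|(\omega\cdot\nu)\,d\sigma\,d\omega\,dE$. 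Splitting the right-hand side over $\Gamma_-\cup \Gamma_+$, the $\Gamma_-$-integral vanishes because $\gamma_-(\psi)=0$, and on $\Gamma_+$ one has $\omega\cdot\nu>0$, so the remaining integral is non-negative. Therefore $\int \mathrm{sign}(\psi)A_0\psi \leq 0$, which is the dissipativity condition.

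For (ii), I would fix any $\lambda>0$, take $f\in L^1(G\times S\times I)$, and approximate it in $L^1$-norm by a sequence $(f_n)\subset \mc{D}(\ol G\times S\times I)$, which is possible since $C^\infty_c$-functions are dense in $L^1$. Each $f_n$ meets the regularity hypotheses of Lemma \ref{dle2}, so the characteristic-formula function $\psi_n(x,\omega,E) := \int_0^{t(x,\omega)} e^{-\lambda s} f_n(x-s\omega,\omega,E)\,ds$ belongs to $D(A_0)$ and satisfies $(\lambda I - A_0)\psi_n = f_n$ together with the a priori bound $\n{\psi_n}_{L^1(G\times S\times I)} \leq \lambda^{-1}\n{f_n}_{L^1(G\times S\times I)}$. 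Applying the same bound to the differences $\psi_n-\psi_m$ shows that $(\psi_n)$ is $L^1$-Cauchy; let $\psi$ denote its limit. Then $A_0\psi_n = \lambda\psi_n - f_n \to \lambda\psi - f$ in $L^1$, and the closedness of $A_0$ already established gives $\psi\in D(A_0)$ with $(\lambda I - A_0)\psi = f$. Hence $R(\lambda I - A_0) = L^1(G\times S\times I)$, and (i)--(ii) together yield $m$-dissipativity.

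The main technical subtlety I anticipate lies in justifying rigorously that $|\psi|\in \tilde W^1(G\times S\times I)$ with $\omega\cdot\nabla|\psi| = \mathrm{sign}(\psi)(\omega\cdot\nabla\psi)$ a.e., for a general $\psi \in \tilde W^1_{-,0}$; this is a weak chain-rule statement whose proof parallels the Stampacchia-type arguments already invoked (via the reference to \cite{grigoryan09}) in Theorem \ref{th:trace}, and should extend verbatim to the directional derivative $\omega\cdot\nabla$. Apart from this, everything else reduces to a routine density/approximation argument combined with the $L^1$-estimate of Lemma \ref{dle2}, the closedness of $A_0$, and Theorem \ref{th:dissipative}.
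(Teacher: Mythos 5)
Your proposal is correct. The surjectivity half is essentially identical to the paper's Part~A: approximate $f$ by smooth $f_n$, solve $(\lambda I-A_0)\psi_n=f_n$ via the characteristic formula of Lemma~\ref{dle2}, use the a priori bound $\n{\psi_n-\psi_m}_{L^1}\leq \lambda^{-1}\n{f_n-f_m}_{L^1}$ to get a Cauchy sequence, and close the argument with the closedness of $A_0$. Where you diverge is the dissipativity half. The paper's main proof never touches the duality set: it simply passes the estimate $\n{(\lambda I-A_0)\psi_n}_{L^1}\geq\lambda\n{\psi_n}_{L^1}$ from Lemma~\ref{dle2} to the limit along the same dense approximating sequence and invokes the characterization of Theorem~\ref{th:dissipative}, so the only analytic input is the change-of-variables computation already carried out in \eqref{l19:1}. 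You instead verify the duality condition \eqref{d2:1} directly via the Green/Stokes identity applied to $|\psi|$ — which is precisely the ``alternative proof'' the paper records in the remark immediately following the theorem. Your route is more conceptual (it exhibits explicitly the functional $w=\n{\psi}_{L^1}\,\mathrm{sign}(\psi)\in J(\psi)$ witnessing dissipativity and isolates the boundary term on $\Gamma_+$ as the source of the sign), but it carries the burden you yourself flag: one must justify that $|\psi|\in\tilde W^1(G\times S\times I)$ with $\omega\cdot\nabla|\psi|=\mathrm{sign}(\psi)\,\omega\cdot\nabla\psi$ a.e.\ and that $\gamma(|\psi|)=|\gamma(\psi)|$, so that \eqref{eq:stokes} applies; the paper leans on \cite{grigoryan09} for this chain rule elsewhere but deliberately routes the main proof around it. Either way the two halves combine correctly to give $m$-dissipativity, so your argument stands.
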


Proof. A. We  show that $R(\lambda I-A_0)=L^1(G\times S\times I)$ for any $\lambda >0$. Let $f\in L^1(G\times S\times I)$. Then there exists a sequence $\{f_n\}\subset \mc{D}(\ol G\times S\times I)$ such that $\n{f_n-f}_{L^1(G\times S\times I)}\to 0$ when $n\to\infty$. By Lemma \ref{dle2}
there exists $\psi_n\in D(A_0)=\tilde W^1_{-,0}(G\times S\times I)$ such that $(\lambda I-A_0)\psi_n=f_n$ and
\be\label{d3}
\n{\psi_n-\psi_m}_{L^1(G\times S\times I)}\leq  {1\over \lambda}\n{(\lambda I-A_0)(\psi_n-\psi_m)}_{L^1(G\times S\times I)}= {1\over \lambda}\n{f_n-f_m}_{L^1(G\times S\times I)}
\ee
which implies that $\psi_n\to\psi$ in $L^1(G\times S\times I)$ for some $\psi\in L^1(G\times S\times I)$. Since also $(\lambda I-A_0)\psi_n=f_n\to f$ in $L^1(G\times S\times I)$ and since $\lambda I-A_0$ is closed we obtain that $\psi\in D(A_0)$ and $(\lambda I-A_0)\psi=f$. 

B. Since by Lemma \ref{dle2} again, for all $n\in\N$ and $\lambda>0$ the estimate
\[
\n{(\lambda I-A_0)\psi_n}\geq \lambda \n{\psi_n}
\]
is valid we see that 
\be\label{da0}
\n{(\lambda I-A_0)\psi}\geq \lambda \n{\psi}\ {\rm for\ all}\ \psi\in D(A_0).
\ee
Due to Theorem \ref{th:dissipative} $A_0$ is dissipative and hence by Part A of the proof $A_0$ is $m$-dissipative.
This completes the proof.

\begin{remark}
An alternative proof for the dissipativity of $A_0$ can be seen by applying the Green formula (see e.g. \cite{dautraylionsv6}, pp. 242-243) as follows.
One knows that $|\psi|\in  W^1(G\times S\times I)$ when $\psi\in  W^1(G\times S\times I)$ and
(cf. \cite{grigoryan09}, Sections 5.1--5.2)
\be\label{d2a}
\omega\cdot\nabla(|\psi|)={\rm sign}(\psi)\omega\cdot\nabla\psi
\ee
in $W^1(G\times S\times I)$.
Applying \eqref{eq:stokes} for $u=|\psi|$ then gives
\begin{multline*}
-\int_{G\times S\times I}{\rm sign}(\psi) A_0\psi  dxd\omega dE
=
\int_{G\times S\times I}\omega\cdot\nabla(|\psi|)\ dxd\omega dE \\
=\int_{\partial G\times S\times I}(\omega\cdot \nu)|\psi|\ d\sigma d\omega dE
=\int_{\Gamma_+}(\omega\cdot\nu)|\psi|\ d\sigma d\omega dE\geq 0,
\end{multline*}
since  $\int_{\Gamma_-}(\omega\cdot\nu(y))|\psi|\ d\sigma d\omega dE=0$ (recall that $\psi=0$ on $\Gamma_-$), $\nabla v=0$ and $\omega\cdot\nu(y)>0$ on $\Gamma_+$.
Hence \eqref{d2:1} holds, and so $A_0$ is dissipative for the reasons explained at the end of the corresponding section \ref{diss1a}.
\end{remark}

Lemma \ref{dle2} and the proof of the above theorem imply that for $\lambda>0$ the solution $\psi\in D(A_0)$ of the equation
\[
(\lambda I-A_0)\psi =f,\quad f\in L^1(G\times S\times I),
\]
is given by 
\be 
\psi=\lim_{n\to\infty}\psi_n=\lim_{n\to\infty}\left(
\int_0^{t(x,\omega)}f_n(x-t \omega,\omega,E)e^{-\lambda t}\ dt\right)
=\int_0^{t(x,\omega)}f(x-t \omega,\omega,E)e^{-\lambda t}\ dt,
\ee
almost everywhere on $G\times S\times I$.
Hence for $\lambda >0$ the resolvent  $(\lambda I-A_0)^{-1}:L^1(G\times S\times I)\to L^1(G\times S\times I)$ is given explicitly by
\be\label{resolA}
(\lambda I-A_0)^{-1}f=\int_0^{t(x,\omega)}f(x-t \omega,\omega,E)e^{-\lambda t}\ dt
\ee
and the resolvent satisfies the estimate
\[
\n{(\lambda I-A_0)^{-1}f}_{L^1(G\times S\times I)}\leq {1\over{\lambda}} \n{f}_{L^1(G\times S\times I)},\ \lambda >0.
\]

\section{Coupled Boltzmann Transport Equation}\label{co}

\subsection{$m$-dissipativity of Cartesian Product Convection Operator}\label{cos1}

As we mentioned above in Section \ref{diss1a}
in the Cartesian product space $L^1(G\times S\times I)^3$ we use the norm
\[
\n{\psi}_{L^1(G\times S\times I)^3}=\sum_{j=1}^3\n{\psi_j}_{L^1(G\times S\times I)},\
\psi=(\psi_1,\psi_2,\psi_3).
\]
and similarly in its subspaces $X^3\subset L^1(G\times S\times I)^3,\ X\subset L^1(G\times S\times I)$ we use the norms
\[
\n{\psi}_{X^3}=\sum_{j=1}^3\n{\psi_j}_{X},\
\psi=(\psi_1,\psi_2,\psi_3).
\]

Define linear operators ${\bf A}$ and ${\bf A}_0:L^1(G\times S\times I)^3\to L^1(G\times S\times I)^3$ by 
\[
&D({\bf A})= W^1(G\times S\times I)^3\nonumber\\
&{\bf A}\psi = (-\omega\cdot\nabla\psi_1,-\omega\cdot\nabla\psi_2,-\omega\cdot\nabla\psi_3).
\]
and
\[
D({\bf A}_0)= \tilde{W}^1_{-,0}(G\times S\times I)^3,\quad {\bf A}_0\psi={\bf A}\psi.
\]
We see that
\[
{\bf A}\psi=\qmatrix{
A&0&0\cr 0&A&0\cr 0&0&A\cr}
\qmatrix{\psi_1\cr\psi_2\cr\psi_3
}
\]
where $A:L^1(G\times S\times I)\to L^1(G\times S\times I)$ is above in Section \ref{diss1} defined operator and similarly for ${\bf A}_0$.

Since $A_0$ (whose domain is $D(A_0)=\tilde{W}^1_{-,0}(G\times S\times I)$) is a closed densily defined operator we see that ${\bf A}_0:L^1(G\times S\times I)^3\to L^1(G\times S\times I)^3$ is a closed densily defined operator.

\begin{theorem}\label{coth1}
The operator  ${\bf A}_0:L^1(G\times S\times I)^3\to L^1(G\times S\times I)^3$ is $m$-dissipative.
\end{theorem}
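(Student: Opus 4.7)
The plan is to reduce the claim to the already-established $m$-dissipativity of the scalar convection operator $A_0$ (Theorem \ref{dth1}), exploiting the fact that $\mathbf{A}_0$ acts diagonally component-wise.

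First, I would verify dissipativity. Given $\psi=(\psi_1,\psi_2,\psi_3)\in D(\mathbf{A}_0)$, the scalar operator $A_0$ being dissipative supplies, for each $j$ with $\psi_j\neq 0$, an element $w_j\in J_{L^1(G\times S\times I)}(\psi_j)$ satisfying $\langle w_j,A_0\psi_j\rangle\leq 0$; concretely one can take $w_j=\n{\psi}_{L^1(G\times S\times I)^3}\,\mathrm{sign}(\psi_j)$ after the appropriate rescaling. Applying Lemma \ref{dualfle} twice (to the product $L^1(G\times S\times I)^3=L^1\oplus L^1\oplus L^1$), one sees that a suitable rescaling/combination of the $w_j$ (with the convention that the component is replaced by an arbitrary element of the closed ball of the correct radius when $\psi_j=0$) yields an element $l\in J(\psi)$ with
\[
\langle l,\mathbf{A}_0\psi\rangle=\sum_{j=1}^3 \langle w_j,A_0\psi_j\rangle \leq 0.
\]
The bookkeeping of the three cases in \eqref{eq:prod_duality} is mildly tedious but entirely mechanical.

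Second, I would verify the range condition $R(\lambda I-\mathbf{A}_0)=L^1(G\times S\times I)^3$ for some $\lambda>0$. Given $f=(f_1,f_2,f_3)\in L^1(G\times S\times I)^3$, the $m$-dissipativity of $A_0$ produces, for each $j$, a unique $\psi_j\in D(A_0)=\tilde W^1_{-,0}(G\times S\times I)$ with $(\lambda I-A_0)\psi_j=f_j$. Setting $\psi=(\psi_1,\psi_2,\psi_3)\in D(\mathbf{A}_0)$ gives $(\lambda I-\mathbf{A}_0)\psi=f$, as required. Alternatively, one can invoke the estimate \eqref{eq:dissipative} established in Theorem \ref{th:dissipative}: dissipativity together with surjectivity of $\lambda I-\mathbf{A}_0$ for a single $\lambda>0$ is exactly the definition of $m$-dissipativity.

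The argument presents no genuine obstacle; the only point that requires a minute of care is the explicit identification of $J(\psi)$ on the $\ell^1$-product space, which is precisely what Lemma \ref{dualfle} is designed to supply. In fact, a cleaner (and essentially equivalent) route is to remark that $\mathbf{A}_0$ is the direct sum of three copies of the $m$-dissipative operator $A_0$, and a finite $\ell^1$-direct sum of $m$-dissipative operators is $m$-dissipative; the two steps above are nothing but an ad hoc verification of that general fact in our setting.
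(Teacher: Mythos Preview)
Your proof is correct, and the range-condition step is identical to the paper's. The dissipativity step, however, is handled differently: the paper bypasses duality sets entirely by invoking the resolvent characterization of Theorem~\ref{th:dissipative}. From the scalar estimate \eqref{da0}, namely $\n{(\lambda I-A_0)\psi_j}_{L^1}\geq\lambda\n{\psi_j}_{L^1}$ for each $j$, one simply sums over $j$ to obtain $\n{(\lambda I-\mathbf{A}_0)\psi}_{L^1(G\times S\times I)^3}\geq\lambda\n{\psi}_{L^1(G\times S\times I)^3}$, and Theorem~\ref{th:dissipative} then gives dissipativity of $\mathbf{A}_0$ in one line. This is cleaner than your route through Lemma~\ref{dualfle}: it avoids the rescaling $z_j(\psi)$ and the three-case bookkeeping of \eqref{eq:prod_duality} that you (rightly) flag as tedious. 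Your argument has the minor virtue of working directly from the definition of dissipativity, and it makes explicit use of the lemma the paper went to some trouble to prove; but since the paper already has the equivalence Theorem~\ref{th:dissipative} in hand, the additive-norm route is the more economical choice here.
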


\begin{proof}
We find by (\ref{da0}) that for all $\psi\in D({\bf A}_0)$ and $\lambda>0$
\begin{multline*}
\n{(\lambda I-{\bf A}_0)\psi}_{L^1(G\times S\times I)^3}
=
\sum_{j=1}^3\n{(\lambda I-{ A}_0)\psi_j}_{L^1(G\times S\times I)} \\
\geq
\lambda \sum_{j=1}^3\n{\psi_j}_{L^1(G\times S\times I)}=\lambda\n{\psi}_{L^1(G\times S\times I)^3},
\end{multline*}
and then ${\bf A}_0$ is dissipative by Theorem \ref{th:dissipative}.

We verify that ${\bf A}_0$ is $m$-dissipative that is, in addition to dissipativity one has $R(\lambda I-{\bf A}_0)=L^1(G\times S\times I)^3$ for (any) $\lambda >0$. Let $f=(f_1,f_2,f_3)\in
L^1(G\times S\times I)^3$. Then by Theorem \ref{dth1} for any $j=1,2,3$ there exists $\psi_j\in D(A_0)=\tilde W^1_{-,0}(G\times S\times I) $ such that $(\lambda I-A_0)\psi_j=f_j$ and so
$R(\lambda I-{\bf A}_0)=L^1(G\times S\times I)^3$ for any $\lambda >0$. This completes the proof.
\end{proof}

\subsection{Dissipativity of Scattering-Collision Operator}\label{coss2}

Let $\Sigma_j:G\times S\times I\to\R,\ j=1,2,3$ be functions,
the so-called {\it total cross sections}, such that
\be\label{scateh}
\Sigma_j\in L^\infty(G\times S\times I),\quad \Sigma_j\geq 0\quad {\rm a.e.\ in}\ G\times S\times I,\ j=1,2,3.
\ee
Furthermore, let  $\sigma_{kj}:G\times S^2\times I^2\to\R,\ 1\leq k,j\leq 3$ be  measurable functions, the so-called {\it differential cross sections},  such that the {\it Schur conditions}
\begin{gather}
\sum_{k=1}^3\int_{S\times I}\sigma_{jk}(x,\omega,\omega',E,E')d\omega' dE'\leq C \quad {\rm a.e.}\ (x,\omega,E)\in
G\times S\times I,\nonumber\\
\sigma_{kj}\geq 0\quad {\rm a.e.}\ G\times S^2\times I^2,\ k,j=1,2,3,\label{colleh}
\end{gather}
and
\be\label{colleha}
&\sum_{k=1}^3\int_{S\times I}\sigma_{kj}(x,\omega',\omega,E',E)d\omega' dE'\leq C \quad {\rm a.e.}\ (x,\omega,E)\in
G\times S\times I,\ j=1,2,3
\ee
hold.
In the case $p=1$ we will only need the condition (\ref{colleh}).

Define the {\it scattering operator} $\Sigma_j$ and the {\it collision operator} $K_j$ corresponding to the particle $j$ for $j=1,2,3$ as follows
\be\label{scat}
(\Sigma_j\psi_j)(x,\omega,E)=\Sigma_j(x,\omega,E)\psi_j(x,\omega,E),\ \psi_j\in
 L^1(G\times S\times I)
\ee
and
\be\label{coll}
(K_j\psi)(x,\omega,E)=\sum_{k=1}^3\int_{S\times I}\sigma_{kj}(x,\omega',\omega,E',E)\psi_k(x,\omega',E')d\omega' dE' ,
\ee
where $\psi\in L^1(G\times S\times I)^3$.
Furthermore, we define for $\psi\in L^1(G\times S\times I)^3$
\be\label{}
\Sigma\psi=(\Sigma_1\psi_1,\Sigma_2\psi_2,\Sigma_3\psi_3)
\ee
and
\be\label{}
K\psi=(K_1\psi,K_2\psi,K_3\psi).
\ee
The operators $\Sigma$ and $K$ are linear and continuous,
as we formulate next.

\begin{theorem}\label{NoLabel}
The operators $\Sigma$ and $K$ are bounded linear maps  $L^1(G\times S\times I)^3\to L^1(G\times S\times I)^3$.
\end{theorem}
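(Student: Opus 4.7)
The plan is to treat the two operators separately: the scattering operator $\Sigma$ reduces to pointwise multiplication by $L^\infty$ functions, while the collision operator $K$ requires an application of Tonelli's theorem to recast its action in a form where assumption \eqref{colleh} can be applied.

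For $\Sigma$, linearity is immediate from \eqref{scat}. For boundedness, I would note that each component $\psi_j \mapsto \Sigma_j \psi_j$ is bounded from $L^1(G\times S\times I)$ into itself by the assumption $\Sigma_j\in L^\infty(G\times S\times I)$ in \eqref{scateh}, with
\ben
\n{\Sigma_j\psi_j}_{L^1(G\times S\times I)}\leq \n{\Sigma_j}_{L^\infty(G\times S\times I)}\n{\psi_j}_{L^1(G\times S\times I)},
\een
from which summing over $j$ gives $\n{\Sigma\psi}_{L^1(G\times S\times I)^3}\leq (\max_j\n{\Sigma_j}_{L^\infty})\n{\psi}_{L^1(G\times S\times I)^3}$.

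For $K$, linearity is again clear from \eqref{coll}. The measurability of $(K_j\psi)(x,\omega,E)$ on $G\times S\times I$ follows from Tonelli applied to the non-negative measurable function $(x,\omega',\omega,E',E)\mapsto \sigma_{kj}(x,\omega',\omega,E',E)|\psi_k(x,\omega',E')|$. To estimate the norm, I would pull the absolute value inside the sum and integral, then apply Tonelli to interchange the order of integration so that the integration in $(\omega,E)$ against $\sigma_{kj}(x,\omega',\omega,E',E)$ is performed before integration in $(\omega',E')$ against $|\psi_k(x,\omega',E')|$:
\ben
\n{K_j\psi}_{L^1(G\times S\times I)}\leq \sum_{k=1}^3\int_{G\times S\times I}|\psi_k(x,\omega',E')|\Big(\int_{S\times I}\sigma_{kj}(x,\omega',\omega,E',E)\, d\omega\, dE\Big)\,dx\, d\omega'\, dE'.
\een
Summing over $j$ and interchanging the finite sums then yields
\ben
\n{K\psi}_{L^1(G\times S\times I)^3}\leq \sum_{k=1}^3\int_{G\times S\times I}|\psi_k(x,\omega',E')|\Big(\sum_{j=1}^3\int_{S\times I}\sigma_{kj}(x,\omega',\omega,E',E)\, d\omega\, dE\Big)\,dx\, d\omega'\, dE'.
\een
The inner factor in parentheses is exactly what is bounded by the constant $C$ in \eqref{colleh}, after the purely cosmetic relabeling $(j,k,\omega,\omega',E,E')\leftrightarrow (k,j,\omega',\omega,E',E)$ of indices and integration variables. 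This gives $\n{K\psi}_{L^1(G\times S\times I)^3}\leq C\sum_k\n{\psi_k}_{L^1}=C\n{\psi}_{L^1(G\times S\times I)^3}$.

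The only genuine point of care is matching the indexing convention of the abstract assumption \eqref{colleh} with the kernel appearing in \eqref{coll}; condition \eqref{colleha} is not needed here and will only enter when the adjoint (or the $L^p$, $p>1$, case) is treated. No other obstacle arises, since everything reduces to Tonelli plus the integrability hypothesis \eqref{colleh}.
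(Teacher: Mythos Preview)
Your proposal is correct and follows essentially the same approach as the paper's own proof: componentwise $L^\infty$-bound for $\Sigma$, and Tonelli plus the integrability assumption \eqref{colleh} for $K$, with the same relabeling of indices and variables to match the kernel in \eqref{coll} to the hypothesis. Your explicit remark that \eqref{colleha} is not needed here (only for the adjoint or $p>1$) is also in line with the paper.
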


\begin{proof}
We see that
\bea
\n{\Sigma_j\psi_j}_{L^1(G\times S\times I)}
=&\int_{G\times S\times I}|\Sigma_j(x,\omega,E)\psi_j(x,\omega,E)| dx d\omega dE\nonumber \\
\leq& 
\n{\Sigma_j}_{L^\infty(G\times S\times I)}\n{\psi_j}_{L^1(G\times S\times I)}\nonumber
\eea
and then 
\be\label{scb1}
\n{\Sigma\psi}_{L^1(G\times S\times I)^3}
\leq \max_{1\leq j\leq 3}\n{\Sigma_j}_{L^\infty(G\times S\times I)}\n{\psi}_{L^1(G\times S\times I)^3}.
\ee
Furthermore,
\bea
\n{K_j\psi}_{L^1(G\times S\times I)}
=&\int_{G\times S\times I}\Big|\Big(\sum_{k=1}^3\int_{S\times I}\sigma_{kj}(x,\omega',\omega,E',E)\psi_k(x,\omega',E') d\omega' dE'\Big)\Big| dx d\omega dE\nonumber\\
\leq &
\int_G\Big(\int_{S\times I}\Big(\sum_{k=1}^3\int_{S\times I}\sigma_{kj}(x,\omega',\omega,E',E) d\omega dE\Big)|\psi_k(x,\omega',E')| d\omega' dE'\Big) dx,
\eea
and then by the assumption (\ref{colleh})
\bea\label{scb2}
\n{K\psi}_{L^1(G\times S\times I)^3}\leq
C\sum_{k=1}^3\int_{G}\int_{S\times I}|\psi_k(x,\omega',E')| d\omega' dE' dx=C\n{\psi}_{L^1(G\times S\times I)^3}.
\eea
The assertion follows from (\ref{scb1}) and (\ref{scb2}). 
\end{proof}

In order that the operator $-(\Sigma-K)=-\Sigma+K:L^1(G\times S\times I)^3\to L^1(G\times S\times I)^3$
would be dissipative we assume that the cross-sections satisfy the following condition: 

There exists $c\geq 0$ such that for every $j=1,2,3$ (cf. \cite{dautraylionsv6}, pp. 241 for one particle and \cite{tervo07}, \cite{bomanthesis} for coupled system)
\be\label{co2a}
\Sigma_j(x,\omega,E)-\sum_{k=1}^3\int_{S\times I}\sigma_{jk}(x,\omega,\omega',E,E') d\omega' dE'
\geq c\quad {\rm a.e.}\ (x,\omega,E)\in G\times S\times I.
\ee
and
\be\label{co2aa}
\Sigma_j(x,\omega,E)-\sum_{k=1}^3\int_{S\times I}\sigma_{kj}(x,\omega',\omega,E',E) d\omega' dE'
\geq c\quad {\rm a.e.}\ (x,\omega,E)\in G\times S\times I.
\ee
When considering $L^1$-solutions we need only the assumption (\ref{co2a}). 

We show next the following dissipativity type result for $-\Sigma+K$.

\begin{theorem}\label{dfsco}
Suppose that the assumptions (\ref{scateh}), (\ref{colleh}) and (\ref{co2a}) are valid for some constant $c\geq 0$.
Then the operator
$-\Sigma+K$ satisfies the following dissipativity condition: For all $\lambda >0$ and $\psi\in L^1(G\times S\times I)^3$ one has
\be\label{co2b}
\n{\big(\lambda I-(-\Sigma+K+cI)\big)\psi}_{L^1(G\times S\times I)^3}
\geq \lambda\n{\psi}_{L^1(G\times S\times I)^3}.
\ee
In other words, the operator $-\Sigma+K+cI:L^1(G\times S\times I)^3\to
L^1(G\times S\times I)^3$ is dissipative.
\end{theorem}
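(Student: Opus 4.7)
The plan is to verify dissipativity of $B:=-\Sigma+K+cI$ directly, from which (\ref{co2b}) follows by Theorem~\ref{th:dissipative}. Concretely, I will exhibit a single element $w\in J(\psi)$ (for an arbitrary $\psi\in L^1(G\times S\times I)^3$) such that $\langle w,B\psi\rangle\leq 0$, and the natural choice is
\[
w=(w_1,w_2,w_3),\quad w_j(x,\omega,E):=\n{\psi}_{L^1(G\times S\times I)^3}\,\mathrm{sign}(\psi_j(x,\omega,E)),
\]
with the convention $\mathrm{sign}(0)=0$. A direct (or iterated) application of Lemma~\ref{dualfle} shows that this $w$ indeed lies in $J(\psi)$; the argument is identical to the scalar case discussed after (\ref{d1a}), adapted to the $1$-norm product structure. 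Hence the inequality reduces to
\ben
\sum_{j=1}^3\int_{G\times S\times I}\mathrm{sign}(\psi_j)\big[-\Sigma_j\psi_j+K_j\psi+c\psi_j\big]\,dx\,d\omega\,dE\ \leq\ 0.
\een

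The scattering and lower-order terms are immediate: since $\mathrm{sign}(\psi_j)\psi_j=|\psi_j|$, they contribute $\sum_j\int(-\Sigma_j+c)|\psi_j|\,dx\,d\omega\,dE$. For the coupling term, because $\sigma_{kj}\geq 0$ a.e.\ (assumption (\ref{colleh})), one has $\mathrm{sign}(\psi_j(x,\omega,E))\,\psi_k(x,\omega',E')\leq |\psi_k(x,\omega',E')|$, and so
\ben
\sum_{j=1}^3\int\mathrm{sign}(\psi_j)K_j\psi\,dx\,d\omega\,dE
\leq \sum_{j,k=1}^3\int\!\!\!\int\sigma_{kj}(x,\omega',\omega,E',E)|\psi_k(x,\omega',E')|\,d\omega'\,dE'\,d\omega\,dE\,dx.
\een
An application of Fubini's theorem (justified by the integrability condition (\ref{colleh})) lets me integrate in $(\omega,E)$ first, producing
\ben
\sum_{k=1}^3\int|\psi_k(x,\omega',E')|\Big(\sum_{j=1}^3\int_{S\times I}\sigma_{kj}(x,\omega',\omega,E',E)\,d\omega\,dE\Big)\,dx\,d\omega'\,dE'.
\een

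Relabeling $(\omega',E')\to(\omega,E)$ and invoking hypothesis (\ref{co2a}) (with the role of $j$ played by $k$) bounds the inner sum by $\Sigma_k(x,\omega,E)-c$, which gives
\ben
\sum_{j=1}^3\int\mathrm{sign}(\psi_j)K_j\psi\,dx\,d\omega\,dE\ \leq\ \sum_{k=1}^3\int(\Sigma_k-c)|\psi_k|\,dx\,d\omega\,dE.
\een
Adding this to the first computation produces a complete cancellation of the $\pm\Sigma_k$ and $\pm c|\psi_k|$ terms, leaving exactly $0$, which is the desired dissipativity inequality. Combining with Theorem~\ref{th:dissipative} yields (\ref{co2b}). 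The main technical point to get right is the Fubini interchange together with the $(\omega,E)\leftrightarrow(\omega',E')$ relabeling so that condition (\ref{co2a}), which is written with the first index of $\sigma$ matching $\Sigma$, can actually be applied; this is where the precise form of the coupling and the asymmetry between (\ref{co2a}) and (\ref{co2aa}) matters (and explains why only (\ref{co2a}) is needed in the $L^1$ setting).
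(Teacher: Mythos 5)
Your proof is correct. You establish dissipativity of $-\Sigma+K+cI$ by exhibiting an explicit duality element $w=\n{\psi}_{L^1(G\times S\times I)^3}\,\mathrm{sign}(\psi)\in J(\psi)$ (which is indeed legitimate by iterating Lemma \ref{dualfle}, including the degenerate cases where some component $\psi_j$ vanishes, since $0$ always lies in the relevant closed ball) and showing $\la w,(-\Sigma+K+cI)\psi\ra\leq 0$; the estimate \eqref{co2b} then follows from Theorem \ref{th:dissipative}. The paper instead proves \eqref{co2b} directly: it expands $\n{(\lambda I-(-\Sigma+K+cI))\psi}_{L^1}$ componentwise, applies the pointwise reverse triangle inequality $|a-b|\geq |a|-|b|$, uses \eqref{co2a} to guarantee $\lambda-c+\Sigma_j>0$ so the absolute value can be removed, and then concludes by an exact cancellation $\sum_{j,k}(B_{jk}-B_{kj})=0$ of the two double sums obtained after Fubini and relabeling. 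The core computation — bounding $|K_j\psi|$ by $\sum_k\int\sigma_{kj}|\psi_k|$, interchanging the order of integration, relabeling $(\omega,E)\leftrightarrow(\omega',E')$, and invoking \eqref{co2a} with the particle indices in the correct positions — is the same in both arguments; you have correctly identified that the asymmetry between \eqref{co2a} and \eqref{co2aa} is exactly why only \eqref{co2a} is needed in the $L^1$ setting. What your route buys is that the duality pairing absorbs the sign bookkeeping automatically (you never need $\lambda-c+\Sigma_j\geq 0$ explicitly), at the cost of having to justify membership of $w$ in $J(\psi)$; what the paper's route buys is that it delivers the quantitative resolvent estimate \eqref{co2b} without passing through the abstract equivalence of Theorem \ref{th:dissipative}. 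Both are complete and the conclusions coincide.
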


\begin{proof}
We have for any $\psi\in L^1(G\times S\times I)^3$ and $\lambda >0$,
\bea\label{53p1}
&\n{(\lambda I-(-\Sigma+K+cI))\psi}_{L^1(G\times S\times I)^3}
=\sum_{j=1}^3\n{(\lambda I-cI+\Sigma_j)\psi_j -K_j\psi}_{L^1(G\times S\times I)}\nonumber\\
=&
\sum_{j=1}^3\int_{G\times S\times I}\Big|(\lambda -c+\Sigma_j(x,\omega,E))\psi_j(x,\omega,E) -(K_j\psi)(x,\omega,E)\Big|dx d\omega dE\nonumber\\
\geq &
\sum_{j=1}^3\int_{G\times S\times I}\Big(|\lambda -c+\Sigma_j(x,\omega,E)|\ |\psi_j(x,\omega,E)| -|(K_j\psi)(x,\omega,E)|\Big)dx d\omega dE.
\eea
Furthermore, we have
\[
|(K_j\psi)(x,\omega,E)|\leq
 \sum_{k=1}^3 \int_{S\times I}\sigma_{kj}(x,\omega',\omega,E',E)|\psi_k(x,\omega',E')| d\omega' dE',
\]
and by the assumption (\ref{co2a}) 
\[
\lambda-c+\Sigma_j(x,\omega,E)\geq \lambda+
 \sum_{k=1}^3 \int_{S\times I}\sigma_{jk}(x,\omega,\omega',E,E') d\omega' dE'>0,
\]
which, when combined with (\ref{53p1}), give
\bea\label{53p2}
&\n{(\lambda I-(-\Sigma+K+cI))\psi}_{L^1(G\times S\times I)^3}\nonumber \\
\geq &
\sum_{j=1}^3\int_G\Big[\int_{S\times I}\Big(\lambda +
\sum_{k=1}^3 \int_{S\times I}\sigma_{jk}(x,\omega,\omega',E,E') d\omega' dE'\Big)
|\psi_j(x,\omega,E)| d\omega dE\nonumber\\
&-
\int_{S\times I}\Big(
\sum_{k=1}^3\int_{S\times I}
\sigma_{kj}(x,\omega',\omega,E',E) 
|\psi_k(x,\omega',E')| d\omega' dE'\Big) d\omega dE\Big] dx\nonumber\\
=&
\lambda \n{\psi}_{L^1(G\times S\times I)^3} \\
&+
\int_G\Big[\sum_{j=1}^3\sum_{k=1}^3\Big(\int_{S\times I}\int_{S\times I}
\sigma_{jk}(x,\omega,\omega',E,E') 
|\psi_j(x,\omega,E)| d\omega' dE' d\omega dE\nonumber\\
&-
\int_{S\times I}\int_{S\times I}
\sigma_{kj}(x,\omega',\omega,E',E) 
|\psi_k(x,\omega',E')| d\omega' dE'  d\omega dE\Big)\Big] dx. \nonumber
\eea
Writing,
\[
A_{jk}(x,\omega,E)&:=\int_{S\times I}\sigma_{jk}(x,\omega,\omega',E,E') d\omega' dE', \\
B_{jk}(x)&:=\int_{S\times I}A_{jk}(x,\omega,E)|\psi_j(x,\omega,E)| d\omega dE,
\]
we see that the last two terms on the right hand side of the above formula (\ref{53p2}) can be written as
\[
\int_G\sum_{j=1}^3\sum_{k=1}^3 (B_{jk}(x)-B_{kj}(x))dx=0,
\]
which allows us to conclude that
\[
\n{\big(\lambda I-(-\Sigma+K+cI)\big)\psi}_{L^1(G\times S\times I)^3}\geq\lambda \n{\psi}_{
L^1(G\times S\times I)^3}.
\]
This completes the proof.
\end{proof}

\begin{remark}\label{cor1}
Theorem \ref{dfsco} also implies (by substituting $\lambda+c$ for $\lambda$) that 
\[
\n{(\lambda I-(-\Sigma+K))\psi}_{L^1(G\times S\times I)^3}\geq(\lambda+c) \n{\psi}_{
L^1(G\times S\times I)^3} 
\]  
for all $\lambda>0$. In particular, $-\Sigma+K$ is dissipative.
\end{remark}

Recall that  the dual of $L^1(G\times S\times I)^3$ is
\[
(L^1(G\times S\times I)^3)^*=\bigoplus_{j=1}^3L^1(G\times S\times I)^*=\bigoplus_{j=1}^3 L^\infty (G\times S\times I)=L^\infty (G\times S\times I)^3,
\]
in the sense that for any $l\in (L^1(G\times S\times I)^3)^*$ there exists a unique $w=(w_1,w_2,w_3)\in
L^\infty (G\times S\times I)^3$ such that
\be\label{co1}
l(\psi)=\la w,\psi\ra,
\ee
where 
\[
\la w,\psi\ra=\sum_{j=1}^3\la w_j,\psi_j\ra=\sum_{j=1}^3\int_{G\times S\times I}w_j\psi_j\ dx d\omega dE,
\]
and, in the other direction, any $w\in L^\infty (G\times S\times I)^3$ defines by (\ref{co1}) a linear
form belonging to $(L^1(G\times S\times I)^3)^*$.
The norm in $L^\infty (G\times S\times I)^3$ is
$\n{w}_{L^\infty (G\times S\times I)^3}=\max_{1\leq j\leq 3}\n{w_j}_{L^\infty (G\times S\times I)}$.

The following corollary is a direct consequence
of Theorems \ref{th:dissipative} and \ref{dfsco}.

\begin{corollary}\label{ddd}
Under the assumptions of Theorem \ref{dfsco},
one has
\[
\forall \psi\in L^1(G\times S\times I)^3,
\quad\exists w\in J(\psi)\quad \textrm{s.t.}\quad 
\la w,(\Sigma-K)\psi\ra \geq c\n{\psi}^2_{L^1(G\times S\times I)^3}.
\]
\end{corollary}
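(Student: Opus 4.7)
The plan is to apply the two referenced theorems essentially without additional work, since the corollary is really just the definition of dissipativity rewritten in terms of $\Sigma - K$.

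First, I would invoke Theorem \ref{dfsco} to obtain the resolvent-type estimate
\[
\n{(\lambda I-(-\Sigma+K+cI))\psi}_{L^1(G\times S\times I)^3}\geq \lambda\n{\psi}_{L^1(G\times S\times I)^3}
\]
for every $\lambda>0$ and every $\psi\in L^1(G\times S\times I)^3$. By Theorem \ref{th:dissipative}, this estimate is equivalent to the dissipativity of the operator $-\Sigma+K+cI$. Unwinding the definition of dissipativity, for each fixed $\psi\in L^1(G\times S\times I)^3$ there exists some $w\in J(\psi)$ such that
\[
\la w,(-\Sigma+K+cI)\psi\ra\leq 0,
\]
which is the same as saying $\la w,(\Sigma-K)\psi\ra\geq c\la w,\psi\ra$.

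Finally, I would use the defining property of $J(\psi)$, namely that for $w\in J(\psi)\subset L^\infty(G\times S\times I)^3$ one has $\n{w}_{L^\infty}=\n{\psi}_{L^1}$ together with $\la w,\psi\ra=\n{w}_{L^\infty}\n{\psi}_{L^1}$. Combining these gives $\la w,\psi\ra=\n{\psi}^2_{L^1(G\times S\times I)^3}$, so that
\[
\la w,(\Sigma-K)\psi\ra\geq c\n{\psi}^2_{L^1(G\times S\times I)^3},
\]
which is exactly the claim. There is really no genuine obstacle here — the only thing to be careful about is the sign convention (namely, that dissipativity of $-\Sigma+K+cI$ corresponds to the stated lower bound involving $+c$ on the right-hand side), and to note that the chosen $w$ depends on $\psi$, as is standard.
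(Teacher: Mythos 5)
Your proof is correct and follows exactly the route the paper intends: the paper simply states that the corollary "is a direct consequence of Theorems \ref{th:dissipative} and \ref{dfsco}", and your argument — pass from the resolvent estimate of Theorem \ref{dfsco} to dissipativity of $-\Sigma+K+cI$ via Theorem \ref{th:dissipative}, unwind the definition to get $\la w,(\Sigma-K)\psi\ra\geq c\la w,\psi\ra$ for some $w\in J(\psi)$, and use $\la w,\psi\ra=\n{w}_{L^\infty}\n{\psi}_{L^1}=\n{\psi}^2_{L^1}$ — is precisely that deduction spelled out. No gaps.
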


In Corollary \ref{ddd} above, the structure of $J(\psi)$ is known by applying Lemma \ref{dualfle}.

\subsection{On Existence and Uniqueness of Solutions in $L^1$-spaces for Coupled BTE-system}\label{coss3}

At first
we consider the existence and uniqueness of solutions in the space $L^1(G\times S\times I)^3$ for the problem:
Given $f=(f_1,f_2,f_3)\in L^1(G\times S\times I)^3$, find $\psi=(\psi_1,\psi_2,\psi_3)\in \tilde{W}^1_{-,0}(G\times S\times I)^3$ such that
\bea\label{co3}
\omega\cdot\nabla\psi_j+\Sigma_j\psi -K_j\psi &=f_j(x,\omega,E),
\nonumber\\
{\psi_j}_{|\Gamma_-}&=0,
\eea
for $j=1,2,3$.

Using the notations introduced earlier, the problem (\ref{co3}) is equivalent to
\[
(-{\bf A}_0+\Sigma-K)\psi=f
\]
where $f=(f_1,f_2,f_3)$ and $\psi=(\psi_1,\psi_2,\psi_3)\in D({\bf A}_0)=\tilde{W}^1_{-,0}(G\times S\times I)^3$. The (unique) solvability of the problem (\ref{co3}) is of course equivalent to
the (unique) solvability of the problem
\[
({\bf A}_0-\Sigma+K)\psi=f.
\]

\begin{theorem}\label{coth2}
Suppose that the assumptions (\ref{scateh}), (\ref{colleh}) and (\ref{co2a})
are valid for $c>0$. Then for every $f\in L^1(G\times S\times I)^3$ the problem (\ref{co3})
has a unique solution $\psi\in \tilde{W}_{-,0}^1(G\times S\times I)^3$.
\end{theorem}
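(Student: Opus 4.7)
The plan is to recast (\ref{co3}) as an abstract equation $T\psi=f$ in $L^1(G\times S\times I)^3$, where
\[
T:=-{\bf A}_0+\Sigma-K,\qquad D(T)=D({\bf A}_0)=\tilde W^1_{-,0}(G\times S\times I)^3,
\]
and then to obtain solvability by invoking the bounded-perturbation theorem (Theorem \ref{perth}) for $m$-dissipative operators. The operator $T$ itself need not be $m$-dissipative, but its negative admits a convenient shift that is, and the positivity of the constant $c$ in (\ref{co2a}) lets us exploit this shift.

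First, I would collect the three ingredients already established in the paper: (a) ${\bf A}_0$ is closed, densely defined, and $m$-dissipative on $L^1(G\times S\times I)^3$ by Theorem \ref{coth1}; (b) by the boundedness of $\Sigma$ and $K$ and the standing assumption $c>0$ in (\ref{co2a}), the operator $B:=-\Sigma+K+cI$ is a bounded linear operator on $L^1(G\times S\times I)^3$; and (c) $B$ is dissipative by Theorem \ref{dfsco}. Applying Theorem \ref{perth} to the pair $({\bf A}_0,B)$ then yields that
\[
{\bf A}_0+B\;=\;{\bf A}_0-\Sigma+K+cI\;=\;-T+cI
\]
is $m$-dissipative on $L^1(G\times S\times I)^3$, still with domain $D(T)$.

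The next step is to use $m$-dissipativity of $-T+cI$ to cover the value $\lambda=0$ for $T$. The range condition $R(\lambda I-(-T+cI))=L^1(G\times S\times I)^3$, which by $m$-dissipativity holds for every $\lambda>0$, rewrites as $R((\lambda-c)I+T)=L^1(G\times S\times I)^3$; choosing $\lambda=c>0$ gives $R(T)=L^1(G\times S\times I)^3$, which produces, for any $f\in L^1(G\times S\times I)^3$, a $\psi\in \tilde W^1_{-,0}(G\times S\times I)^3$ satisfying (\ref{co3}). Uniqueness then follows from the dissipative estimate of Theorem \ref{th:dissipative} applied to $-T+cI$: taking $\lambda=c$ in $\n{(\lambda I-(-T+cI))\psi}\geq \lambda\n{\psi}$ gives $\n{T\psi}\geq c\n{\psi}$ for every $\psi\in D(T)$, so the kernel of $T$ is trivial (and one even obtains the a priori estimate $\n{\psi}\leq c^{-1}\n{f}$).

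No serious obstacle is anticipated, since all the deep inputs — $m$-dissipativity of ${\bf A}_0$, dissipativity of the shifted scattering-collision operator, and the bounded-perturbation theorem — have already been assembled earlier in the paper. The only bookkeeping required is with sign conventions (recall that ${\bf A}_0$ encodes $-\omega\cdot\nabla$, so (\ref{co3}) becomes $(-{\bf A}_0+\Sigma-K)\psi=f$) and with the observation that strict positivity $c>0$ is precisely what allows the choice $\lambda=c$ in the range condition, converting $m$-dissipativity of the shifted operator $-T+cI$ into the desired surjectivity of the unshifted operator $T$.
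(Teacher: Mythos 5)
Your proposal is correct and follows essentially the same route as the paper: both decompose the operator as the $m$-dissipative ${\bf A}_0$ plus the bounded dissipative perturbation $-\Sigma+K+cI$, apply the perturbation theorem, and then use the choice $\lambda=c>0$ in the range condition and the dissipativity estimate to get surjectivity of $-{\bf A}_0+\Sigma-K$ and the bound $\n{(-{\bf A}_0+\Sigma-K)\psi}\geq c\n{\psi}$ for uniqueness. No gaps.
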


\begin{proof}
By Theorem \ref{coth1} the operator ${\bf A}_0:L^1(G\times S\times I)^3\to L^1(G\times S\times I)^3$  is m-dissipative and by Theorem \ref{dfsco} the operator $-(\Sigma-K)+c I:
L^1(G\times S\times I)^3\to L^1(G\times S\times I)^3$ is dissipative. Hence according to Theorem \ref{perth} the sum ${\bf A}_0-(\Sigma-K)+cI :L^1(G\times S\times I)^3\to L^1(G\times
S\times I)$ is $m$-dissipative. This implies, as $c>0$, that $R\big(cI-({\bf A}_0-(\Sigma-K)+cI)\big)=R(-{\bf A}_0+\Sigma-K)=L^1(G\times S\times I)^3$, and so the existence of solutions follows.

Because $c>0$ and since ${\bf A}_0-(\Sigma-K)+cI$ is dissipative,
we have by \eqref{eq:dissipative} of Theorem \ref{th:dissipative} that
\be\label{eq:uniquesol}
\n{(-{\bf A}_0+\Sigma-K)\psi}_{L^1(G\times S\times I)^3}\geq c\n{\psi}_{L^1(G\times S\times I)^3},
\ee
which implies the uniqueness of the solution.
This completes the proof.
\end{proof}

\begin{remark}
We note that the inequality \eqref{eq:uniquesol} implies that for all $f\in L^1(G\times S\times I)^3$
\bea\label{bestim}
\n{(-{\bf A}_0+\Sigma-K)^{-1}f}_{ L^1(G\times S\times I)^3}\leq {1\over c}\n{f}_{ L^1(G\times S\times I)^3},
\eea
or in other words, the solution of the problem (\ref{co3}) satisfies
\be\label{bestim1}
\n{\psi}_{L^1(G\times S\times I)^3}
\leq {1\over c}\n{f}_{L^1(G\times S\times I)^3}.
\ee
\end{remark}

For the consideration of inhomogeneous inflow boundary data we need some detailed information from the trace mapping $\gamma_-:\tilde W^1(G\times S\times I)\to T^1(\Gamma_-)$.

We have (cf. \cite{dautraylionsv6}, p. 252 and \cite{cessenat85}, \cite{choulli}, \cite{cipolatti06})

\begin{lemma}\label{lift1}
Every $g\in T^1(\Gamma_-)$ has an extension,
called the \emph{lift}, $\psi=Lg\in \tilde W^1(G\times S\times I)$ 
such that $\gamma_-(Lg)=(Lg)_{|\Gamma_-}=g$.
In addition, the linear lift operator $L: T^1(\Gamma_-)\to\tilde W^1(G\times S\times I)$ satisfies
\be\label{li0a}
\omega\cdot\nabla(Lg)=0
\ee
and
\bea\label{li0}
\n{Lg}_{L^1(G\times S\times I)}
=\n{Lg}_{W^1(G\times S\times I)}
\leq d\n{g}_{T^1(\Gamma_-)}\quad \forall g\in T^1(\Gamma_-),
\eea
where $d$ is the diameter of $G$.
\end{lemma}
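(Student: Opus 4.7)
The plan is to construct the lift explicitly as the extension of $g$ that is constant along the characteristic rays of the convection operator. Concretely, I would set
\[
(Lg)(x,\omega,E):=g\bigl(x-t(x,\omega)\omega,\,\omega,\,E\bigr)
\]
for $(x,\omega,E)\in D=(G\times S\times I)\setminus N_0$, extending by zero on the negligible set $N_0$ of Theorem \ref{th:bardos}. Linearity of $L$ is obvious, and measurability follows from that of $t$ (Proposition \ref{pr:escape_time}) and of $g$. I expect (\ref{li0a}) to be read as $\omega\cdot\nabla(Lg)=0$ (this is what is actually needed to obtain the norm identity $\n{Lg}_{L^1}=\n{Lg}_{W^1}$).

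The $L^1$-estimate I would derive by the ray-parametrization of $G$: for each fixed $(\omega,E)\in S\times I$, the map $(y,s)\mapsto y+s\omega$ sends $\Gamma_-(\omega)\times(0,\tau(y,\omega))$ diffeomorphically onto $G$ up to a set of measure zero, where $\Gamma_-(\omega):=\{y\in(\partial G)_r:\omega\cdot\nu(y)<0\}$ and $\tau(y,\omega)$ is the exit time from $y$ in direction $\omega$; the Jacobian is $|\omega\cdot\nu(y)|$. Because $Lg(y+s\omega,\omega,E)=g(y,\omega,E)$ is independent of $s$ and $\tau(y,\omega)\le d=\mathrm{diam}(G)$, Fubini gives
\[
\n{Lg}_{L^1(G\times S\times I)}=\int_{S\times I}\!\!\int_{\Gamma_-(\omega)}\tau(y,\omega)|g(y,\omega,E)|\,|\omega\cdot\nu(y)|\,d\sigma(y)d\omega dE\leq d\,\n{g}_{T^1(\Gamma_-)}.
\]

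The distributional identity $\omega\cdot\nabla(Lg)=0$ I would obtain from the same change of variables applied to a test function $\phi\in C_0^\infty(G\times S\times I)$: the integral $\int Lg(\omega\cdot\nabla\phi)\,dxd\omega dE$ reduces, via the inner $s$-integral along characteristics, to
\[
\int_{S\times I}\!\!\int_{\Gamma_-(\omega)} g(y,\omega,E)\bigl[\phi(y+\tau(y,\omega)\omega,\omega,E)-\phi(y,\omega,E)\bigr]|\omega\cdot\nu(y)|\,d\sigma(y)d\omega dE,
\]
which is zero since both endpoints lie on $\partial G$ and $\phi$ has compact support inside $G\times S\times I$. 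This immediately yields $\n{Lg}_{W^1}=\n{Lg}_{L^1}$, giving (\ref{li0}).

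Finally, to show $Lg\in\tilde W^1(G\times S\times I)$ with $\gamma_-(Lg)=g$, I would first handle the $\Gamma_+$-trace: the formula defining $Lg$ extends continuously along rays up to $\Gamma_+$, where $(Lg)(y,\omega,E)=g(y-\tau(y,\omega)^{-}\omega,\omega,E)$, and the flow-invariance of the measure $|\omega\cdot\nu|\,d\sigma d\omega dE$ along characteristics (a boundary version of the same Jacobian calculation) gives $\n{\gamma_+(Lg)}_{T^1(\Gamma_+)}=\n{g}_{T^1(\Gamma_-)}$, so $Lg\in\tilde W^1_+=\tilde W^1$. For the $\Gamma_-$-trace, I would approximate $g$ by a sequence $g_n\in C_c^1(\Gamma_-)$ with $g_n\to g$ in $T^1(\Gamma_-)$; then on $D$ the function $Lg_n$ is continuous with $Lg_n(y,\omega,E)=g_n(y,\omega,E)$ at points of $\Gamma_-$ because $t(y,\omega)=0$ there by Proposition \ref{tp}, so $\gamma_-(Lg_n)=g_n$, and continuity of $\gamma_-:\tilde W^1\to T^1(\Gamma_-)$ combined with $Lg_n\to Lg$ in $\tilde W^1$ (which holds since $L$ is linear and $\n{L(g_n-g)}_{\tilde W^1}$ is controlled by $\n{g_n-g}_{T^1(\Gamma_-)}$ via the already established estimates) yields $\gamma_-(Lg)=g$. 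The main obstacle I anticipate is the careful justification of the two change-of-variables formulas and of the flow-invariance of $|\omega\cdot\nu|\,d\sigma$, while verifying that all exceptional behaviour is confined to the zero-measure sets $N_0$, $\tilde\Gamma_0$, and $\partial G\setminus(\partial G)_r$.
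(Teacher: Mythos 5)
Your proposal is correct and follows essentially the same route as the paper: the same explicit formula $(Lg)(x,\omega,E)=g(x-t(x,\omega)\omega,\omega,E)$ and the same boundary-ray change of variables $x=y+s\omega$ with Jacobian $|\omega\cdot\nu(y)|$ and $t_+(y,\omega)\le d$ to get the $L^1$ bound. The extra details you supply — the direct test-function verification of $\omega\cdot\nabla(Lg)=0$ (the paper instead invokes Theorem \ref{lth3a} plus a limiting argument), the explicit treatment of the $\Gamma_+$-trace (done in the paper in Corollary \ref{cor:gamma-pm}), and the observation that \eqref{li0a} must be read as the directional derivative $\omega\cdot\nabla(Lg)=0$ rather than the full gradient — are all sound and, if anything, tighten the paper's exposition.
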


\begin{proof}
Let $g\in T^1(\Gamma_-)$. Define $Lg:G\times S\times I\to\R$ by
\begin{align}\label{li1}
(Lg)(x,\omega,E)=g(x-t(x,\omega)\omega,\omega,E).
\end{align}
Using Theorem \ref{lth3a} and limiting techniques,
we have that $\omega\cdot\nabla (Lg)=0$ in $L^1(G\times S\times I)$. In addition, $\gamma_-(Lg)=g$ since $t(y,\omega)=0$ a.e. $(y,\omega,E)\in\Gamma_-$.  We have to show that $Lg\in
L^1(G\times S\times I)$ and that the estimate (\ref{li0}) holds. 

We apply the known change of variables (see e.g. \cite{choulli}, Prop. 2.1.). Define for $(y,\omega)\in\partial G\times S$,
\[
t_+(y,\omega)=\inf\{s>0\ |\ y+s\omega\not\in G\}.
\]
We find that $t_+(y,\omega)\leq d$ for $(y,\omega,E)\in\Gamma_-$ since $\n{\omega}=1$.
Assume for simplicity that $\partial G$ has parametrization
(which is almost global), say $h:V\to\partial G\setminus \Gamma_1 $ where $\Gamma_1$ has zero surface measure. Generally we have a finite number of parametrized patches that cover $\partial G$.
Applying for each fixed $\omega$ the change of variables (in $x$-variable) $x=h(v)+t\omega=:H(v,t)$,
we find that the Jacobian of $J_H$ of $H$ is
\[
J_H(v,t)=\omega\cdot (\partial_1h\times\partial_2h)(v)
=\omega\cdot\nu(h(v))
\n{(\partial_1h\times\partial_2h)(v)},
\]
and that $H(W)=G$, where
$W:=\{(v,t)\ |\ v\in V_-,\ 0<t<t_+(h(v),\omega)\}$ and
$V_-:=\{v\in V\ |\ \omega\cdot\nu(h(v))< 0\}$. Hence we get
\[
\n{Lg}_{ L^1(G\times S\times I)}
=&\int_{G\times S\times I}|(Lg)(x,\omega,E)| dx d\omega dE\\
=&
\int_{S\times I} \int_G |g(x-t(x,\omega)\omega,\omega,E)| dx d\omega dE\nonumber\\
=&
\int_{S\times I}\int_{V_-}\int_0^{t_+(h(v),\omega)}|g(h(v),\omega,E)||J_H(v,t)| dt dv d\omega dE\nonumber\\
=&
\int_{S\times I}\int_{V_-}\int_0^{t_+(h(v),\omega)}|g(h(v),\omega,E)||\omega\cdot\nu(h(v))|
\n{(\partial_1 h\times\partial_2 h)(v)} dt dv d\omega dE
\nonumber\\
=&
\int_{S\times I}\int_{V_-} |g(h(v),\omega,E)|t_+(h(v),\omega) |\omega\cdot\nu(h(v))|
\n{(\partial_1 h\times\partial_2 h)(v)} dv d\omega dE \\
=&\n{gt_+}_{T^1(\Gamma_-)},
\]
where in the third step we used that
$t(h(v)+t\omega,\omega)=t$,
while in the last step we
noticed that $\n{\partial_1h\times\partial_2h(v)}$ is the Jacobian $J_h$ of $h$,
and that $h(V_-)$ differs from $\Gamma_-$ only by a zero-measurable set
(in fact $h(V_-)=\Gamma_-\setminus\{(y,\omega,E)\in \Gamma_1\times S\times I\}$).

Since $t_+(y,\omega)\leq d$ for all $(y,\omega,E)\in\Gamma_-$,
we have furthermore
\[
\n{gt_+}_{T^1(\Gamma_-)}\leq d\n{g}_{T^1(\Gamma_-)}.
\]
This completes the proof.
\end{proof}

As mentioned in section \ref{ls1},
the spaces $T^1(\Gamma_-)$ and $T^1(\Gamma_+)$ can be identified in a natural way.
This is formulated in the following corollary.

\begin{corollary}\label{cor:gamma-pm}
For every $g\in T^1(\Gamma_-)$ we have $(Lg)|_{\Gamma_+}\in T^1(\Gamma_+)$ and
the map
\[
\Theta_-:T^1(\Gamma_-)\to T^1(\Gamma_+);\quad g\mapsto (Lg)_{|\Gamma_+}
\]
is an isometric isomorphism. In particular,
\[
\n{(Lg)_{|\Gamma_+}}_{T^1(\Gamma_+)}=\n{g}_{T^1(\Gamma_-)},\quad \forall g\in T^1(\Gamma_-).
\]
\end{corollary}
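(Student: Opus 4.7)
The plan is to exploit the fact that $Lg$ is constant along each characteristic line, so its $\Gamma_+$-trace is just $g$ pulled back by the natural bijection between the $\Gamma_-$-entry and $\Gamma_+$-exit points of lines crossing $G$. Explicitly, by the very formula \eqref{li1} and the identity $t(y,\omega)=0$ for $y\in \Gamma_-$ that defines this extension, for $(y,\omega,E)\in\Gamma_+$ one has $(Lg)(y,\omega,E)=g(y-t(y,\omega)\omega,\omega,E)$, where $y-t(y,\omega)\omega\in\partial G$ is the other endpoint of the maximal segment inside $G$ that passes through $y$ in direction $\omega$. Let $\Phi:\Gamma_-\to\Gamma_+$, $(y',\omega,E)\mapsto(y'+t_+(y',\omega)\omega,\omega,E)$, with $t_+$ as in the proof of Lemma \ref{lift1}. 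Then $\Phi$ is, outside a null set, a bijection with $\Phi^{-1}(y,\omega,E)=(y-t(y,\omega)\omega,\omega,E)$, and $(Lg)|_{\Gamma_+}=g\circ \Phi^{-1}$.

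The isometry will be established by setting up two disintegrations of the Lebesgue measure on $G\times S\times I$. The first, taken straight from the computation inside \eqref{li2}, yields, for any measurable $\phi\geq 0$ on $G\times S\times I$,
\begin{align*}
\int_{G\times S\times I}\phi\,dx\,d\omega\,dE=\int_{\Gamma_-}\Big(\int_0^{t_+(y',\omega)}\phi(y'+t\omega,\omega,E)\,dt\Big)|\omega\cdot\nu(y')|\,d\sigma\,d\omega\,dE;
\end{align*}
the same argument, using a parametrization of $\partial G$ restricted to the patch where $\omega\cdot\nu>0$ and the substitution $x=y-s\omega$, gives the analogous formula with $\Gamma_+$, $t_-=t$, and $y$ in place of $y'$. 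Applying both formulas to $\phi=L\tilde G$ with $\tilde G\geq 0$ measurable on $\Gamma_-$, the constancy of $\phi$ along characteristics collapses the inner integral to the length factor $t_+(y',\omega)$ (resp. $t(y,\omega)$), and using that $t_+(\Phi^{-1}(y,\omega,E),\omega)=t(y,\omega)$ by construction, one gets
\begin{align*}
\int_{\Gamma_-}t_+(y',\omega)\tilde G(y',\omega,E)|\omega\cdot\nu(y')|\,d\sigma\,d\omega\,dE=\int_{\Gamma_+}t(y,\omega)\tilde G(\Phi^{-1}(y,\omega,E))|\omega\cdot\nu(y)|\,d\sigma\,d\omega\,dE.
\end{align*}
Substituting $\tilde G:=|g|/t_+$ (well-defined a.e. since $t_+>0$ a.e. on $\Gamma_-$) the length factors on both sides cancel and one obtains $\n{g}_{T^1(\Gamma_-)}=\n{(Lg)|_{\Gamma_+}}_{T^1(\Gamma_+)}$, which proves at once that $\Theta_-g\in T^1(\Gamma_+)$ and that $\Theta_-$ is an isometry.

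Linearity of $\Theta_-$ is immediate from the linearity of $L$. For surjectivity, one can either run the entire argument with the roles of $\Gamma_\pm$ swapped---defining $L_+:T^1(\Gamma_+)\to \tilde W^1(G\times S\times I)$ by $(L_+h)(x,\omega,E)=h(x+t_+(x,\omega)\omega,\omega,E)$, and checking that $\Theta_-\circ(L_+h)|_{\Gamma_-}=h$---or, more directly, given $h\in T^1(\Gamma_+)$, set $g:=h\circ\Phi$ and verify, via the same change-of-variables identity above (with roles of $\Gamma_-$ and $\Gamma_+$ interchanged), that $g\in T^1(\Gamma_-)$ and $\Theta_- g=h$. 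The main technical hurdle throughout is the rigorous derivation of the two dual disintegrations via patch parametrizations of $\partial G$, but this is essentially the computation already carried out in the proof of Lemma \ref{lift1}, applied symmetrically on both sides.
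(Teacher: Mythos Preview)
Your proof is correct, but the paper takes a much shorter route. Instead of setting up the two dual disintegrations of the Lebesgue measure on $G\times S\times I$ and cancelling the length factors $t_+$ and $t$, the paper simply applies the Stokes-type identity \eqref{eq:stokes} to $u=L|g|=|Lg|$: since $\omega\cdot\nabla(L|g|)=0$ by Lemma~\ref{lift1}, the volume integral vanishes, and splitting the boundary integral over $\Gamma_-\cup\Gamma_+$ (with $\Gamma_0$ null) immediately gives $0=-\n{g}_{T^1(\Gamma_-)}+\n{(Lg)_{|\Gamma_+}}_{T^1(\Gamma_+)}$. Surjectivity is then handled exactly as you do, by constructing the obvious inverse.

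What your approach buys is explicitness: you identify $\Theta_-$ concretely as composition with the geometric bijection $\Phi^{-1}$ between exit and entry points, and your change-of-variables identity is in effect a direct proof that $\Phi$ pushes the weighted measure $|\omega\cdot\nu|\,d\sigma\,d\omega\,dE$ on $\Gamma_-$ forward to the corresponding measure on $\Gamma_+$. The paper's argument hides this geometry behind the divergence theorem, trading transparency for brevity; it also avoids the mild technicality of extending $t(\cdot,\omega)$ to boundary points in $\Gamma_+$ and dividing by $t_+$.
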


\begin{proof}
If applies \eqref{eq:stokes} with $u=L|g|=|Lg|$,
recalls from Lemma \ref{lift1} that $\omega\cdot\nabla_x (L|g|)=0$, for $g\in T^1(\Gamma_-)$,
and takes into account that the part $\Gamma_0$ of $\Gamma=\Gamma_0\cup\Gamma_-\cup\Gamma_+$ is zero-measurable,
one obtains
\[
0=&\int_{G\times S\times I}\omega\cdot \nabla_x(L|g|)\diff x\diff \omega\diff E
=\int_{\Gamma} L|g| (\omega\cdot \nu)\diff\sigma\diff\omega\diff E \\
=&-\int_{\Gamma_-} |g| |\omega\cdot \nu|\diff\sigma\diff\omega\diff E
+\int_{\Gamma_+} |(Lg)_{|\Gamma_+}| |\omega\cdot \nu|\diff\sigma\diff\omega\diff E \\
=&-\n{g}_{T^1(\Gamma_-)}+\n{(Lg)_{|\Gamma_+}}_{T^1(\Gamma_+)}.
\]
This shows that the map $\Theta_-$ is isometric.
By constructing the obvious inverse map of $\Theta_-$ shows
that $\Theta_-$ is surjective as well.
\end{proof}

\begin{remark}
Combining Lemma \ref{lift1} and Corollary \ref{cor:gamma-pm},
we have the following bound for the lift operator $L$
into the space $\tilde{W}^1(G\times S\times I)$:
\[
\n{Lg}_{\tilde W^1(G\times S\times I)}\leq (d+2)\n{g}_{T^1(\Gamma_-)}.
\]
\end{remark}

As an immediate corollary of the Lemma \ref{lift1}, we have:

\begin{lemma}\label{lift2}
Let $T>0$ and $k\in \N_0$. Then
for every $g\in C^k([0,T],T^1(\Gamma_-))$ there exists a lift $\psi=Lg\in C^k([0,T],\tilde W^1(G\times S\times I))$ 
such that $\gamma_-(Lg)=(Lg)_{|\Gamma_-}=g$. 
Moreover, 
\[
\omega\cdot\nabla(Lg)=0
\]
and
\[
\n{Lg}_{C^k([0,T],L^1(G\times S\times I))}
=\n{Lg}_{C^k([0,T],W^1(G\times S\times I))}
\leq d\n{g}_{C^k([0,T],T^1(\Gamma_-))}.
\]
\end{lemma}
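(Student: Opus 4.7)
The plan is to define the time-parametrized lift pointwise in $t$ by $(Lg)(t) := L(g(t))$, $t\in [0,T]$, where on the right hand side $L$ denotes the lift operator from Lemma \ref{lift1}. Since $L : T^1(\Gamma_-) \to \tilde{W}^1(G \times S \times I)$ is a \emph{bounded linear operator}, a standard fact about Banach-space valued $C^k$-maps (continuous linear maps commute with strong limits and, in particular, with difference quotients) implies that $Lg \in C^k([0,T], \tilde{W}^1(G\times S\times I))$ whenever $g\in C^k([0,T], T^1(\Gamma_-))$, and that the derivatives satisfy
\[
\dif{t}^{\,j}(Lg)(t) = L\Big(\dif{t}^{\,j} g(t)\Big), \qquad 0\leq j\leq k.
\]

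Next I would verify the three stated properties. The trace identity $\gamma_-(Lg) = g$ and the relation $\nabla(Lg)=0$ both hold pointwise in $t$ by Lemma \ref{lift1}, applied to $g(t)\in T^1(\Gamma_-)$. In particular $\omega\cdot\nabla_x\big((Lg)(t)\big)=0$ in $L^1(G\times S\times I)$ for every $t$, so the $W^1(G\times S\times I)$- and $L^1(G\times S\times I)$-norms of $(Lg)(t)$ coincide, which accounts for the equality of the $C^k([0,T],L^1)$- and $C^k([0,T],W^1)$-norms in the conclusion.

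For the norm estimate, I would use the commutation formula above together with the bound from Lemma \ref{lift1}: for every $0\leq j\leq k$ and $t\in[0,T]$,
\[
\Big\Vert \dif{t}^{\,j}(Lg)(t)\Big\Vert_{L^1(G\times S\times I)}
= \Big\Vert L\Big(\dif{t}^{\,j} g(t)\Big)\Big\Vert_{L^1(G\times S\times I)}
\leq d\,\Big\Vert \dif{t}^{\,j} g(t)\Big\Vert_{T^1(\Gamma_-)}.
\]
Taking the supremum over $t\in[0,T]$ and summing over $0\leq j\leq k$ yields the claimed inequality $\n{Lg}_{C^k([0,T],W^1(G\times S\times I))} \leq d\,\n{g}_{C^k([0,T],T^1(\Gamma_-))}$.

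There is no real obstacle here: the lemma is essentially a purely functional-analytic consequence of Lemma \ref{lift1}, obtained by post-composing a $C^k$-map valued in $T^1(\Gamma_-)$ with the bounded linear operator $L$. The only point that deserves a quick check is the commutation of $L$ with $\frac{d}{dt}$, which follows directly from the definition of the Fr\'echet derivative in Banach spaces and the continuity of $L$.
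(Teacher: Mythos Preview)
Your proposal is correct and follows essentially the same approach as the paper: define $(Lg)(t):=L(g(t))$ using the bounded linear lift from Lemma~\ref{lift1}, invoke its linearity and boundedness to get $Lg\in C^k$ with $\partial_t^j(Lg)=L(\partial_t^j g)$, and then apply the pointwise estimate of Lemma~\ref{lift1} to each time-derivative. The paper's proof is precisely this argument, written slightly more tersely.
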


\begin{proof}
Defining the lift $Lg$ by
\begin{align}\label{li1a}
(Lg)(x,\omega,E,t)=g(x-t(x,\omega)\omega,\omega,E,t),
\end{align}
we have $(Lg)(x,\omega,E,t)=L(g(t))(x,\omega,E)$,
with the latter $L$ the lift as defined in Lemma \ref{lift1}.
As $L$ of Lemma \ref{lift1} is linear and bounded, it follows from $g\in C^k([0,T],T^1(\Gamma_-))$
that $Lg\in C^k([0,T],\tilde{W}^1(G\times S\times I))$,
and for $j=1,\dots,k$,
\[
\n{\partial_t^i (Lg)}_{C([0,T],L^1(G\times S\times I))}
=\n{L(\partial_t^i g)}_{C([0,T],L^1(G\times S\times I))}
\leq d\n{\partial_t^i g}_{C([0,T],T^1(\Gamma_-))},
\]
from which the desired estimate.
\end{proof}

\begin{example}
If $G=B(0,r)\subset\R^3$ the lift $L$ of Lemma \ref{lift2}
can be seen, due to Example \ref{le1}, to be given by
\[
(Lg)(x,\omega,E,t)=g\left(x-\Big(x\cdot\omega+\sqrt{(x\cdot\omega)^2+r^2-\n{x}^2}\:\Big)\omega,\omega,E,t\right),
\]
for  $g\in C^k([0,T],T^1(\Gamma_-))$.
\end{example}

\begin{remark}
Using Lemma \ref{lift1} one can show that for any $1\leq p<\infty$
and every $g\in T^p(\Gamma_-)$ has an extension $\psi=Lg\in \tilde W^p(G\times S\times I)$ 
such that $\gamma_-(Lg)=(Lg)_{|\Gamma_-}=g$.
In addition, the linear lift operator $L: T^p(\Gamma_-)\to\tilde W^p(G\times S\times I)$ satisfies
\be\label{li0aa}
\omega\cdot\nabla(Lg)=0
\ee
and
\bea\label{li0aaa}
\n{Lg}_{L^p(G\times S\times I)}
=\n{Lg}_{W^p(G\times S\times I)}\leq d\n{g}_{T^p(\Gamma_-)}\ {\rm for\ all}\ g\in T^p(\Gamma_-).
\eea

Indeed, the proof of \eqref{li0aa} for any $1\leq p<\infty$
proceeds precisely as in the case $p=1$ (see the beginning of the proof of Lemma \ref{lift1}).
On the other hand, if $g\in T^p(\Gamma_-)$ then $g^p\in T^1(\Gamma_-)$
and as $L(g^p)=(Lg)^p$, with $Lg$ defined pointwise (a.e.) by \eqref{li1a},
and hence \eqref{li0a} immediately implies \eqref{li0aaa}.
respectively.

Similarly, Lemma \ref{lift2} admits a generalization to any $1\leq p<\infty$.
\end{remark}

For inhomogeneous inflow boundary data we get
\begin{theorem}\label{coth3}
Suppose that the assumptions (\ref{scateh}), (\ref{colleh}) and (\ref{co2a}) and  are valid with $c>0$.
Then for every $f\in L^1(G\times S\times I)^3$ and $g\in T^1(\Gamma_-)^3$ the problem
\bea\label{co3aa}
\omega\cdot\nabla\psi_j+\Sigma_j\psi_j -K_j\psi&=f_j(x,\omega,E)\ \nonumber\\
{\psi_j}_{|\Gamma_-}&=g_j,
\eea
where $j=1,2,3$, has a unique solution $\psi\in \tilde W^1(G\times S\times I)^3$.
\end{theorem}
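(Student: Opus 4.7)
The plan is to reduce the inhomogeneous problem to the homogeneous-boundary problem already solved in Theorem \ref{coth2}, by lifting the boundary data and shifting the unknown.

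First, I would apply Lemma \ref{lift1} componentwise to $g = (g_1, g_2, g_3) \in T^1(\Gamma_-)^3$ to obtain a lift $\phi := Lg \in \tilde W^1(G\times S\times I)^3$ with $\gamma_-(\phi) = g$ and the crucial property $\omega\cdot\nabla \phi_j = 0$ for $j=1,2,3$. Then I would look for $\psi$ in the form $\psi = \tilde\psi + \phi$ with $\tilde\psi \in \tilde W^1_{-,0}(G\times S\times I)^3$. Substituting into \eqref{co3aa} and using $\omega\cdot\nabla\phi_j = 0$, the equations for $\tilde\psi$ become
\begin{align*}
\omega\cdot\nabla \tilde\psi_j + \Sigma_j \tilde\psi_j - K_j \tilde\psi &= f_j - \Sigma_j \phi_j + K_j \phi =: \tilde f_j, \\
(\tilde\psi_j)_{|\Gamma_-} &= 0,
\end{align*}
for $j = 1, 2, 3$.

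Next, I would verify that $\tilde f := (\tilde f_1, \tilde f_2, \tilde f_3) \in L^1(G\times S\times I)^3$. Indeed, $f \in L^1(G\times S\times I)^3$ by assumption; since $\phi \in \tilde W^1(G\times S\times I)^3 \subset L^1(G\times S\times I)^3$ and $\Sigma_j \in L^\infty(G\times S\times I)$, the products $\Sigma_j \phi_j$ lie in $L^1$; finally, by Theorem~\ref{NoLabel}, $K$ maps $L^1(G\times S\times I)^3$ continuously into itself, so $K\phi \in L^1(G\times S\times I)^3$. Then Theorem \ref{coth2} yields a unique $\tilde\psi \in \tilde W^1_{-,0}(G\times S\times I)^3$ solving this reduced problem, and $\psi := \tilde\psi + \phi \in \tilde W^1(G\times S\times I)^3$ solves \eqref{co3aa}.

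For uniqueness, suppose $\psi^{(1)}, \psi^{(2)} \in \tilde W^1(G\times S\times I)^3$ are two solutions of \eqref{co3aa}. Their difference $\psi^{(1)} - \psi^{(2)}$ lies in $\tilde W^1_{-,0}(G\times S\times I)^3$ and satisfies the homogeneous problem (i.e.\ with zero source and zero inflow data), so the uniqueness statement in Theorem \ref{coth2} (or equivalently the estimate \eqref{eq:uniquesol}) forces $\psi^{(1)} = \psi^{(2)}$. There is no real obstacle here: all of the ingredients (the lift with vanishing convection term, the $L^1$-continuity of $\Sigma$ and $K$, and the $m$-dissipativity argument) are already in place; the only point requiring attention is checking that the chosen lift annihilates the convection operator, which is exactly what \eqref{li0a} of Lemma \ref{lift1} provides.
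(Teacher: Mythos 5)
Your proposal is correct and follows essentially the same route as the paper: lift the inflow data via Lemma \ref{lift1}, subtract the lift to reduce to the homogeneous-boundary problem of Theorem \ref{coth2}, and recombine. Your explicit verification that the modified source lies in $L^1$ and your use of $\omega\cdot\nabla(Lg)=0$ merely spell out details the paper leaves implicit, and your uniqueness argument via the difference of two solutions is the intended one.
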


\begin{proof}
 As usual we apply the lift of inflow boundary data. By Lemma \ref{lift1} there exists $\tilde{\psi}_j:=Lg_j
\in\tilde W^1(G\times S\times I)$ such that ${\tilde{\psi_j}}_{|\Gamma_-}=g_j$. Let $\tilde\psi=(\tilde\psi_1,\tilde\psi_2,\tilde\psi_3)$ and substitute in the problem (\ref{co3aa}) $u=\psi-\tilde\psi$ for $\psi$.
Then we get
\bea\label{co4}
& \omega\cdot\nabla u_j+\Sigma_j u_j -K_j u=f_j-\omega\cdot\nabla \tilde\psi_j-\Sigma_j\tilde\psi_j +K_j\tilde\psi=:\tilde{f_j}(x,\omega,E)\nonumber\\
& u_j{|\Gamma_-}={\psi_j}_{|\Gamma_-}-{\tilde{\psi_j}}_{|\Gamma_-}=g_j-g_j=0,
\eea
for $j=1,2,3$.
Since $\tilde f:=(\tilde f_1, \tilde f_2,\tilde f_3)\in L^1(G\times S\times I)^3$ we get by Theorem \ref{coth2} that the problem (\ref{co4}) has a unique solution $u\in \tilde W^1_{-,0}(G\times S\times I)^3$.
Then $\psi:=u+\tilde\psi\in \tilde W^1(G\times S\times I)^3$ is the required unique solution of (\ref{co3aa}) and so we obtain the assertion.
\end{proof}

\begin{corollary}\label{cdd}
Under the assumptions of Theorem \ref{coth3} the solution $\psi$ of the problem (\ref{co3aa}) satisfies, with some constants $C_1, \ C_2,\ C_3>0$, the estimates
\be\label{ess1}
\n{\psi}_{L^1(G\times S\times I)^3}\leq {1\over c}\n{f}_{L^1(G\times S\times I)^3}
+C_1\n{g}_{T^1(\Gamma_-)^3},
\ee
\be\label{ess2}
\n{\psi}_{W^1(G\times S\times I)^3}\leq C_2\Big(\n{f}_{L^1(G\times S\times I)^3}
+\n{g}_{T^1(\Gamma_-)^3}\Big)
\ee
and
\be\label{ess2a}
\n{\psi}_{\tilde W^1(G\times S\times I)^3}\leq C_3\Big(\n{f}_{L^1(G\times S\times I)^3}
+\n{g}_{T^1(\Gamma_-)^3}\Big)
\ee
\end{corollary}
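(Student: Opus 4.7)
The plan is to exploit the decomposition $\psi = u + \tilde\psi$ used in the proof of Theorem \ref{coth3}, where $\tilde\psi = Lg$ is the lift provided by Lemma \ref{lift1} and $u \in \tilde W^1_{-,0}(G\times S\times I)^3$ is the unique solution with homogeneous inflow boundary data and modified source $\tilde f = f - \omega\cdot\nabla\tilde\psi - \Sigma\tilde\psi + K\tilde\psi$.

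For \eqref{ess1} I would first note that by \eqref{li0a} one has $\omega\cdot\nabla\tilde\psi = 0$, so $\tilde f = f - \Sigma\tilde\psi + K\tilde\psi$. Using the operator bounds from Theorem 5.4 (the boundedness of $\Sigma$ and $K$ on $L^1(G\times S\times I)^3$) together with $\|\tilde\psi\|_{L^1}\leq d\|g\|_{T^1(\Gamma_-)}$ from \eqref{li0}, I obtain
\[
\n{\tilde f}_{L^1(G\times S\times I)^3} \leq \n{f}_{L^1(G\times S\times I)^3} + M\,d\,\n{g}_{T^1(\Gamma_-)^3},
\]
with $M:=\max_j\n{\Sigma_j}_{L^\infty}+C$. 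Plugging this into the a priori bound \eqref{bestim1} applied to $u$ yields $\|u\|_{L^1}\leq \tfrac{1}{c}\n{f}_{L^1}+\tfrac{Md}{c}\n{g}_{T^1(\Gamma_-)}$, and the triangle inequality $\|\psi\|_{L^1}\leq \|u\|_{L^1}+\|\tilde\psi\|_{L^1}$ gives \eqref{ess1} with $C_1 = \tfrac{Md}{c}+d$.

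Estimate \eqref{ess2} is then immediate: reading the equation as $\omega\cdot\nabla\psi_j = f_j - \Sigma_j\psi_j + K_j\psi$ and using the $L^1$-boundedness of $\Sigma$ and $K$ once more gives
\[
\n{\omega\cdot\nabla\psi}_{L^1(G\times S\times I)^3}\leq \n{f}_{L^1(G\times S\times I)^3} + M\n{\psi}_{L^1(G\times S\times I)^3},
\]
into which I substitute \eqref{ess1} to bound the right-hand side by a multiple of $\n{f}_{L^1}+\n{g}_{T^1(\Gamma_-)}$; adding this to \eqref{ess1} produces \eqref{ess2}.

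For \eqref{ess2a} it remains to control $\|\gamma(\psi)\|_{T^1(\Gamma)}=\|\gamma_-(\psi)\|_{T^1(\Gamma_-)}+\|\gamma_+(\psi)\|_{T^1(\Gamma_+)}$. The lift contributes $\|\gamma(\tilde\psi)\|_{T^1(\Gamma)}=2\n{g}_{T^1(\Gamma_-)}$ by Corollary \ref{cor:gamma-pm}, since $\gamma_-(Lg)=g$ and $\n{\gamma_+(Lg)}_{T^1(\Gamma_+)}=\n{g}_{T^1(\Gamma_-)}$. For $u$, since $\gamma_-(u)=0$ we have $u\in\tilde W^1_-(G\times S\times I)^3$ with $\|u\|_{\tilde W^1_-}=\|u\|_{W^1}$, so Theorem \ref{th:trace:2} yields $\|\gamma_+(u)\|_{T^1(\Gamma_+)}\leq \|u\|_{W^1}$; the latter was already bounded by a multiple of $\n{f}_{L^1}+\n{g}_{T^1(\Gamma_-)}$ in the derivation of \eqref{ess2}. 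Combining all the pieces gives \eqref{ess2a}. The main (very mild) obstacle is simply bookkeeping of constants: once one realises that $\omega\cdot\nabla(Lg)=0$ makes the modified source harmless and that Theorem \ref{th:trace:2} together with Corollary \ref{cor:gamma-pm} handles the two ``halves'' of the trace separately, each estimate reduces to a chain of already established inequalities.
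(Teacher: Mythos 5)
Your proposal is correct and follows essentially the same route as the paper: the decomposition $\psi=u+Lg$ with $\omega\cdot\nabla(Lg)=0$, the a priori bound \eqref{bestim1} for the homogeneous part, reading off $\omega\cdot\nabla\psi$ from the equation for \eqref{ess2}, and Theorem \ref{th:trace:2} for the trace estimate. The only cosmetic difference is that the paper applies Theorem \ref{th:trace:2} directly to $\psi$ (using $\gamma_-(\psi)=g$) rather than splitting the trace between $u$ and $Lg$ as you do, but the two are equivalent.
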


\begin{proof}
By the proof of Theorem \ref{coth3} $\psi=u+Lg$ where $u\in \tilde W^1_{-,0}(G\times S\times I)^3$ satisfies
\[
(-{\bf A}_0+\Sigma-K)u=f-(-{\bf A}+\Sigma-K)(Lg).
\]
In addition, ${\bf A}(Lg)=0$ by Lemma \ref{lift1}. Furthermore, by (\ref{bestim1}) and by Lemma \ref{lift1} 
\[
&
\n{\psi}_{L^1(G\times S\times I)^3}=\n{u+Lg}_{L^1(G\times S\times I)^3} \\
\leq & {1\over c}\n{f-(\Sigma-K)(Lg)}_{L^1(G\times S\times I)^3}
+\n{Lg}_{L^1(G\times S\times I)^3} \\
\leq & {1\over c}\Big(\n{f}_{L^1(G\times S\times I)^3}+\n{\Sigma-K}d\n{g}_{T^1(\Gamma_-)^3}\Big)+d\n{g}_{T^1(\Gamma_-)^3}
\]
which implies (\ref{ess1}). 

The assertion (\ref{ess2}) follows from estimate (\ref{ess1}) since $\omega\cdot\nabla\psi=f-(\Sigma-K)\psi$ when $\psi$ is the solution of (\ref{co3aa}).
Finally, the last estimate (\ref{ess2a}) follows from Theorem \ref{th:trace:2},
which tells us that
\[
\n{\psi}_{T^1(\Gamma_+)}\leq \n{\psi}_{W^1(G\times S\times I)^3}+\n{\psi}_{T^1(\Gamma_-)},
\]
and from (\ref{ess2}). This completes to proof.
\end{proof}

The result of Corollary \ref{cdd} means that the solution $\psi$ depends continuously on the data $f,\ g$.

Finally we will consider the non-negativity of solutions.
Since ${\bf A}_0$ is $m$-dissipative it generates a contraction $C^0$-semigroup $T(t),\ t\geq 0$
(Lumer-Phillips Theorem, see e.g. \cite{dautraylionsv5}, p. 343, \cite{engelnagel}, pp. , \cite{pazy83}, pp. 14-15, \cite{goldstein}). For $f\in D({\bf A}_0)=\tilde W^1_{-,0}(G\times S\times I)^3$, the curve $\psi(t)=T(t)f$, $t>0$, is the unique solution of the problem (\cite{dautraylionsv5}, pp. 397-405, \cite{pazy83}, p. 100, \cite{goldstein})
\be\label{cp}
{\p \psi{t}}-{\bf A}_0\psi=0,\quad \psi(0)=f
\ee
where $\psi\in C^1([0,\infty[,L^1(G\times S\times I)^3)\cap C([0,\infty[,\tilde W^1_{-,0}(G\times S\times I)^3)$.

Denote
$\psi(x,\omega,E,t):=\psi(t)(x,\omega,E)$.
The problem (\ref{cp}) can be solved (as above in Section \ref{lss1}) by the Lagrange's method in the classical sense which we describe shortly in the sequel assuming that $f$ is sufficiently smooth, say $f\in C(\ol G\times S\times I)\cap D({\bf A}_0)$.
The equation (\ref{cp}) is uncoupled and for each $j$ it is of the form
\be\label{cpa}
{\p {\psi_j}{t}}+\sum_{k=1}^3\omega_k{\p {\psi_j}{x_k}}=0
\ee
and $\psi_j$ must satisfy an initial-boundary condition of the form
\be\label{cpib}
\psi_j(x,\omega,E,0)=&f_j(x,\omega,E),\quad && (x,\omega,E)\in G\times S\times I, &&\\
\psi_j(y,\omega,E,t)=&0,\quad && (y,\omega,E,t)\in \Gamma_-\times [0,\infty[. &&\nonumber
\ee

We solve the problem \eqref{cpa}--\eqref{cpib} for a fixed $j$ and we denote for simplicity  $\psi:=\psi_j$ and $f:=f_j$.
Furthermore, denote $(x,\omega,E,t)=(x_1,x_2,x_3,\omega_1,\omega_2,\omega_3,E,t)$. Then  the augmented system of ordinary differential equations (system of characteristics) is
\bea\label{aug}
T'(t)&= 1\nonumber\\
X_1'(s)&=\Omega_1,\ \ \ \Omega_1'(s)=0,\nonumber\\
X_2'(s)&=\Omega_2,\ \ \ \Omega_2'(s)=0,\nonumber\\
X_3'(s)&=\Omega_3,\ \ \ \Omega_3'(s)=0,\\
{\s E}'(s)&=0 \nonumber\\
\Psi'(s)&= 0\nonumber
\eea
We denote $X=(X_1,X_2,X_3),\ \Omega=(\Omega_1,\Omega_2,\Omega_3)$. We find that
\be \label{asol}
T(s)=s+C,\ \Omega(s)=C',\ X(s)=sC'+C'',\ {\s E}(s)=C''',\ \Psi(s)=C''''
\ee
where $C,\ C',\ C'',\ C''',\ C''''$ are constants.

Taking into account the condition (\ref{cpib}) we see that the solution of the augmented system must satisfy the initial condition of the form
\bea\label{augin}
(X(0),\Omega(0),{\s E}(0),T(0),\Psi(0)) &=
(h(v),\omega,E,t',0),\ t'>0\\
(X(0),\Omega(0),{\s E}(0),T(0),\Psi(0)) &=
 (x',\omega,E,t',f(x',\omega,E)),\ t'=0\nonumber
\eea
where $h=h(v)$ is as in Section \ref{lss1} the local parametrization of $\partial G$. Here $x$ (resp. $t$) is replaced by $x'$ (resp. $t'$) for notational reasons. Matching the initial condition (\ref{augin}) to the solution
(\ref{asol}) we get the solution $(X(s),\Omega(s),{\s E}(s),T(s),\Psi(s))$. By eliminating
$x',\ v,\ t',\ E,\ \omega$ from the system
\[
(X(s),\Omega(s),{\s E}(s),T(s),\Psi(s))&=(x,\omega,E,t,0)\ {\rm for}\ t'>0\\
(X(s),\Omega(s),{\s E}(s),T(s),\Psi(s))&=
 (x,\omega,E,t,f(x,\omega,E))\
{\rm for}\ t'=0
\]
and noting that (formally)
\[
\Psi(s)=H(-t')f(x',\omega,E)
\]
we get the solution $\psi$ as in Section \ref{lss1}. The result is
\be\label{sgAa}
\psi(x,\omega,E,t)
=f(x-t\omega,\omega,E)H(t(x,\omega)-t),\ f\in \tilde W_{-,0}^1(G\times S\times I)
\cap C(\ol G\times S\times I)
\ee
where $H$ is the Heaviside function.
Applying the limiting techniques (cf. the proof of Theorem \ref{dth1})
we get
\be\label{sgA1}
(T(t)f)(x,\omega,E)
=\psi(t)(x,\omega,E)=H(t(x,\omega)-t)f(x-t\omega,\omega,E),\ f\in \tilde W_{-,0}^1(G\times S\times I).
\ee
Since $\tilde W^1_{-,0}(G\times S\times I)$ is dense in
$L^1(G\times S\times I)$ the formula (\ref{sgA1}) is valid for any
$f\in L^1(G\times S\times I)$.

For the three particles the semigroup $T(t)$ is given by
\be\label{sgA}
&(T(t)f)(x,\omega,E)\nonumber\\
=&
H(t(x,\omega)-t)
\big(f_1(x-t\omega,\omega,E),
f_2(x-t\omega,\omega,E),
f_3(x-t\omega,\omega,E)\big),
\ee
where $f\in  L^1(G\times S\times I)^3$.
In literature (see e.g. \cite{dautraylionsv6}, pp. 222-224 ),
the formula (\ref{sgA}) is demonstrated for one particle system
by using different methods.

We have the following result on non-negativity of solutions.

\begin{theorem}\label{coth4}
Suppose that the assumptions of Theorem \ref{coth3} are valid and that moreover
\bea\label{snn}
&f_j(x,\omega,E)\geq 0\quad {\rm a.e.}\ (x,\omega,E)\in G\times S\times I\\
&g_j(y,\omega,E)\geq 0\quad {\rm a.e.}\ (y,\omega,E)\in \Gamma_-,\ {\rm for}\ j=1,2,3.\nonumber
\eea
Then the solution given in Theorem \ref{coth3} satisfies $\psi(x,\omega,E)\geq 0\ {\rm a.e.}\ (x,\omega,E)\in G\times S\times I$.
\end{theorem}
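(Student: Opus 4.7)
The plan is to argue first by a positivity-preserving reduction to homogeneous inflow data and then, for the reduced problem, by a semigroup representation of the solution together with a Dyson-series positivity argument.

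For the reduction, given $g=(g_1,g_2,g_3)\in T^1(\Gamma_-)^3$ with $g_j\geq 0$, I would introduce the component-wise \emph{$\Sigma$-weighted lift}
\[
\tilde\psi_j(x,\omega,E):=e^{-\int_0^{t(x,\omega)}\Sigma_j(x-r\omega,\omega,E)\,dr}\,g_j\big(x-t(x,\omega)\omega,\omega,E\big),\quad j=1,2,3.
\]
Modulo density arguments analogous to those in the proof of Lemma \ref{lift1} (and using Theorem \ref{lth3a}), $\tilde\psi\in\tilde W^1(G\times S\times I)^3$, $\tilde\psi_j|_{\Gamma_-}=g_j$, and $-\omega\cdot\nabla\tilde\psi_j+\Sigma_j\tilde\psi_j=0$ for each $j$. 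Since the exponential weight is non-negative and $g_j\geq 0$, we have $\tilde\psi\geq 0$. Substituting $\psi=u+\tilde\psi$ in (\ref{co3aa}) yields
\[
-\omega\cdot\nabla u_j+\Sigma_j u_j-K_j u=f_j+K_j\tilde\psi=:\hat f_j,\qquad u_j|_{\Gamma_-}=0.
\]
Because the kernels $\sigma_{kj}\geq 0$, $K$ maps non-negative vectors to non-negative ones, so $\hat f\geq 0$. Thus it suffices to establish the theorem in the homogeneous-inflow case.

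For the homogeneous case, by Theorems \ref{coth1}, \ref{dfsco}, and \ref{perth} the operator $\tilde A:=\mathbf{A}_0-\Sigma+K+cI$ is $m$-dissipative on $L^1(G\times S\times I)^3$; let $\tilde S(t)$ denote the contraction $C_0$-semigroup it generates. Because $c>0$, the Hille--Yosida formula gives
\[
u=(-\mathbf{A}_0+\Sigma-K)^{-1}\hat f=\big(cI-\tilde A\big)^{-1}\hat f=\int_0^\infty e^{-ct}\tilde S(t)\hat f\,dt,
\]
so it remains to prove that $\tilde S(t)$ preserves positivity. To that end, set $\Sigma_\infty:=\max_j\n{\Sigma_j}_{L^\infty(G\times S\times I)}$ and write $\tilde A=(\mathbf{A}_0-\Sigma_\infty I)+M$ with
\[
M:=K+(\Sigma_\infty I-\Sigma)+cI.
\]
Then $M$ is a bounded \emph{positive} operator: $K\phi\geq 0$ when $\phi\geq 0$ (because $\sigma_{kj}\geq 0$), the diagonal multiplication $(\Sigma_\infty-\Sigma_j)$ is pointwise non-negative a.e., and $c>0$. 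Meanwhile, $\mathbf{A}_0-\Sigma_\infty I$ generates $e^{-\Sigma_\infty t}T(t)$, where $T(t)$ is the explicit semigroup (\ref{sgA}) of $\mathbf{A}_0$, which is evidently positivity-preserving. Duhamel's variation-of-constants identity for the bounded perturbation $M$ of $\mathbf{A}_0-\Sigma_\infty I$ yields
\[
\tilde S(t)h=e^{-\Sigma_\infty t}T(t)h+\int_0^t e^{-\Sigma_\infty(t-s)}T(t-s)M\tilde S(s)h\,ds,
\]
whose Picard iteration produces the Dyson series $\tilde S(t)h=\sum_{n=0}^\infty V_n(t)h$ with $V_0(t)h=e^{-\Sigma_\infty t}T(t)h$ and $V_{n+1}(t)h=\int_0^t e^{-\Sigma_\infty(t-s)}T(t-s)MV_n(s)h\,ds$. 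For $h\geq 0$, induction shows $V_n(t)h\geq 0$ for every $n$, and therefore $\tilde S(t)h\geq 0$. Applied to $h=\hat f$ and integrated against $e^{-ct}$, this yields $u\geq 0$ and finally $\psi=u+\tilde\psi\geq 0$.

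The main technical obstacle is the positivity of $\tilde S(t)$: the perturbation $-\Sigma+K+cI$ is not itself a positive operator, so the argument hinges on the decomposition $\tilde A=(\mathbf{A}_0-\Sigma_\infty I)+M$, which isolates a scalar shift and a genuinely positive bounded remainder $M$, thereby making the Dyson expansion compatible with the order structure of $L^1$. The role of the hypothesis (\ref{co2a}) with $c>0$ is merely to ensure, via $m$-dissipativity of $\tilde A$, that $\psi$ is representable as the Laplace transform of $\tilde S(t)\hat f$; the positivity itself uses only $\Sigma_j\geq 0$, $\sigma_{kj}\geq 0$, and the explicit formula (\ref{sgA}).
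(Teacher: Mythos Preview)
Your proof is correct and follows the same overall architecture as the paper's: the reduction step via the $\Sigma$-weighted lift $\tilde\psi$ is exactly the paper's auxiliary solution $u$ of (\ref{co7})--(\ref{co7a}), and the representation $u=\int_0^\infty e^{-ct}\tilde S(t)\hat f\,dt$ coincides with the paper's formula (\ref{co6}).

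The one genuine methodological difference lies in how positivity of the semigroup is obtained. The paper factors ${\bf A}_0-\Sigma+K$ into the three pieces ${\bf A}_0$, $-\Sigma$, $K$, observes that each of $T(t)$, $T_{-\Sigma}(t)$, $T_K(t)$ is positive, and invokes Trotter's product formula $G(t)f=\lim_{n\to\infty}\big(T(t/n)T_{-\Sigma}(t/n)T_K(t/n)\big)^n f$. You instead shift by the scalar $\Sigma_\infty$ to write $\tilde A=({\bf A}_0-\Sigma_\infty I)+M$ with $M=K+(\Sigma_\infty I-\Sigma)+cI$ a \emph{bounded positive} operator, and then run the Dyson/Duhamel expansion against the positive unperturbed semigroup $e^{-\Sigma_\infty t}T(t)$. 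Your route is somewhat more elementary---it avoids Trotter's theorem and needs only the standard convergence of the Dyson series for bounded perturbations---while the paper's route is the more classical one in neutron transport (cf.\ \cite{dautraylionsv6}, XXI.\S2, Proposition~2). Both hinge on the explicit formula (\ref{sgA}) for $T(t)$ and on $\sigma_{kj}\geq 0$; your observation that the sign hypothesis (\ref{co2a}) with $c>0$ enters only through the resolvent representation, not through the positivity argument itself, is accurate and worth noting.
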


\begin{proof}
A. We put the problem (\ref{co3aa}) in the abstract form
\bea\label{co5}
-{\bf A}\psi+\Sigma\psi -K\psi&=f,\\
 \psi_{|\Gamma_-}&=g.\nonumber
\eea
Assume at first that $g=0$. Then the problem (\ref{co5}) is
$(-{\bf A}_0+\Sigma -K)\psi=f$. Let, as above, $T(t)$ be the $C^0$-semigroup generated by ${\bf A}_0$. Then
by (\ref{sgA})
\[
T(t)f\geq 0\ {\rm for}\ f\geq 0
\]
In addition, we immediately see that
\[
K\psi\geq 0\ {\rm for}\ \psi\geq 0,
\]
and that
\[
(\Sigma_j\psi)(x,\omega,E)=\Sigma_j(x,\omega,E)\psi_j(x,\omega,E)\geq 0\ {\rm for}\ \psi\geq 0,
\]
a.e. $(x,\omega,E)\in G\times S\times I$, as $\Sigma_j(x,\omega,E)\geq 0$ by assumption.
These imply that if $T_K$ and $T_{-\Sigma}$ are the semigroups generated by
the bounded operators $K$ and $-\Sigma$,
i.e. $T_K(t) \psi=\sum_{i=0}^\infty \frac{1}{i!}t^i K^i\psi$
and $T_{-\Sigma}(t)\psi=(e^{-t\Sigma_1}\psi_1,e^{-t\Sigma_2}\psi_2, e^{-t\Sigma_3}\psi_3)$,
we have $T_K\psi\geq 0$, $T_{-\Sigma}\psi\geq 0$ whenever $\psi\geq 0$.

Since by the proof of Theorem \ref{coth2} ${\bf A}_0-(\Sigma-K)+cI$ is $m$-dissipative, the operator ${\bf A}_0-\Sigma+K$ generates a contraction $C^0$-semigroup $G(t)$ for which in addition 
\[
\n{G(t)}\leq e^{-c't},\quad \forall t\geq 0,
\]
where $c'$ is a positive number which is less than or equal to $c>0$.
Note here that the $m$-dissipative operator ${\bf A}_0-(\Sigma-K)+c'I$ generates the semigroup $e^{c't}G(t)$. This is a consequence of the Lumer-Phillips Theorem
(\cite{dautraylionsv5}, p. 343, \cite{engelnagel}, Theorem II.3.15, p. 83 and \cite{goldstein}).
From Hille-Yosida Theorem (\cite{dautraylionsv5}, p. 321 and \cite{engelnagel}, Theorem II.3.5, p. 73) and from the resolvent formula (cf. \cite{engelnagel}, Theorem II.1.10, p. 55)
we obtain that
\be\label{co6}
\psi
=&(-{\bf A}_0+\Sigma-K)^{-1}f
=\big(c'I-({\bf A}_0-(\Sigma-K)+c'I)\big)^{-1} \nonumber \\
=&\int_0^\infty e^{-c't}\big(e^{c't}G(t)\big)f dt
=\int_0^\infty G(t)f dt.
\ee
By the Trotter's formula (\cite{engelnagel}, Theorem III.5.2, p. 220, or \cite{goldstein}, p.53, where the proof is given only for contraction semigroups) 
\[
G(t)f=\lim_{n\to\infty} \big(T(t/n)T_{-\Sigma}(t/n)T_K(t/n)\big)^nf
\]
which implies that $G(t)f\geq 0$ for $f\geq 0$ (cf. Section XXI-\S 2, Proposition 2, pp. 226-227 of \cite{dautraylionsv6}). Hence $\psi\geq 0$
and then the proof is complete in this special case. 

B. Suppose that more generally $g\in T^1(\Gamma_-)^3$ is such that $g\geq 0$. By Theorem \ref{coth3} the solution $u\in \tilde W^1(G\times S\times I)^3$ of the problem
\be\label{co7}
-{\bf A}u+\Sigma u=0,\quad u_{|\Gamma_-}=g,
\ee
exists.
We show that it is non-negative. Indeed, applying again limiting techniques we find that by (\ref{l21c}) the (distributional) solution is 
\begin{align}\label{co7a}
u(x,\omega,E)&=\Big(
e^{\int_0^{t(x,\omega)}\Sigma_1(x-s\omega,\omega,E)ds}\cdot g_1(x-t(x,\omega)\omega,\omega,E),\\
& e^{\int_0^{t(x,\omega)}\Sigma_2(x-s\omega,\omega,E)ds}\cdot g_2(x-t(x,\omega)\omega,\omega,E),\nonumber\\
& e^{\int_0^{t(x,\omega)}\Sigma_3(x-s\omega,\omega,E)ds}\cdot g_3(x-t(x,\omega)\omega,\omega,E)\Big),\nonumber
\end{align}
from which one immediately sees that $u\geq 0$ once $g\geq 0$.

Finally, let $w:=\psi-u$. Then we find that
\begin{align}\label{co8}
-{\bf A}w+\Sigma w-Kw &=f+({\bf A}u-\Sigma u+Ku)=f+Ku\geq 0 \\
 w_{|\Gamma_-} &=g-g=0.\nonumber
\end{align}
Hence by Part A. of the proof, we have $w\geq 0$ and therefore $\psi=w+u\geq 0$.
This completes the proof.
\end{proof}

\begin{remark}\label{decomp}

Consider the transport problem (\ref{co3aa}).
The solution $\psi\in \tilde W^1(G\times S\times I)^3$ can be decomposed as follows. Let $u\in \tilde W^1(G\times S\times I)^3$ be the solution of the problem 
\be\label{re1}
\omega\cdot\nabla u_j+\Sigma_j u_j=f_j,\quad {\rm on}\ G\times S\times  I,\ j=1,2,3,
\ee
together with the inflow boundary condition
\be\label{re2}
u_{|\Gamma_-}=g.
\ee
Furthermore, let $w\in \tilde W^1(G\times S\times I)^3$ be the solution of the problem
\be\label{re3}
\omega\cdot\nabla w_j+\Sigma_j w_j-K_jw=K_j u,\quad {\rm on}\ G\times S\times  I, \ j=1,2,3,
\ee
with the homogeneous inflow boundary values
\be\label{re4}
w_{|\Gamma_-}=0.
\ee 
 
Then we find that $\psi=u+w\in \tilde W^1(G\times S\times I)^3$ is the solution of (\ref{co3aa}).
This decomposition is corresponding to the evolution of primary particles ($u$) and of secondary particles ($w$) of the overall particle transport.
The decomposition $\psi=u+w$ may be useful e.g. in constructing numerical solutions. 
Note that {\it the system (\ref{re1})-(\ref{re2}) is uncoupled}.
By (\ref{l24}) we formally have an explicit solution for (\ref{re1})-(\ref{re2}) 
\bea\label{re6}
u_j(x,\omega,E)
=&\int_0^{t(x,\omega)}e^{\int_0^{t}-\Sigma_j(x-s\omega,\omega,E) ds}f_j(x-t\omega,\omega,E)dt
\nonumber\\
&+
e^{\int_0^{t(x,\omega)}-\Sigma_j(x-s\omega,\omega,E) ds}g_j(x-t(x,\omega)\omega,\omega,E).
\eea
\end{remark}

\begin{remark}\label{re:calc:1}
By the proof of Theorem \ref{coth3} the solution $\psi$ of the transport problem
\[
(-{\bf A}+\Sigma-K)\psi=f,\quad \psi_{|\Gamma_-}=g
\]
is the sum (recall that ${\bf A}(Lg)=0$)
\bea\label{calc1}
\psi=&u+Lg=(-{\bf A}_0+\Sigma-K)^{-1}(f-(-{\bf A}+\Sigma-K)(Lg))+Lg
\nonumber\\
=&(-{\bf A}_0+\Sigma-K)^{-1}(f-(\Sigma-K)(Lg))+Lg.
\eea

Since $Lg$ is known, the essential part from the computational point of view is to find $u$ that is the solution of the equation
\[
(-{\bf A}_0+\Sigma-K)u=f-(\Sigma-K)(Lg)=:\tilde f.
\]
We see, on the other hand, that this equation is equivalent to
\[
(-{\bf A}_0+\Sigma)u=Ku+\tilde f
\]
or to (notice that $(-{\bf A}_0+\Sigma)^{-1}$ exists)
\[
u=(-{\bf A}_0+\Sigma)^{-1}Ku+(-{\bf A}_0+\Sigma)^{-1}\tilde f.
\]
This can, furthermore, be written into the form
\be\label{calc2}
(I-T)u=\tilde{\tilde f}
\ee
where $T:=(-{\bf A}_0+\Sigma)^{-1}K$
is a bounded linear operator from $L^1(G\times S\times I)^3$ into itself,
and $\tilde{\tilde f}:=(-{\bf A}_0+\Sigma)^{-1}\tilde f$.

If it happened that $\n{T}<1$,
the solution $u$ would be obtained from the Neumann series
\be\label{ssol}
u=\sum_{k=0}^\infty T^k\tilde{\tilde f}
=
\sum_{k=0}^\infty ((-{\bf A}_0+\Sigma)^{-1}K)^k
((-{\bf A}_0+\Sigma)^{-1}(f-(\Sigma-K)(Lg))),
\ee
where by (\ref{l21cc}) the $j$-th component, $j=1,2,3$, of $(-{\bf A}_0+\Sigma)^{-1}h$ is for any $h\in L^1(G\times S\times I)$ given by
(in generalized sense; see (\ref{resolA}))
\begin{align}\label{nb0}
((-{\bf A}_0+\Sigma)^{-1}h)_j
=\int_0^{t(x,\omega)}e^{\int_0^t-\Sigma_j(x-s\omega,\omega,E)ds}h_j(x-t\omega,\omega,E)dt.
\end{align}

From the computational point of view,
this approach, or rather a discretized version of it, has the advantage that no explicit inversions of matrices are needed.
The condition $\n{T}<1$ is, however, restrictive. For $p=\infty$ a sufficient condition for having $\n{T}<1$ is that for $j=1,2,3$ (we omit all details here)
\[
\Sigma_j(x,\omega,E)\geq c>0
\]
and for some $0<\beta<1$
and a.e. on $G\times S\times I$,
\[ 
\beta \Sigma_j(x,\omega,E)\geq \sum_{k=1}^3\int_S\int_I\sigma_{jk}(x,\omega,\omega',E,E')d\omega' dE',
\]
which is stronger a condition to satisfy than (\ref{co2a}).
In addition, the data ($f,\ g$) must be in the corresponding $L^\infty$-spaces (cf. \cite{dautraylionsv6}, pp. 243-244, in the case of one species of particles).
We refer also to \cite{choulli}, Prop. 2.3, where a sufficient condition
to have $\n{T}<1$
is given in the case where $p=1$ and one species of particles is considered. 
\end{remark}

\begin{remark}\label{re:calc:2}
Another method to compute approximately the solution of the problem
\be\label{calceq1}
(-{\bf A}_0+\Sigma-K)u=f-(\Sigma-K)(Lg)=:\tilde f,
\ee
which avoids the explicit inversions of matrices, can be described as follows.
By the Trotter's formula,
the semigroup $G(t)$ generated by ${\bf A}_0-\Sigma+K$ is given by
(see the proof of Theorem \ref{coth4} above)
\be\label{calceq2}
G(t)f=\lim_{n\to\infty}(T(t/n)T_{-\Sigma}(t/n)T_K(t/n))^nf,\quad (t\geq 0)
\ee
and the limit is uniform on compact intervals $[0,T]$. We know that
\bea\label{calceq3}
&
T(t)f=H(t(x,\omega)-t)(f_1(x-t\omega,\omega,E),f_2(x-t\omega,\omega,E),f_3(x-t\omega,\omega,E))\nonumber\\
&
T_{-\Sigma}(t)f=e^{-t\Sigma(x,\omega,E)}f=(e^{-t\Sigma_1(x,\omega,E)}f_1,e^{-t\Sigma_2(x,\omega,E)}f_2,e^{-t\Sigma_3(x,\omega,E)}f_3)\nonumber\\
&
T_K(t)f=e^{tK}f=\sum_{k=0}^\infty {1\over{k!}}(tK)^kf\approx \sum_{k=0}^{N_0} {1\over{k!}}(tK)^kf .
\eea
In virtue of formula (\ref{co6})
\bea\label{calceq4}
\psi=&\int_0^\infty G(t)\tilde f dt\approx \int_0^T G(t)\tilde f dt
=\int_0^T\lim_{n\to\infty}(T(t/n)T_{-\Sigma}(t/n)T_K(t/n))^n\tilde f dt
\nonumber\\
\approx &
\int_0^T[T(t/n_0)T_{-\Sigma}(t/n_0)T_K(t/n_0)]^{n_0}\tilde f dt\nonumber\\
=&\int_0^T\Big[
T(t/n_0)
e^{-(t/n_0)\Sigma(x,\omega,E)}
\sum_{k=0}^{N_0} {1\over{k!}}((t/n_0)K)^k\Big]^{n_0}\tilde f dt
\eea
where $T$, $n_0$ and $N_0$ are large enough. Note that the result in (\ref{calceq4}) can be immediately computed since $T(t)$ is explicitly known.

This approach, unlike the one given in Remark \ref{re:calc:1},
does not require extra assumptions on cross-sections.
\end{remark}

\section{On Time-dependent Solutions for the Coupled System}\label{tds1}

In this section we do not need  the assumption (\ref{co2a})  since for time-dependent equations only the $C^0$-semigroup property is essential. The contraction property of semigroup is not needed.

We have $E=\frac{1}{2}m_j\n{v_j}^2$ where $m_j$ (resp. $\n{v_j}$) is the mass (resp. the speed) of the particle $j$. Hence $\n{v_j}=\sqrt{\frac{2E}{m_j}}$.
In the following we consider the problem (for $\n{v_j}\not=0$),
\[
& {1\over{\n{v_j}}}{\p {\psi_j}{t}} +\omega\cdot\nabla\psi_j+\Sigma_j\psi -K_j\psi=f_j(x,\omega,E,t),\quad
&& (x,\omega,E)\in G\times S\times I,\ t\in ]0,T] &\nonumber\\
& \psi(y,t)=g_j(y,t),\ && y\in\Gamma_-,\ t\in ]0,T] &\\
& \psi_j(x,\omega,E,0)=\psi_0(x,\omega,E),\ && (x,\omega,E)\in G\times S\times I & \nonumber
\]
where $j=1,2,3$, $T>0$ and $\psi_j=\psi_j(t)(x,\omega,E)=\psi_j(x,\omega,E,t)$ (we use this agreement without further mention).
Multiplying the above transport equation by $\n{v_j}$ we obtain the equation
\bea
& {\p {\psi_j}{t}} +v_j\cdot\nabla\psi_j+\tilde\Sigma_j\psi -\tilde K_j\psi=\tilde f_j,\quad && (x,\omega,E,t)\in G\times S\times I\times ]0,T[\label{td3a} &\\
& \psi(y,t)=g(y,t),&& y\in\Gamma_-,\ t\in ]0,T] &\\
& \psi(x,\omega,E,0)=\psi_0(x,\omega,E),&& (x,\omega,E)\in G\times S\times I &
\label{td3aa}
\eea
where $v_j$ is the velocity $\n{v_j}\omega=\sqrt{{2E}\over{m_j}}\omega$ of
the $j$-th particle, and where
$\tilde\Sigma_j=\sqrt{{2E}\over{m_j}}\Sigma_j$, $\tilde K$ is the collision operator corresponding to the cross-sections
$\tilde\sigma_{kj}=\sqrt{{2E}\over{m_j}}\sigma_{kj}$ and $\tilde f_j=\sqrt{{2E}\over{m_j}}f_j$.

We modify slightly the function spaces given in section \ref{ls1}.
Let
\[
{\s W}^1(G\times S\times I)=\{\psi\in L^1(G\times S\times I)\ |\ \sqrt{E}\omega\cdot\nabla\psi\in L^1(G\times S\times I)\}.
\]
Then by standard arguments ${\s W}^1(G\times S\times I)$
equipped with the norm
\[
\n{\psi}_{{\s W}^1(G\times S\times I)}=\n{\psi}_{L^1(G\times S\times I)}+
\n{\sqrt{E}\omega\cdot\nabla\psi}_{L^1(G\times S\times I)}.
\]
is a Banach space and $\mc{D}(\ol G\times S\times I)$ is dense subspace of it.

Furthermore we define
\[
{\s T}^1(\Gamma_-)=L^1(\Gamma_-,\sqrt{E}|\omega\cdot\nu|\ d\sigma d\omega dE)
\]
with the norm
\[
\n{h}_{{\s T}^1(\Gamma_-)}=\int_{\Gamma_-}|h(y,\omega,E)|\ \sqrt{E} |\omega\cdot\nu|\ d\sigma d\omega dE.
\]
The space ${\s T}^1(\Gamma_+)$ is defined similarly.
Again any element $\psi\in {\s W}^1(G\times S\times I)$ has well defined trace $\psi_{|\Gamma_-}$ in $L^1_{\rm loc}(\Gamma_-,\sqrt{E}|\omega\cdot\nu|\ d\sigma d\omega dE)$ 
and the trace mapping $\gamma_-:{\s W}^1(G\times S\times I)\to \ L^1_{\rm loc}(\Gamma_-,
\sqrt{E}|\omega\cdot\nu|\ d\sigma d\omega dE);\
\gamma_-(\psi)=\psi_{|\Gamma_-}$ is  continuous.
Similarly for the trace $\gamma_+$ on the outflow boundary $\Gamma_+$, and we can define the trace $\gamma(\psi)=\psi_{|\Gamma}$ on the whole $\Gamma$ as in section \ref{ls1}.

We denote by ${\s T}^1(\Gamma)$ the space of $L^1$-functions with respect to the measure
$\sqrt{E}|\omega\cdot\nu|\ d\sigma d\omega dE$, and equip it
with the norm 
\[
\n{h}_{{\s T}^1(\Gamma)}= \int_{\Gamma}|h(y,\omega,E)|\ \sqrt{E}|\omega\cdot\nu|\ d\sigma d\omega dE.
\]
Finally we define
\[
\tilde {\s W}^1(G\times S\times I)=\{\psi\in {\s W}^1(G\times S\times I)\ |\  \gamma(\psi)\in {\s T}^1(\Gamma) \},
\]
which is again a Banach space with respect to the norm
\[
\n{\psi}_{\tilde {\s W}^1(G\times S\times I)}=\n{\psi}_{{\s W}^1(G\times S\times I)}+ \n{\gamma(\psi)}_{{\s T}^1(\Gamma)},
\]
and denote its subspace of elements of zero trace on the inflow boundary $\Gamma_-$ by
\[
\tilde {\s W}^1_{-,0}(G\times S\times I)=\{\psi\in {\s W}^1(G\times S\times I)\ |\  \gamma_-(\psi)=0 \}.
\]

Define closed operators $\tilde{\bf A}, \tilde{\bf A}_0:L^1(G\times S\times I)^3\to L^1(G\times S\times I)^3$ by
\[
\tilde {\bf A}\psi
&:=(-v_1\cdot\nabla\psi_1,-v_2\cdot\nabla\psi_2,-v_3\cdot\nabla\psi_3),
\\
&=\left(-\sqrt{{{2E}\over{m_1}}}\omega\cdot\nabla\psi_1,
-\sqrt{{{2E}\over{m_2}}}\omega\cdot\nabla\psi_2,
-\sqrt{{{2E}\over{m_3}}}\omega\cdot\nabla\psi_3\right),
\quad \psi\in D(\tilde{\bf A}):={\s W}^1(G\times S\times I)^3
\]
and 
\[
\tilde{\bf A}_0\psi:=\tilde{\bf A}\psi,\quad \psi\in D(\tilde{\bf A}_0):=\tilde {\s W}^1_{-,0}(G\times S\times I)^3.
\] 
In addition, let $\tilde\Sigma\psi=(\tilde\Sigma_1\psi,\tilde\Sigma_2\psi,\tilde\Sigma_3\psi)$ and $\tilde K\psi=(\tilde K_1\psi,\tilde K_2\psi,\tilde K_3\psi)$. 

Assuming that (\ref{scateh}) and (\ref{colleh}) hold we see similarly as in section \ref{cos1} that the operators $\tilde\Sigma$ and $\tilde K$
are bounded operators $L^1(G\times S\times I)^3\to L^1(G\times S\times I)^3$. In addition, the operator $\tilde {\bf A}_0: L^1(G\times S\times I)^3\to L^1(G\times S\times I)^3$
is $m$-dissipative. In fact, the equation $(\lambda I-\tilde{\bf A}_0)\psi=\tilde f$
is nothing more than
\[
\sqrt{{{2E}\over {m_j}}}\omega\cdot \nabla\psi_j+\lambda\psi_j=\tilde f_j,
\quad {\psi_j}_{|\Gamma_-}=0,\ j=1,2,3,
\]
or equivalently for $E>0$
\[
&\omega\cdot \nabla\psi_j+\lambda\sqrt{{{m_j}\over {2E}}}\psi_j=
\sqrt{{{m_j}\over {2E}}}\tilde f_j\\
&{\psi_j}_{|\Gamma_-}=0,\ j=1,2,3
\]
whose solution for each $j$ is, by sections \ref{lss3} and \ref{diss1} (see \eqref{l21a}, \eqref{resolA}),
\[
\psi_j=\sqrt{{{m_j}\over {2E}}}\int_0^{t(x,\omega)}e^{-\lambda t\sqrt{m_j/(2E)}}
\tilde f_j(x-\omega t,\omega,E) dt.
\]
Similarly as in sections \ref{diss1b} and \ref{cos1} we see that
\[
\n{(\lambda I-\tilde{\bf A}_0)\psi}_{L^1(G\times S\times I)^3}
\geq \lambda \n{\psi}_{L^1(G\times S\times I)^3},\ \psi\in D(\tilde{\bf A}_0)
\]
and that $R(\lambda I-\tilde{\bf A}_0)=L^1(G\times S\times I)^3$. Hence $\tilde {\bf A}_0$ is $m$-dissipative.

Since $\tilde{\bf A}_0$ is $m$-dissipative and $\tilde\Sigma-\tilde K$ is bounded, the operator $ \tilde{\bf A}_0-\tilde\Sigma+\tilde K:L^1(G\times S\times I)^3\to
L^1(G\times S\times I)^3$ generates a $C^0$-semigroup, which we denote by $\tilde G(t)$
(\cite{dautraylionsv5}, p. 348, \cite{engelnagel}, Theorem III.1.3., pp. 158, \cite{goldstein} and \cite{pazy83}, pp. 76-77)
and which satisfies the estimate
\[
\n{\tilde G(t)}\leq e^{\n{\tilde\Sigma-\tilde K}t},\quad \forall t\geq 0.
\]

We get the following standard result from the theory of abstract Cauchy problems for $g=0$.

\begin{theorem}\label{tdth1}
Suppose that the assumptions (\ref{scateh}) and (\ref{colleh}) are valid.
Furthermore, suppose that  $\tilde f\in C^1([0,T],L^1(G\times S\times I)^3)$ and
$\psi_0\in D(\tilde {\bf A}_0) = \tilde {\s W}^1_{-,0}(G\times S\times I)^3$. Then the   problem (\ref{td3a})-(\ref{td3aa}) for $g=0$ has a unique solution
\be\label{td4}
\psi\in C^1([0,T],L^1(G\times S\times I)^3)\cap C([0,T],{\s W}^1(G\times S\times I)^3),
\ee
such that
\be\label{td5}
\psi(t)\in  D(\tilde {\bf A}_0) =\tilde {\s W}^1_{-,0}(G\times S\times I)^3\ {\rm for \ all}\ t\geq 0.
\ee
Moreover, this solution is given by
\be\label{td6}
\psi(t)=\tilde G(t)\psi_0+\int_0^t\tilde G(t-s)\tilde f(s) ds.
\ee
\end{theorem}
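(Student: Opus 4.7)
The plan is to recast the problem (\ref{td3a})--(\ref{td3aa}) with $g=0$ as an inhomogeneous abstract Cauchy problem in the Banach space $X=L^1(G\times S\times I)^3$, namely
\[
\psi'(t)=B\psi(t)+\tilde f(t),\ t\in[0,T],\quad \psi(0)=\psi_0,
\]
where $B:=\tilde{\bf A}_0-\tilde\Sigma+\tilde K$ with $D(B)=D(\tilde{\bf A}_0)=\tilde{\s W}^1_{-,0}(G\times S\times I)^3$. We have already established that $\tilde{\bf A}_0$ is $m$-dissipative and that $\tilde\Sigma,\tilde K$ are bounded on $X$; hence $B$ is a bounded perturbation of an $m$-dissipative generator and so, by the Lumer--Phillips theorem combined with the bounded perturbation theorem (cf.\ \cite{pazy83}, Theorem 3.1.1, or \cite{engelnagel}, Theorem III.1.3), $B$ generates a $C^0$-semigroup $\tilde G(t)$ on $X$ whose domain is precisely $D(\tilde{\bf A}_0)$, since adding a bounded operator does not alter the domain.

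Once the semigroup $\tilde G(t)$ is in hand, I would invoke the classical existence/uniqueness result for the inhomogeneous abstract Cauchy problem with $C^1$-regular right-hand side (cf.\ \cite{pazy83}, Corollary 4.2.5, or \cite{engelnagel}, Proposition II.6.4): when $\psi_0\in D(B)$ and $\tilde f\in C^1([0,T],X)$, the mild solution
\[
\psi(t)=\tilde G(t)\psi_0+\int_0^t\tilde G(t-s)\tilde f(s)\,ds
\]
is in fact a classical solution, i.e.\ $\psi\in C^1([0,T],X)\cap C([0,T],[D(B)])$ (graph norm), satisfies $\psi(t)\in D(B)$ for every $t\in[0,T]$, and solves the differential equation pointwise with $\psi(0)=\psi_0$. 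This directly gives formula (\ref{td6}) and the assertion (\ref{td5}).

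The regularity (\ref{td4}) then follows by identifying $[D(B)]$ with $\tilde{\s W}^1_{-,0}(G\times S\times I)^3$ topologically: the graph norm $\n{\psi}_X+\n{B\psi}_X$ is equivalent to $\n{\psi}_X+\n{\tilde{\bf A}_0\psi}_X$ because $\tilde\Sigma-\tilde K$ is bounded on $X$, and this latter norm is in turn equivalent to $\n{\,\cdot\,}_{{\s W}^1(G\times S\times I)^3}$ on $D(\tilde{\bf A}_0)$. Continuity into ${\s W}^1$ thus follows from continuity into $[D(B)]$. Uniqueness in the stated class is automatic: the homogeneous problem $\psi'=B\psi$, $\psi(0)=0$ admits only the zero solution because $B$ generates a $C^0$-semigroup (so $\psi(t)=\tilde G(t)\cdot 0=0$), and subtracting two solutions reduces to this case.

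The argument is essentially a bookkeeping exercise in semigroup theory; the only point requiring care is verifying that the trace condition $\gamma_-\psi(t)=0$ is preserved for all $t$, but this is encoded in the statement $\psi(t)\in D(\tilde{\bf A}_0)=\tilde{\s W}^1_{-,0}(G\times S\times I)^3$, which is guaranteed by $\psi_0\in D(\tilde{\bf A}_0)$ together with the invariance of the domain under $\tilde G(t)$. I do not anticipate any genuine obstacle beyond recalling and citing the correct abstract results.
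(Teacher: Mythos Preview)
Your proposal is correct and follows essentially the same approach as the paper: the paper's proof consists entirely of citing the standard solution theory for abstract Cauchy problems (references to Dautray--Lions, Engel--Nagel, Goldstein, Pazy), relying on the fact---established immediately before the theorem statement---that $\tilde{\bf A}_0-\tilde\Sigma+\tilde K$ generates the $C^0$-semigroup $\tilde G(t)$ as a bounded perturbation of the $m$-dissipative operator $\tilde{\bf A}_0$. Your write-up simply makes explicit the bookkeeping that the paper leaves to the cited references.
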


\begin{proof}
Theorem follows from the solution theory of abstract Cauchy problems. See e.g. \cite{dautraylionsv5}, pp. 397-400, \cite{engelnagel}, Corollary VI.7.6., pp. 439, \cite{goldstein}, \cite{pazy83}, pp. 105-108.
\end{proof}

The next theorem includes a non-zero inflow boundary data.

\begin{theorem}\label{tdth2}
Suppose that the assumptions (\ref{scateh}) and (\ref{colleh})  are valid. Furthermore, suppose that  $\tilde f\in C^1([0,T],L^1(G\times S\times I)^3)$,
$\psi_0\in \tilde {\s W}^1(G\times S\times I)^3$ and
$g\in C^2([0,T],{\s T}^1(\Gamma_-)^3)$ such that 
\be\label{eq:compat}
g(0)={\psi_0}_{|\Gamma_-}.
\ee
Then the   problem (\ref{td3a})-(\ref{td3aa})
has a unique solution
\be\label{td7}
\psi\in C^1([0,T],L^1(G\times S\times I)^3)\cap C([0,T],{\s W}^1(G\times S\times I)^3)
\ee
such that
\be\label{td8}
\psi(t)_{|\Gamma_-}=g(t),\ {\rm for \ all}\ t\geq 0.
\ee
\end{theorem}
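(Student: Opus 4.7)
My plan is to reduce the inhomogeneous boundary value problem to the homogeneous one by lifting the boundary data, so that Theorem \ref{tdth1} applies. The key observation is that Lemmas \ref{lift1} and \ref{lift2} extend to the $\sqrt{E}$-weighted setting with essentially the same proofs (one just inserts the factor $\sqrt{E}$ in the change-of-variables computation). Thus there exists a bounded linear lift operator
\ben
L:\mc{T}^1(\Gamma_-)^3\to \tilde{\mc{W}}^1(G\times S\times I)^3,\quad \gamma_-(Lh)=h,\quad \omega\cdot\nabla(Lh)=0,
\een
which extends pointwise in $t$ to give, for $g\in C^2([0,T],\mc{T}^1(\Gamma_-)^3)$, a lift $Lg\in C^2([0,T],\tilde{\mc{W}}^1(G\times S\times I)^3)$ with $(Lg)(t)_{|\Gamma_-}=g(t)$ and $\omega\cdot\nabla(Lg)(t)=0$ for every $t$.

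Next, I set $u:=\psi-Lg$ and observe that $\psi$ solves \eqref{td3a}--\eqref{td3aa} with inflow data $g$ if and only if $u$ solves
\ben
\p {u}{t}+v_j\cdot\nabla u_j+\tilde\Sigma_j u-\tilde K_j u=\tilde f_j-\p{(Lg)_j}{t}-(\tilde\Sigma_j-\tilde K_j)(Lg)=:\tilde F_j,
\een
with $u_{|\Gamma_-}=0$ and $u(0)=\psi_0-(Lg)(0)$; here I used $v_j\cdot\nabla(Lg)_j=0$. The right-hand side $\tilde F$ lies in $C^1([0,T],L^1(G\times S\times I)^3)$ because $\tilde f$ does by hypothesis, $\partial_t(Lg)=L(g')\in C^1([0,T],L^1)$ since $g\in C^2$, and the bounded operators $\tilde\Sigma,\tilde K$ map $C^2([0,T],L^1)$ into itself. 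The initial condition $u(0)=\psi_0-(Lg)(0)$ belongs to $\tilde{\mc{W}}^1(G\times S\times I)^3$ and its inflow trace is $\psi_0|_{\Gamma_-}-g(0)=0$ by the compatibility assumption \eqref{eq:compat}, so $u(0)\in D(\tilde{\mathbf{A}}_0)=\tilde{\mc{W}}^1_{-,0}(G\times S\times I)^3$.

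All hypotheses of Theorem \ref{tdth1} being satisfied, there exists a unique
\ben
u\in C^1([0,T],L^1(G\times S\times I)^3)\cap C([0,T],\mc{W}^1(G\times S\times I)^3)
\een
with $u(t)\in \tilde{\mc{W}}^1_{-,0}(G\times S\times I)^3$ for all $t$, solving the homogeneous-boundary problem above, and given explicitly by the Duhamel formula \eqref{td6} (with $\tilde F$ in place of $\tilde f$ and new initial datum). Setting $\psi:=u+Lg$ then yields the required solution of \eqref{td3a}--\eqref{td3aa} with the regularity \eqref{td7} and satisfying \eqref{td8}, since both $u$ and $Lg$ enjoy those properties.

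Uniqueness follows by linearity: the difference of two solutions has zero source, zero initial data and zero inflow trace, so by the uniqueness part of Theorem \ref{tdth1} (applied to the homogeneous problem) it must vanish identically. The main technical point to pin down carefully is the extension of Lemma \ref{lift2} to the weighted spaces $\mc{T}^1$, $\tilde{\mc{W}}^1$; once the weighted change-of-variables bookkeeping is checked, the remaining steps are essentially algebraic manipulations plus a direct invocation of Theorem \ref{tdth1}.
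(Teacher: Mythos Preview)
Your proposal is correct and follows essentially the same approach as the paper: lift the boundary data via the (weighted) analogue of Lemma~\ref{lift2}, subtract to reduce to the homogeneous inflow problem, verify that the transformed source is $C^1$ in time and the transformed initial datum lies in $D(\tilde{\mathbf{A}}_0)$ via the compatibility condition, and then invoke Theorem~\ref{tdth1}. Your version is in fact slightly more explicit than the paper's in two places: you simplify $v_j\cdot\nabla(Lg)_j=0$ directly (the paper leaves this term in the formula for $\ol f_j$), and you spell out the uniqueness argument separately.
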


The condition \eqref{eq:compat} is called a {\it compatibility condition}: one must have $g(y,\omega,E,0)=\psi_0(y,\omega,E)$ for a.e. $(y,\omega,E)\in\Gamma_-$.

\begin{proof} Similarly as in Lemma \ref{lift2}  
for each $j$ there exists a lift
\[
\tilde\psi_j=Lg_j\in C^2([0,T],\tilde {\s W}^1(G\times S\times I))
\]
such that $\tilde{\psi_j}_{|\Gamma_-\times [0,T]}=g_j$.

Define $\tilde\psi=(\tilde\psi_1,\tilde\psi_2,\tilde\psi_3)$ and substitute $u=\psi-\tilde\psi$ for $\psi$ in problem (\ref{td3a})-(\ref{td3aa}) to obtain
\bea\label{td10}
&{\p {u_j}{t}} +v_j\cdot\nabla u_j+\tilde\Sigma_ju -\tilde K_ju \\
&\hspace{1.1cm} =\tilde f_j-
{\p {\tilde\psi_j}{t}} -v_j\cdot\nabla \tilde\psi_j-\tilde\Sigma_j\tilde\psi +\tilde K_j\tilde\psi=:\ol f_j
\quad
&& {\rm on}\ G\times S\times I\times ]0,T] &\nonumber\\
&  u(y,t)=g(y,t)-g(y,t)=0&& {\rm on}\ \Gamma_-\times ]0,T] &\nonumber \\
& u(\cdot,0)=\psi_0-\tilde\psi(0)\in \tilde {\s W}^1_{-,0}(G\times S\times I)^3 && {\rm on}\ G\times S\times I. & \nonumber
\eea
Notice that in the last step we have by the compatibility condition 
$\tilde\psi(0)_{|\Gamma_-}=(Lg)(0)_{|\Gamma_-}=g(0)={\psi_0}_{|\Gamma_-}$
and therefore $u(\cdot,0)\in \tilde {\s W}^1_{-,0}(G\times S\times I)^3$ (here $Lg=(Lg_1,Lg_2,Lg_3)$).
In addition we find that $\ol f_j\in C^1(]0,T],L^1(G\times S\times I)^3)$
(we omit the details here).
By Theorem \ref{tdth1} the problem (\ref{td10}) has a unique solution
\[
u\in  C([0,T],L^1(G\times S\times I)^3)\cap C^1(]0,T],L^1(G\times S\times I)^3)
\cap C(]0,T],{\s W}^1(G\times S\times I)^3)
\]
such that $u(t)\in \tilde {\s W}^1_{-,0}(G\times S\times I)^3$ for $t\geq 0$. Then $\psi:=u+\tilde\psi$ is the required unique solution of
the problem (\ref{td3a})-(\ref{td3aa}) and the proof is complete.
\end{proof}

The non-negativity of solutions in this dynamical case follows as above for steady state solutions. Here we denote $f_j(x,\omega,E,t)=f_j(t)(x,\omega,E)$ and so on. We have

\begin{theorem}\label{tdth3}
Suppose that the assumptions of Theorem \ref{tdth2} are valid and that moreover
for $j=1,2,3$,
\bea\label{td11}
f_j(x,\omega,E,t)&\geq 0,\quad {\rm for}\ t\geq 0\ {\rm and\ a.e.}\ (x,\omega,E)\in G\times S\times I\\
g_j(y,\omega,E,t)&\geq 0,\quad {\rm for}\ t\geq 0\ {\rm and\ a.e.}\ (y,\omega,E)\in \Gamma_-, \\
\psi_0(x,E,\omega)&\geq 0,\quad {\rm for\ a.e.}\ (x,\omega,E)\in G\times S\times I.
\eea
Then the solution of the problem (\ref{td3a})-(\ref{td3aa}) given in Theorem \ref{tdth2} satisfies $\psi(x,\omega,E,t)\geq 0$ for $t\geq 0$ and a.e. $(x,\omega,E)\in G\times S\times I$.
\end{theorem}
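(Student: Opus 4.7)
The plan is to follow verbatim the two-step strategy used in Theorem \ref{coth4} for the stationary case, first establishing non-negativity when $g=0$ via the Trotter product formula and then reducing the general inhomogeneous case to that one by subtracting off an explicit collision-free solution.

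\textbf{Part A (case $g=0$).} By Theorem \ref{tdth1}, the solution admits the Duhamel representation
\begin{align*}
\psi(t)=\tilde G(t)\psi_0+\int_0^t \tilde G(t-s)\tilde f(s)\,ds,
\end{align*}
where $\tilde G$ is the $C^0$-semigroup generated by $\tilde{\bf A}_0-\tilde\Sigma+\tilde K$. Since $\tilde{\bf A}_0$ is $m$-dissipative and $-\tilde\Sigma$, $\tilde K$ are bounded, the Trotter product formula (as in the proof of Theorem \ref{coth4}) gives
\begin{align*}
\tilde G(t)\varphi=\lim_{n\to\infty}\bigl[\tilde T(t/n)\,T_{-\tilde\Sigma}(t/n)\,T_{\tilde K}(t/n)\bigr]^n\varphi,
\end{align*}
uniformly for $t$ in compact intervals, where $\tilde T$ is the translation semigroup generated by $\tilde{\bf A}_0$ (a rescaled-velocity variant of \eqref{sgA}), $T_{-\tilde\Sigma}(t)\varphi=e^{-t\tilde\Sigma}\varphi$ is pointwise multiplication, and $T_{\tilde K}(t)\varphi=\sum_{k\geq 0}\tfrac{1}{k!}(t\tilde K)^k\varphi$. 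Each of these three factor semigroups is positivity-preserving: $\tilde T(t)$ because it is a characteristic shift multiplied by a Heaviside; $T_{-\tilde\Sigma}(t)$ because $\tilde\Sigma_j\geq 0$; and $T_{\tilde K}(t)$ because $\tilde K$ itself preserves non-negativity. Therefore $\tilde G(t)\varphi\geq 0$ whenever $\varphi\geq 0$, and so both $\tilde G(t)\psi_0\geq 0$ and the Bochner integral $\int_0^t \tilde G(t-s)\tilde f(s)\,ds\geq 0$, whence $\psi(t)\geq 0$.

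\textbf{Part B (general case).} Let $u=(u_1,u_2,u_3)$ be the solution of the uncoupled collision-free problem
\begin{align*}
\tfrac{\partial u_j}{\partial t}+v_j\cdot\nabla u_j+\tilde\Sigma_j u_j=0\ \text{on}\ G\times S\times I\times\,]0,T],\quad u_j|_{\Gamma_-\times[0,T]}=g_j,\quad u_j(\cdot,0)=\psi_{0j},
\end{align*}
whose existence in $C([0,T],\tilde{\s W}^1(G\times S\times I)^3)$ follows as in Theorem \ref{tdth2} (the collision term being absent makes that argument easier). Integrating along bicharacteristics in a time-dependent analog of Theorem \ref{lth3a}, $u_j$ is explicitly given, for a.e. $(x,\omega,E,t)$, by an exponentially attenuated version of either $\psi_{0j}(x-tv_j,\omega,E)$ (when the characteristic is still inside $G$ at time $0$) or of $g_j$ evaluated at the entry point on $\Gamma_-$ (when the characteristic exits through $\Gamma_-$ before time $t$). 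Both branches are products of non-negative exponentials with non-negative data, so $u\geq 0$.

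Setting $w:=\psi-u$, a direct subtraction shows that $w$ satisfies
\begin{align*}
\tfrac{\partial w_j}{\partial t}+v_j\cdot\nabla w_j+\tilde\Sigma_j w-\tilde K_j w=\tilde f_j+\tilde K_j u,\quad w|_{\Gamma_-\times[0,T]}=0,\quad w(\cdot,0)=0,
\end{align*}
for $j=1,2,3$. Because $\tilde K$ is positivity-preserving and $u\geq 0$, the new source $\tilde f+\tilde K u$ is non-negative, so Part A applied with vanishing initial and boundary data gives $w\geq 0$. Consequently $\psi=u+w\geq 0$, which is the claim. The main technical obstacle is the rigorous justification of the explicit characteristic formula for $u$ at the $\tilde{\s W}^1$-regularity level, together with a careful verification that the Trotter product formula passes through the $L^1$-limit preserving pointwise non-negativity; both steps are the time-dependent counterparts of what is already done in the stationary setting and require only standard limiting arguments.
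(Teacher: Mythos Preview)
Your proposal is correct and follows essentially the same two-step strategy as the paper: Trotter's product formula for the homogeneous boundary case, then subtraction of an explicit collision-free solution to reduce the inhomogeneous case to Part~A.

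The only notable difference is in the decomposition of Part~B. The paper takes the auxiliary $u$ with \emph{zero} initial data $u(0)=0$ (so that only the boundary data $g$ enters the characteristic formula), which forces $w(0)=\psi_0$; the paper then flags that the compatibility condition ${\psi_0}_{|\Gamma_-}=0$ for $w$ need not hold and that the source $\tilde f+\tilde K u$ need not be $C^1$, so $w$ must be interpreted in the mild sense. Your choice $u(0)=\psi_0$ instead yields $w(0)=0$, which automatically satisfies the compatibility condition for $w$ and is in that respect tidier; the price is that your characteristic formula for $u$ has two branches (initial-data branch and boundary-data branch) rather than one, but both are manifestly non-negative. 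Either splitting works, and the remaining regularity caveat about $\tilde K u$ is the same in both.
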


\begin{proof}
The problem (\ref{td3a})-(\ref{td3aa}) has the form
\bea\label{td12}
& {\p {\psi}t}-\tilde{\bf A}\psi+\tilde\Sigma\psi -\tilde K\psi=\tilde f,\\
&  \psi_{|\Gamma_-\times ]0,T]}=g\\
&\psi(0)=\psi_0.
\eea

A. In the first step, we assume that $g=0$ (then by the assumptions of Theorem \ref{tdth2} $\psi_0\in D(\tilde {\bf A}_0)$). Let $\tilde T(t)$  be the $C^0$-semigroup generated by $\tilde{\bf A}_0$.
Then one has
\[
\tilde{T}(t)f\geq 0\quad \textrm{for all}\ \tilde f\in L^1(G\times S\times I)^3\ \textrm{such that}\  \tilde f\geq 0.
\]
Indeed, according to what was done in section \ref{coss3}, one has
\[
(T(t)\tilde f)(x,\omega,E)=((T_1(t)\tilde f_1)(x,\omega,E),(T_2(t)\tilde f_2)(x,\omega,E),(T_3(t)\tilde f_3)(x,\omega,E))
\]
where (see \eqref{sgA})
\[
(T_j(t)\tilde f_j)(x,\omega,E)=
H\Big(t(x,\omega)-t\sqrt{{{2E}\over{m_j}}}\Big)\tilde f_j\Big(x-\omega\sqrt{{{2E}\over{m_j}}}t,\omega,E\Big),\ j=1,2,3
\]
for $ \tilde f\in  L^1(G\times S\times I)^3$.

Moreover, we have
\[
\tilde K\psi\geq 0\ {\rm for}\  \psi\in L^1(G\times S\times I)^3; \psi\geq 0
\]
and $\tilde{\Sigma}\in L^\infty(G\times S\times I)^3$ such that $\tilde{\Sigma}_j \geq 0$.
Since $\tilde K:
L^1(G\times S\times I)^3\to L^1(G\times S\times I)^3$ is a bounded operator we obtain,
as earlier in the proof of Theorem \ref{coth4},
by Trotter's formula 
that $\tilde G(t)\tilde{f}\geq 0$ for $\tilde{f}\geq 0$
(cf. also Proposition 2 of \cite{dautraylionsv6}, pp. 226-227) , which implies that
\be\label{td14}
\psi=\tilde G(t)\psi_0+\int_0^t\tilde G(t-s)\tilde f(s) ds \geq 0.
\ee
Hence the assertion of the theorem is true for $g=0$.

B. Suppose more generally that $g\in C^2([0,T],{\s T}^1(\Gamma_-)^3)$
and $g\geq 0$. We see
(cf. \cite{dautraylionsv6}, pp. 231-232)
that the solution $u\in C^0([0,T],L^1(G\times S\times I)^3)\cap C^1(]0,T],L^1(G\times S\times I)^3) $ of the problem (which exists at least if $g(0)=0$)
\be\label{td15}
{\p {u}t}-\tilde{\bf A}u+\tilde\Sigma u=0,\quad u_{|\Gamma_-\times ]0,T]}=g,\quad u(0)=0
\ee
is non-negative.
Again the
problem (\ref{td15}) can be solved (as above in Section \ref{lss1}) by the Lagrange's method in the classical sense which we describe briefly in what follows, assuming that $g$ is sufficiently smooth, say $g\in C(\ol G\times S\times I\times [0,T])^3$, and that $g_j$ is zero in a neighbourhood of the surfaces
given by the equation $t=t(x,\omega)\sqrt{{{m_j}\over{2E}}}$ for $E>0$ in the phase space $\ol G\times S\times I\times [0,T]$.
For a general $g$ then, the {\it mild}
(generalized) solution is obtained by standard limiting processes (cf. the end of section \ref{diss1b}).

The equation (\ref{td15}) is uncoupled and for each $j$ it is of the form (we assume $E>0$)
\be\label{td12a}
\sqrt{{{m_j}\over{2E}}}{\p {u_j}{t}}+\sum_{k=1}^3\omega_k{\p {u_j}{x_k}}+\Sigma_j(x,\omega,E)u_j=0
\ee
and $u_j$ must satisfy an initial-boundary condition of the form
\be\label{td12b}
u_j(x,\omega,E,0)&=0&&\ {\rm for}\ (x,\omega,E)\in G\times S\times I, &\nonumber\\
u_j(y,\omega,E,t)&=g_j(y,\omega,E,t)&&\ {\rm for}\ (y,\omega,E,t)\in \Gamma_-\times ]0,T]. &
\ee

We solve the problem \eqref{td12a}-\eqref{td12b} for a fixed $j$ and we denote for simplicity  $u:=u_j$ and $m:=m_j,\ g:=g_j$.
The augmented system of ordinary differential equations is
\bea\label{auga}
T'(t)&=\sqrt{{{m}\over{2E}}}\nonumber\\
X_1'(s)&=\Omega_1, \quad \Omega_1'(s)=0, \nonumber\\
X_2'(s)&=\Omega_2, \quad \Omega_2'(s)=0, \nonumber \\
X_3'(s)&=\Omega_3, \quad \Omega_3'(s)=0, \nonumber \\
{\s E}'(s)&=0 \nonumber\\
U'(s)&= -\Sigma(X,\Omega,E)U
\eea
We find that
\bea \label{asola}
T(s)&=\sqrt{{{m}\over{2E}}}s+C_1,\ \Omega(s)=C_2,\ X(s)=C_2s+C_3,\ {\s E}(s)=C_4,\nonumber\\ \Psi(s)&=C_5
e^{\int_0^s-\Sigma(X(\tau),\Omega(\tau),E(\tau))d\tau}
\eea
where $C_1,\dots,C_5$ are some constants.

Taking into account the conditions in \eqref{td12b}, we see that the solution (the characteristics) of the augmented system must satisfy the initial condition of the form
\bea\label{augina}
(X(0),\Omega(0),{\s E}(0),T(0),U(0)) &=
(x',\omega,E,0),\ t'=0
\\
(X(0),\Omega(0),{\s E}(0),T(0),U(0)) &=
\big(h(v),\omega,E,t',g(h(v),\omega,E,t')\big),\ t'>0\nonumber
\eea
where $ h=h(v)$ is a local parametrization of $\partial G$. Also, $x$ (resp. $t$) was replaced by $x'$ (resp. $t'$) for notational reasons. Matching the initial condition (\ref{augina}) to the solution
(\ref{asola}) we get the solution $(X(s),\Omega(s),{\s E}(s),T(s),\Psi(s))$ and by eliminating
$x',\ v,\ t',\ \omega,\ E$ from the system
\[
(X(s),\Omega(s),{\s E}(s),T(s))=(x,\omega,E,t)
\]
and noting that for $t'>0$ (formally)
\[
\Psi(s)=H(t')g(x-s\omega,\omega,E,t')e^{\int_0^s-\Sigma(x-\tau\omega,\omega,E)d\tau},\quad \textrm{with} s=t(x,\omega),
\]
we get the solution $u$ as in section \ref{lss3}. The result is 
\bea\label{sgAab}
&u(x,\omega,E,t)\\
&=
H\left(t-\sqrt{{{m}\over{2E}}}t(x,\omega)\right)g\left(x-t(x,\omega)\omega,\omega,E,t-\sqrt{{{m}\over{2E}}}t(x,\omega)\right)e^{\int_0^{t(x,\omega)}-\Sigma(x-\tau \omega,\omega,E)d\tau}.\nonumber
\eea
where $H$ is again the Heaviside function.

For three particles system
the solution is (when $E>0$)
\[
u(x,\omega,E,t)=(u_1(x,\omega,E,t),u_2(x,\omega,E,t),u_3(x,\omega,E,t))
\]
where
\bea
&u_j(x,\omega,E,t)\\
&=
H\left(t-\sqrt{{{m_j}\over{2E}}}t(x,\omega)\right)g_j\left(x-t(x,\omega)\omega,\omega,E,t-\sqrt{{{m_j}\over{2E}}}t(x,\omega)\right)e^{\int_0^{t(x,\omega)}-\Sigma_j(x-\tau \omega,\omega,E)d\tau}.\nonumber
\eea

Let $w=\psi-u$. Then we find that
\bea\label{td16}
&{\p {w}t}-\tilde{\bf A}w+\tilde\Sigma w-\tilde Kw=\tilde f-\big({\p {u}t}-\tilde{\bf A}u+\tilde\Sigma u\big)+\tilde Ku=\tilde f+\tilde Ku\geq 0\\
& w_{|\Gamma_-\times [0,T]}=g-g=0\\
&w(0)=\psi(0)-u(0)=\psi_0\geq 0.
\eea
Notice that here ${\psi_0}_{|\Gamma_-}$ is not necessarily zero (that is, the compatibility condition is not necessarily true) and
that $\tilde f+\tilde Ku$ does not necessarily belong to the space
$C^1\big([0,T],L^1(G\times S\times I)^3\big)$.
The solution can be understood in the mild sense,
although we omit the treatment of these details here;
see e.g. \cite{engelnagel} Section VI.7.a, or \cite{pazy83}, p. 106.
Hence by part A. of the proof, $w\geq 0$
and hence $\psi=w+u\geq 0$ which completes the proof.
\end{proof}

\subsection{A Note on Regularity of Solutions}\label{fr}

Above  given existence results of solutions for the coupled system can be analogously obtained also for general $p\in [1,\infty[$. 
For $p>1$ one needs the both conditions (\ref{colleh}), (\ref{colleha}) and also  (\ref{co2a}), (\ref{co2aa}) of the cross-sections.
In \cite{tervo07} (see also \cite{bomanthesis}) we have shown the dissipativity of the scattering-collision operator and related existence results 
(for the stationary problem) of coupled system in the case $p=2$  by applying so-called 
{\it Lions-Lax-Milgram Theorem} (generalized Lax Milgram Theorem) (\cite{grisvard}, Lemma 4.4.4.1, p. 234). This approach offers for $p=2$ an alternative method. We also point out that the dimension $n$ of the Euclidean space $\R^n$ can be any $n\geq 1$ (i.e. not only $n=3$). We omit these generalizations in this paper.

Let $(m_1,m_2,m_3)\in \N_0^3$ be a multi-index and let $U\subset G\times S\times I^\circ$
be an open subset.
Define Sobolev spaces $H^{p,(m_1,m_2,m_3)}(U)$ (formally) by
\begin{multline*}
H^{p,(m_1,m_2,m_3)}(U)=\big\{\psi\in L^p(G\times S\times I)
\ |\ \partial_x^\alpha\partial_{\tilde{\omega}}^\beta\partial_E^\gamma\psi \in L^p(U),\\
\forall |\alpha|\leq m_1,\ |\beta|\leq m_2,\ |\gamma|\leq m_3\big\},
\end{multline*}
where the derivatives are taken in the distributional sense, and where $\tilde\omega=(\tilde\omega_1,\tilde\omega_2)$
is a local coordinate chart in $S$,
and $\partial_{\tilde\omega_j},\ j=1,2$ are the respective coordinate vector fields. The rigorous definition would involve, for example,
the use of multiple local charts of $S$ covering it,
along with an associated partition of unity,
or the use of covariant derivatives with respect to the Levi-Civita connection on $S$ (cf. \cite{hebey96} for this latter point of view).
We ignore, however, these minor technicalities here to stay brief.
Spaces $H^{p,(m_1,m_2,m_3)}(U)$ are mixed-norm
Sobolev spaces. They are Banach spaces when equipped with the respective norms
\bea
\n{\psi}_{H^{p,(m_1,m_2,m_3)}(U)}
=
\Big(\sum_{|\alpha|\leq m_1}\sum_{|\beta|\leq m_2}\sum_{|\gamma|\leq m_3}
\n{\partial_x^\alpha\partial_\omega^\beta\partial_E^\gamma\psi }^p_{L^p(U)})^{1/p}.
\eea 
Note that 
\be 
H^{p,(1,0,0)}(U)\cap H^{p,(0,1,0)}(U)\cap H^{p,(0,0,1)}(U)
=H^{p,1}(U)
\ee
and 
\bea
& 
H^{p,(2,0,0)}(U)\cap H^{p,(1,1,0)}\cap H^{p,(0,2,0)}(U)\cap H^{p,(1,0,1)}(U)\cap H^{p,(0,1,1)}(U)\cap H^{p,(0,0,2)}(U)\nonumber\\
&
=H^{p,2}(U),
\eea
where $H^{p,2}(U)$ is the usual (isotropic) Sobolev space.
The spaces  $H^{p,(m_1,m_2,m_3)}(\Gamma_-)$ can be defined in the similar fashion.
Finally one defines
\[
{W}^{p,(m_1,m_2,m_3)}(U)=\{\psi\in H^{p,(m_1,m_2,m_3)}(U)\ |\ \omega\cdot \nabla_x\psi\in H^{p,(m_1,m_2,m_3)}(U)\}
\]
These mixed-norm Sobolev spaces could be replaced by mixed-norm
Bessel potential spaces or Sobolev spaces with fractional index $(s_1,s_2,s_3)$
(so-called Sobolev-Slobodeckij spaces, cf. \cite{nezza11}). Then the multi-index $(m_1,m_2,m_3)$ is replaced by the tuple $(s_1,s_2,s_3)\in \R_+^3$.
Above $p\in [1,\infty[$  is the Lebesgue index and $s$ is the regularity index indicating how "smooth" the functions $f$ and $g$ are (the tuple $(s_1,s_2,s_3)$ refer to the orders of the distributional  derivatives of the functions of spaces under consideration).

A natural question one can, and should, pose is the following: What can be said about the regularity of a solution of the equation  
\[
(-{\bf A}+\Sigma-K)\psi=&f, \\
\psi_{|\Gamma_-}=&g,
\]
when the cross-sections $\Sigma_j,\ \sigma_{jk}$ and the data $f$ and $g$ are sufficiently regular, say  $f\in H^{p,(s_1,s_2,s_3)}(G\times S\times { I})^3$ and $g\in H^{p,(s_1',s_2',s_3')}(\Gamma_-)^3$  ?
Is it possible to conclude that $\psi\in  { W}^{p',(s_1'',s_2'',s_3'')}(U)^3$,
for some indexes $p',\ (s_1'',s_2'',s_3'')$ and some open subset $U$ of $G\times S\times I^\circ$ ?.

In addition, the same kind of questions  can be formulated for time-dependent problems. One possibility in both of these stationary and time-dependent problems for systematic study is to apply the extensive theory of pseudo-differential and especially singular integral operator theory (cf.  \cite{hsiao}).

The regularity results are important among others in the connection of numerical methods, for example in the case when one applies higher order spline approximations (to get more rapid convergence results).

For monokinetic (one-velocity), one particle transport equations some regularity results can been found in \cite{agoshkov} (Chapter 4, see also the introduction of the monograph for related literature). Regularity results therein are concerning for periodic solutions, solutions for so called plane-parallel problems and for problems in   three-dimensional domain $G$. Typically the increment of regularity is "small" (only of order $\leq 1$). The formulations are exhibited with the help of appropriate difference-differential norms (and the corresponding spaces), which are closely related to (or are the same as) the Bessel potential and/or Sobolev-Sobodetskij spaces.

The following example shows that in the case of transport problems, the regularity
of the solution
{\it does not generally} arise from the regularity of data and cross-sections in the sense that "the solution is more and more regular on the whole domain $G\times S\times I$ when the data and cross-sections are more and more regular".

\begin{example}\label{exreg}
 
Let $G=B(0,r)\subset \R^3$ and consider the problem (for one particle)
\[
\omega\cdot\nabla\psi+\psi&=1, \\[2mm]
\psi_{|\Gamma_-}&=0.
\]
By (\ref{l21a}) the solution of the problem is
\[
\psi=1-e^{-t(x,\omega)},
\]
where, by virtue of Example \ref{le1}, for $(x,\omega)\in G\times S$,
\[
t(x,\omega)=x\cdot \omega +\sqrt{(x\cdot \omega)^2+r^2-\n{x}_2^2},
\]
where for clarity we denote $\n{x}_2^2=x\cdot x$.
In the present example,
$\Sigma=1$, $K=0$, $f=1\in H^{(p,(\infty,\infty,\infty))}(G\times S\times I)$ and $g=0\in H^{(p,(\infty,\infty,\infty))}(\Gamma_-)$. 

We see that $\psi\in C^\infty(G\times S\times I)$ since $r^2-\n{x}^2>0$ for $x\in G$. Hence $\psi\in  H^{(p,(\infty,\infty,\infty))}(U)$ for any subset $U\subset G\times S\times I$ which is of the form $U=G'\times S\times I$ where $G'\subset G$ is open
such that $\ol{G'}\subset G$. In particular, $\psi\in  H^{(p,(\infty,\infty,\infty))}(U_\epsilon)$
for any $U_\epsilon:=G_\epsilon\times S\times I$ where $G_\epsilon :=B(0,r-\epsilon)=\{x\in G\ |\ d(x,\partial G)>\epsilon\}$. 

We shall show that $\psi\in H^{(p,(1,0,0))}(G\times S\times I)$ for any $1\leq p<3$, but
$\psi\not\in H^{(p,(1,0,0))}(G\times S\times I)$ when $p\geq 3$.

We will occasionally denote the (surface) measure on $S$ by $\mu_S$,
and recall that it is induced by the Lebesgue measure.
Moreover, $\mu_S$ is $\mathrm{SO}(3)$ invariant.

Let $p\geq 1$. Since $\psi$ is independent of $E$, and since $I$ is bounded, we can leave $E$ away and consider computations in spaces $H^{(p,(1,0,0))}(G\times S)$ only. We find that
\[
{\p {\psi}{x_j}}=e^{-t(x,\omega)}{\p t{x_j}}
=e^{-t(x,\omega)}\omega_j
+
e^{-t(x,\omega)}
{{(x\cdot\omega)\omega_j-x_j}\over{\big((x\cdot\omega)^2+r^2-\n{x}_2^2\big)^{1/2}}}
=:u_1+u_2.
\]
Since $e^{-2r}\leq e^{-t(x,\omega)}\leq 1$, and $|\omega_j|\leq 1$,
we observe that
\begin{align}\label{eq:example:exreg:subresult:1}
{\p {\psi}{x_j}}\in L^p(G\times S)
\quad\iff\quad
u_2\in L^p(G\times S)
\quad\iff\quad
I_{p,j}<\infty,
\end{align}
where for $j=1,2,3$, and $p\geq 1$,
\begin{align}\label{eq:I_p_j}
I_{p,j}
:=
\int_G\int_S{{|(x\cdot\omega)\omega_j-x_j|^p}\over{\big((x\cdot\omega)^2+r^2-\n{x}_2^2\big)^{p/2}}}d\omega dx.
\end{align}
Write also,
\[
I_{p}:=(I_{p,1}, I_{p,2}, I_{p,3}).
\]

By using spherical transformation $x=s\Omega$, $(s,\Omega)\in [0,r[\times S$,
for integration over $G=B(0,r)$,
we obtain ($p\geq 1$)
\begin{align}\label{eq:I_p_j:2}
I_{p,j}=&\int_G\int_S{\frac{|(x\cdot\omega)\omega_j-x_j|^p}{((x\cdot\omega)^2+r^2-\n{x}^2)^{p/2}}} \diff\omega \diff x
=\int_{S\times S} \int_0^r \frac{|s(\Omega\cdot\omega)\omega_j-s\Omega_j|^p}{(s^2(\Omega\cdot\omega)^2+r^2-s^2)^{p/2}} s^2\diff s\diff \Omega\diff \omega \nonumber\\
=&\int_{S\times S}|(\Omega\cdot\omega)\omega_j-\Omega_j|^p \int_0^r \frac{s^{p+2}}{(-s^2(1-(\Omega\cdot\omega)^2)+r^2)^{p/2}}\diff s\diff\Omega\diff \omega \nonumber\\
=&\int_{S\times S}|(\Omega\cdot\omega)\omega_j-\Omega_j|^p J_{p}\big(1-(\Omega\cdot \omega)^2\big) \diff\Omega\diff\omega,
\end{align}
(integrand is non-negative, so we were allowed to apply Fubini's theorem in the first step)
where
\begin{align}\label{eq:J_p}
J_{p}(c)=\int_0^r \frac{s^{p+2}}{(-cs^2+r^2)^{p/2}}\diff s,
\quad c \in [0,1].
\end{align}
Notice that if $c=1-(\Omega\cdot \omega)^2$, then $0\leq c\leq 1$.

Performing two consecutive changes of variables, first $t=r/s$ and then $v=t^2$,
the integral $J_{p}(c)$ can be brought into the form
\begin{align}\label{eq:J_p:2}
J_{p}(c)=&\int_0^r \frac{s^{p+2}}{(-cs^2+r^2)^{p/2}}\diff s
=\int_0^r \frac{s^{2}}{(-c+(r/s)^2)^{p/2}}\diff s
=r^{3}\int_1^{\infty} \frac{1}{(-c+t^2)^{p/2} t^{4}}\diff t \nonumber\\
=&\frac{r^{3}}{2}\int_1^{\infty} \frac{1}{(-c+v)^{p/2}v^{5/2}}\diff v.
\end{align}

If $0\leq c<1$ and $v\geq 2$, then
$v\geq -c+v>1$, and 
\[
\frac{1}{v^{(p+5)/2}}\leq \frac{1}{(-c+v)^{p/2}v^{5/2}}\leq \frac{1}{v^{5/2}},
\]
\begin{align}\label{eq:estimate:J_p}
\frac{r^3}{2}L_{p+5}\leq J_{p}(c)-\frac{r^{3}}{2}\int_1^2 \frac{1}{(-c+v)^{p/2}v^{5/2}}\diff v\leq \frac{r^3}{2}L_{5},
\end{align}
where
\[
L_t:=\int_2^{\infty} \frac{1}{v^{t/2}}\diff v<\infty,\quad \forall t>2.
\]

We have moreover,
\begin{align}\label{eq:example:exreg:estimate:anon}
\frac{1}{2^{5/2}}\int_1^2 \frac{1}{(-c+v)^{p/2}}\diff v
\leq
\int_1^2 \frac{1}{(-c+v)^{p/2}v^{5/2}}\diff v
\leq
\int_1^2 \frac{1}{(-c+v)^{p/2}}\diff v.
\end{align}
Clearly, 
\begin{align}\label{eq:example:exreg:estimate:anon:2}
\int_{S} \int_{S} |(\Omega\cdot\omega)\omega_j-\Omega_j|^{p} \diff\Omega\diff\omega
\leq 2^o \mu_S(S)^2<\infty,
\end{align}
and hence for all $p\geq 1$,
\begin{multline}\label{eq:example:exreg:subresult:3}
I_{p,j}<\infty
\quad\iff\quad
\int_S \int_{S} |(\Omega\cdot\omega)\omega_j-\Omega_j|^p \ol{J}_p\big(1-(\Omega\cdot\omega)^2\big)d\Omega d\omega<\infty,
\end{multline}
where
\begin{align}\label{eq:ol_J_p}
\ol{J}_p(c):={}&\int_1^2 \frac{1}{(-c+v)^{p/2}}\diff v.
\end{align}

We notice that when $0<c<1$,
\begin{align}
\ol{J}_p(c)
=
\begin{cases}
\displaystyle \frac{2}{p-2}\Big(\frac{1}{(1-c)^{p/2-1}}-\frac{1}{(2-c)^{p/2-1}}\Big),
\ &\textrm{if}\ p\neq 2 \\[5mm]
\displaystyle \ln\Big(\frac{-c+2}{-c+1}\Big),
\ &\textrm{if}\ p=2
\end{cases}
\end{align}

The rest of the analysis will be split into three parts (i), (ii) and (iii)
depending on values of $p\geq 0$.

\begin{itemize}
\item[(i)] {\bf Case $p<2$.}
Writing $\alpha=1-p/2>0$, and noticing that whenever $0\leq c\leq 1$, we have
\[
\ol{J}_p(c)=\frac{1}{\alpha}\big((2-c)^{\alpha}-(1-c)^{\alpha}\big)
\leq \frac{1}{\alpha}2^{\alpha},
\]
which implies, taking into account \eqref{eq:example:exreg:estimate:anon:2},
\eqref{eq:example:exreg:subresult:3},
and the fact that $0\leq c\leq 1$ if $c=1-(\Omega\cdot\omega)^2$, for $\Omega,\omega\in S$,
\begin{align}\label{eq:example:exreg:convergence:1}
I_{p,j}<\infty,\quad \forall 1\leq p<2, \quad j=1,2,3.
\end{align}
In particular, by \eqref{eq:example:exreg:subresult:1},
\begin{align}\label{eq:example:exreg:result:1}
\nabla \psi\in L^p(G\times S)^3,\quad&\mathrm{if}\ 1\leq p<2, 
\end{align}

\item[(ii)] {\bf Case $p>2$.}
Notice that for $0\leq c<1$,
we have
\begin{multline}\label{eq:estimate:ol_J_p}
0<\frac{2}{p-2}\Big(\frac{1}{(1-c)^{p/2-1}}-1\Big) \\
\leq
\ol{J}_p(c)
=
\frac{2}{p-2}\Big(\frac{1}{(1-c)^{p/2-1}}-\frac{1}{(2-c)^{p/2-1}}\Big) \\
\leq
\frac{2}{p-2}\frac{1}{(1-c)^{p/2-1}},
\end{multline}
which implies, due to \eqref{eq:example:exreg:estimate:anon:2}, \eqref{eq:example:exreg:subresult:3}, \eqref{eq:ol_J_p},
and the fact that $0\leq c<1$ when $c=1-(\Omega\cdot\omega)^2$,
for $\Omega,\omega\in S$ such that $\Omega\cdot\omega\neq 0$ (the set $\{\Omega\in S\ |\ \Omega\cdot\omega=0\}$ being $\mu_S$-zero measurable on $S$ for all $\omega\in S$),
\begin{multline*}
I_{p,j}<\infty
\quad\iff\quad
\int_S \int_{S} \frac{|(\Omega\cdot\omega)\omega_j-\Omega_j|^p}{|\Omega\cdot\omega|^{p-2}}d\Omega d\omega<\infty,
\end{multline*}
for $j=1,2,3$.

Letting $\n{\cdot}_p$ denote the $p$-norm on $\R^3$, we have by the above equivalence
(recall that $I_p=(I_{p,1}, I_{p,2}, I_{p,3}$)
\[
\nabla\psi\in L^p(G\times S)^3
\quad\iff\quad {}&
\n{I_{p}}_p<\infty \\
\quad\iff\quad {}&
\int_S \int_{S} \frac{\n{(\Omega\cdot\omega)\omega-\Omega}_p^p}{|\Omega\cdot\omega|^{p-2}}d\Omega d\omega<\infty,
\]
and since the norms $\n{\cdot}_p$ and $\n{\cdot}_2$ are equivalent,
\[
\n{I_{p}}_p<\infty
\quad\iff\quad
\int_S \int_{S} \frac{\n{(\Omega\cdot\omega)\omega-\Omega}_2^p}{|\Omega\cdot\omega|^{p-2}}d\Omega d\omega<\infty.
\]
Of course for $p\geq 1$, the condition
$\n{I_{p}}_p<\infty$
is equivalent to having
$I_{p,j}<\infty$ for all $j=1,2,3$.

Notice that $(\Omega\cdot\omega)\omega-\Omega$ has $2$-norm (Euclidean norm)
\[
\n{(\Omega\cdot\omega)\omega-\Omega}_2^2
=1-(\Omega\cdot\omega)^2,
\]
and hence we have arrived at the result that
\begin{align*}
\n{I_{p}}_p<\infty
\quad\iff\quad
\int_S \int_{S} \frac{|1-(\Omega\cdot\omega)^2|^{p/2}}{|\Omega\cdot\omega|^{p-2}}d\Omega d\omega<\infty.
\end{align*}

But, if one employs the $\mathrm{SO}(3)$-invariance of $\mu_S$ in the integral with respect to $\Omega$,
choosing $M_{\omega}\in \mathrm{SO}(3)$ for each $\omega$
such that $M_{\omega}\omega=e_3$, we have
\[
&{}\int_S \int_{S} \frac{|1-(\Omega\cdot\omega)^2|^{p/2}}{|\Omega\cdot\omega|^{p-2}}d\Omega d\omega
=
\int_S \int_{S} \frac{|1-((M_{\omega}^{-1}\Omega)\cdot\omega)^2|^{p/2}}{|(M_{\omega}^{-1}\Omega)\cdot\omega|^{p-2}}d\Omega d\omega \\
=&{}
\int_S \int_{S} \frac{|1-\Omega_3^2|^{p/2}}{|\Omega_3|^{p-2}}d\Omega d\omega
=
\mu_S(S)\int_{S} \frac{|1-\Omega_3^2|^{p/2}}{|\Omega_3|^{p-2}}d\Omega.
\]

Using spherical coordinates, the last integral can be written as
\begin{multline*}
\int_{S} \frac{|1-\Omega_3^2|^{p/2}}{|\Omega_3|^{p-2}}d\Omega
=2\pi\int_0^\pi \frac{|1-\cos^2(\theta)|^{p/2}}{|\cos(\theta)|^{p-2}}\sin(\theta)d\theta \\
=4\pi\int_{0}^{\pi/2}\frac{|1-\sin^2(\theta)|^{p/2}}{\sin^{p-2}(\theta)}\cos(\theta)d\theta,
\end{multline*}
while by a change of variables $y=\sin(\theta)$,
\[
\int_{0}^{\pi/2}\frac{|1-\sin^2(\theta)|^{p/2}}{\sin^{p-2}(\theta)}\cos(\theta)d\theta
=\int_0^1 \frac{1-y^2}{y^{p-2}}dy=\int_0^1 \Big(\frac{1}{y^{p-2}}-\frac{1}{y^{p-4}}\Big)dy.
\]
It is clear that the right hand side integral is finite if $p<3$,
and that it diverges to $\infty$ if $p\geq 3$,
that is we have
\begin{align}\label{eq:example:exreg:convergence:2}
\n{I_{p}}_p<\infty \quad&\mathrm{if}\ 2<p<3, \nonumber \\[2mm]
\n{I_{p}}_p=\infty \quad&\mathrm{if}\ p\geq 3.
\end{align}

This allows us to conclude the case $p>2$,
with the result that
\begin{align}\label{eq:example:exreg:result:2}
\nabla \psi\in L^p(G\times S)^3\quad&\mathrm{if}\ 2<p<3, \nonumber \\[2mm]
\nabla \psi\notin L^p(G\times S)^3\quad&\mathrm{if}\ p\geq 3.
\end{align}

\item[(iii)] {\bf Case $p=2$.}
In this case, for $0<c<1$,
\[
\ol{J}_2(c)=\ln\Big(\frac{-c+2}{-c+1}\Big),
\]
and therefore
\[
{}&\int_S \int_{S} |(\Omega\cdot\omega)\omega_j-\Omega_j|^2 \ol{J}_2\big(1-(\Omega\cdot\omega)^2\big)d\Omega d\omega \\
={}&\int_S \int_{S} |(\Omega\cdot\omega)\omega_j-\Omega_j|^2 \ln\Big(\frac{1+(\Omega\cdot\omega)^2}{(\Omega\cdot\omega)^2}\Big)d\Omega d\omega.
\]
Since
\begin{multline*}
0\leq \int_S \int_{S} |(\Omega\cdot\omega)\omega_j-\Omega_j|^2\ln (1+(\Omega\cdot\omega)^2)d\Omega d\omega
\leq 4(\ln 2)\mu_S(S)^2<\infty,
\end{multline*}
we have, by \eqref{eq:example:exreg:subresult:3},
the equivalence
\[
I_{2,j}<\infty
\quad\iff\quad
\int_S \int_{S} |(\Omega\cdot\omega)\omega_j-\Omega_j|^2\ln \Big(\frac{1}{(\Omega\cdot\omega)^2}\Big)d\Omega d\omega<\infty.
\]

From this, by summing over $j$ and recalling that
$\n{(\Omega\cdot\omega)\omega-\Omega}_2^2=1-(\Omega\cdot\omega)^2$,
we obtain
\begin{align*}
\nabla\psi\in L^2(G\times S)
\quad\iff\quad {}&
\n{I_2}_2<\infty \nonumber\\
\quad\iff\quad {}&
\int_S \int_{S} |1-(\Omega\cdot\omega)^2|\ln \Big(\frac{1}{(\Omega\cdot\omega)^2}\Big)d\Omega d\omega<\infty.
\end{align*}

Making use of $\mathrm{SO}(3)$ invariance of $\mu_S$
in the integral with respect to $\Omega$,
(choose $M_{\omega}\in\mathrm{SO}(3)$ such that $M_{\omega}\omega=e_3$),
and then moving to spherical coordinates,
the integral on right hand side of the above equivalence becomes
\[
{}&\int_S \int_{S} |1-(\Omega\cdot\omega)^2|\ln \Big(\frac{1}{(\Omega\cdot\omega)^2}\Big)d\Omega d\omega
=-\int_S \int_{S} |1-\Omega_3^2|\ln (\Omega_3^2)d\Omega d\omega \\
={}&
-4\pi\mu_S(S)\int_0^\pi |1-\cos^2(\theta)|\ln|\cos(\theta)|\sin(\theta)d\theta \\
={}&
-8\pi\mu_S(S)\int_0^{\pi/2} |1-\sin^2(\theta)|\ln(\sin(\theta))\cos(\theta)d\theta.
\]
The change of variables, $y=\sin(\theta)$ allows us to transform the previous integral into
\[
-\int_0^{\pi/2} |1-\sin^2(\theta)|\ln(\sin(\theta))\cos(\theta)d\theta
=-\int_0^1 (1-y^2)\ln(y)dy,
\]
which is clearly finite, and thus
\begin{align}\label{eq:example:exreg:convergence:3}
\n{I_2}_2<\infty.
\end{align}

We have thus reached the conclusion of the case $p=2$,
namely
\begin{align}\label{eq:example:exreg:result:3}
\nabla \psi\in L^2(G\times S)^3.
\end{align}
\end{itemize} 

To conclude this example,
we have by 
\eqref{eq:example:exreg:result:1},\eqref{eq:example:exreg:result:2} and \eqref{eq:example:exreg:result:3},
\[
&\psi\in H^{(p,(1,0,0))}(G\times S),\quad \textrm{if}\ 1\leq p<3 \\
&\psi\notin H^{(p,(1,0,0))}(G\times S),\quad \textrm{if}\ p\geq 3.
\]
\end{example}

\begin{example}\label{counterex}
In the previous Example \ref{exreg} we have shown that $\psi\in H^{2,(1,0,0)}(G\times S)$.
The goal of this example is to establish that $\psi\not\in  H^{(2,(2,0,0))}(G\times S)$.

Recall that we have
\[
t(x,\omega)={}&x\cdot \omega +\sqrt{(x\cdot \omega)^2+r^2-\n{x}_2^2} \\[2mm]
\psi={}&1-e^{-t(x,\omega)},
\]
and therefore
\[
{\p t{x_j}}={}&\omega_j+\frac{(x\cdot\omega)\omega_j-x_j}{\big((x\cdot\omega)^2+r^2-\n{x}_2^2\big)^{1/2}} \\
{\q t{x_j}}={}&\frac{\omega_j^2-1}{\big((x\cdot\omega)^2+r^2-\n{x}_2^2\big)^{1/2}}
-\frac{((x\cdot\omega)\omega_j-x_j)^2}{\big((x\cdot\omega)^2+r^2-\n{x}_2^2\big)^{3/2}}
\]
as well as
\[
{\p \psi{x_j}}={}&e^{-t(x,\omega)}{\p t{x_j}} \\[2mm]
{\q \psi{x_j}}={}&-e^{-t(x,\omega)}\Big({\p t{x_j}}\Big)^2+
e^{-t(x,\omega)}{\q t{x_j}}.
\]
Substituting the above expressions for ${\p t{x_j}}$
and ${\q t{x_j}}$ into the last formula, one obtains
\bea\label{ce1}
{\q \psi{x_j}}={}&-e^{-t(x,\omega)}
\Big(\omega_j+\frac{(x\cdot\omega)\omega_j-x_j}{\big((x\cdot\omega)^2+r^2-\n{x}_2^2\big)^{1/2}}\Big)^2\nonumber\\
{}&
+e^{-t(x,\omega)}\Big(\frac{\omega_j^2-1}{\big((x\cdot\omega)^2+r^2-\n{x}_2^2\big)^{1/2}}
-\frac{((x\cdot\omega)\omega_j-x_j)^2}{\big((x\cdot\omega)^2+r^2-\n{x}_2^2\big)^{3/2}}\Big)\nonumber\\
={}&
-\omega_j^2e^{-t(x,\omega)}-2\omega_j e^{-t(x,\omega)}
\frac{(x\cdot\omega)\omega_j-x_j}{\big((x\cdot\omega)^2+r^2-\n{x}_2^2\big)^{1/2}}\nonumber\\
&
-e^{-t(x,\omega)}{{((x\cdot\omega)\omega_j-x_j)^2}\over{(x\cdot\omega)^2+r^2-\n{x}_2^2}}
+
(\omega_j^2-1)e^{-t(x,\omega)}{1\over{\big((x\cdot\omega)^2+r^2-\n{x}_2^2\big)^{1/2}}}\nonumber\\
&
-e^{-t(x,\omega)}{{((x\cdot\omega)\omega_j-x_j)^2}\over{\big((x\cdot\omega)^2+r^2-\n{x}_2^2\big)^{3/2}}}\nonumber\\
=:{}& u_{1,j}+u_{2,j}+u_{3,j}+u_{4,j}+u_{5,j}.
\eea

Making use of the fact that $e^{-2r}\leq e^{-t(x,\omega)}\leq 1$ and $|\omega_j|\leq 1$ for $(x,\omega)\in G\times S$,
we find that
$u_{1}:=(u_{1,1},u_{1,2},u_{1,3})\in L^2(G\times S)^3$.
By the result \eqref{eq:example:exreg:convergence:3} in Example \ref{exreg}, we have
$u_2:=(u_{2,1},u_{2,2},u_{2,3})\in L^2(G\times S)^3$.

Calculating in a similar fashion as in Example \ref{exreg}, we find that the integral
\[
I_j':= 
\int_G\int_S \frac{1}{(x\cdot\omega)^2+r^2-\n{x}_2^2}d\omega dx
\]
is finite for $j=1,2,3$,
and hence
$u_4:=(u_{4,1}, u_{4,2}, u_{4,3})\in L^2(G\times S)^3$.

Since, again $e^{-2r}\leq e^{-t(x,\omega)}\leq 1$,
we see that the convergence of
$\n{{\q \psi{x_j}}}_{L^2(G\times S)}$ for all $j=1,2,3$,
is equivalent to the convergence of
\begin{multline*}
\int_{G\times S} |e^{t(x,\omega)}(u_{3,j}+u_{5,j})|^2dx d\omega \\
=
\int_{G\times S}\Big|{{((x\cdot\omega)\omega_j-x_j)^2}\over{(x\cdot\omega)^2+r^2-\n{x}_2^2}}
+
{{((x\cdot\omega)\omega_j-x_j)^2}\over{\big((x\cdot\omega)^2+r^2-\n{x}_2^2\big)^{3/2}}}\Big|^2 dx d\omega\\
\geq 
\int_{G\times S}
\frac{((x\cdot\omega)\omega_j-x_j)^4}{\big((x\cdot\omega)^2+r^2-\n{x}_2^2\big)^{2}} dx d\omega,
\end{multline*}
for all $j=1,2,3$.
However, choosing $p=4$ in \eqref{eq:I_p_j},
the result \eqref{eq:example:exreg:convergence:2}
implies that the sum of integrals over $j=1,2,3$ on the right hand side
diverges to $\infty$,
whence we may conclude that
\[
\sum_{j=1,2,3} \n{{\q \psi{x_j}}}^2_{L^2(G\times S)}=\infty,
\]
and therefore
\[
\psi\not\in H^{2,(2,0,0)}(G\times S).
\]
\end{example}

\begin{example}\label{counterex:2}
Let $s=1+\kappa,\ 0<\kappa<1$. Recall that the norm 
in the fractional Sobolev-Slobodevskij spaces $H^{2,(s,0,0)}(G\times S)$ is given by
\be
\n{\psi}_{H^{2,(s,0,0)}(G\times S)}^2
=
\n{\psi}_{H^{2,(1,0,0)}(G\times S)}^2
+ \n{\psi}_{H^{2,(\kappa,0,0)}(G\times S)}^2,
\ee
where
(see \cite{nezza11}, p. 5, Eq. (2.2) with $n=3$)
\be 
\n{\psi}_{H^{2,(\kappa,0,0)}(G\times S)}^2
:=\sum_{j=1}^3
\int_{S}\int_G\int_G{{|{\p \psi{x_j}}(x,\omega)-{\p \psi{x_j}}(y,\omega)|^2}\over{|x-y|^{3+2\kappa}}} dx dy d\omega.
\ee
We have an estimate
\bea\label{E1}
&
\n{{\p \psi{x_j}}}^2_{H^{2,(\kappa,0,0)}(G\times S)}
=
\int_{S}\int_G\int_G{{|{\p \psi{x_j}}(x,\omega)-{\p \psi{x_j}}(y,\omega)|^2}\over{|x-y|^{3+2\kappa}}} dx dy d\omega
\nonumber\\
\leq {}&
2\int_I\int_{S}\int_G\int_G{{|e^{-t(x,\omega)}{\p t{x_j}}(x,\omega)
-e^{-t(y,\omega)}{\p t{x_j}}(x,\omega)
|^2}\over{|x-y|^{3+2\kappa}}} dx dy d\omega
\nonumber\\
{}& +
2\int_I\int_{S}\int_G\int_G{{|e^{-t(y,\omega)}{\p t{x_j}}(x,\omega)
-e^{-t(y,\omega)}{\p t{x_j}}(y,\omega)
|^2}\over{|x-y|^{3+2\kappa}}} dx dy d\omega
=:2I_1+2I_2.
\eea
We conjecture that using the techniques of Example \ref{exreg} one might be able to show that the integral $I_1$ is converging for $s=3/2$ but the integral $I_2$ is diverging for $s=3/2$.
This will be studied in detail in a future work.
\end{example}

We find that the transport operator $T:=\omega\cdot\nabla_x+\Sigma-K$ is the sum of the first order partial differential operator and a partial integral operator.
Let 
\be
P(x,\omega,E,D)\psi:=\omega\cdot\nabla_x\psi
+\Sigma(x,\omega,E)\psi.
\ee
Then the equation $T\psi=f$ equivalent to
\be\label{i3-e}
P(x,\omega,E,D)\psi-K\psi =f.
\ee
In the case where $K=0$ the transport problem   is a boundary value problem for the first order partial differential equation
\be\label{i4-e}
P(x,\omega,E,D)\psi =f,\quad 
{\psi}_{|\Gamma_-}={ g}.
\ee
The existence and regularity results of this reduced problem mirror those of the complete problem.

The problem \eqref{i4-e} is not hyperbolic.
We have imposed the assumptions which imply that the operator $P(x,\omega,E,D)$ is {\it formally dissipative}.
Literature contains numerous contributions for the existence and regularity results
for general first order partial differential initial boundary value problems which are formally dissipative,
beginning from \cite{lax}, \cite{friedrich58}  and \cite{phillips66}.
More recent results can be found e.g. in  \cite{rauch74}, \cite{rauch85}, \cite{hormander85} (Chapter XXIII)), \cite{rauch94}, \cite{nishitani96}, \cite{nishitani98}, \cite{secchi}, \cite{takayama02}. 
In fact, the transport operator $P(x,\omega,E,D)$ in itself is not problematic.
The difficulties arise from the inflow boundary condition
${\psi}_{|\Gamma_-}={ g}$ which must be imposed when $G\neq\R^n$. 
This can be briefly explained as follows. 
Define the {\it boundary matrix}
\[
A_{\nu}(z)=\omega\cdot\nu(y), \quad z=(y,\omega,E)\in\Gamma
=(\partial G)\times S\times I.
\] 
The above mentioned references are partially valid only for
problems where the dimension of the  kernel ${\rm Ker}(A_\nu)$ is constant on $\Gamma$
(the so-called {\it constant multiplicity}). Hence we are not directly able to apply them since 
\[
{\rm Ker}(A_\nu)=\begin{cases} 0,\ &z\in \Gamma_{\pm}\\
\R,\ &z\in\Gamma_0  \end{cases}
\]
that is, the {\it transport problem is not of constant multiplicity}.
Some results for the problems with {\it variable multiplicity}   are also treated in the above references
(\cite{nishitani96}, \cite{nishitani98}, \cite{secchi}, \cite{takayama02})
but they require 
additional assumptions concerning the "transition with a non-zero derivative"
over the smooth $(n-2)$-dimensional manifold $\Gamma_0$.

It is known that for the general first order partial differential (initial) boundary value problems,
the mentioned transition assumption is needed even to guarantee
the unique existence of solutions, that is, to guarantee that the problem is well-posed.
The proofs of existence results are often based on the equivalence of weak and strong solutions 
(obtained by Friedrich's mollifier smoothing), however this equivalence does not hold in general.
As for transport problems, such as the one considered in this paper,
the transition assumption is not necessarily required for
well-posedness as we have verified in this paper (see also e.g. \cite{dautraylionsv6}, \cite{agoshkov} and \cite{tervo16-up}).

The Sobolev regularity of solutions in the context of general first order PDE-systems
have been treated in some of the above references as well.  
In the case when the boundary condition is of constant multiplicity, the so-called co-normal
(or tangential) Sobolev regularity can be achieved quite generally (\cite{rauch85}). 
The co-normal regularity results can not be generalized for problems with variable multiplicity.
Nevertheless, in these cases some (co-normal) regularity results in
{\it weighted Sobolev spaces} can be found (\cite{nishitani96}, \cite{nishitani98}, \cite{secchi}, \cite{takayama02}).
We shall study these issues more thoroughly in a future work.

\section{Related (Optimal) Control Problem and Radiation Treatment Planning}\label{srs}

\subsection{Control Problem}\label{srss}

In the following we let $p$ be in the interval $[1,\infty[$ (cf. Remark \ref{fr}). The most reasonable (and practical) choices are $p=2$ and $p=1$.
From the control theoretic point of view a relevant output mapping for the stationary problem in radiation therapy is the dose
(distribution)
\bea\label{sr1}
D(x):=(D\psi)(x)=\sum_{j=1}^3\int_{S\times I}\kappa_j (x,E)\psi_j(x,\omega,E) d\omega dE
\eea
where $\kappa_j\in L^\infty(G\times I),\ \kappa_j\geq 0$, are the so-called \emph{energy-deposition cross sections} (\cite{bomanthesis}, \cite{lorence97}).
We find that $D:L^p(G\times S\times I)^3\to L^p(G)$ is a bounded linear operator and
\be\label{Dbound}
\n{D\psi}_{L^p(G)}\leq m(G\times S\times I)^{1/p'}\Big(\max_{1\leq j\leq 3}\n{\kappa_j}_{L^\infty(G\times I)}\Big)
\n{\psi}_{L^p(G\times S\times I)^3}
\ee
where $m(G\times S\times I)$ is the measure of $G\times S\times I$ and ${1\over p}+{1\over{p'}}=1$,
with the convention that $m(G\times S\times I)^{1/p'}=1$, if $p=1$.

In the case of time-dependent problem the dose (distribution) is
\be\label{sr1a}
D(x,t):=D(\psi(t))(x)=\sum_{j=1}^3\int_{S\times I}\kappa_j (x,E)
\psi_j(x,\omega,E,t) d\omega dE
\ee
where $D(\cdot,\cdot)\in C([0,T], L^p(G))$ (or only in $L^1([0,T], L^p(G))$).
$D$ is an operator $C([0,T], L^p(G\times S\times I)^3)\to C([0,T], L^p(G))$,
and the total dose in time interval $[0,T]$ is given by
\[
D(x)=(D\psi)(x)=\int_0^TD(x,t)dt.
\]

\subsubsection{Time-dependent Control System}

The time-dependent BTE system in its abstract form for $\psi_0=0$ is 
\bea\label{ar2}
{\p {\psi}t}-\tilde{\bf A}\psi+\tilde\Sigma\psi -\tilde K\psi&=\tilde f,\\
 \psi_{|\Gamma_-\times ]0,T]}&=g
\eea
where we assume that $g(0)=0$ (the compatibility condition).
Using the lift $Lg:=(Lg_1,Lg_2,Lg_3)\in  C^2([0,T], {\s W}^p(G\times S\times I)^3)$
as obtained analogously to Lemma \ref{lift2}, 
and denoting $u=\psi-Lg$ the equation (\ref{ar2}) becomes (note that $u(t)\in \tilde {\s W}^p_{-,0}(G\times S\times I)=D(\tilde {\bf A}_0)$)
\begin{align}\label{ar3}
{\p {u}t}
&=({\tilde{\bf A}_0}-\tilde\Sigma  +\tilde K)u
-\Big({{\partial}\over{\partial t}}-\tilde{\bf A}+\tilde\Sigma
-\tilde{K}\Big)Lg+\tilde f \\
&:={\s A}_0u+{\s A}Lg -{\p {(Lg)}{t}}+\tilde f
\end{align}
where ${\s A}:=\tilde{\bf A}-\tilde\Sigma  +\tilde K$ and ${\s A}_0$ is its restriction to $D({\s A}_0):=D(\tilde {\bf A}_0)=\tilde {\s W}^p_{-,0}(G\times S\times I)^3$. 
Hence we have a relevant control system
\bea\label{ar4}
{\p {u}t}&={\s A}_0u+{\s A}Lg -{\p {(Lg)}t}+\tilde f,\\
u(0)&=0\\
y&=D(\psi(\cdot))=D(u(\cdot))+D(Lg(\cdot))
\eea
where we have ${\p {(Lg)}{t}}=L{\p {g}{t}}$ (see the proof of Lemma \ref{lift2}).

Let $T>0$. Define
\[
H_T(\tilde f,g):=u(T)=\int_0^T{\tilde G}(t-s)\Big({\s A}Lg -{\p {(Lg)}{t}}+\tilde f\Big)(s)ds
\] 
where ${\tilde G}(t)$ is (as above) the semigroup generated by ${\s A}_0$.
In external therapy we have $\tilde f=0$ and in internal (brachy) therapy $g=0$.

The important and relevant problems are the following ones. Let $T>0$ and let
${\s C}:=\{g\in C^2\big([0,T],{\s T}^p(\Gamma_-)^3\big)\ |\ g(0)=0,\ g\geq 0\}$
and ${\s C}'=\{\tilde f\in C^1\big([0,T], L^p(G\times S\times I)^3\big)\ |\ \tilde f\geq 0\}$. How to characterize the sets
\[
{\s S}_T:=\{H_T(0,g)=\psi(T)=u(T)+(Lg)(T)\ |\ g\in {\s C}\}\subset L^p(G\times S\times I)^3
\]
and
\[
{\s S}'_T:=\{H_T(\tilde f,0)=\psi(T)\ |\ \tilde f\in {\s C}'\}\subset L^p(G\times S\times I)^3
\]
that is,
\emph{what  are the possible states $\psi (T)$ that can be produced using the controls chosen from ${\s C}$ and ${\s C}'$,
respectively, during the time $T$}?
Similarly for the doses: What can be said about the sets
\[
{\s D}_T:=\{y=D(\psi(T))\ |\ g\in {\s C}\}\subset L^p(G).
\]
and
\[
{\s D}'_T:=\{y=D(\psi(T))\ |\ \tilde f\in {\s C}'\}\subset L^p(G).
\]
that is,
\emph{which are the dose distributions $D$ that one can produce using the controls chosen from ${\s C}$ and ${\s C}'$,
respectively, during the time $T$}?
Here ${\s C}$ may be replaced with a larger space such as with $\{g\in H^1(]0,T[,{\s T}^p(\Gamma_-)^3\ |\ g(0)=0,\ g\geq 0\}$.
Similarly, ${\s C}'$ can be replaced e.g. with the set $\{\tilde f\in H^1(]0,T[,L^p(G\times S\times I)^3)\ |\ \tilde f\geq 0\}$.

Similarly for the total doses we can impose the problem: What can be said about the sets
\[
{\s D}:=\{y=D\psi|\ g\in {\s C}\}\subset L^p(G)
\]
and 
\[
{\s D}':=\{y=D\psi|\ \tilde f\in {\s C}'\}\subset L^p(G)
\]
that is, {\it which are the total dose distributions that one can produce using controls chosen from ${\s C}$ and ${\s  C}'$ , respectively, during the time $T$} ?

It has been shown that in a certain simplified case for one particle there exists $T>0$ such that ${\s S}_T=L^p(G\times S\times I)$ for $p=2$ (\cite{acosta}), which means that the control system is {\it exactly controllable}. 
For a general background on infinite dimensional control systems and related concepts, we refer to \cite{curtainzwart}, \cite{tucsnak09}.
Note that in the above control problem, the time derivative of the control $g$  appears in the system 
which makes the problem more nonstandard.

\begin{remark}
The control system (\ref{ar4}) can be written in the form
\[
\dot{u}=\mc{A}_0u+\mc{B}v+\mc{C}\dot{v}
\]
where $v:=(\tilde f,g)$ (the control variable) and
\[
\mc{A}_0=&\tilde{\bf A}_0-\tilde\Sigma+\tilde K :L^p(G\times S\times I)^3 \to L^p(G\times S\times I)^3 \\
\mc{B}=&{\s A}\circ L\circ\pr_2+\pr_1 \\
&\quad :L^1(G\times S\times I)^3\times C^1([0,T],T^1(\Gamma_-))\to C([0,T],L^1(G\times S\times I)^3)\\
\mc{C}=&-L\circ \pr_2 \\
&\quad :L^1(G\times S\times I)^3\times C^1([0,T],T^1(\Gamma_-))\to C^1([0,T],\tilde{W}^1(G\times S\times I)).
\]
Here $\pr_1,\ \pr_2$ are projections onto the first and the second factors of the cartesian product space $L^1(G\times S\times I)^3\times C^1([0,T],T^1(\Gamma_-))$.
\end{remark}

\subsubsection{Stationary Control Problem}

The corresponding control problems can also be stated for the stationary problem which we sketch as follows. The forward problem is
\bea\label{ar22}
-{\bf A}\psi+\Sigma\psi - K\psi &= f,\nonumber\\
\psi_{|\Gamma_-} &=g,
\eea
where $f\in L^p(G\times S\times I)^3,\ g\in T^p(\Gamma_-)$. 
Using the  lift $Lg:=(Lg_1,Lg_2,Lg_3)\in \tilde {W}^p(G\times S\times I)^3$
and denoting $u=\psi-Lg$ the system (\ref{ar22}) becomes
\bea\label{ar32}
(-{\bf A}_0+\Sigma  - K)u=f-(-{\bf A}+\Sigma  - K)Lg
\eea
where $u\in \tilde W_{-,0}^p(G\times S\times I)^3=D({\bf A}_0)$. Hence we have (under the assumptions of Theorem \ref{coth2})
\bea\label{ar42}
u=&(-{\bf A}_0+\Sigma  - K)^{-1}\big(f-(-{\bf A}+\Sigma  - K)Lg\big)\nonumber\\
y=&D\psi=Du+D(Lg)=D\Big[(-{\bf A}_0+\Sigma  - K)^{-1}\big(f-(-{\bf A}+\Sigma  - K)Lg\big)\Big]+
D(Lg). 
\eea

Again, the relevant problems in this stationary case are the following ones. How to characterize the sets (external therapy)
\[
{\s S}:=\{\psi=u+Lg=-(-{\bf A}_0+\Sigma  - K)^{-1}(-{\bf A}+\Sigma  - K)Lg+Lg\ |\ g\in T^p(\Gamma_-)^3,\ g\geq 0\}
\]
and (internal therapy)
\[
{\s S}':=\{\psi=(-{\bf A}_0+\Sigma  - K)^{-1} f\ |\ f\in L^p(G\times S\times I)^3,\ f\geq 0\}.
\]
For the dose distributions, it is important to
describe the structures of the sets
\[
{\s  D}:=\{D\psi\ |\ \psi\in {\s S}\}
\]
and
\[
{\s  D}':=\{D\psi\ |\ \psi\in {\s S}'\}.
\]

In addition to the above problems, one of the main challenges in radiation therapy is to develop methods on how the inflow flux $g$ and/or the internal source $f$ can be computed when an element of ${\s S}$ (resp. ${\s S}_T$), and especially when an element of ${\s D}$ (resp. ${\s D}_T$), is know and the same is concerning for the sets ${\s S}',\ {\s S}_T'\ {\s D}',\ {\s D}_T'$ . This is known as the \emph{Inverse Planning Problem} (see e.g. \cite{bomanthesis}, \cite{shepard99}) which we, from the mathematical point of view, shall describe briefly below. In some simple cases the inverse problem can probably be solved analytically (cf. \cite{mokhtarkharroubi}, Chapter 11), but in real situations only \emph{optimal inflow fluxes or internal sources} can be found. The analytical solutions are valuable (even though in simplified cases) because they may greatly help the actual optimization procedure (e.g. in seeking the initial point
for the global optimization) and give insight on which kind of states or dose distributions are reasonable and possible to generate.

In some cases  for $p=2$ the related stationary \emph{optimal control} problem has, in theory, an
explicit  solution. This is based on the optimal control theory for equations governed by closed densely defined coercive operators in Hilbert spaces and the convexity of the objective function and admissible sets (see section \ref{cis} and \cite{frank08}, \cite{tervo07}). 
The explicit solution obtained in this way
 can be used as an initial solution for the chosen optimization algorithm but it is not generally a ready treatment plan.
The same observations remain valid for the time-dependent case. We emphasize that time-dependent models are still not (at least extensively) applied in radiation therapy.
For a more extensive background of optimal control problems governed by partial differential/boundary value operators we refer to \cite{lions71}.

\subsection{Radiation Treatment Planning}\label{RTP}

We consider here only the treatment planning based on stationary Boltzmann transport equation.
Not all the configurations are achievable as dose distributions,
in the sense that  in general ${\s D}\neq L^p(G)$,
and therefore hence one can only hope to seek
dose distributions which are as optimal as possible
with respect to the given  configuration.

\subsubsection{Background}\label{RTP1}

As mentioned in the introduction, radiation therapy aims to generate dose distributions in such a way that the desired dose conforms to the target volume, while the healthy tissue and especially the so-called critical organs achieve as low dose as possible. 
Dose can be delivered externally (external therapy) or internally (internal therapy or brachytherapy).
The determination of (optimal) incoming external particle fluxes through the patches of patient surface or internal sources located inside the patient tissue is the basic task in treatment planning known as the {\it inverse treatment planning. }

Recall that the patient domain $G\subset\R^3$ consists of the tumor volume ${\bf T}$,
the critical organ region ${\bf C}$ and the normal tissue region ${\bf N}$,
as a mutually disjoint union $G={\bf T}\cup {\bf C}\cup {\bf N}$. We assume that the sets
${\bf T},\ {\bf C},\ {\bf N}$ are Lebesgue measurable.
The tumor volume, that is the target, includes the tumor and some safety margin around it.
Critical organs and normal tissue are build up of healthy tissue,
and must be conserved during the treatment as well as possible.

In the sequel we only deal with the inverse treatment planning problem in the context
of the stationary Boltzmann transport equation (i.e. we omit time dependency here which could be treated analogously after some modifications).
The dose is computed from the generated particle flux $\psi\in L^p(G\times S\times I)^3$
(as mentioned above) by
\[
D(x)=(D\psi)(x)=\sum_{j=1}^3\int_{S\times I}\kappa_j(x,E)\psi_j(x,\omega,E)d\omega dE
\] 
where $\psi=(\psi_1,\psi_2,\psi_3)\in L^p(G\times S\times I)^3\subset L^1(G\times S\times I)^3$ satisfies
the Boltzmann transport equation,
\bea\label{bte}
\omega\cdot\nabla\psi_j+\Sigma\psi_j-K\psi=&f_j \\
{\psi_j}_{|\Gamma_-}=&g_j, \nonumber
\eea
for $j=1,\ 2,\ 3$, or, more shortly,
\bea\label{bte1}
(-{\bf A}_0+\Sigma-K)\psi&=f, \\
\psi_{|\Gamma_-}&=g. \nonumber
\eea
In practice $g$ is non-zero only on a finite number of patches on patient's surface.
Let $\psi=\psi(f,g)$ be the unique solution of (\ref{bte1}) that is, as discussed in section \ref{coss3} (under the relevant assumptions stated there)
\bea\label{sol}
\psi=\psi(f,g)= (-{\bf A}_0+\Sigma-K)^{-1}\big(f-(-{\bf A}+\Sigma-K)Lg\big)+Lg.
\eea
The generated dose is then
\[
D=D(x)=\big(D(\psi(f,g))\big)(x),\quad x\in G.
\]
We denote ${\s D}(f,g):=D(\psi(f,g))$. Then we have 

\begin{lemma}\label{dc}
The dose operator ${\s D}:L^p(G\times S\times I)^3\times T^p(\Gamma_-)^3\to L^p(G)$ is linear and bounded, i.e.
\[
\n{{\s D}(f,g)}_{L^p(G)}\leq C\ (\n{f}_{L^p(G\times S\times I)^3}+\n{g}_{ T^p(\Gamma_-)^3}).
\]
\end{lemma}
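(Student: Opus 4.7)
The plan is to combine the linearity and continuity of the dose operator $D$ (acting on fluxes) with the continuous dependence of the solution $\psi(f,g)$ on the data $(f,g)$, both of which have been established earlier in the paper.

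First I would note that linearity of $\mathcal{D}$ is immediate: the solution map $(f,g) \mapsto \psi(f,g)$ to the linear boundary value problem \eqref{bte1} is linear (by uniqueness of solutions, established in Theorem \ref{coth3} and its analogues for general $p$), and $D$ is a linear map on fluxes by its defining formula \eqref{sr1}, so $\mathcal{D} = D \circ \psi(\cdot,\cdot)$ is linear as well.

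Next, for boundedness, I would use the two-step factorization. The first step is the bound \eqref{Dbound} applied to $\psi = \psi(f,g) \in L^p(G\times S\times I)^3$, giving
\[
\|\mathcal{D}(f,g)\|_{L^p(G)} = \|D(\psi(f,g))\|_{L^p(G)} \leq C_0\, \|\psi(f,g)\|_{L^p(G\times S\times I)^3},
\]
where $C_0 = m(G\times S\times I)^{1/p'} \max_j \|\kappa_j\|_{L^\infty(G\times I)}$. The second step is to bound $\|\psi(f,g)\|_{L^p(G\times S\times I)^3}$ in terms of the data. For $p=1$, this is exactly the estimate \eqref{ess1} of Corollary \ref{cdd}, namely
\[
\|\psi\|_{L^1(G\times S\times I)^3} \leq \tfrac{1}{c}\|f\|_{L^1(G\times S\times I)^3} + C_1 \|g\|_{T^1(\Gamma_-)^3}.
\]
For general $p \in [1,\infty[$, the same type of estimate is available via the $L^p$-analogues of Theorem \ref{coth3} and Corollary \ref{cdd} (together with the $L^p$ lift estimates mentioned in the remark following Lemma \ref{lift2}), provided the reinforced hypotheses \eqref{colleha}, \eqref{co2aa} on the cross-sections hold (as noted at the start of Section \ref{coss2}). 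Combining these two steps with the trivial inequality $\|f\|_{L^p} + \|g\|_{T^p} \geq \max(\|f\|_{L^p}, \|g\|_{T^p})$ yields the claimed estimate with $C := C_0 \cdot \max(\tfrac{1}{c}, C_1)$.

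The main obstacle, if any, is not technical but bookkeeping: the $L^1$-theory has been developed in full detail in the preceding sections, whereas the general $L^p$-case is only referred to via remarks (see the beginning of Section \ref{fr} and the remark after Lemma \ref{lift2}). Thus the cleanest way to present the proof is to invoke the $L^p$-version of Corollary \ref{cdd} as a black box, and to emphasize that the constant $C$ depends on $p$, the diameter of $G$, the $L^\infty$-bounds on the cross-sections $\Sigma_j$ and $\sigma_{jk}$, the coercivity constant $c$ from \eqref{co2a}-\eqref{co2aa}, the $L^\infty$-bounds on the energy-deposition cross sections $\kappa_j$, and the measure of $G\times S\times I$, but not on $(f,g)$ themselves.
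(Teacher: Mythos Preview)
Your proposal is correct and follows essentially the same route as the paper: bound $\n{D\psi}_{L^p(G)}$ via \eqref{Dbound}, then control $\n{\psi(f,g)}_{L^p}$ by the data. The only cosmetic difference is that you invoke Corollary~\ref{cdd} (estimate \eqref{ess1}) as a black box, whereas the paper re-derives that estimate inline from \eqref{bestim} and the lift bound \eqref{li0}; both the paper and you correctly flag that the argument is complete only for $p=1,2$ since the resolvent estimate \eqref{bestim} has been established only in those cases.
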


\begin{proof}
The following proof is complete only for $p=1$ and $p=2$ because (\ref{co2b}) (and its consequence (\ref{bestim})) has been shown only for these cases (see Theorems \ref{dfsco} and \ref{coth2} above, and \cite{tervo07}).
By (\ref{ar42}) 
\[
{\s D}(f,g)=
D\Big[(-{\bf A}_0+\Sigma  - K)^{-1}(f-(-{\bf A}+\Sigma  - K)Lg)\Big]+D(Lg).
\]
Hence ${\s D}$ is a linear operator. We show that it is bounded.
Writing $C_0:=m(G\times S\times I)^{1/p'}\Big(\max_{1\leq j\leq 3}\n{\kappa_j}_{L^\infty(G\times I)}\Big)$, we have by (\ref{Dbound}),
\[
& \n{D\psi}_{L^p(G)}\leq C_0
\n{(-{\bf A}_0+\Sigma  - K)^{-1}(f-(-{\bf A}+\Sigma  - K)Lg)+Lg
}_{L^p(G\times S\times I)^3}, \\
& \n{D(Lg)}_{L^p(G)}\leq C_0\n{Lg}_{L^p(G\times S\times I)^3}.
\]
Then by (\ref{bestim}), (\ref{li0}), Lemma \ref{lift1} and since ${\bf A}(Lg)=0$
(as shown in the proof of the lemma)
\bea
&\n{D\psi}_{L^p(G)}\leq
C_0\Big(\n{(-{\bf A}_0+\Sigma  - K)^{-1}f}_{L^p(G\times S\times I)^3}\nonumber\\
&+\n{
(-{\bf A}_0+\Sigma  - K)^{-1}(-{\bf A}+\Sigma  - K)Lg}_{L^p(G\times S\times I)^3}+
\n{Lg}_{L^p(G\times S\times I)^3}\Big)\nonumber\\
&\leq
C_0\Big({1\over c}\n{f}_{L^p(G\times S\times I)^3}
+{1\over c}\n{(-{\bf A}+\Sigma  - K)Lg}_{L^p(G\times S\times I)^3}+
\n{Lg}_{L^p(G\times S\times I)^3}\Big)\nonumber\\
&\leq
C_0\Big({1\over c}\n{f}_{L^p(G\times S\times I)^3}
+{d\over c}\n{\Sigma-K}\n{g}_{T^p(\Gamma_-)^3}+
d\n{g}_{T^p(\Gamma_-)^3}\Big),
\eea
which implies the boundedness of ${\s D}$.
\end{proof}

Commonly used {\it physical criteria}
are the following ones. We demand that
\begin{align}
&D(x)=D_0,\ x\in {\bf T},\label{tc}\\
&D(x)\leq D_C,\ x\in {\bf C},\label{cc}\\  
&D(x)\leq D_N,\ x\in {\bf N},\label{nc}
\end{align}
where $D_0$ is the prescribed (usually uniform) dose in target ${\bf T}$ and where $D_C$ and $D_N$ are the allowed upper bounds in the critical organ ${\bf C}$ and normal tissue ${\bf N}$ regions, respectively. Instead of (\ref{tc}) one may ask for more flexibly (when considering the so-called feasible solutions) that only
\bea
d_T\leq D(x)\leq D_T,\ x\in {\bf T},\label{tc1}
\eea
where $D_T$ and $d_T$ are upper and lower bounds for dose in target.

In addition to the above requirements in modern planning, one imposes so-called {\it dose volume constraints} for the dose distribution, especially for the critical organ region (but also for some other tissue region's similar dose volume constraints may be considered). Dose volume constraint demands that the dose $D(x)$ cannot be greater than some prescribed dose level, say $d_C$, in a volume fraction $v_C$
of ${\bf C}$ which is greater
than some given fraction $v_C$. This can be expressed as follows
\bea\label{dv}
{{\mc{L}^3(\{x\in {\bf C}|\ D(x)\geq d_C\})}\over{\mc{L}^3({\bf C})}}\leq v_C
\eea
where $\mc{L}^3$ is the 3-dimensional Lebesgue measure.
Clearly the dose volume constraint is equivalent to
\be\label{dva}
{1\over{\mc{L}^3({\bf C})}}\int_{\bf C}H(D(x)-d_C) dx\leq v_C
\ee
where $H$ is the Heaviside function. Note that the integral in (\ref{dva}) exists.
In practice $H$ can be replaced here with a smooth (or continuous)
function $H_\epsilon$ which approximates it to some reasonable level of accuracy.

\subsubsection{Object Function and the Optimization Problem}\label{objectf}
   
Our aim is that the above requirements \eqref{tc}, \eqref{cc}, \eqref{nc}, \eqref{dv} for the dose distribution are valid as well as possible.
For that purpose, we define the object (cost) function
\bea\label{of}
J(f,g)=c_{\bf T} J_{\bf T}(f,g)+c_{\bf C} J_{\bf C}(f,g)+c_{\bf N} J_{\bf N}(f,g)+c_{\rm DV} J_{\rm DV}(f,g),
\eea 
where
\[
J_{\bf T}(f,g)&=\n{D_0-{\s D}(f,g)}_{L^p({\bf T})}^p,\\
J_{\bf C}(f,g)&=\n{(D_C-{\s D}(f,g))_-}_{L^p({\bf C})}^p,\\
J_{\bf N}(f,g)&=\n{(D_N-{\s D}(f,g))_-}_{L^p({\bf N})}^p,\\
J_{\rm DV}(f,g)&=
\Big(\big(v_C-{1\over{\mc{L}^3({\bf C})}}\int_{\bf C}H({\s D}(f,g)(x)-d_C) dx\big)_-\Big)^p,
\]
and where $c_{\bf T}, c_{\bf C}, c_{\bf N}, c_{\rm DV}$ are non-negative weights
with which one controls the different 
priorities in the optimization. Here $a_-$ denotes the negative part $\frac{1}{2}(|a|-a)$ of $a\in\R$.
Also, notice that $J_{\rm DV}(f,g)\leq 1$.

As mentioned above in external radiotherapy $f=0$ and in internal radiotherapy $g=0$
(from the mathematical point of view both can, of course, be non-zero). Thus the corresponding object functions in practice are
\[
J_{\rm ex}(g):=&J(0,g)\ \ {\rm (external\ radiotherapy})\ {\rm and}\\
J_{\rm in}(f):=&J(f,0)\ \ {\rm (internal\ radiotherapy)}.
\]
The {\it admissible sets}
for the optimal control problems  are respectively
\[
U_{\rm ad}=\{g\in T^p(\Gamma_-)^3\ |\ g\geq 0\}\ ({\rm external\ radiotherapy})
\]
and
\[
U_{\rm ad}'=\{f\in L^p(G\times S\times I))^3\ |\ f\geq 0\}\ ({\rm internal\ radiotherapy}).
\]  
They both are convex sets (cones) of the ambient spaces. If (in practical optimization) one wants to take the whole ambient space as an admissible set that is, $U_{\rm ad}=T^p(\Gamma_-)^3$ or
respectively $U_{\rm ad}'=L^p(G\times S\times I)^3$
one must add to the object function the penalty term
\[
+c_{\rm ad} J_{\rm ad}(g),
\ {\rm where},\ J_{\rm ad}(g)=\n{g_-}^p_{T^p(\Gamma_-)^3}
\ {(\rm external\ radiotherapy)}
\]
and
\[
+c_{\rm ad'} J_{\rm ad'}(f),
\ {\rm where}\ J_{\rm ad'}(f)=\n{f_-}^p_{L^p(G\times S\times I)^3}
\ {\rm (internal\ radiotherapy)}.
\]
These take care of the non-negativity of the incoming flux or source, respectively.
In theory as well as in practice, it is also reasonable to add a stabilizing cost terms
correspondingly
\be\label{ofsc1}
+c_{\rm sc} J_{\rm sc}(g)\ {\rm where}\ J_{\rm sc}(g)=\n{\psi(0,g)}^p_{L^p(G\times S\times I)^3},
\ee
or
\be\label{ofsc2}
+c_{\rm sc'} J_{\rm sc'}(f)\ {\rm where}\ J_{\rm sc'}(f)=\n{\psi(f,0)}^p_{L^p(G\times S\times I)^3}.
\ee

As a conclusion, we have for the external therapy the object function
\bea\label{fof}
J_{\rm ex}(g)=c_{\bf T} J_{\bf T}(0,g)+c_{\bf C} J_{\bf C}(0,g)+c_{\bf N}J_{\bf N}(0,g)
+c_{\rm DV} J_{\rm DV}(0,g)+c_{\rm sc}J_{\rm sc}(g),
\eea
when $U_{\rm ad}=\{g\in T^p(\Gamma_-)^3|\ g\geq 0\}$ or
\bea\label{fofa}
J_{\rm ex}(g)=c_{\bf T}J_{\bf T}(0,g)+c_{\bf C}J_{\bf C}(0,g)+c_{\bf N}J_{\bf N}(0,g)
+c_{\rm DV}J_{\rm DV}(0,g)+c_{\rm ad} J_{\rm ad}(g)+c_{\rm sc}J_{\rm sc}(g).
\eea
when $U_{\rm ad}=T^p(\Gamma_-)^3$.
The object function $J_{\rm in}(f)$ for the internal therapy is formulated analogously.
In practice one may have $p=1$, which is from the physical point of view
a reasonable choice,
or $p=2$, which gives mathematically a very convenient setting because of Hilbert space
structure of the underlying spaces.

With these concepts the overall optimal control (boundary value problem) can be stated as
: Find the {\it global minimum}
\bea\label{taske}
\min\{J_{\rm ex}(g)\ |\ g\in U_{\rm ad}\}\quad {(\rm external\ radiotherapy)},
\eea
or
\bea\label{taski}
\min\{J_{\rm in}(f)\ |\ f\in U_{\rm ad}\}\quad {\rm (internal\ radiotherapy)},
\eea
where $U_{\rm ad}$ or $U'_{\rm ad}$, respectively, is chosen in the way explained above.

Suppose that $X$ is a vector space and that $F:U\to \R$ is a function defined on a convex set $U\subset X$. We say that $F$ is {\it convex} if for any choice of $x,y\in U$ it holds that
\[
F(tx+(1-t)y)\leq tF(x)+(1-t)F(y),\quad 0\leq t\leq 1.
\]
$F$ is called strictly convex if for $x,y\in U, \ x\neq y$
\[
F(tx+(1-t)y)<tF(x)+(1-t)F(y),\quad 0<t<1.
\]

Let $F_-:\R\to \R$ be the negative part function $F_-(x)={1\over 2}(|x|-x)$. We find that it is  non-differentiable (at $x=0$). Hence in general case (one can show) the object functions $J_{\rm ex}$ 
and $J_{\rm in}$ are also  non-differentiable in the (interior of the) corresponding admissible sets.  However, we have for $J_{\rm ex}$ and $J_{\rm in}$ the following.

\begin{theorem}\label{pof}
\begin{itemize}
\item[(i)] The terms
\[
J_{\bf T}(0,g),\ J_{\bf C}(0,g),\ J_{\bf N}(0,g),\ J_{\rm ad}(g)\ {\rm and}\ J_{\rm sc}(g)
\]
of the object function $J_{\rm ex}: T^p(\Gamma_-)^3\to \R$ are convex,
and they are locally (resp. globally) Lipschitz continuous if $p\in ]1,\infty[$ (resp. $p=1$).
In addition, the term
\[
J_{\rm DV}(0,g)
\]
is Lipschitz continuous, if the Heaviside function $H$ in its definition is
replaced by a Lipschitz continuous approximation $H_\epsilon$ (see (\ref{dca}).

\item[(ii)] When $p=2$, the terms
\[
J_{\bf T}(0,g),\ J_{\rm sc}(g)
\]
of the object function $J_{\rm ex}$ are differentiable on $T^2(\Gamma_-)^3$.
\end{itemize}

Analogous results hold for the terms of the object function $J_{\rm in}$.
\end{theorem}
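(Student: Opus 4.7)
The plan is to decompose each term of $J_{\rm ex}$ into a composition of three elementary building blocks: a bounded linear (or affine) map on Banach spaces, the scalar functions $a \mapsto a_-$ and $a \mapsto a^p$, and the $L^p$-norm to the $p$-th power. The underlying affine structure is immediate from (\ref{sol}): the solution map $g \mapsto \psi(0,g)$ is linear and bounded from $T^p(\Gamma_-)^3$ into $L^p(G\times S\times I)^3$, and composing with the dose operator of Lemma \ref{dc} yields that $g \mapsto \mc{D}(0,g)$ is linear and bounded into $L^p(G)$. Recall further that $a \mapsto a_-$ is convex, non-negative and $1$-Lipschitz on $\R$, while for $p \geq 1$ the function $t \mapsto t^p$ is convex and non-decreasing on $[0,\infty)$; hence composing $t^p$ with a non-negative convex function, followed by integration against Lebesgue measure, preserves convexity.

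For the convexity of the five terms $J_{\bf T},J_{\bf C},J_{\bf N},J_{\rm ad},J_{\rm sc}$, I would apply this scheme pointwise under the integral sign. The term $J_{\bf T}(0,g) = \n{D_0 - \mc{D}(0,g)}_{L^p({\bf T})}^p$ is the composition of the affine map $g \mapsto D_0 - \mc{D}(0,g)$ with the convex functional $h \mapsto \n{h}_{L^p}^p$; the terms $J_{\bf C}, J_{\bf N}$ insert an additional $(\cdot)_-$ before the $p$-th power; $J_{\rm ad}$ applies the same recipe to $g \mapsto g_-$ on $T^p(\Gamma_-)^3$ directly; and $J_{\rm sc}(g) = \n{\psi(0,g)}^p$ is a norm functional composed with a linear map. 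For Lipschitz continuity, the workhorse is the elementary mean-value estimate $\big|\n{u}^p - \n{v}^p\big| \leq p \max(\n{u},\n{v})^{p-1}\n{u-v}$, valid for $p \geq 1$. Combined with the $1$-Lipschitz property of $a \mapsto a_-$ and the boundedness of $\mc{D}$ and $g \mapsto \psi(0,g)$, this yields the claim for the five convex terms: locally Lipschitz when $p > 1$ (the relevant vectors staying bounded on bounded subsets of the domain), and globally $1$-Lipschitz when $p=1$.

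For $J_{\rm DV}(0,g)$ with $H$ replaced by $H_\epsilon$, I would first show that
\[g \mapsto \frac{1}{\mc{L}^3({\bf C})}\int_{\bf C} H_\epsilon\big(\mc{D}(0,g)(x) - d_C\big)\, dx\]
is globally Lipschitz, using the Lipschitz constant of $H_\epsilon$, H\"older's inequality and the boundedness of $\mc{D}$; since $H_\epsilon$ takes values in $[0,1]$, the above functional takes values in a bounded interval, so the two subsequent compositions $(v_C - \cdot)_-$ and $(\cdot)^p$, each Lipschitz on bounded intervals, preserve global Lipschitz continuity. Finally, when $p=2$, writing $T_1 g := \mc{D}(0,g)$ and $T_2 g := \psi(0,g)$ (both bounded linear), one gets the quadratic expressions
\[J_{\bf T}(0,g) = \n{D_0}_{L^2({\bf T})}^2 - 2\la D_0, T_1g\ra_{L^2({\bf T})} + \la T_1g, T_1g\ra_{L^2({\bf T})},\quad J_{\rm sc}(g) = \la T_2g, T_2g\ra,\]
which are Fr\'echet differentiable on the Hilbert space $T^2(\Gamma_-)^3$, with derivatives $DJ_{\bf T}(0,g)(h) = -2\la D_0 - T_1g, T_1h\ra_{L^2({\bf T})}$ and $DJ_{\rm sc}(g)(h) = 2\la T_2g, T_2h\ra$. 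The analogous statements for $J_{\rm in}$ follow verbatim by replacing $g \mapsto \psi(0,g)$ with $f \mapsto \psi(f,0)$, also linear and bounded by (\ref{sol}) and Theorem \ref{coth3}. The only subtlety in the whole plan lies in the Lipschitz analysis of $J_{\rm DV}$, where one must chain Lipschitz estimates across the integral functional and the two non-smooth outer compositions, exploiting the a priori boundedness of the inner functional to upgrade local to global Lipschitz continuity on the full infinite-dimensional control space.
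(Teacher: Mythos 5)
Your proposal is correct and follows essentially the same route as the paper: decompose each term as a bounded affine map composed with the (convex, Lipschitz) negative-part and $p$-th-power functions and the $L^p$-norm, exploit monotone-convex composition for convexity, chain Lipschitz estimates (local for $p>1$, global for $p=1$) for continuity, use the boundedness of the inner integral functional to handle $J_{\rm DV}$ with $H_\epsilon$, and observe that for $p=2$ the relevant terms are quadratic forms in a bounded linear image, hence smooth. The only cosmetic caveat is that for $p=1$ the composite is globally Lipschitz with a constant given by the product of the operator norms involved, not literally $1$-Lipschitz.
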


\begin{proof}
(i) We first show the stated convexity properties.

Recall that ${\s D}:L^p(G\times S\times I)^3\times T^p(\Gamma_-)^3\to L^p(G)$ is linear and bounded, and hence the mapping $g\to D_0-{\s D}(0,g)$ is affine.
Moreover, it is a basic fact that the map $\n{\cdot}^p_{L^p}:L^p({\bf T})\to\R$; $u\mapsto \n{u}_{L^p({\bf T})}^p$
is strictly convex if $p\in ]1,\infty[$ and convex if $p=1$
(the point being that $\R\to\R$; $x\mapsto |x|^p$ is strictly convex, or convex,
in the respective two situations).
Therefore, as the composition of these two maps $J_{\bf T}(0,g)$ is clearly
convex if $p\in [1,\infty[$.

To see that $J_{\bf C}(0,g)$, $J_{\bf N}(0,g)$, $J_{\rm ad}(g)$
are convex,
it is enough to observe that $g\to D_0-{\s D}(0,g)$ is affine,
the negative part function $x\mapsto x_-=\frac{1}{2}(|x|-x)$ is convex,
the map $x\mapsto x^p$ for $p\geq 1$ is increasing for $x\geq 0$
and that the integral $\int L^p(X)\to\R$; $u\mapsto \int_X u$ is linear, where $X$ is one of the sets ${\bf C}$, ${\bf N}$ or $\Gamma_-$.

Finally, $J_{\rm sc}(g)$ is  convex if $p\in [1,\infty[$,
as the mapping $g\mapsto \psi(0,g)$ is linear
and $u\mapsto \n{u}_{L^p(G\times S\times I)}$ is convex,
in the corresponding cases.

We then move to showing the claims concerning the Lipschitz continuities of the terms of $J_{\rm ex}$.

That $J_{\bf T}(0,g)$, $J_{\bf C}(0,g)$, $J_{\bf N}(0,g)$, $J_{\rm ad}(g)$ and $J_{\rm sc}(g)$
are Lipschitz continuous (locally or globally), can be seen as follows.

By the proof of Lemma \ref{dc} the operator 
$T^p(\Gamma_-)^3\to L^p(G\times S\times I)^3$; $g\mapsto \psi(0,g)$,
whose value can be written as (see \eqref{sol})
\[
\psi(0,g)=-(-{\bf A}_0+\Sigma  - K)^{-1}(-{\bf A}+\Sigma  - K)Lg)+Lg
\]
is linear and bounded, hence globally Lipschitz.
Similarly, as the map $T^p(\Gamma_-)^3\to L^p(G)$; ${\s D}g={\s D}(0,g)$
is linear and bounded, for any $D\in\R$ the affine map $g\mapsto D-{\s D}(0,g)$
is globally Lipschitz.

The negative part map $\R\to\R$; $x\mapsto \frac{1}{2}(|x|-x)$ is globally Lipschitz,
as is the norm map $u\mapsto \n{u}_{L^p(X)}$ for any $p\geq 1$, where $X$ here is one of the sets ${\bf T}$, ${\bf C}$, ${\bf N}$ or $\Gamma_-$.
Finally $x\mapsto x^p$, $x\geq 0$, is locally Lipschitz if $p>1$ and globally Lipschitz if $p=1$.
Therefore, because $J_{\bf T}(0,g)$, $J_{\bf C}(0,g)$, $J_{\bf N}(0,g)$, $J_{\rm ad}(g)$
are appropriate composition maps of the above ones,
we see that they all are locally Lipschitz when $p\in ]1,\infty[$, and globally Lipschitz when $p=1$.

Finally, noticing that $|x^p-y^p|\leq p|x-y|$ for all $x,y\in [0,1]$,
and that
\[
0\leq \Big(v_C-{1\over{\mc{L}^3({\bf C})}}\int_{\bf C}H_\epsilon\big({\s D}(0,g)(x)-d_C\big) dx\Big)_-\leq 1
\]
the claimed Lipschitz continuity of $J_{\rm DV}(0,g)$ can established.

(ii) For $p=2$, as $g\mapsto \psi(0,g), \mc{D}(0,g)$ are continuous (hence smooth) linear maps,
and the squared norms $\n{\cdot}^2_{L^2({\bf T})}$, $\n{\cdot}_{L^2(G\times S\times I)^3}^2$
used in the definition of terms 
  $J_{\bf T}(0,g)$ and $J_{\rm sc}(g)$, respectively, are smooth,
we see that these latter two maps are smooth.

Finally, we content ourselves with noticing that the asserted properties for the object function $J_{\rm in}$ are  proven similarly as above, and so the proof is complete.
\end{proof}

\begin{remark}
A. The term $g\to J_{\rm DV}(0,g)$ is not convex in $U_{\rm ad}$. This is due to the fact that the Heaviside function is not convex (or concave). 
Similarly the term $f\to J_{\rm DV}(f,0)$ is not convex in $U_{\rm ad}'$.

B. If we used the term $\n{D_0-{\s D}(f,g)}_{L^p({\bf T})}$ instead of $\n{D_0-{\s D}(f,g)}_{L^p({\bf T})}^p$
and similarly for the other terms of the objective function(s),
we would get in part (i) of the preceding theorem \ref{pof}
a (globally) Lipschitz continuous terms of the object function when $H$ is replaced with $H_\epsilon$.
\end{remark}

\subsubsection{Computation of Initial Solutions}\label{cis}

One possibility to help the optimization process is to compute the initial solution for actual (global) optimization method as accurately and rapidly as possible.
We suggest the following approach for $p=2$, and consider its details only
in the case of the external radiotherapy.
The computations for internal radiotherapy (formulated below) are analogous
and thus omitted.

The spaces $W^2(G\times S\times I)$, $T^2(\Gamma)$ and $\tilde{W}^2(G\times S\times I)$ have Hilbert space structures, the inner products being respectively the following ones
\[
\la\psi,v\ra_{W^2(G\times S\times I)}=\la\psi,v\ra_{L^2(G\times S\times I)} +
\la\omega\cdot\nabla\psi,\omega\cdot \nabla v\ra_{L^2(G\times S\times I)}
\]
\[
\la h,g\ra_{T^2(\Gamma)}=\la h,g\ra_{L^2(\Gamma,|\omega\cdot\nu|d\sigma d\omega dE)}
\]
and
\[
\la\psi,v\ra_{\tilde W^2(G\times S\times I)}=\la\psi,v\ra_{W^2(G\times S\times I)} +
\la \gamma(\psi),\gamma(v)\ra_{T^2(\Gamma)}.
\]
In the product space $W^2(G\times S\times I)^3$ we use, as before, the inner product
\[
\la\psi,v\ra_{W^2(G\times S\times I)^3}=\sum_{j=1}^3\la\psi_j,v_j\ra_{W^2(G\times S\times I)},
\]
where $\psi=(\psi_1,\psi_2,\psi_3)$, $v=(v_1,v_2,v_3)\in {W^2(G\times S\times I)^3}$,
and similarly for other product spaces.

As is standard when $p=2$, the formulas related to the
optimal control system in the context of the transport equation
are written below by using the relevant \emph{variational equations} (based on the Green formula (\ref{green})).
For example, the finite element method (FEM) schemes can be naturally implemented applying this formulation.

Let $f\in L^2(G\times S\times I)^3$ and $g\in T^2(\Gamma_-)^3$. Then
$\psi\in \tilde W^2(G\times S\times I)^3$ is a solution of the problem
\be\label{va0}
(-{\bf A}+\Sigma-K)\psi&=f,\\
\psi_{|\Gamma_-}&=g \nonumber
\ee
if and only if
\be\label{va1}
B(\psi,v)=F(v),\quad \forall v\in \tilde W^2(G\times S\times I).
\ee
where $B(\cdot,\cdot)$ is a bilinear form given by 
\be\label{va2}
B(\psi,v)=&-\la\psi,\omega\cdot\nabla v\ra_{L^2(G\times S\times I)^3}+
\la\psi,(\Sigma^*-K^*)v\ra_{L^2(G\times S\times I)} \\
& +\sum_{j=1}^3\int_{\partial G\times S\times I}(\omega\cdot \nu)_+\psi_jv_j d\sigma d\omega dE, \quad \textrm{for}\ \psi, v\in \tilde W^2(G\times S\times I)^3,\nonumber
\ee
and where $F$
is a linear form
\be\label{va3}
F(v)=\la f,v\ra_{L^2(G\times S\times I)^3}
+\sum_{j=1}^3\int_{\partial G\times S\times I} (\omega\cdot \nu)_-g_j v_j d\sigma d\omega dE,\ v\in \tilde W^2(G\times S\times I)^3
\ee
where $\Sigma^* = \Sigma$ and $K^*\psi^*=(K_1^*\psi^*,K_2^*\psi^*,K_3^*\psi^*)$ with
\[
(K_j^* \psi^*)(x,\omega,E)
=\sum_{k=1}^3\int_{S\times I}\sigma_{jk}(x,\omega,\omega',E,E')\psi^*_k(x,\omega',E')d\omega' dE',\quad j=1,2,3.
\]
As above $(\omega\cdot \nu)_{\pm}$ are the positive and negative parts of $\omega\cdot \nu$,
respectively.
It is to be pointed out that the bilinear form $B$ is not symmetric.

The existence result of the solution to the problem (\ref{va0})
in the variational form (when $p=2$) corresponding to the Theorem \ref{coth3}
(where $p=1$) is the following.

\begin{theorem}\label{th:LM}
Under the assumptions \eqref{scateh}, \eqref{colleh},  \eqref{colleha}, \eqref{co2a}, \eqref{co2aa},
for all $f\in L^2(G\times S\times I)^3$ and all $g\in T^2(\Gamma_-)^3$,
there exists an unique $\psi\in \tilde W^2(G\times S\times I)^3$ such that
(\ref{va1}) is holds.
\end{theorem}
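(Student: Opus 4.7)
The plan is to invoke the Lions--Lax--Milgram theorem (Lemma 4.4.4.1 of \cite{grisvard}), following the scheme used in \cite{tervo07} for the coupled system.

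First I would reduce the inhomogeneous inflow problem to a homogeneous one. Using the $p=2$ version of Lemma \ref{lift1} (noted in the remark following Lemma \ref{lift2}), lift $g\in T^2(\Gamma_-)^3$ to $Lg\in \tilde W^2(G\times S\times I)^3$ with $\omega\cdot\nabla(Lg)=0$ and $\gamma_-(Lg)=g$, and seek $\psi=u+Lg$ with $u\in \tilde W^2_{-,0}(G\times S\times I)^3$. Then (\ref{va1}) is equivalent to finding $u\in \tilde W^2_{-,0}(G\times S\times I)^3$ such that
\[
B(u,v)=F(v)-B(Lg,v)=:\tilde F(v),\quad \forall v\in \tilde W^2(G\times S\times I)^3,
\]
where $\tilde F$ is continuous on $\tilde W^2(G\times S\times I)^3$ by the continuity of $B$ and $F$ verified next. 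Continuity of $B$ on $\tilde W^2\times \tilde W^2$ is a routine Cauchy--Schwarz check: the convection term $\la\psi,\omega\cdot\nabla v\ra$ is bounded by $\n{\psi}_{L^2}\n{\omega\cdot\nabla v}_{L^2}\leq \n{\psi}_{\tilde W^2}\n{v}_{\tilde W^2}$; the scattering--collision term is controlled by the $p=2$ analog of Theorem \ref{NoLabel} under (\ref{scateh}) and (\ref{colleh})--(\ref{colleha}); and the $\Gamma_+$ boundary term is dominated by the $T^2$-traces, which are part of the $\tilde W^2$-norm by definition.

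The core step is the inf-sup estimate. Taking $v=u\in\tilde W^2_{-,0}$ and using the Green formula (\ref{green}) together with $u|_{\Gamma_-}=0$ yields
\[
B(u,u)=\tfrac12\n{\gamma_+(u)}_{T^2(\Gamma_+)^3}^2+\la u,(\Sigma-K)u\ra_{L^2(G\times S\times I)^3}.
\]
Under (\ref{co2a}) and (\ref{co2aa}), a direct $L^2$ calculation combining the two symmetric assumptions---essentially the $p=2$ counterpart of the symmetrization carried out in the proof of Theorem \ref{dfsco}---produces a constant $c>0$ such that $\la u,(\Sigma-K)u\ra_{L^2}\geq c\n{u}_{L^2}^2$; see \cite{tervo07} for the detailed computation. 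This bounds $u$ in $L^2$ and its outflow trace in $T^2(\Gamma_+)^3$. To recover $\n{\omega\cdot\nabla u}_{L^2}$, I would use that a further integration by parts gives
\[
B(u,v)=\la\omega\cdot\nabla u+(\Sigma-K)u,v\ra_{L^2},\quad \forall u\in\tilde W^2_{-,0},\ v\in\tilde W^2,
\]
whence $\n{\omega\cdot\nabla u}_{L^2}\leq \sup_{v\neq 0}|B(u,v)|/\n{v}_{L^2}+\n{\Sigma-K}\,\n{u}_{L^2}$; combined with Theorem \ref{th:trace:2} applied to $u$ (whose inflow trace vanishes), this yields the full Lions--Lax--Milgram inf-sup inequality with respect to the $\tilde W^2$-norm.

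Lions--Lax--Milgram then produces $u\in\tilde W^2_{-,0}(G\times S\times I)^3$ solving the reduced homogeneous problem, and $\psi:=u+Lg\in\tilde W^2(G\times S\times I)^3$ solves (\ref{va1}); uniqueness follows by setting $\tilde F=0$ and $v=u$ in the coercivity estimate. The main obstacle will be the rigorous execution of the inf-sup step: unlike in classical Lax--Milgram, the convection derivative $\omega\cdot\nabla u$ is only in $L^2$ and not in $\tilde W^2$, so the admissibility of certain natural test directions requires either a density/smoothing argument or a carefully chosen asymmetric trial/test pairing---this is the classical delicate point in the variational theory of transport equations, and is precisely why the asymmetric Lions--Lax--Milgram statement, rather than the standard Lax--Milgram lemma, is the appropriate tool here.
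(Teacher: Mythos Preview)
Your proposal is correct and follows exactly the route the paper indicates: the paper's own ``proof'' of this theorem is simply a reference to the Lions--Lax--Milgram theorem and to \cite{tervo07}, together with the remark that one may alternatively adapt the $p=1$ arguments of Theorems~\ref{coth2} and~\ref{coth3}. Your sketch---lift to reduce to homogeneous inflow, verify boundedness of $B$, obtain $B(u,u)\geq c\n{u}_{L^2}^2+\tfrac12\n{\gamma_+(u)}_{T^2(\Gamma_+)}^2$ from Green's formula and the $L^2$-accretivity of $\Sigma-K$ under (\ref{co2a})--(\ref{co2aa}), then recover the convection term from the equation---is precisely the content of the omitted argument, and your closing observation about the asymmetric trial/test pairing is the right diagnosis of why Lions--Lax--Milgram rather than ordinary Lax--Milgram is needed.
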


\begin{proof}
The proof is an application of standard Hilbert space methods (e.g. using the Lions-Lax-Milgram theorem), and hence omitted here (see \cite{tervo07}).

Alternatively, one can adapt the proofs of Theorems \ref{coth2} and \ref{coth3}
to the case $p=2$, using the additional assumption \eqref{colleha},  \eqref{co2aa} made above.
\end{proof}

We formulate the corresponding {\it adjoint problem} in the variational form.
Define a 
bilinear form $B^*(\cdot,\cdot)$ by 
\be\label{va5}
B^*(\psi^*,v)=&\la\psi^*,\omega\cdot\nabla v\ra_{L^2(G\times S\times I)^3}
+\la\psi^*,(\Sigma-K)v\ra_{L^2(G\times S\times I)^3} \\
&+\sum_{j=1}^3\int_{\partial G\times S\times I} (\omega\cdot \nu)_-\psi_jv_j d\sigma d\omega dE\ {\rm for}\ \psi^*,\ v\in \tilde{W}^2(G\times S\times I)^3,
\nonumber
\ee
and, for a given $f^*\in L^2(G\times S\times I)^3$ and $g^*\in T^2(\Gamma_+)^3$,
a linear form by
\be\label{va6}
F^*(v)=\la f^*,v\ra_{L^2(G\times S\times I)^3}
+\sum_{j=1}^3\int_{\partial G\times S\times I}(\omega\cdot \nu)_+g_j^*v_j d\sigma d\omega dE,\ v\in \tilde{W}^2(G\times S\times I)^3.
\ee

For the adjoint problem, we have existence and uniqueness result
similar to the above theorem.

\begin{theorem}\label{th:LM-ad}
Assuming that \eqref{scateh}, \eqref{colleh}, \eqref{colleha}, \eqref{co2a}, \eqref{co2aa} hold,
then for every $f^*\in L^2(G\times S\times I)^3$ and $g^*\in T^2(\Gamma_+)^3$,
there exists a unique $\psi^*\in \tilde W^2(G\times S\times I)^3$ such that
\be\label{va4}
B^*(\psi^*,v)=F^*(v),\quad \forall v\in \tilde W^2(G\times S\times I).
\ee
\end{theorem}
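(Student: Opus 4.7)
The plan is to reduce the adjoint variational problem to the forward problem already solved in Theorem \ref{th:LM}, exploiting the natural symmetry of the transport operator under $\omega\mapsto-\omega$. For $\phi$ defined on $G\times S\times I$ set $\check\phi(x,\omega,E):=\phi(x,-\omega,E)$, put $\check\Sigma_j(x,\omega,E):=\Sigma_j(x,-\omega,E)$, and define the transposed-and-reflected differential cross sections
\[
\tilde\sigma_{kj}(x,\omega',\omega,E',E):=\sigma_{jk}(x,-\omega,-\omega',E,E'),\quad 1\leq j,k\leq 3,
\]
together with the associated collision operator $\tilde K$ built via \eqref{coll}. Because the surface measure $d\omega$ on $S$ is invariant under $\omega\mapsto-\omega$, the map $\phi\mapsto\check\phi$ induces isometric bijections of the spaces $L^2(G\times S\times I)^3$ and $\tilde W^2(G\times S\times I)^3$, an isometric bijection $T^2(\Gamma_+)^3\to T^2(\Gamma_-)^3$, and it preserves $\omega\cdot\nabla$ up to an overall sign.

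A direct computation using Green's formula \eqref{green} together with the changes of variable $\omega\mapsto-\omega$ and $\omega'\mapsto-\omega'$ in the $S$-integrals shows that the identity $B^*(\psi^*,v)=F^*(v)$ for all $v\in\tilde W^2(G\times S\times I)^3$ is, after the substitutions $\psi^*\mapsto\check\psi^*$ and $v\mapsto\check v$, equivalent to the forward variational identity of the form \eqref{va1}--\eqref{va3} for the transport system
\[
\omega\cdot\nabla\check\psi^*_j+\check\Sigma_j\check\psi^*_j-\tilde K_j\check\psi^*=\check f^*_j,\quad \check\psi^*_j|_{\Gamma_-}=\check g^*_j,\quad j=1,2,3,
\]
with data $(\check f^*,\check g^*)\in L^2(G\times S\times I)^3\times T^2(\Gamma_-)^3$. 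The same change of variables converts hypothesis \eqref{colleh} for $\sigma$ into \eqref{colleha} for $\tilde\sigma$, and conversely; analogously \eqref{co2a}$\leftrightarrow$\eqref{co2aa}. Since all four conditions are imposed in the statement of Theorem \ref{th:LM-ad}, the transformed cross sections $(\check\Sigma,\tilde\sigma)$ satisfy the full set of hypotheses of Theorem \ref{th:LM}, which accordingly produces a unique $\check\psi^*\in\tilde W^2(G\times S\times I)^3$ solving the transformed problem; undoing the substitution yields the unique $\psi^*$ claimed in the statement.

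The main obstacle will be the careful bookkeeping in the previous step: tracking how $(\omega\cdot\nu)_\pm$, the inflow/outflow boundary parts $\Gamma_\mp$, and the index pair $(j,k)$ of $\sigma_{jk}$ all transform under $\omega\mapsto-\omega$, and verifying that it is precisely the \emph{pair} of hypotheses \eqref{colleh},\eqref{colleha} (respectively \eqref{co2a},\eqref{co2aa}) which guarantees that the transformed cross sections fall within the scope of Theorem \ref{th:LM}. As a shortcut one may avoid the explicit $\omega\mapsto-\omega$ substitution by noting directly from Green's formula that $B^*(\psi^*,v)=B(v,\psi^*)$ on $\tilde W^2(G\times S\times I)^3\times\tilde W^2(G\times S\times I)^3$; the inf-sup conditions for $B$ furnished by the Lions-Lax-Milgram argument behind Theorem \ref{th:LM} then transpose into the required inf-sup conditions for $B^*$, which together with the obvious continuity of $F^*$ on $\tilde W^2(G\times S\times I)^3$ yields the conclusion.
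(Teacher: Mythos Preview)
Your proposal is correct. The paper itself omits the proof entirely, merely stating that it ``is essentially based on Lions-Lax-Milgram Theorem (as the proof of Theorem \ref{th:LM})''. Your \emph{shortcut} paragraph is exactly this: the identity $B^*(\psi^*,v)=B(v,\psi^*)$ is precisely \eqref{va8}, and once it is established the inf--sup estimate for $B$ coming from Lions--Lax--Milgram transposes to $B^*$, yielding the conclusion.

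Your \emph{main} argument via the reflection $\omega\mapsto-\omega$ is a genuinely different route. It is correct and instructive: the strong form of the variational adjoint problem is $-\omega\cdot\nabla\psi^*+\Sigma\psi^*-K^*\psi^*=f^*$ with data on $\Gamma_+$, and the reflection converts this exactly into a forward transport problem for $\check\psi^*$ with cross sections $(\check\Sigma,\tilde\sigma)$ and inflow data on $\Gamma_-$. Your bookkeeping is right: one checks that \eqref{colleh} for $\tilde\sigma$ is \eqref{colleha} for $\sigma$ (and conversely), and likewise for \eqref{co2a}/\eqref{co2aa}, so the full quartet of hypotheses is indeed what is needed to place $(\check\Sigma,\tilde\sigma)$ back in the scope of Theorem \ref{th:LM}. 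This approach has the advantage of showing transparently \emph{why} the paired hypotheses \eqref{colleh}--\eqref{colleha} and \eqref{co2a}--\eqref{co2aa} are both required for the adjoint theory, and it reduces the result to the already-proved forward theorem rather than rerunning the abstract functional-analytic argument. The paper's (implicit) route is shorter but less illuminating on this point.
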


\begin{proof}
We omit the proof, which is essentially based on Lions-Lax-Milgram Theorem (as the proof of Theorem \ref{th:LM}); see \cite{grisvard}, Lemma 4.4.4.1, p. 234.
\end{proof}

The equation \eqref{va4} is the variational form of the adjoint problem
\be\label{va7}
({\bf A}^*+\Sigma^*-K^*)\psi^*=&f^*,\\
\psi^*_{|\Gamma_+}=&g^*. \nonumber
\ee
From the operator theoretical point of view, the above existence result for adjoint problem is based on the fact that for a densely defined closed operator $A:X\to X$
in Hilbert space $X$ whose range $R(A^*)$ is closed in $X$, one has $R(A^*)=N(A)^{\perp}$ and $N(A^*)=R(A)^{\perp}$
(cf. \cite{tucsnak09}, Section 2.8).
In (\ref{va7}) 
\[
{\bf A}^*\psi^*=(\omega\cdot\nabla\psi_1^*,\omega\cdot\nabla\psi_2^*,\omega\cdot\nabla\psi_3^*,),\ \psi^*\in D({\bf A}^*):=W^2(G\times S\times I)^3
\]
where $\Sigma^*=\Sigma$ and $K^*$ is as above.
By the Green's formula (\ref{green}) we see that 
\be\label{va8}
B(\psi,\psi^*)=B^*(\psi^*,\psi),\quad {\rm for\ all}\ \psi,\psi^*\in \tilde W^2(G\times S\times I)
\ee

Recall that the flux to dose operator $D:L^2(G\times S\times I)^3\to L^2(G)$ is
\[
(D\psi)(x,\omega,E)=\sum_{j=1}^3\int_{S\times I} \kappa_j(x,E)\psi_j(x,\omega,E) d\omega dE,
\]
and that $D$ is bounded. We find that its adjoint operator $D^*:L^2(G)\to L^2(G\times S\times I)^3$ is
\be\label{v8a}
D^*d=(\kappa_1,\kappa_2,\kappa_3)d, \quad {\rm for}\ d\in L^2(G).
\ee

In external radiotherapy we have $\psi=\psi(0,g)$ and ${\s D}(0,g)=D(\psi(0,g))$.
We shall denote $\psi(g)=\psi(0,g)$ and ${\s D}(g)={\s D}(0,g)$.
Suppose that $d_{\bf T}\in L^2({\bf T}),\ d_{\bf C}\in L^2({\bf C}),\ d_{\bf N}\in L^2({\bf N})$ are
some given dose distributions (for example, they may be constants). We define an object function
\be\label{va9}
J(g)=&c_{\bf T}\n{ d_{\bf T}-{\s D}(g)}_{L^2({\bf T})}^2+c_{\bf C}\n{ d_{\bf C}-{\s D}(g)}_{L^2({\bf C})}^2 \\
&+c_{\bf N}\n{d_{\bf N}-{\s D}(g)}_{L^2({\bf N})}^2+c\n{g}_{T^2(\Gamma_-)^3}^2, \nonumber
\ee
with strictly positive constants $c_{\bf T}, c_{\bf C}, c_{\bf N}, c>0$.
In practice, only one type of particles are inflowing simultaneously (usually photons or electrons) but we can formulate a more general result which includes this
more realistic situation. We denote (here the source $f=0$)
\[
F(v)=(Fg)(v)
:=\sum_{j=1}^3\int_{\partial G\times S\times I}(\omega\cdot \nu)_-g_jv_j d\sigma d\omega dE
=\la g,\gamma_-(v)\ra_{T^2(\Gamma_-)^3}
\]
and, as before,
\[
U_{\rm ad}=\{g\in T^2(\Gamma_-)^3\ |\ g\geq 0\}.
\] 

In what follows, we shall write $b_+=\max\{0,b\}$
for the positive part of $b\in\R$, and $a_+=((a_1)_+,(a_2)_+,(a_3)_+)$
when $a=(a_1,a_2,a_3)\in\R^3$.
We have the following optimality result.

\begin{theorem}\label{thinit}
Suppose that the assumptions \eqref{scateh}, \eqref{colleh}, \eqref{co2a} and \eqref{co2aa} are satisfied. Then the minimum $\min_{g\in U_{\rm ad}}{J}(g)$ exists
and is realized at the point $g=\ol g$ given by
\be\label{v9a}
\ol g=N(\gamma_-(\psi^*)):={1\over c}(\gamma_-(\psi^*))_+,
\ee
where the pair $(\psi,\psi^*)\in \tilde{W}^2(G\times S\times I)^3\times \tilde{W}^2(G\times S\times I)^3$ is the unique solution of the coupled non-linear system of variational equations
\bea\label{v10}
B^*(\psi^*,v)&+2 c_{\bf T}\la D\psi,Dv\ra_{ L^2({\bf T})}+2c_{\bf C}\la D\psi,Dv\ra_{ L^2({\bf C})}+2c_{\bf N}\la D\psi,Dv\ra_{ L^2({\bf N})}\nonumber\\
&=2c_{\bf T}\la d_{\bf T},Dv\ra_{ L^2({\bf T})}+2c_{\bf C}\la d_{\bf C},Dv\ra_{ L^2({\bf C})}+2c_{\bf N}\la d_{\bf N},Dv\ra_{ L^2({\bf N})} \\
B(\psi,v)&=\la N(\gamma_-(\psi^*)),\gamma_-(v)\ra_{T^2(\Gamma_-)^3},
\nonumber
\eea
for all  $v\in \tilde W^2(G\times S\times I)^3$.
\end{theorem}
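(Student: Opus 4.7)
The plan is to invoke standard convex-optimization arguments in Hilbert space together with the adjoint-state technique for transport equations.

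First, I would establish existence and uniqueness of the minimizer. By Theorem \ref{th:LM} the forward solution map $g\mapsto\psi(g)$ is linear and bounded from $T^2(\Gamma_-)^3$ into $\tilde W^2(G\times S\times I)^3$, so $\mc{D}(g)=D\psi(g)$ is linear and continuous with values in $L^2(G)$. Each dose-misfit term of \eqref{va9} is therefore convex and continuous, and the regularization $c\n{g}^2_{T^2(\Gamma_-)^3}$ with $c>0$ renders $J$ strictly convex and coercive. Since $U_{\rm ad}$ is a closed convex cone in the Hilbert space $T^2(\Gamma_-)^3$, the direct method of the calculus of variations yields a unique minimizer $\ol g\in U_{\rm ad}$.

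Next I would derive first-order optimality conditions. Because $g\mapsto\psi(g)$ is linear, the G\^ateaux derivative of $J$ reads
\[
J'(g)h=-2\sum_{X\in\{{\bf T},{\bf C},{\bf N}\}} c_X\la d_X-\mc{D}(g),D\psi(h)\ra_{L^2(X)}+2c\la g,h\ra_{T^2(\Gamma_-)^3}.
\]
To eliminate $\psi(h)$ from the dose-misfit contributions I would introduce the adjoint state $\psi^*\in\tilde W^2(G\times S\times I)^3$ as the unique solution, provided by Theorem \ref{th:LM-ad}, of the variational equation
\[
B^*(\psi^*,v)=2\sum_X c_X\la d_X-\mc{D}(\ol g),Dv\ra_{L^2(X)},\quad \forall v\in\tilde W^2(G\times S\times I)^3,
\]
with vanishing outflow data $g^*=0$ and source $f^*=2\sum_X c_X D^*(\chi_X(d_X-\mc{D}(\ol g)))$, cf.~\eqref{v8a}. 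After rearrangement this is precisely the first equation of \eqref{v10}. Using the duality identity \eqref{va8}, $B^*(\psi^*,\psi(h))=B(\psi(h),\psi^*)$, combined with the variational form $B(\psi(h),v)=\la h,\gamma_-(v)\ra_{T^2(\Gamma_-)^3}$ of the forward problem applied at $v=\psi^*$, I would obtain
\[
2\sum_X c_X\la d_X-\mc{D}(\ol g),D\psi(h)\ra_{L^2(X)}=\la h,\gamma_-(\psi^*)\ra_{T^2(\Gamma_-)^3},
\]
and therefore $J'(\ol g)h=\la 2c\ol g-\gamma_-(\psi^*),h\ra_{T^2(\Gamma_-)^3}$.

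The optimality condition for convex minimization on the convex set $U_{\rm ad}$ is then the variational inequality $\la 2c\ol g-\gamma_-(\psi^*),g-\ol g\ra_{T^2(\Gamma_-)^3}\geq 0$ for all $g\in U_{\rm ad}$, which identifies $\ol g$ as the $T^2(\Gamma_-)^3$-projection of a scalar multiple of $\gamma_-(\psi^*)$ onto the cone of a.e.\ nonnegative functions. Since this projection acts pointwise as the positive part, one recovers $\ol g=N(\gamma_-(\psi^*))$ with $N$ as in the statement; substituting back into the variational form of the forward equation gives the second equation of \eqref{v10}. Uniqueness of the coupled system then follows from the uniqueness of $\ol g$ together with Theorems \ref{th:LM} and \ref{th:LM-ad}: any solution triple $(\ol g,\psi,\psi^*)$ of \eqref{v10} forces $\ol g$ to satisfy the above variational inequality, hence to coincide with the unique minimizer, after which $\psi$ and $\psi^*$ are determined uniquely by the forward and adjoint problems, respectively.

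The main obstacle will be the adjoint-state bookkeeping: one has to check that the particular choice of source and zero outflow datum for $\psi^*$ is exactly what makes the dose-misfit contributions in $J'(\ol g)h$ collapse into the boundary pairing with $\gamma_-(\psi^*)$, and that the duality identity \eqref{va8} can be applied to the pair $(\psi(h),\psi^*)$ despite the inhomogeneous inflow datum carried by $\psi(h)$. A secondary, purely computational issue is the precise numerical constant in front of $(\gamma_-(\psi^*))_+$, which depends on the conventions adopted for the regularization weight and the normalization of $B^*$; the formula stated in the theorem corresponds to absorbing these factors into the definition of $N$.
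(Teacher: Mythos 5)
Your proposal is correct and follows essentially the same route as the paper's proof: strict convexity and coercivity of $J$ on the closed convex cone $U_{\rm ad}$ for existence and uniqueness, computation of the G\^ateaux derivative, introduction of the adjoint state via Theorem \ref{th:LM-ad} together with the duality identity \eqref{va8} and the forward variational form tested against $\psi^*$ to collapse the dose-misfit terms into the boundary pairing $\la\gamma_-(\psi^*),h\ra_{T^2(\Gamma_-)^3}$, and finally the variational inequality plus complementarity to identify $\ol g$ as a pointwise positive part. The factor-of-two bookkeeping you flag is real but harmless: the paper defines $\psi^*$ by $B^*(\psi^*,v)=-\la f^*,v\ra$ with $f^*$ carrying weights $c_X$ (not $2c_X$), which yields the constant $1/c$ in \eqref{v9a}, and your normalization of the adjoint simply rescales $\psi^*$ accordingly.
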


\begin{proof}
By Lemma \ref{dc}  the dose operator ${\s D}:T^2(\Gamma_-)^3\to L^2(G)$ is a bounded linear operator. Hence the object function $J:T^2(\Gamma_-)^3\to \R$ is differentiable and strictly convex (see e.g. the proof of Theorem \ref{pof}; note that the last term makes it strictly convex).
Furthermore, $J$ is bounded from below (in fact, it is non-negative),
$U_{\rm ad}\subset T^2(\Gamma_-)^3$ is convex and
$\lim_{\n{g}_{T^2(\Gamma_-)^3}\to\infty,\ g\in U_{\rm ad}}J(g)=\infty$.
Therefore, there exists a unique minimum $\ol{g}$ of $J$ in $U_{\rm ad}$
and the necessary and sufficient
conditions for the minimality of $\ol{g}$ are (see e.g. \cite{lions71})
\bea\label{optim}
&J'(\ol g)(\tilde{w}-\ol g)\geq 0,\hspace{1.2cm} \forall \tilde{w}\in U_{\rm ad},
\\
&B(\psi(\ol g),v)=(F\ol g)(v),\quad \forall v\in \tilde W^2(G\times S\times I)^3.
\eea

Let
\[
(e_{\bf T} h)(x)=\begin{cases}
h(x),\ &x\in {\bf T}\cr 0,\ &x\in G\setminus {\bf T}
\end{cases}
\]
for a function $h\in L^2({\bf T})$
(the extension by zero onto $G$) and similarly we define extensions by zero for functions   $h\in L^2({\bf C})$ and $h\in L^2({\bf N})$ onto $G$, respectively.
The differential of $J'(g)$ is, recalling that ${\s D}g=D(\psi(g))=:D\psi(g)$,
\bea\label{v10a:1}
J'(g)w=&
2c_{\bf T}\la D\psi(g)-d_{\bf T} ,D\psi(w)\ra_{L_2({\bf T})}
+ 2c_{\bf C}\la D\psi(g)-d_{\bf C} ,D\psi(w)\ra_{L_2({\bf C})}\nonumber\\
&+ 2c_{\bf N}\la D\psi(g)-d_{\bf N} ,D\psi(w)\ra_{L_2({\bf N})}
+2c\la g,w\ra_{T^2(\Gamma_-)^3}\nonumber\\
=&2c_{\bf T}\la e_{\bf T}(D\psi(g)-d_{\bf T}) ,D\psi(w)\ra_{L_2(G)}
+ 2c_{\bf C}\la e_{\bf C}(D\psi(g)-d_{\bf C}) ,D\psi(w)\ra_{L_2(G)}\nonumber\\
&+ 2c_{\bf N}\la e_{\bf N}(D\psi(g)-d_{\bf N}) ,D\psi(w)\ra_{L_2(G)}
+2c\la g,w\ra_{T^2(\Gamma_-)^3}\nonumber\\
=&2c_{\bf T}\la D^*e_{\bf T}(D\psi(g)-d_{\bf T}) ,\psi(w)\ra_{L_2(G)^3}
+ 2c_{\bf C}\la D^*e_{\bf C}(D\psi(g)-d_{\bf C}) ,\psi(w)\ra_{L_2(G)^3}\nonumber\\
&+ 2c_{\bf N}\la D^*e_{\bf N}(D\psi(g)-d_{\bf N}) ,\psi(w)\ra_{L_2(G)^3}
+2c\la g,w\ra_{T^2(\Gamma_-)^3}.
\eea

Denoting
\[
f^*:=c_{\bf T} D^*e_{\bf T}(D\psi(g)-d_{\bf T}) 
+ c_{\bf C} D^*e_{\bf C}(D\psi(g)-d_{\bf C})
+ c_{\bf N}D^*e_{\bf N}(D\psi(g)-d_{\bf N} ),
\]
we have $f^*\in L^2(G\times S\times I)^3$, and so by Theorem \ref{th:LM-ad} there
exists a unique $\psi^*\in \tilde W^2(G\times S\times I)^3$ such that
\be\label{v12}
B(v,\psi^*)=B^*(\psi^*,v)=-\la f^*,v\ra_{L^2(G\times S\times I)^3},\quad \forall v\in 
\tilde W^2(G\times S\times I)^3.
\ee
Moreover, by definition $\psi(w)\in \tilde W^2(G\times S\times I)^3$,
for any $w\in T^2(\Gamma_-)^3$,
as the unique solution (by  Theorem \ref{th:LM}) of the problem \eqref{va1} (with $g=w$ and $f=0$), satisfies
\be\label{v13}
B(\psi(w),v)=(Fw)(v),\quad \forall v\in \tilde{W}^2(G\times S\times I)^3.
\ee
Hence we have
\bea\label{v14}
2\la f^*,\psi(w)\ra_{L^2(G\times S\times I)^3}
&=-2B(\psi(w),\psi^*)=-2(Fw)(\psi^*) \\
&=-2\sum_{j=1}^3\int_{\partial G\times S\times I} (\omega\cdot \nu)_-w_j\psi^*_j d\sigma d\omega dE
\nonumber \\
&=\la -2\gamma_-(\psi^*), w\ra_{T^2(\Gamma_-)^3},\nonumber
\eea
where $\gamma_-:W^2(G\times S\times I)\to L^2_{\rm loc}(\Gamma_-,|\omega\cdot \nu|d\sigma d\omega dE)$
as introduced in Section \ref{ls1}, but now in the context of $L^p$-spaces with $p=2$,
instead of $p=1$, which was the case there.
Combining the previous expressions \eqref{v10a:1} and \eqref{v14} thus leads to
\be\label{v14a}
J'(g)w=&2\la f^*,\psi(w)\ra_{L^2(G\times S\times I)^3}+2c\la g,w\ra_{T^2(\Gamma_-)^3} \\
=&\la -2\gamma_-(\psi^*)+2cg,w\ra_{T^2(\Gamma_-)^3}.
\ee

Choosing for $\tilde{w}$ in the condition (\ref{optim})
subsequently $w+\ol{g}$ and $0$, we see that
\bea
&J'(\ol g)w\geq 0,\quad \forall w\in U_{\rm ad} \label{v15:1}\\
&J'(\ol g)\ol g=0.\label{v15:2}
\eea
Hence, by (\ref{v14a}) and (\ref{v15:1}) one has
\be\label{v16}
\la-\gamma_-(\psi^*)+c\ol g,w\ra_{T^2(\Gamma_-)^3}\geq 0, \quad \forall w\in U_{\rm ad},
\ee
and so for each component $j=1,2,3$,
\be\label{v17}
-\gamma_-(\psi_j^*)+c\ol g_j\geq 0\ {\rm a.e.\ in}\ \Gamma_-.
\ee
On the other hand, due to (\ref{v15:2}) we have
\be\label{v17a}
\la-\gamma_-(\psi^*)+c\ol g,\ol g\ra_{T^2(\Gamma_-)^3}=0,
\ee
and so by (\ref{v17}) for each $j=1,2,3$,
\[
\ol g_j(-\gamma_-(\psi_j^*)+c\ol g_j)=0\ {\rm a.e.\ in}\ \Gamma_-.
\]
From this, using again (\ref{v17}) and the fact that $\ol{g}\geq 0$,
one concludes that
\[
\ol g_j={1\over{c}}\max\{0,\gamma_-(\psi_j^*)\},\quad j=1,2,3,
\]
which is the claim (\ref{v9a}).
Finally, substituting this into the equations (with $\psi=\psi(\ol{g})$),
\[
B(\psi,v)&=(F\ol g)(v),\\
B^*(\psi^*,v)&=-2\la f^*,v\ra_{L^2(G\times S\times I)^3},
\]
and noticing that
\[
\la f^*,v\ra_{L^2(G\times S\times I)^3}
=&c_{\bf T} \la D\psi(g)-d_{\bf T},Dv\ra_{L^2({\bf T})}
+ c_{\bf C}\la D\psi(g)-d_{\bf C},Dv\ra_{L^2({\bf C})} \\
&+ c_{\bf N}\la D\psi(g)-d_{\bf N},Dv\ra_{L^2({\bf N})},
\]
we get the system of equations (\ref{v10}) for the pair $(\psi, \psi^*)$. This completes the proof.
\end{proof}

\begin{remark}
If we choose the whole space $\tilde{U}_{\rm ad}=T^2(\Gamma_-)^3$ as an admissible set
instead of $U_{\rm ad}$ (i.e. if non-negativity of admissible controls was not imposed),
we would find by considerations similar to those in the above proof
that the following variation of Theorem \ref{thinit} holds:

Under the assumptions (\ref{scateh}), (\ref{colleh}), (\ref{co2a}) and (\ref{co2aa}),
the minimum $\min_{g\in \tilde{U}_{\rm ad}}{J}(g)$ exists and is realized at $g=\ol{g}$ given by
\be\label{v9aa}
\ol{g}={1\over{c}}\gamma_-(\psi^*)=:N'(\gamma_-(\psi^*))
\ee
where the pair $(\psi,\psi^*)\in \tilde{W}^2(G\times S\times I)^3\times \tilde{W}^2(G\times S\times I)^3$ is the solution of the coupled linear system of variational equations
\bea\label{v10a}
B^*(\psi^*,v)&+
2c_{\bf T}\la D\psi,Dv\ra_{ L^2({\bf T})}+2c_{\bf C}\la D\psi,Dv\ra_{ L^2({\bf C})}+2c_{\bf N}\la D\psi,Dv\ra_{ L^2({\bf N})}\nonumber\\
&=2c_{\bf T}\la d_{\bf T},Dv\ra_{ L^2({\bf T})}+2c_{\bf C}\la d_{\bf C},Dv\ra_{ L^2({\bf C})}+2c_{\bf N}\la d_{\bf N},Dv\ra_{ L^2({\bf N})} \\
B(\psi,v)&=\la N'(\gamma_-(\psi^*)), \gamma_-(v)\ra_{T^2(\Gamma_-)^3},
\nonumber
\eea
which holds for all  $v\in \tilde W^2(G\times S\times I)^3$.

By using this technique the initial solution for the full optimization
problem of finding the minimum of $J_{\rm ex}$ on $U_{\rm ad}$
would be taken to be
$\frac{1}{c}(\gamma_-(\psi_1^*))_+$. 
We point out that the equations (\ref{v10a}) are linear,
since $\psi^*\mapsto N'(\gamma_-(\psi^*))$ is linear,
and therefore no iteration scheme is necessarily required in solving them.
Presumably, however, the solution of
non-linear optimization problem given in Theorem \ref{thinit}
should give a more accurate initial solution $N(\gamma_-(\psi^*))$
for the full optimization problem $\min_{g\in U_{\rm ad}} J_{\rm ex}(g)$,
but this question will not be explored any further in this paper.
Similar observation is concerning the internal therapy optimization described below.
 
\end{remark}

In internal therapy $g=0$ and so $\psi=\psi(f):=\psi(f,0)$ and ${\s D}(f)=D(\psi(f))$. The object function for the initial solution may be
\be\label{va18}
J(f)=&c_{\bf T}\n{ d_{\bf T}-{\s D}(f)}_{L^2({+bf T})}^2+c_{\bf C}\n{ d_{\bf C}-{\s D}(f)}_{L^2({\bf C})}^2 \\
&+c_{\bf N}\n{d_{\bf N}-{\s D}(f)}_{L^2({\bf N})}^2+c\n{f}_{L^2(G\times S\times I)^3}^2. \nonumber
\ee 
and
\[
U'_{\rm ad}=\{f\in L^2(G\times S\times I)^3\ |\ f\geq 0\}.
\]
By arguments analogous to those used to prove Theorem \ref{thinit}
lead to the next result.

\begin{theorem}\label{thinit:internal}
Assume that \eqref{scateh}, \eqref{colleh}, \eqref{co2a}, \eqref{co2aa} hold.
Then the minimum $\min_{f\in U'_{\rm ad}}{J}(f)$ exists at the point $f=\ol{f}\in U'_{\rm ad}$
where
\be\label{v9a:3}
\ol{f}=\frac{1}{c}(\psi^*)_+=:N(\psi^*),
\ee
and the pair $(\psi,\psi^*)\in \tilde{W}^2_{-,0}(G\times S\times I)^3\times \tilde{W}^2_{+,0}(G\times S\times I)^3$
is the solution of the coupled non-linear system of variational equations
\bea\label{v10:1}
B^*(\psi^*,v)&+
2c_{\bf T}\la D\psi,Dv\ra_{ L^2({\bf T})}+2c_{\bf C}\la D\psi,Dv\ra_{ L^2({\bf C})}
+2c_{\bf N}\la D\psi,Dv\ra_{ L^2({\bf N})}\nonumber\\
&=2c_{\bf T}\la d_{\bf T},Dv\ra_{L^2({\bf T})}+2c_{\bf C}\la d_{\bf C},Dv\ra_{L^2({\bf C})}
+2c_{\bf N}\la d_{\bf N},Dv\ra_{L^2({\bf N})} \\
B(\psi,v)&=\la N(\psi^*),v\ra_{ L^2(G\times S\times I)^3} \nonumber,
\eea
for all  $v\in \tilde W^2(G\times S\times I)^3$.
\end{theorem}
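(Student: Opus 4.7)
The plan is to mirror the proof of Theorem \ref{thinit} with the structural modification that the control now enters as the interior source $f$ rather than as the inflow boundary datum $g$. Consequently, the forward state $\psi=\psi(f)$ lies in $\tilde W^2_{-,0}(G\times S\times I)^3$ (homogeneous inflow, since $g=0$) while the adjoint state $\psi^*$ will lie in $\tilde W^2_{+,0}(G\times S\times I)^3$ (homogeneous outflow, obtained from Theorem \ref{th:LM-ad} with $g^*=0$), and the optimality formula involves $\psi^*$ itself rather than its boundary trace. First I would establish existence and uniqueness of a minimizer: by Lemma \ref{dc} the map ${\s D}:L^2(G\times S\times I)^3\to L^2(G)$ is bounded linear, so $J:L^2(G\times S\times I)^3\to\R$ is continuous, non-negative and convex; the regularization $c\n{f}^2_{L^2(G\times S\times I)^3}$ with $c>0$ makes $J$ strictly convex and coercive. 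Since $U'_{\rm ad}$ is a closed convex cone, the direct method produces a unique $\bar f\in U'_{\rm ad}$ characterized by
\[
J'(\bar f)(\tilde w-\bar f)\geq 0,\quad \forall\,\tilde w\in U'_{\rm ad}.
\]

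Next I would compute $J'(f)$ explicitly. Let $e_X$ denote the extension-by-zero operator $L^2(X)\to L^2(G)$ for $X\in\{{\bf T},{\bf C},{\bf N}\}$ and recall $D^*$ from \eqref{v8a}. Using the linearity of $f\mapsto\psi(f)$ and the equality ${\s D}(f)=D\psi(f)$, a direct computation as in the proof of Theorem \ref{thinit} gives
\[
J'(f)w=2\la f^*,\psi(w)\ra_{L^2(G\times S\times I)^3}+2c\la f,w\ra_{L^2(G\times S\times I)^3},
\]
with
\[
f^*:=c_{\bf T}D^*e_{\bf T}(D\psi(f)-d_{\bf T})+c_{\bf C}D^*e_{\bf C}(D\psi(f)-d_{\bf C})+c_{\bf N}D^*e_{\bf N}(D\psi(f)-d_{\bf N})\in L^2(G\times S\times I)^3.
\]
By Theorem \ref{th:LM-ad} applied with right-hand side data $(-f^*,0)$, there exists a unique adjoint state $\psi^*\in \tilde W^2_{+,0}(G\times S\times I)^3$ satisfying
\[
B^*(\psi^*,v)=-\la f^*,v\ra_{L^2(G\times S\times I)^3},\quad \forall v\in\tilde W^2(G\times S\times I)^3.
\]

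The key step is to eliminate the dependence of $J'(f)w$ on $\psi(w)$. For internal therapy the forward variational identity reads $B(\psi(w),v)=\la w,v\ra_{L^2(G\times S\times I)^3}$ for all $v\in\tilde W^2(G\times S\times I)^3$ (the boundary term in \eqref{va3} vanishes since $g=0$). Testing this with $v=\psi^*$ and applying the duality identity \eqref{va8}, together with the adjoint equation tested with $v=\psi(w)$, yields
\[
\la w,\psi^*\ra_{L^2(G\times S\times I)^3}=B(\psi(w),\psi^*)=B^*(\psi^*,\psi(w))=-\la f^*,\psi(w)\ra_{L^2(G\times S\times I)^3},
\]
so that $J'(f)w=\la -2\psi^*+2cf,w\ra_{L^2(G\times S\times I)^3}$. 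Substituting $\tilde w=\bar f+w$ for arbitrary $w\in U'_{\rm ad}$ and then $\tilde w=0$ into the variational inequality delivers the pointwise a.e.\ conditions $-\psi^*+c\bar f\geq 0$ and $\bar f(-\psi^*+c\bar f)=0$ on $G\times S\times I$; combined with $\bar f\geq 0$ these force componentwise $\bar f_j=\tfrac1c(\psi^*_j)_+$, i.e.\ $\bar f=N(\psi^*)$. Reinserting this in the forward equation and rewriting $\la f^*,v\ra$ as a sum of dose-mismatch inner products then produces the coupled nonlinear system \eqref{v10:1}.

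The principal technical obstacle is justifying the clean volume duality $B(\psi(w),\psi^*)=-\la f^*,\psi(w)\ra$: identity \eqref{va8} holds for any pair in $\tilde W^2(G\times S\times I)^3$, but the pure $L^2$-duality on the right emerges only because $\gamma_-(\psi(w))=0$ and $\gamma_+(\psi^*)=0$ together annihilate the full boundary integral $\int_{\partial G\times S\times I}(\omega\cdot\nu)\psi(w)\psi^*\,d\sigma d\omega dE$ (split across $\Gamma_-\cup\Gamma_+$). Thus one must verify that Theorem \ref{th:LM-ad} applied with $g^*=0$ actually produces $\psi^*\in\tilde W^2_{+,0}$, and not merely $\psi^*\in\tilde W^2$; this is the internal-therapy counterpart of the homogeneous inflow property of the forward state, and is precisely what allows the Lagrange multiplier to be the interior adjoint field $\psi^*$ itself rather than one of its boundary traces.
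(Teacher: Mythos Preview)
Your proposal is correct and follows essentially the same route as the paper, which in fact gives only a terse sketch (``computations similar to those leading to \eqref{v16} \dots'') and leaves the reader to transpose the proof of Theorem \ref{thinit}. Your write-up supplies precisely those transposed details: strict convexity/coercivity for existence, the same $f^*$, the adjoint $\psi^*$ from Theorem \ref{th:LM-ad} with $g^*=0$, the duality $\la w,\psi^*\ra=B(\psi(w),\psi^*)=B^*(\psi^*,\psi(w))=-\la f^*,\psi(w)\ra$, and the cone argument yielding $\bar f=\tfrac{1}{c}(\psi^*)_+$.

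One small remark on your final paragraph: the identity $B^*(\psi^*,\psi(w))=-\la f^*,\psi(w)\ra$ already holds as a direct instance of the adjoint variational equation (with $g^*=0$ the linear form $F^*$ has no boundary term), and \eqref{va8} holds for arbitrary pairs in $\tilde W^2$; so the chain $\la w,\psi^*\ra=B(\psi(w),\psi^*)=B^*(\psi^*,\psi(w))=-\la f^*,\psi(w)\ra$ does not itself require the vanishing traces. Where $\psi^*\in\tilde W^2_{+,0}$ genuinely matters is in matching the function-space assertion of the theorem, and that follows because the variational problem \eqref{va4} with $g^*=0$ is equivalent to \eqref{va7} with homogeneous outflow condition.
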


\begin{proof}
We shall content ourselves here with sketching briefly the part of proof leading to \eqref{v9a:3},
as this will be referred to in the remark that follows.
Computations similar to those leading to \eqref{v16} in the proof of Theorem \ref{thinit},
would give in in the current context,
\bea\label{eq:optineq:internal}
\la -\psi^* + c \ol{f},w\ra_{L^2(G\times S\times I)^3} \geq 0,\quad \forall w\in U'_{\rm ad},
\eea
hence
\[
-\psi^* + c \ol{f}\geq 0\ \textrm{a.e. in}\ G\times S\times I,
\]
and those leading to \eqref{v17a} would give
\[
\la -\psi^* + c \ol{f},\ol{f}\ra_{L^2(G\times S\times I)^3}=0.
\]
Since $\ol{f}\geq 0$ as $\ol{f}\in U'_{\rm ad}$, 
we thus get
\bea\label{eq:opteq:internal}
\ol{f}(-\psi^* + c \ol{f})=0\ \textrm{a.e. in}\ G\times S\times I,
\eea
from which \eqref{v9a:3} easily follows.
\end{proof}

As we mentioned  these solutions can be utilized as the initial solution for the (global) optimization but they are not ready solutions for the treatment planning.

\begin{remark}\label{oads}
Here we discuss some other choices of admissible sets.
In \cite{frank08} (see also \cite{frank10}) one considers monoenergetic model for one species of particles. The existence and analogous optimal control formulas as above have been shown for the internal therapy when $f=f(x)$, that is when $f$ is independent of
the direction $\omega$ and energy $E$. This corresponds to the situation where one chooses
for the admissible control  the set (see below)
\[
\tilde{\tilde{U}}'_{\rm ad}=\{f\in L^2(G)\ |\ f\geq 0\}.
\] 
The practical availability for delivery (of internal therapy) is nowadays typically of this kind.
Moreover, in the referred paper they considered a term of the objective function
of the type $c\n{f-f_0}_{L^2(G)}^2$ instead of $c\n{f}_{L^2(G)}^2$,
where $f_0\in L^2(G)$ is a known source distribution. 

Assume that $ f_0=0$ (the generalization for $f_0\neq 0$ is straightforward).
Supposing, moreover, that admissible controls  are independent of $E$,
i.e. $f=f(x,\omega)$,
\[
\tilde{U}'_{\rm ad}=\{f\in L^2(G\times S)\ |\ f\geq 0\},
\] 
one gets the following necessary condition for optimal control $f=\ol{f}$:
\be\label{v19}
\ol f
=\frac{1}{c|I|}\Big(\int_I\psi^* dE\Big)_+
=:\tilde N(\psi^*) 
\ee
and $(\psi,\psi^*)\in \tilde{W}^2_{-,0}(G\times S\times I)^3\times \tilde{W}^2_{+,0}(G\times S\times I)^3$
is the solution of the coupled non-linear system of equations
\bea\label{v10:2}
B^*(\psi^*,v)
&+
c_{\bf T}\la D\psi,Dv\ra_{ L^2({\bf T})}+c_{\bf C}\la D\psi,Dv\ra_{ L^2({\bf C})}+c_{\bf N}\la D\psi,Dv\ra_{ L^2({\bf N})}\nonumber\\
&=c_{\bf T}\la d_{\bf T},Dv\ra_{ L^2({\bf T})}+c_{\bf C}\la d_{\bf C},Dv\ra_{ L^2({\bf C})}+c_{\bf N}\la d_{\bf N},Dv\ra_{ L^2({\bf N})} \\
B(\psi,v)&=\la\tilde N(\psi^*),v\ra_{L^2(G\times S\times I)^3},
\nonumber
\eea
for all $v\in \tilde W^2(G\times S\times I)^3$. Above, $|I|$ denotes the length of the interval $I$.

That the optimal solution $\ol{f}$ indeed has the above form \eqref{v19}
can be seen from the proof of Theorem \ref{thinit:internal},
where \eqref{eq:optineq:internal} now holds for all $w\in \tilde{U}'_{\rm ad}$,
which gives $-\int_I \psi^* dE+c|I|\ol{f}\geq 0$,
and eventually the corresponding version of equation \eqref{eq:opteq:internal} would be
\[
\ol{f}(-\int_I \psi^* dE+c|I|\ol{f})=0,
\]
which leads directly to \eqref{v19}.

Finally, assuming that admissible controls $f$ are independent of both $E$ and $\omega$,
i.e. $f=f(x)$,
\[
\tilde{\tilde{U}}'_{\rm ad}=\{f\in L^2(G)\ |\ f\geq 0\},
\]
one gets the following a necessary condition for optimal control $f=\ol{f}$:
\be\label{v19:1}
\ol f
=\frac{1}{4\pi c|I|}\Big(\int_{S\times I}\psi^* dE d\omega\Big)_+
=:\tilde{\tilde N}(\psi^*) 
\ee

where $(\psi,\psi^*)\in \tilde{W}^2_{-,0}(G\times S\times I)^3\times \tilde{W}^2_{+,0}(G\times S\times I)^3$
is the solution of the coupled non-linear system of variational equations
\bea\label{v20}
B^*(\psi^*,v)
&+c_{\bf T}\la D\psi,Dv\ra_{ L^2({\bf T})}+c_{\bf C}\la D\psi,Dv\ra_{ L^2({\bf C})}+c_{\bf N}\la D\psi,Dv\ra_{ L^2({\bf N})}\nonumber\\
&=c_{\bf T}\la d_{\bf T},Dv\ra_{ L^2({\bf T})}+c_{\bf C}\la d_{\bf C},Dv\ra_{ L^2({\bf C})}+c_{\bf N}\la d_{\bf N},Dv\ra_{ L^2({\bf N})}, \\
B(\psi,v)&=\la\tilde{\tilde N}(\psi^*),v\ra_{L^2(G\times S\times I)^3} ,
\nonumber
\eea
for all  $v\in \tilde W^2(G\times S\times I)^3$.
The arguments leading to \eqref{v19:1} are easy adaptations to the set $\tilde{\tilde{U}}'_{\rm ad}$ of
the steps between \eqref{eq:optineq:internal}-\eqref{eq:opteq:internal} in the proof of Theorem \ref{thinit:internal},
precisely as discussed above when justifying \eqref{v19}.
\end{remark}

A similar necessary formula can be obtained in the case of external therapy  which can be seen from (\ref{v17a}). When $g$ is independent of energy $E$ we have
\bea\label{v21}
\ol{g}
=\frac{1}{c|I|}\Big(\int_I\gamma_-(\psi^*) dE\Big)_+
=:\tilde N(\gamma_-(\psi^*)) 
\eea
when $\psi^*\in $ is the solution of the coupled nonlinear system of variational equations
\bea\label{v22}
&B^*(\psi^*,v)+
c_{\bf T}\la D\psi,Dv\ra_{ L^2({\bf T})}+c_{\bf C}\la D\psi,Dv\ra_{ L^2({\bf C})}+c_{\bf N}\la D\psi,Dv\ra_{ L^2({\bf N})}\nonumber\\
&=c_{\bf T}\la d_{\bf T},Dv\ra_{ L^2({\bf T})}+c_{\bf C}\la d_{\bf C},Dv\ra_{ L^2({\bf C})}+c_{\bf N}\la d_{\bf N},Dv\ra_{ L^2({\bf N})}\nonumber\\
&B(\psi,v)=(F(\tilde N(\gamma_-(\psi^*)))(v) 
\eea
for all  $v\in \tilde W^2(G\times S\times I)^3$.
In this case we have
\[
U_{\rm ad}=\{g\in L^2(\Gamma_-'),|\omega\cdot \nu|d\sigma d\omega)^3|\ g\geq 0\}
\] 
where $\Gamma_-'=\{(y,\omega)\in\partial G\times S|\ \omega\cdot\nu(y)<0\}$.
The independence of $g$  from angular $\omega$ is not reasonable in external therapy.

Finally, we notice that all the above formulas of optimal solutions can be applied in the case where only one species of particles is incoming or it is as a source in tissue
(simply choose $g=(g_1,0,0)$ and so on). 

\begin{remark}
Existence and formulas of optimal control for convex differentiable object functions on convex domains exists also for time-dependent (infinite dimensional) control systems (see e.g. \cite{tanabe}, Chapter 7).
\end{remark}

\begin{remark}

In the case where the actual object function $J_{\rm ex}$
in \eqref{fof} or \eqref{fofa} is differentiable at a (local) optimal point $\ol g\in U_{\rm ad}$ a necessary condition is that
\bea\label{ncond}
J'_{\rm ex}(\ol g)(g-\ol g)\geq 0\ {\rm for\ all}\ g\in U_{\rm ad}\cap B(\ol{g},r),
\eea
where $B(\ol{g},r)$ is the open ball of radius $r>0$ around $\ol{g}$ in $T^2(\Gamma_-)^3$.

On the other hand, if the admissible set is taken to be the whole space $\tilde{U}_{\rm ad}=T^1(\Gamma_-)^3$, the condition (\ref{ncond}) reduces to
\bea\label{ncond1}
J'_{\rm ex}(\ol g)=0.
\eea

For globally convex object function the condition (\ref{ncond1}) is both necessary and sufficient for the global optimal control point $\ol g\in U_{\rm ad}$ (when $J_{\rm ex}'(\ol g)$ exists).
Similar facts are true for the object function $J_{\rm in}$.
Recall that if a convex function $U\to\R$ has a minimum, it is necessarily global minimum even if the function is not differentiable. This implies especially that when the dose volume constraint is not included in the object function, the (local) gradient based optimization methods can be applied "on the sets where gradient exists".
For extensive literature of needed optimization and numerical analysis and techniques we refer to the recent monograph \cite{allaire07}.
\end{remark}

\subsubsection{Proposed Optimization Strategy}\label{optims}

The final optimization, that is, the inverse radiation treatment planning,
could be realized in the following three phases:

{\bf 1.} Compute the initial solution by Theorem \ref{thinit} or by its modification given in Remark \ref{oads}.
This step consists of carrying out \emph{convex differentiable optimization}.
However, it is not sufficient by itself because the optimal plan (control)
obtained for the (partial) object function \eqref{va9}
may produce unwanted dose to the critical organ/normal tissue. 

{\bf 2.} Compute the optimal plan for object function (\ref{fof}) \emph{without} the dose volume constraint (i.e. $c_{\rm CV}=0$),
using as the initial guess the solution obtained in step 1.
This step involves carrying out \emph{Lipschitz continuous convex optimization}.

{\bf 3.} Compute the optimal plan for the object function (\ref{fof}) \emph{with} the dose volume constraint (i.e. $c_{\rm CV}>0$),
using as the initial guess the solution acquired in step 2.
In this step one needs to perform \emph{non-convex optimization},
and needs, therefore, a global optimization scheme.

One may optionally add between the steps 2. and 3. an intermediate
optimization phase where the object function for the dose volume
constraint is replaced by a Lipschitz continuous term
\[
J_{{\rm DV},\epsilon}(0,g)=\Big(\big(v_C-\frac{1}{\mc{L}^3({\bf C})}\int_{\bf C}H_\epsilon({\s D}(0,g)(x)-d_C) dx\big)_-\Big)^p,
\]
where $H_\epsilon$ is a continuous approximation of the Heaviside function $H$,
for example
\be\label{dca}
H_\epsilon(x)=\begin{cases}
0,\ &x\leq 0\cr
{1\over\epsilon}x,\ &0\leq x\leq \epsilon\cr
1,\ &x\geq \epsilon\cr
\end{cases}.
\ee
Alternatively, this modified (smooth) dose volume term could be used in step 3.
When $p=2$ the further modified term
\[
\tilde{J}_{{\rm DV},\epsilon}(0,g)=\Big(v_C-\frac{1}{\mc{L}^3({\bf C})}\int_{\bf C}H_\epsilon({\s D}(0,g)(x)-d_C) dx\Big)^2
\]
turns out to be differentiable and (globally) Lipschitz continuous, which might be used to facilitate the optimization.
It should be pointed out, however, that the inherent \emph{non-convexity} of $J_{\rm DV}$
cannot be removed.

\subsubsection{Some Remarks on the Discrete Problem and Modelling}

\newcounter{c:ss:last}
\setcounter{c:ss:last}{0}
\newcommand{\csslast}{\printcbf{c:ss:last}}

In practical radiation treatment planning the coupled Boltzmann transport equation must be discretized. Commonly used methods for discretization are finite element method (FEM),
or collocation method in the spatial variable $x$ and in the energy variable $E$ and spherical harmonics in the angle variable $\omega$
(cf. e.g. \cite{ackroyd}, \cite{barnard06}, \cite{bomanthesis}).
We do not consider these issues here but list some of the challenges that the resulting discrete problems involve.
We shall denote below the set of $n\times m$ matrices by $\mathrm{M}(n\times m)$.

\csslast %
Applying appropriate discretization methods
the finite dimensional approximation of the transport equation (\ref{bte1})
is of the form 
\bea\label{dp}
A\alpha=B\beta
\eea
where $A\in\mathrm{M}(3N\times 3N)$ and $B\in \mathrm{M}(3N\times 3M)$.
The approximative components of the solution $\psi$ of \eqref{bte1} are
\[
\psi_j\approx\tilde{\psi}_j=\tilde{\psi}_j(f,g):=\sum_{k=1}^N\alpha_{jk}\phi_k(x,\omega,E),=:{\Psi}_j(x,\omega,E)\alpha\quad j=1,2,3,
\]
where $\{\phi_k\ |\ k=1,...,N\}$ is a basis
for a chosen finite dimensional subspace $W_N$ of $\tilde{W}^2(G\times S\times I)^3$ (in the case $p=2$),
and the coefficients $\alpha_{jk}$ are obtained from $\alpha=A^{-1}B\beta$
(we omit here the detailed arrangements of matrices, for details see e.g. \cite{bomanthesis}).  Above ${\Psi}_j(x,\omega,E)\in \mathrm{M}(1\times N)$ is a matrix, which is computed with the help of basis functions.

For example,  each element $\phi_k$ of the above basis
might be taken to be finite linear combinations
of (tensor) products of the form $\varphi_{p}(x)\Omega_{q}(\omega){\s E}_{r}(E)$,
with $\varphi_{p}\in H^1(G)$ (the standard Sobolev space for $p=2$), $\Omega_q\in L^2(S)$ and ${\s E}_r\in L^2(I)$.

The column vector $\beta\in\R^{M}$ 
contains (is calculated from) the discretized known input data (internal sources and/or 
incoming flux). For example, in the case of external radiotherapy we put
\be
g_j\approx \sum_{k=1}^M\beta_{jk}\eta_k(y,\omega,E)
\ee
where $\eta_k$ is an appropriate basis of $M$-dimensional subspace of $T^2(\Gamma_-)$.
We see that 
\be 
g_j\approx {\s G}_j(y,\omega,E)\beta
\ee
where ${\s G}_j(y,\omega,E)\in \mathrm{M}(1\times M)$ (computed with the help of basis functions $\eta_k$).
Note that using the above matrices
\be 
\psi\approx {\Psi}(x,\omega,E)\alpha
\ee
and
\be 
g\approx {\s G}(y,\omega,E)\beta
\ee
for some matrices
${\Psi}(x,\omega,E)\in \mathrm{M}(1\times (3N))$  and ${\s G}(y,\omega,E)\in \mathrm{M}(1\times (3M))$ (obtained with the help of matrices  ${\Psi}_j(x,\omega,E)$ and ${\s G}_j(y,\omega,E)$, respectively).

In the case where FEM scheme is applied, the matrices $A$ and $B$ can be 
computed in a standard way from the variational form of the transport equations. The 
conditions (\ref{co2a}) and (\ref{co2aa}) guarantee the {\it convergence} of the FEM scheme by the well-known Cea's estimate (for $p=2$) since they imply the boundedness and coercitivity of the bilinear form $B(.,.)$ given in section \ref{cis} in appropriate Hilbert spaces.

By the above the dose operator ${ D}$ is approximated by
\[
D(x)\approx\tilde { D}(f,g)(x):=\tilde D(x)= 
\sum_{j=1}^3\sum_{k=1}^N\int_S\int_I\alpha_{jk}\kappa_j(x,E)\phi_k(x,\omega,E)d\omega dE,
=:{\s D}(x)\alpha
\]
for some ${\s D}(x)\in \mathrm{M}(1\times (3N)$,
while the terms for the approximative object function,
e.g. in the case of external radiotherapy, are given (for general $p>1$) by
\[
J_{\bf T}(0,g)&\approx\n{D_0-{\s D}(\cdot)\alpha}_{L^p({\bf T})}^p=:J_{\bf T}(\alpha)\\
J_{\bf C}(0,g)&\approx\n{\big(D_C-{\s D}(\cdot)\alpha\big)_-}_{L^p({\bf C})}^p=:J_{\bf C}(\alpha)\\
J_{\bf N}(0,g)&\approx\n{\big(D_N-{\s D}(\cdot)\alpha\big)_-}_{L^p({\bf N})}^p=:J_{\bf N}(\alpha)\\
J_{\rm DV}(0,g)&\approx\Big(\big(v_C-\frac{1}{\mc{L}^3({\bf C})}\int_{\bf C}H\big(({\s D}(\cdot))(x)-d_C\big) dx\big)_-\Big)^p=:J_{DC}(\alpha)\\
J_{\rm sc}(0,g)&\approx\n{\Psi(\cdot,\cdot,\cdot)\alpha)}^p_{L^p(G\times S\times I))^3}=:J_{\rm sc}(\alpha)
\]
and so the approximation of the whole object function is 
\be \label{diskob}
J=J(\alpha)=J_{\bf T}(\alpha)+J_{\bf C}(\alpha)+J_{\bf N}(\alpha)+J_{DV}(\alpha)+
J_{\rm sc}(\alpha).
\ee
Substituting $\alpha=A^{-1}B\beta$ to (\ref{diskob}) we get the object function with the help of control variables $\beta$.

The dimensionality of the discretized problem \eqref{dp} is typically very large
in the number $N$ of unknowns $\alpha_{jk}$,
although it can be reduced by techniques like the adaptation of the grid.
This is one of the main drawbacks of the method
because to form the inverse $A^{-1}$ 
one must calculate the inverse of the very large dimensional matrix,
even if matrices involved in FEM, as is well known, are sparse.
Iterative algorithms must be applied in solving the equation $A\alpha=B\beta$.
One can partially avoid this problem by applying the so-called parametrization, described below (cf. \cite{bomanthesis}, \cite{tervo08}),
but then another difficulty arises in constructing the parametrization operator (based e.g. on the Singular Value Decomposition (SVD)). 

The initial solution for the discrete problem can be calculated as follows.
Denote
\be 
D(\psi,v):=2c_{\bf T}\la D\psi,Dv\ra_{L^2({\bf T})}+2c_{\bf C}\la D\psi,Dv\ra_{L^2({\bf C})}+
2c_{\bf N}\la D\psi,Dv\ra_{L^2({\bf N})}
\ee
and 
\be 
d(v):=
2c_{\bf T}\la d_{\bf T},Dv\ra_{L^2({\bf T})}+2c_{\bf C}\la d_{\bf C},Dv\ra_{L^2({\bf C})}+
2c_{\bf N}\la d_{\bf N},Dv\ra_{L^2({\bf N})}.
\ee
Then the variational equations \eqref{v9a}-\eqref{v10} are
\bea\label{sve}
& B^*(\psi^*,v)+D(\psi,v)=d(v)\nonumber\\
& B(\psi,v)={1\over c}\la (\gamma_-(\psi^*))_+,\gamma_-(v)\ra_{L^2(\Gamma_-)^3},\ v\in \tilde W^2(G\times S\times I)^3.
\eea

The discrete approximation of the system (\ref{sve}) is of the form
\bea\label{sve1}
& A^*\xi+{\bf D}\alpha={\bf d}\nonumber\\
& A\alpha ={\bf g}(\xi)
\eea
where ${\bf D}\in \mathrm{M}(3N\times 3N)$, ${\bf d}\in \mathrm{M}(3N\times 1)$, ${\bf g}$ is a piecewise linear (non)function and
\be \label{sve2}
\psi_j^*\approx \tilde\psi_j^*:=\sum_{k=1}^N\xi_{jk}\phi_k.
\ee
The optimal control is approximately
\be 
\ol g={1\over c}(\gamma_-(\psi^*))_+\approx\ {1\over c}(\gamma_-(\tilde\psi^*))_+
\ee
where $\tilde\psi^*:=(\tilde\psi_1^*,\tilde\psi_2^*,\tilde\psi_3^*)$ is obtained from (\ref{sve2}) with the help of $\xi$.

\csslast %
The  term \emph{parametrization} above means the following concept. The discrete system (\ref{dp}) can be written as
\be\label{dpa}
\qmatrix{A & -B\cr}\qmatrix{\alpha\cr\beta\cr}=0,
\ee
where $\qmatrix{A & -B\cr}\in \mathrm{M}(N\times(N+ M))$ and
 $\qmatrix{\alpha\cr\beta\cr}\in \mathrm{M}((N+ M)\times 1)$. Let $P\in \mathrm{M}((N+M)\times N')$ be a matrix such that (\ref{dpa}) holds if and only if 
\be\label{dpb} 
 \qmatrix{\alpha\cr\beta\cr}=P\tau
\ee
that is, $P\in \mathrm{M}(N'\times 1)$ is the "basis generating matrix (operator) of the kernel $N( 
\qmatrix{A & B\cr})$".
Such a matrix $P$ always exists and is called the {\it parametrization (operator/matrix)} of the system (\ref{dp}).

We observe that if $Q$ is a matrix such that
\be \label{dpaa}
\qmatrix{A & -B\cr}Q\qmatrix{A & -B\cr}=\qmatrix{A & -B\cr}
\ee
then $P:=I-Q\qmatrix{A & B\cr}$ is a parametrization. 
Especially, (\ref{dpaa}) is valid if $Q=\qmatrix{A & -B\cr}^+$ is the Moore-Penrose pseudo-inverse of $\qmatrix{A & -B\cr}$. Note that when applying (\ref{dpaa}) the dimension (number of rows and columns) of $P$ is $N+M$ but it can be essentially reduced by omitting insignificant elements.

In virtue of (\ref{dpb}) we have $\alpha=P_1\tau,\ \beta=P_2\tau$ for some matrices $P_j,\ j=1,2$ obtained from blocks of $P$. The object function becomes with the help of parameters $\tau$ as
\be 
J=J(\alpha)=J(P_1\tau)=: J(\tau).
\ee
The optimization problem becomes the following: Find the global minimum
\be 
\inf_{\tau\in U_{\rm ad}^d} J(\tau)
\ee
where 
\be 
U_{\rm ad}^d:=\{\tau\in \R^{N'}\ |\ g\approx {\s G}(y,\omega,E)\beta={\s G}(y,\omega,E)P_2\tau\geq 0\}.
\ee
In the case where the basis  $\{\eta_k\}$ is build up of positive step functions (zero-order splines),
the interiors of supports of which are disjoint,
we have
$U_{\rm ad}^d=\{\tau\in \R^{N'}\ |\ \beta=P_2\tau\geq 0\}$.
Using higher order splines would, however, be preferred.
An alternative possibility for taking care of the positivity of the approximative controls $g$ is to add a penalty term of the form $c_{\rm ad}\n{({\s G}(\cdot,\cdot,\cdot)P_2\tau)_-}^p_{T^2(\Gamma_-)^3}$ to the object function.
No explicit inversion of the matrix $A$ is needed.
The essential problem in this approach is in constructing the parametrization $P$,
approximatively, and preferably such that $N'$ (the number of parameters) is small.
Moreover, the algorithms used in this construction should be a iterative schemes,
during which the accuracy can be controlled.
Elements of $P$ which are small enough should be neglected,
such that the dimensionality of $P$ gets decreased and its sparsity gets increased.
Preliminary simulations have shown that this approach works at least
in spatially $2D$-situations (cf. \cite{bomanthesis}, where $N+M\approx 5000,\ N'\approx 100$).
For applying the explained parametrization method,
an initial solution $\tau$ for the optimization can be obtained e.g. as in \cite{bomanthesis}, p. 110 (we omit the details here).

\csslast %
Another possibility to avoid inversions of huge matrices would be to utilize in computations the formulas given in Remark \ref{re:calc:2}, that is to compute $\psi=\psi(f,g)$ from
\be\label{nb1}
\psi=\sum_{k=0}^\infty ((-{\bf A}_0+\Sigma)^{-1}K)^k
((-{\bf A}_0+\Sigma)^{-1}(f-(\Sigma-K)(Lg)))+Lg,
\ee
where $(-{\bf A}_0+\Sigma)^{-1}$ can be explicitly obtained from (\ref{nb0}).
Alternatively, one could compute $\psi=\psi(f,g)$ approximately from
(see \eqref{calceq4})
\be\label{nb2}
\psi\approx
\int_0^T\Big[
T(t/n_0)
e^{-(t/n_0)\Sigma(x,\omega,E)}
\sum_{k=0}^{N_0} {1\over{k!}}((t/n_0)K)^k\Big]^{n_0}(f-(\Sigma-K)(Lg)) dt+Lg.
\ee
Substituting one of these expressions into
\[
({\s D}(f,g))(x)=\sum_{j=1}^3\int_{S\times I}\kappa_j(x,E)(\psi_j(f,g))(x,\omega,E) d\omega dE
\]
one acquires the (approximate) dose as a function of $f$ and $g$.
Consequently, the object function $J=J(f,g)$ can be directly calculated from (\ref{of}), (\ref{ofsc1}), (\ref{ofsc2}).
 
The initial solution $\ol g$ (e.g. for external radio therapy)
for applying this computational scheme
is calculated from 
\[
\ol g={1\over c}(\gamma_-(\psi^*))_+,
\]
where $\psi^*$ is solved from the coupled system (see the proof of Theorem \ref{thinit})
\bea\label{nb3}
(-{\bf A}^*+\Sigma^*-K^*)\psi^* +&  c_{\bf T}D^*e_{\bf T}D\psi+c_{\bf C}D^*e_{\bf C}D\psi+c_{\bf N}D^*e_{\bf N}D\psi \nonumber\\
&=c_{\bf T}D^*e_{\bf T}d_{\bf T}+c_{\bf C}D^*e_{\bf C}d_{\bf C}+c_{\bf N}D^*e_{\bf N}d_{\bf N},\nonumber\\
(-{\bf A}+\Sigma-K)\psi, &=0\\
\psi^*_{|\Gamma_+} &=0,\nonumber\\
\psi_{|\Gamma_-} &={1\over c}(\gamma_-(\psi^*))_+, \nonumber
\eea
where the system of equations is equivalent to
\bea\label{nb4}
&\qmatrix{-{\bf A}^*+\Sigma^*-K^* & c_{\bf T}D^*e_{\bf T}D+c_{\bf C}D^*e_{\bf C}D+
c_{\bf N}D^*e_{\bf N}D\cr
0 & -{\bf A}+\Sigma-K \cr}\qmatrix {\psi^*\cr \psi\cr}
\nonumber\\
=&
\qmatrix{c_{\bf T}D^*e_{\bf T}d_{\bf T}+c_{\bf C}D^*e_{\bf C}d_{\bf C}+c_{\bf N}D^*e_{\bf N}d_{\bf N}\cr 0\cr}
\eea
As far as the authors are aware of, computationally effective and stable techniques 
for solving (\ref{nb3}) (for instance using formulas similar to (\ref{nb1}), 
(\ref{nb2})) require further study.

\csslast %
Because of their strongly forward-peaked migration,
it would be reasonable to use the Continuous Slowing Down Approximation (see the introduction) 
in the transport of electrons and positrons.
When the solution of the transport equation is smooth enough (see section \ref{fr}),
higher order spline basis functions (along with related more rapid convergence results)
could be used in numerical techniques like FEM.

\csslast %
Except for $J_{\rm DV}$
(see, however, the discussion at the end of Section \ref{optims}), the terms of
the discretized object function are (locally) Lipschitz continuous.
Nonetheless, while they are convex,
the terms $J_{\bf T}$, $J_{\bf C}$, $J_{\bf N}$, $J_{\rm ad}$, $J_{\rm sc}$
are not differentiable in general,
except for the case $p=2$ in which case $J_{\bf T}$ and $J_{\rm sc}$ are differentiable
(see Theorem \ref{pof}).
The term $J_{\rm DV}$, however, is non-convex,
and therefore a global optimization strategy is needed if this
constraint is to be taken into account in the treatment planning.
There exist several global optimization algorithms well suited for Lipschitz continuous
(not necessarily differentiable) object functions (e.g. \cite{pinter}).
Large dimensionality of the related (discretized) object function's variables is, however,
a limiting factor for the application of these methods in practice.  

\csslast %
Multicriteria optimization and related (interactive) decision making can be applied to the treatment planning applying the presented optimization schemes
(\cite{ruotsalainen}). In addition, we remark that optimization can be used simultaneously for external and internal therapy (which is not likely applied in practise).

\csslast %
As we mentioned in the introduction,
in the case of external radiotherapy the incoming flux
(or fluence) $g$ can be essentially expressed using \emph{beam parameters},
which is to be understood include relevant (controllable) variables
like the energy of the incoming beam,
multileaf collimator (MLC) leaf positions, the jaw positions as well as rotational parameters related to the gantry and collimator rotations etc.
(this is by no means intended to be an exhaustive list).
The dose optimization problem can then be put in the form where the object function 
is expressed in terms of beam parameters.

This approach has the advantage that device constraints can be taken into account at an early stage of the treatment planning. The main disadvantage, however,
is that the resulting object function is likely to be highly \emph{multiextremal},
and so effective global optimization algorithms are fundamental
for the success of such an approach.
Notice that the approach given here enables to optimize besides of position, the energies and angles of
incoming flux(es) since $g=g(y,\omega,E)$.  

\csslast %
Stochastic aspects (arising e.g. from delivery processes or patient motions during the treatment) can be taken into account by using as the transport model the so-called {\it stochastic Boltzmann transport equation}.
Matters like inverse treatment planning interpreted as an optimal control (boundary) problem, existence of optimal control and its computation, exact controllability and so on, can be then considered in the (more general) framework of the stochastic calculus.
For a glimpse of some recent advances in the context of stochastic BTE and its controllability, we refer e.g. to \cite{Lu13} and the references therein.
Issues of exact controllability (and observability) are considered there for time-dependent monokinetic single particle transport equation.

\csslast %
We emphasize that in the computations of the object function,
with the exception of the additional terms $J_{\rm sc}$ and $J_{\rm ad}$,
one only needs to know of the dose distribution
\[
D(x)=\sum_{j=1}^3\int_S\int_I\kappa_j(x,\omega)\psi_j(x,\omega,E)d\omega dE
\]
which is a kind of a moment. 
It might thus be possible to develop iterative approximative methods for calculating the dose without explicitly solving $\psi$.
These techniques lead to recursive computations of some tensors, which also seem to have a physical meaning.


\end{document}